\newtheorem{theorem}{Theorem}
\newtheorem{corollary}[theorem]{Corollary}
\newtheorem{lemma}[theorem]{Lemma}
\newtheorem{proposition}[theorem]{Proposition}
\newtheorem{conjecture}[theorem]{Conjecture}
\newtheorem{claim}[theorem]{Claim}
\theoremstyle{definition}
\newtheorem{defin}[theorem]{Definition}
\titleformat{\section}[hang]{\scshape\large\bfseries\filcenter}{\S\thesection}{4pt}{}
\titleformat{\subsection}[hang]{\scshape\bfseries}{\thesubsection.}{4pt}{}
\newcommand{\tss}[1]{\textsuperscript{#1}}
\newcommand{\on}[1]{
	\operatorname{#1}
}
\newcommand\mder{{\Delta\!\!\!\!\!\hbox{\raisebox{0.3ex}{\tiny\ \textbullet}}}\,}
\newcommand{\tdt}{\times\cdots\times}
\newcommand{\tightoverset}[2]{
  \mathop{#2}\limits^{\vbox to -.5ex{\kern-1.15ex\hbox{$#1$}\vss}}}
\newcommand{\conv}{
	\tightoverset{\boldsymbol{-}}{\ast}
}
\newcommand\restr[2]{{
  \left.\kern-\nulldelimiterspace 
  #1 
  \vphantom{\big|} 
  \right|_{#2} 
}}
\newcommand{\subalign}[1]{%
  \vcenter{%
    \Let@ \restore@math@cr \default@tag
    \baselineskip\fontdimen10 \scriptfont\tw@
    \advance\baselineskip\fontdimen12 \scriptfont\tw@
    \lineskip\thr@@\fontdimen8 \scriptfont\thr@@
    \lineskiplimit\lineskip
    \ialign{\hfil$\m@th\scriptstyle##$&$\m@th\scriptstyle{}##$\hfil\crcr
      #1\crcr
    }%
  }%
}
\newcommand\blfootnote[1]{%
  \begingroup
  \renewcommand\thefootnote{}\footnote{#1}%
  \addtocounter{footnote}{-1}%
  \endgroup
}
\newcommand\ssk[1]{
	\substack{#1}
}
\newcommand\ex{\mathop{\mathbb{E}}}
\newcommand{\exx}{
  \mathop{
    \mathchoice{\vcenter{\hbox{\larger[4]$\mathbb{E}$}}}
               {\kern0pt\mathbb{E}}
               {\kern0pt\mathbb{E}}
               {\kern0pt\mathbb{E}}
  }\displaylimits
}
\newcommand*\bcdot{\mathpalette\bigcdot@{0.5}}
\newcommand*\bigcdot@[2]{\mathbin{\vcenter{\hbox{\scalebox{#2}{$\m@th#1\bullet$}}}}}
\def\blfootnote{\gdef\@thefnmark{}\@footnotetext}
\newcommand\id{\mathbbm{1}}
\begin{document}
\begin{center}\Large\noindent{\bfseries{\scshape Approximate quadratic varieties}}\\[24pt]\normalsize\noindent{\scshape Luka Mili\'cevi\'c\dag}\\[6pt]
\end{center}
\blfootnote{\noindent\dag\ Mathematical Institute of the Serbian Academy of Sciences and Arts\\\phantom{\dag\ }Email: luka.milicevic@turing.mi.sanu.ac.rs}

\footnotesize
\begin{changemargin}{1in}{1in}
\centerline{\sc{\textbf{Abstract}}}
\phantom{a}\hspace{12pt}~A classical result in additive combinatorics, which is a combination of Balog-Szemer\'edi-Gowers theorem and a variant of Freiman's theorem due to Ruzsa, says that if a subset $A$ of $\mathbb{F}_p^n$ contains at least $c |A|^3$ additive quadruples, then there exists a subspace $V$, comparable in size to $A$, such that $|A \cap V| \geq \Omega_c(|A|)$. Motivated by the fact that higher order approximate algebraic structures play an important role in the theory of uniformity norms, it would be of interest to find higher order analogues of the mentioned result.\\
\phantom{a}\hspace{12pt}~In this paper, we study a quadratic version of the approximate property in question, namely what it means for a set to be an approximate quadratic variety. It turns out that information on the number of additive cubes, which are 8-tuples of the form $(x, x+ a, x+ b, x+ c , x+ a + b, x+ a + c, x+ b + c, x+ a + b + c)$, in a set is insufficient on its own to guarantee quadratic structure, and it is necessary to restrict linear structure in a given set, which is a natural assumption in this context. With this in mind, we say that a subset $V$ of a finite vector space $G$ is a \emph{$(c_0, \delta, \varepsilon)$-approximate quadratic variety} if $|V| = \delta |G|$, $\|\id_V - \delta\|_{\mathsf{U}^2} \leq \varepsilon$ and $V$ contains at least $c_0\delta^7 |G|^4$ additive cubes. Our main result is the structure theorem for approximate quadratic varieties, stating that such a set has a large intersection with an exact quadratic variety of comparable size.
\end{changemargin}
\normalsize
\section{Introduction}

Approximate algebraic structures are a major topic of study in additive combinatorics. First result concerning such structures, and a cornerstone of the area, is Freiman's theorem~\cite{Freiman}. As we shall work in vector spaces over a fixed prime field $\mathbb{F}_p$ in this paper, we begin by recalling an analogue of Freiman's theorem in that setting, due to Ruzsa~\cite{RuzsaFp}.

\begin{theorem}[Ruzsa~\cite{RuzsaFp}]\label{frthm} Let $G$ be a finite-dimensional vector space over a prime field $\mathbb{F}_p$. Suppose that $A \subseteq G$ satisfies $|A + A| \leq K |A|$.\footnote{This condition is called \emph{small doubling} and set obeying this condition are traditionally called \emph{approximate groups}.} Then there exists a subspace $U \leq G$ of size $|U| \leq O_K(|A|)$ such that $A \subseteq U$.\end{theorem}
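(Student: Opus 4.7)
The plan is to combine the Plünnecke--Ruzsa inequality with Ruzsa's covering lemma to show that the $\mathbb{F}_p$-subspace $U = \langle A \rangle$ spanned by $A$ has size $O_K(|A|)$; since $A \subseteq U$ by construction, this suffices. The essential feature of working over $\mathbb{F}_p$, as opposed to a general abelian group where one must settle for a generalised arithmetic progression containing $A$, is that
\[
\langle A \rangle = \bigcup_{n \geq 1} nA,
\]
because any $\mathbb{F}_p$-linear combination $\sum c_i a_i$ with coefficients chosen in $\{0, 1, \dots, p-1\}$ lies in $\bigl(\sum c_i\bigr)A$, and $0 \in pA$ for any $a \in A$. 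It is therefore enough to control the iterated sumsets $nA$ uniformly in $n$ modulo a subspace of bounded dimension.

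First I would establish the Plünnecke--Ruzsa bound: $|A + A| \leq K|A|$ implies $|nA - mA| \leq K^{n+m}|A|$ for all nonnegative integers $n,m$. The cleanest route is via Petridis' argument, selecting a nonempty $A' \subseteq A$ that minimises the ratio $|A' + A|/|A'|$ and iterating through the Ruzsa triangle inequality $|X|\,|Y - Z| \leq |X - Y|\,|X - Z|$. In particular this gives the two consequences I need, $|A - A| \leq K^2 |A|$ and $|3A - A| \leq K^4 |A|$.

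Next I would invoke Ruzsa's covering lemma (proved by a short greedy maximality argument): if $|C + D| \leq L|C|$, then there exists $X \subseteq D$ with $|X| \leq L$ and $D \subseteq X + C - C$. Applying this with $C = A$ and $D = 2A - A$, and using $|A + (2A - A)| = |3A - A| \leq K^4 |A|$, yields $X \subseteq 2A - A$ with $|X| \leq K^4$ and $2A - A \subseteq X + A - A$. A routine induction on $n$ then upgrades this to $nA - A \subseteq (n-1)X + A - A$, so that $nA \subseteq nX + A - A \subseteq \langle X \rangle + (A - A)$, where $\langle X \rangle$ denotes the $\mathbb{F}_p$-span of $X$. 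Taking the union over $n$ and using the identification of $\langle A \rangle$ above gives $U \subseteq \langle X \rangle + (A - A)$, and hence
\[
|U| \;\leq\; |\langle X \rangle|\cdot |A - A| \;\leq\; p^{|X|}\cdot K^2 |A| \;\leq\; p^{K^4} K^2 |A| \;=\; O_K(|A|),
\]
since $p$ is fixed.

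The main obstacle I expect is in the first step: naive iteration of the Ruzsa triangle inequality on its own only yields doubly-exponential bounds in $n + m$, so Petridis' minimiser trick (or the original Plünnecke graph argument) is really needed to keep the exponent linear. Once a clean Plünnecke--Ruzsa bound is in place, the covering lemma and the identification of $\langle A \rangle$ with the increasing union of the $nA$ make the remaining steps essentially routine.
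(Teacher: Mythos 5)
Your argument is correct and is essentially Ruzsa's own proof of this theorem, which the paper only quotes with a citation rather than proving: the Pl\"unnecke--Ruzsa inequality gives $|A-A|\leq K^2|A|$ and $|3A-A|\leq K^4|A|$, the covering lemma then yields $X\subseteq 2A-A$ with $|X|\leq K^4$ and $nA\subseteq \langle X\rangle + (A-A)$ for all $n$, and since over $\mathbb{F}_p$ one has $\langle A\rangle=\bigcup_{n\geq 1}nA$, the span $U=\langle A\rangle$ satisfies $|U|\leq p^{K^4}K^2|A|=O_K(|A|)$ with $p$ fixed. All the intermediate steps (Petridis/Pl\"unnecke, the greedy covering argument, and the induction $nA-A\subseteq (n-1)X+A-A$) check out, so there is nothing to correct.
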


Let us record here a variant of that theorem which has a condition involving additive quadruples, which are $(x,y,z,w)$ such that $x + y = z + w$, instead of the small doubling assumption. The variant is obtained after an application of Balog-Szemer\'edi-Gowers theorem~\cite{bsgthm,Gowers4AP}, and appears more naturally in the context of higher order Fourier analysis. 

\begin{theorem}\label{bsgfreimanintro} Let $G$ be a finite-dimensional vector space over a prime field $\mathbb{F}_p$. Suppose that $A \subseteq G$ and that $A$ has at least $c|A|^3$ additive quadruples. Then there exists a subspace $U \leq G$ of size $|U| \leq O_c(|A|)$ such that $|U \cap A| \geq \Omega_c(|A|)$.\end{theorem}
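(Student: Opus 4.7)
The plan is to deduce Theorem~\ref{bsgfreimanintro} by combining the Balog--Szemer\'edi--Gowers theorem with Theorem~\ref{frthm} above; this is essentially the standard route and neither step requires a new idea in the $\mathbb{F}_p^n$ setting.

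First, I would invoke the Balog--Szemer\'edi--Gowers theorem in the following form: if $A \subseteq G$ has at least $c|A|^3$ additive quadruples, then there is a subset $A' \subseteq A$ with $|A'| \geq \Omega_c(|A|)$ and small doubling $|A' + A'| \leq O_c(|A'|)$. The standard graph-theoretic argument (via the Balog--Szemer\'edi graph regularisation lemma and counting paths of length~$3$) applies verbatim in any abelian group, so in particular in $\mathbb{F}_p^n$, and I would just cite~\cite{bsgthm,Gowers4AP}.

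Second, I would apply Theorem~\ref{frthm} (Ruzsa's $\mathbb{F}_p^n$-version of Freiman) to the set $A'$ obtained above. Since $|A' + A'| \leq O_c(|A'|)$, the theorem produces a subspace $U \leq G$ with $A' \subseteq U$ and $|U| \leq O_c(|A'|) \leq O_c(|A|)$.

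Finally, I would conclude by noting that $A' \subseteq A \cap U$, so $|A \cap U| \geq |A'| \geq \Omega_c(|A|)$, while simultaneously $|U| \leq O_c(|A|)$. There is no real obstacle here: the content of the theorem is already packaged into the two black boxes being invoked, and the only thing to check is that the dependence of the constants remains polynomial (or at least a function of~$c$ alone) throughout the concatenation, which is immediate from the standard statements.
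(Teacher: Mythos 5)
Your proposal is correct and is exactly the route the paper intends: Theorem~\ref{bsgfreimanintro} is obtained by applying the Balog--Szemer\'edi--Gowers theorem to extract a subset $A' \subseteq A$ of size $\Omega_c(|A|)$ with small doubling, and then covering $A'$ by a subspace via Theorem~\ref{frthm}, so that $U \supseteq A'$ gives both $|U| \leq O_c(|A|)$ and $|U \cap A| \geq |A'| \geq \Omega_c(|A|)$. No gap to report.
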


We may think of the sets satisfying the assumptions of Theorem~\ref{bsgfreimanintro} as \emph{approximate cosets} (we choose this wording as the term \emph{approximate subgroup} has a standard meaning of having a small doubling). The way such a result is relevant in higher order Fourier analysis is via following consequence, which was proved by Gowers and plays an important role in his proof of an inverse theorem for $\mathsf{U}^3$ uniformity norm~\cite{Gowers4AP} (we shall recall the definition of this norm slightly later).

\begin{theorem}[Gowers~\cite{Gowers4AP}]\label{approxHommInvIntro}Let $G$ and $H$ be finite-dimensional vector spaces over $\mathbb{F}_p$. Let $A \subseteq G$ be a subset and let $\phi \colon A \to H$ be a map which \emph{respects} at least $c |G|^3$ additive quadruples in $A$, meaning that $\phi(x) + \phi(y) = \phi(z) + \phi(w)$ holds for at least $c |G|^3$ quadruples $(x,y,z,w) \in A^4$ such that $x + y = z + w$. Then there exists an affine map $\Phi \colon G\to H$ such that $\phi = \Phi$ holds for at least $\Omega_c(|G|)$ points in $A$.\end{theorem}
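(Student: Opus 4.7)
The plan is to apply Theorem~\ref{bsgfreimanintro} to the graph of $\phi$. Let $\Gamma = \{(x, \phi(x)) : x \in A\} \subseteq G \times H$, so $|\Gamma| = |A|$. Every additive quadruple $(x, y, z, w) \in A^4$ respected by $\phi$ gives an additive quadruple $((x, \phi(x)), (y, \phi(y)), (z, \phi(z)), (w, \phi(w)))$ in $\Gamma$, so $\Gamma$ contains at least $c |G|^3 \geq c |A|^3$ additive quadruples (the hypothesis also forces $|A| \geq c^{1/3} |G|$, since the total number of additive quadruples in $A^4$ is at most $|A|^3$, making $\Omega_c(|A|)$ and $\Omega_c(|G|)$ interchangeable throughout). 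Theorem~\ref{bsgfreimanintro} then supplies a subspace $U \leq G \times H$ with $|U| \leq O_c(|A|)$ and $|U \cap \Gamma| \geq \Omega_c(|A|)$.

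Next I would unpack the structure of $U$. Define the vertical kernel $K = \{h \in H : (0, h) \in U\} \leq H$ and the horizontal projection $V = \pi_G(U) \leq G$. Then $U$ is a union of cosets of $\{0\} \times K$, and the projection $\pi_G$ induces a bijection $U / (\{0\} \times K) \to V$; equivalently, $U$ is the graph of a linear map $L : V \to H / K$. The $\Omega_c(|A|)$ points $x \in A$ satisfying $(x, \phi(x)) \in U$ are pairwise distinct and all lie in $V$, so $|V| \geq \Omega_c(|A|)$; combined with $|U| = |V| \cdot |K| \leq O_c(|A|)$, this forces $|K| = O_c(1)$.

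The last step is a pigeonhole. Working in the $\mathbb{F}_p$-vector space setting, lift $L$ to a linear map $\widetilde{L} : V \to H$ and then extend it to a linear map on all of $G$ by choosing any complement of $V$. For each of the $\Omega_c(|A|)$ good points $x \in A$, the difference $\phi(x) - \widetilde{L}(x)$ lies in $K$; since $|K| = O_c(1)$, some single element $k^\star \in K$ is attained $\Omega_c(|G|)$ times. The affine map $\Phi(x) = \widetilde{L}(x) + k^\star$ then agrees with $\phi$ on the required number of points.

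The main obstacle is the recognition in the second step that one must quantitatively control the vertical kernel $K$, since the naive hope $K = \{0\}$ (which would exhibit $U$ itself as the graph of a linear map) need not hold. The bound $|K| = O_c(1)$ is, however, delivered automatically by comparing $|U|$ to $|V|$, and it is exactly what enables the final pigeonhole; the rest of the argument is a routine manipulation of subspaces in $G \times H$.
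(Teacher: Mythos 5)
Your proposal is correct, and it is essentially the standard argument behind the cited result: the paper itself gives no proof of Theorem~\ref{approxHommInvIntro} (it is quoted from Gowers, with the remark that the cyclic-group proof transfers to $\mathbb{F}_p^n$), and the classical deduction is exactly what you do — apply the Balog--Szemer\'edi--Gowers/Ruzsa structure theorem (Theorem~\ref{bsgfreimanintro}) to the graph $\Gamma=\{(x,\phi(x))\}\subseteq G\times H$ and read off an affine map from the resulting subspace. Your handling of the two points that need care — the interchangeability of $\Omega_c(|A|)$ and $\Omega_c(|G|)$ via $|A|\geq c^{1/3}|G|$, and the bound $|K|=O_c(1)$ on the vertical kernel followed by a pigeonhole over $K$ — is accurate, so the argument goes through.
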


Strictly speaking, Gowers obtained Theorem~\ref{approxHommInvIntro} in the case of cyclic groups, but the same proof works in $\mathbb{F}_p^n$. We may think of this theorem as a structure theorem for approximate affine homomorphisms. This is an approximate structure in the context of maps between vector spaces.\\

Finally, let us mention another approximate algebraic structure. We have already mentioned in passing the uniformity norms, whose definition we now recall. Firstly, we recall that the \emph{discrete multiplicative derivative operator} $\mder_a$ for shift $a \in G$ is defined by $\mder_a f(x) = f(x + a)\overline{f(x)}$ for functions $f \colon G \to \mathbb{C}$.\\
\indent Let $f \colon G \to \mathbb{C}$ be a function. The \emph{Gowers uniformity norm} $\|f\|_{\mathsf{U}^k}$, defined by Gowers in~\cite{GowerskAP}, is given by the formula
\[\|f\|_{\mathsf{U}^k} = \Big(\exx_{x,a_1,\dots, a_k} \mder_{a_1} \dots \mder_{a_k} f(x) \Big)^{1/2^k}.\]

These norms are defined by a combinatorial average, which in the case of $\mathsf{U}^3$ norm is taken over additive cubes, which will be crucial for the rest of the paper. To be precise, an \emph{additive cube} is an 8-tuple of the form
\[\Big(x, x+ a, x+ b, x+ c , x+ a + b, x+ a + c, x+ b + c, x+ a + b + c\Big)\]
where $x,a,b,c \in G$ are any 4 elements of $G$.\\ 

In his inverse theorem for $\mathsf{U}^3$ norm, Gowers obtained a partial description of functions with large value of the norm, which was sufficient for the purposes of proving Szemer\'edi's theorem for arithmetic progressions of length 4. A qualitatively optimal inverse theorem was later obtained by Green and Tao~\cite{GreenTaoU3} for groups of odd order. In the vector space case their result is the following.

\begin{theorem}[Green and Tao~\cite{GreenTaoU3}]\label{GTU3thm}Suppose that $f \colon G \to \mathbb{D}$ satisfies $\|f\|_{\mathsf{U}^3} \geq c_0$ and assume $p \geq 3$. Then there exists a quadratic polynomial $q \colon G \to \mathbb{F}_p$ such that $\Big|\ex_{x} f(x) \exp\Big(\frac{2 \pi i q(x)}{p}\Big)\Big| \geq \Omega_{c_0}(1)$.\end{theorem}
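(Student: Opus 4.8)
The plan is to follow the classical ``differentiate, linearise, symmetrise'' route, reducing the statement to the structure theorem for approximate affine homomorphisms, Theorem~\ref{approxHommInvIntro}, and exploiting throughout the commutativity $\mder_{a}\mder_{b}f=\mder_{b}\mder_{a}f$ of discrete second derivatives. I write $e_p(t)=\exp(2\pi i t/p)$ and fix an identification of the Pontryagin dual of $G$ with $G$ via a non-degenerate bilinear form $\langle\cdot,\cdot\rangle$, so that every character of $G$ is of the form $x\mapsto e_p(\langle\xi,x\rangle)$, $\xi\in G$.

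First I would differentiate once. The identity $\|f\|_{\mathsf{U}^3}^{8}=\ex_h\|\mder_h f\|_{\mathsf{U}^2}^{4}$, together with the elementary $\mathsf{U}^2$ inverse theorem $\|g\|_{\mathsf{U}^2}^{4}=\sum_\xi|\widehat g(\xi)|^{4}\le\|\widehat g\|_\infty^{2}\|g\|_2^{2}$ applied to each $\mder_h f$, gives for every $h$ a frequency $\phi(h)\in G$ with $|\widehat{\mder_h f}(\phi(h))|^{2}\ge\|\mder_h f\|_{\mathsf{U}^2}^{4}$ (using $\|\mder_h f\|_2\le1$). Averaging over $h$ yields $\ex_h|\widehat{\mder_h f}(\phi(h))|^{2}\ge\|f\|_{\mathsf{U}^3}^{8}\ge c_0^{8}$, so the set $H:=\{h\in G:|\widehat{\mder_h f}(\phi(h))|\ge 2^{-1/2}c_0^{4}\}$ satisfies $|H|\ge\tfrac12 c_0^{8}|G|$, and we are left with a map $\phi\colon G\to G$ that is ``good'' on the dense set $H$.

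The core of the proof, and the step I expect to be the main obstacle, is to deduce that $\phi$ respects $\Omega_{c_0}(|G|^{3})$ additive quadruples inside $H$. Using $|\widehat{\mder_h f}(\phi(h))|^{2}=\ex_{r,s}\mder_h\mder_{r}f(s)\,e_p(-\langle\phi(h),r\rangle)$, the previous bound reads $\ex_{h,r,s}\mder_h\mder_{r}f(s)\,e_p(-\langle\phi(h),r\rangle)\ge c_0^{8}$, and from here I would run the iterated Cauchy--Schwarz argument of Gowers (the engine of his $\mathsf{U}^3$ inverse theorem; see also Green and Tao~\cite{GreenTaoU3}): repeatedly duplicating variables and opening squares expresses a lower bound for this quantity as an average over additive-cube configurations of $f$ twisted by linear phases built from the $\phi(h_i)$, and since the $f$-factors are bounded by $1$ this average can be $\Omega_{c_0}(1)$ only when the phases cancel on many configurations, forcing $\phi(h_1)+\phi(h_2)=\phi(h_3)+\phi(h_4)$ for $\Omega_{c_0}(|G|^{3})$ additive quadruples $(h_1,h_2,h_3,h_4)\in H^{4}$. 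This is the only place where one genuinely works with the cube structure; the rest is bookkeeping. Applying Theorem~\ref{approxHommInvIntro} to $\phi$ then produces a linear map $M\colon G\to G$ and $v\in G$ with $\phi(h)=Mh+v$ for all $h$ in a set $H'\subseteq H$ of size $\Omega_{c_0}(|G|)$.

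It remains to upgrade $M$ to the Hessian of a genuine quadratic form, which is where the hypothesis $p\ge3$ enters and which is the second delicate ingredient. Here I would carry out the symmetry argument: using $\mder_{h_1}\mder_{h_2}f=\mder_{h_2}\mder_{h_1}f$ once more, a further Cauchy--Schwarz estimate forces $\langle Mh_1,h_2\rangle=\langle Mh_2,h_1\rangle$ for $\Omega_{c_0}(|G|^{2})$ pairs $(h_1,h_2)$; a rank argument then lets one replace $M$ by a genuinely symmetric matrix, at the cost of shrinking $H'$ by a constant factor and a bounded-rank correction that is absorbed by pigeonholing over the $O_{c_0}(1)$ residual linear characters. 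Since $2$ is invertible in $\mathbb{F}_p$, the quadratic polynomial $q_0(x):=2^{-1}\langle Mx,x\rangle$ satisfies $\mder_h e_p(q_0)(x)=e_p(\langle Mh,x\rangle)\,e_p(2^{-1}\langle Mh,h\rangle)$, so $g:=f\cdot e_p(-q_0)$ satisfies $|\widehat{\mder_h g}(v)|=|\widehat{\mder_h f}(Mh+v)|=|\widehat{\mder_h f}(\phi(h))|\ge 2^{-1/2}c_0^{4}$ for all $h\in H'$. A short Fourier computation now gives
\[\Omega_{c_0}(1)\le\ex_h|\widehat{\mder_h g}(v)|^{2}=\sum_\xi|\widehat g(\xi)|^{2}\,|\widehat g(\xi-v)|^{2}\le\|\widehat g\|_\infty^{2},\]
so some $\ell\in G$ satisfies $|\widehat g(\ell)|=\Omega_{c_0}(1)$, i.e.\ $\big|\ex_x f(x)\,e_p(-q_0(x)-\langle\ell,x\rangle)\big|=\Omega_{c_0}(1)$. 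Since $q(x):=-q_0(x)-\langle\ell,x\rangle$ is a quadratic polynomial, this is the asserted conclusion.
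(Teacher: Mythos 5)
Your outline is correct and is essentially the proof from the cited source: the paper does not prove Theorem~\ref{GTU3thm} itself but quotes it from Green and Tao, and your sketch reproduces exactly that standard argument (Gowers' derivative-and-linearise step with iterated Cauchy--Schwarz to get many respected additive quadruples, the structure theorem for approximate homomorphisms as in Theorem~\ref{approxHommInvIntro}, the Green--Tao symmetry argument using $p\geq 3$, and the final Fourier integration). The two places you leave as black boxes (the Cauchy--Schwarz linearisation and the symmetrisation) are precisely the nontrivial ingredients of that proof, and your descriptions of them are accurate.
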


We may thus think of functions with large $\mathsf{U}^3$ norm as approximate quadratic phases.\\

When it comes to higher order analogues of the results above, we have inverse theorems for $\mathsf{U}^k$ norms, for higher $k$. These were proved by Green, Tao and Ziegler~\cite{GTZ} for cyclic groups, by Bergelson, Tao and Ziegler~\cite{BTZ} for finite vector spaces over prime fields of sufficiently large characteristic and Tao and Ziegler~\cite{TaoZiegler} for all finite vector spaces. In particular, in vector spaces in the case of high characteristic where $p \geq k$, Theorem~\ref{GTU3thm} of Green and Tao generalizes to the following result.

\begin{theorem}[Bergelson, Tao, Ziegler~\cite{BTZ}] Suppose that $f \colon G \to \mathbb{D}$ satisfies $\|f\|_{\mathsf{U}^k} \geq c_0$ and assume $p \geq k$. Then there exists a degree $k-1$ polynomial $q \colon G \to \mathbb{F}_p$ such that $\Big|\ex_{x} f(x) \exp\Big(\frac{2 \pi i q(x)}{p}\Big)\Big| \geq \Omega_{c_0}(1)$.\end{theorem}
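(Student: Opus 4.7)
The plan is induction on $k$. The base case $k=2$ is classical: $\|f\|_{\mathsf{U}^2}^4 = \sum_\xi |\hat f(\xi)|^4$, so $\|f\|_{\mathsf{U}^2} \geq c_0$ forces some linear phase to correlate with $f$ with magnitude at least $c_0^2$. For the inductive step, expand
\[\|f\|_{\mathsf{U}^k}^{2^k} = \exx_a \|\mder_a f\|_{\mathsf{U}^{k-1}}^{2^{k-1}}.\]
Since $|\mder_a f| \leq 1$, Markov's inequality yields $\|\mder_a f\|_{\mathsf{U}^{k-1}} \geq \Omega(c_0)$ for a $\Omega_{c_0}(1)$-fraction of $a \in G$, and the inductive hypothesis then produces, for each such $a$, a polynomial $q_a \colon G \to \mathbb{F}_p$ of degree at most $k-2$ satisfying $\big|\ex_x \mder_a f(x) \exp(2\pi i q_a(x)/p)\big| \geq \Omega_{c_0}(1)$.

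The crux is to select the family $(q_a)_a$ coherently, so that it resembles the family of derivatives $(\mder_a Q)_a$ of a single polynomial $Q$ of degree $k-1$. To this end, one applies Cauchy--Schwarz several times to higher moments of the correlation $\ex_x \mder_a f(x) \exp(2\pi i q_a(x)/p)$, in order to deduce that for many additive quadruples $(a,b,c,d)$ with $a+b=c+d$ the polynomial combination $q_a + q_b - q_c - q_d$ correlates with a lower-complexity phase, hence (by the inductive inverse theorem yet again) has degree strictly less than $k-2$. For $k=3$ this is exactly the statement that $a \mapsto q_a$ respects many additive quadruples in the sense of Theorem~\ref{approxHommInvIntro}, which one may therefore apply directly to obtain an affine $\Phi$ agreeing with $a \mapsto q_a$ on many points. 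For general $k$, one needs the analogous higher-order structure theorem for approximate polynomial maps, which is precisely the subject developed in the present paper.

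The hardest step is the integration: once the $q_a$ are sufficiently coherent, one must actually extract a single polynomial $Q$ of degree $k-1$ with $q_a = \mder_a Q$ for $\Omega_{c_0}(|G|)$ values of $a$. Given such a $Q$, the function $f(x)\exp(-2\pi i Q(x)/p)$ has $\mathsf{U}^2$ norm at least $\Omega_{c_0}(1)$, and one last Fourier step absorbs a linear character into $Q$ to close the induction. The hypothesis $p \geq k$ enters precisely at this integration step: recovering $Q$ from its derivatives via symmetrization introduces a factor of $k!$ that must be invertible in $\mathbb{F}_p$. In low characteristic this fails, and one is forced to work with non-classical polynomials as in the Tao--Ziegler refinement; pinning down enough coherence to make the integration work is the main obstacle throughout.
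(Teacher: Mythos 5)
The paper does not prove this statement at all: it is quoted as background from Bergelson--Tao--Ziegler~\cite{BTZ}, whose argument is ergodic-theoretic (characteristic factors for $\mathbb{F}_p^{\omega}$-actions plus a correspondence principle), so what you have written is not a reconstruction of the cited proof but a sketch of the Gowers-style inductive strategy. As a proof it has genuine gaps, and they sit exactly where all the difficulty of the theorem lives. First, the coherence step is asserted rather than carried out: repeated Cauchy--Schwarz gives, for many quadruples $a+b=c+d$, a correlation statement about the product of the four derivative correlations, and converting this into ``$q_a+q_b-q_c-q_d$ has degree $<k-2$'' requires equidistribution/rank arguments (correlation of a phase with a lower-complexity object does not by itself lower its degree), together with a structure theorem for the map $a\mapsto q_a$ at general order. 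For $k=3$ this is indeed Theorem~\ref{approxHommInvIntro} plus the Green--Tao symmetry argument~\cite{GreenTaoU3}, but for general $k$ the needed statement is not supplied by the inductive hypothesis and is not ``precisely the subject developed in the present paper'': this paper treats only the quadratic case and uses inverse-type theorems as inputs; the general-order analogues are the hard content of~\cite{MannersApprox},~\cite{FMulti},~\cite{GTZ}, comparable in depth to the theorem you are trying to prove. Citing them as the missing lemma makes the argument circular in spirit, since quantitatively they are proved alongside or after the inverse theorem itself.

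Second, the integration step is likewise only named, not performed: you must symmetrize the top-degree data, show the symmetrized form is (approximately) the $(k-1)$-st derivative of a single $Q$, and control the error terms; the remark about $k!$ and $p\geq k$ is correct but is a description of where the difficulty lies, not a resolution of it. Also, your closing claim that $f(x)\exp(-2\pi i Q(x)/p)$ then has large $\mathsf{U}^2$ norm is only right for $k=3$; for general $k$ one gets largeness of a lower-order uniformity norm ($\mathsf{U}^{k-1}$ after removing $Q$) and must re-apply the induction, which needs to be set up so that the degrees add correctly. In short, the outline follows a known and legitimate route (the quantitative/combinatorial one, different from the route of~\cite{BTZ}), but the two steps you flag as ``the crux'' and ``the hardest step'' are precisely the theorem's content, and leaving them as flagged obstacles means the proposal does not constitute a proof.
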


Similarly to Theorem~\ref{GTU3thm}, we may think of this theorem as a structure theorem for approximate polynomial forms.\\

Returning now to Theorem~\ref{approxHommInvIntro}, we remark that there are satisfactory higher order generalizations of approximate affine homomorphisms, namely approximate polynomials and their multilinear variant, Freiman multihomorphisms, whose definitions we now recall.\\
\indent Let $G$ and $H$ be two abelian groups, let $A \subseteq G$ be a subset and let $F \colon A \to H$ be a map. Similarly to $\mder_a$, let us write $\Delta_a F$ for the map from $A \cap A - a$ to $H$ given by $x \mapsto F(x + a) - F(x)$. We say that $F$ is an \emph{$\varepsilon$-approximate polynomial of degree at most $d$} if $\Delta_{a_1}\dots\Delta_{a_{d+1}} F(x) = 0$ holds for at least $\varepsilon |G|^{d+2}$ choices of $(d+2)$-tuples $(a_1, \dots, a_{d+1}, x) \in G^{d+2}$ (and all $2^{k+1}$ resulting arguments of $F$ lie in its domain).\\
\indent On the other hand, a \emph{Freiman multihomomorphism of order $d$} is a map $\Phi \colon A \to H$, where now $A \subseteq G^d$, such that $\Phi$ is a Freiman homomorphism in each principal direction, namely for each direction $i \in [d]$, and each element $(x_1, \dots, x_{i-1}, x_{i+1}, \dots, x_d)$ of $G_1\tdt G_{i-1}\times G_{i+1}\tdt G_d$, the map that sends each $y_i$ such that $(x_1, \dots,$ $x_{i-1},$ $y_i,$ $x_{i+1}, \dots,$ $x_d) \in A$ to $\phi(x_1, \dots,$ $x_{i-1},$ $y_i,$ $x_{i+1}, \dots,$ $x_d)$ respects all additive quadruples.\\
\indent Manners~\cite{MannersApprox} proved a structure theorem for approximate polynomials over cyclic groups and Gowers and the author~\cite{FMulti} proved a structure theorem for Freiman multihomomorphisms in finite vector spaces. Furthermore, in the same paper, Gowers and the author obtained a structure theorem for approximate polynomials in finite vector spaces in the case of high characteristic.\\
\indent Our goal in this paper is to make a step towards completing the picture and study the quadratic analogue of Theorem~\ref{bsgfreimanintro}. Table~\ref{TableMissingThm} summarizes the discussion of various approximate algebraic structures of linear and higher order.\\

\begin{table}
\centering
\begin{tabular}{ |c| c | c | c |}
\hline
 Degree & Sets & Maps $G \to H$ & Forms $G \to \mathbb{F}_p$ \\ \hline
  Linear &  Theorem 2 & Theorem 3 & Inverse theorem for $\|\cdot\|_{\mathsf{U}^2}$ norm \\ \hline
  Higher &  \textbf{??} &  \begin{tabular}{@{}c@{}c@{}}Structure theorem for \\ approximate polynomials and \\Freiman multihomorphisms\end{tabular}  & Inverse theorem for $\|\cdot\|_{\mathsf{U}^k}$ norm \\\hline
\end{tabular}
\caption{Various approximate algebraic structures.}
\label{TableMissingThm}
\end{table}

Returning to Theorem~\ref{bsgfreimanintro}, we may phrase the assumption on the number of additive quadruples as follows. Let $A$ be a subset of $G$ and let $x, a, b \in G$ be chosen uniformly and independently at random. Then 
\[\mathbb{P}(x + a + b \in A| x + a, x + b, x \in A) = \frac{\mathbb{P}(x + a + b , x + a, x + b, x \in A)}{\mathbb{P}(x + a, x + b, x \in A)} = \frac{Q}{|A|^3},\]
where $Q$ is the number of additive quadruples in $A$. Thus, $A$ has at least $c_0 |A|^3$ additive quadruples if and only the probability of 'completing an additive quadruple' $\mathbb{P}(x + a + b \in A| x + a, x + b, x \in A)$ is at least $c_0$. In other words, $A$ is approximately closed under the relevant operation.\\ 

In order to find a quadratic generalization, it is natural to count the additive cubes. Observe that if $V = \{x \in G \colon q(x) = 0\}$ is a quadratic variety defined by a quadratic map $q \colon G \to \mathbb{F}_p^r$, then $V$ is closed under 'completing additive cubes' due to the following identity
\[\phi(x + a + b + c) = \phi(x + a + b) + \phi(x + a + c) + \phi(x + b + c) - \phi(x + a) - \phi(x + b) - \phi(x + c) + \phi(x).\]

However, this time being approximately closed under completing additive cubes is no longer sufficient on its own to guarantee that $V$ has any quadratic structure at all. Consider the following example. Let $G = U \oplus T$ be a direct sum and let $\pi \colon G \to T$ be the projection onto $T$ associated with this direct sum. Let $S$ be a Sidon subset (a set without non-trivial additive quadruples) of $T$ and set $V = U + S$. Observe that if $x + a + b, x + a , x +b, x\in V$ then we have $\pi(x + a + b) + \pi(x) = \pi(x + a) + \pi(x + b)$, and, since $S$ is Sidon, this means that $\pi(x) \in \{\pi(x + a), \pi(x+b)\}$ so $a \in U$ or $b \in U$. Suppose now that 7 points $x, x+ a, x+ b, x+ c , x+ a + b, x+ a + c, x+ b + c$ all belong to $V$. If $a,b,c \in U$, then $x + a + b + c \in V$. Otherwise, suppose without loss of generality that $a \notin U$. Observation about additive quadruples in $V$ implies that $b,c \in U$, so $x + a + b + c \in x + a + U \subseteq V$, so actually $V$ is closed under completing additive cubes. \\
\indent The set $V$ above has no quadratic structure, and has some, but not significant linear structure. Since our aim is to obtain a purely quadratic structure theorem, we impose further condition on $V$ and assume that it has negligible linear structure. This leads us to the following definition, formulated by Gowers (personal communication), which also forbids the example above.

\begin{defin}\label{approxvardefn}Let $V \subseteq G$. We say that $V$ is a \emph{$(c_0, \delta, \varepsilon)$-approximate quadratic variety} if $|V| = \delta |G|$, $\|\id_V - \delta\|_{\mathsf{U}^2} \leq \varepsilon$ and $\ex_{x,a,b,c} \mder_{a,b,c}\id_V(x) = c_0\delta^7$.\end{defin}

We think of parameters $c_0, \delta$ and $\varepsilon$ as satisfying the relationship $c_0 \gg \delta \gg \varepsilon$. As a further motivation for imposing the $\mathsf{U}^2$ condition, we note the following connection with approximate quadratic polynomials.

\begin{proposition}\label{approximatePolynomialsAndApproximateVars}Let $G$ and $H$ be two finite-dimensional vector spaces over $\mathbb{F}_p$ and let $A \subseteq G$. Let $F \colon A \to H$ satisfy $\Delta_a \Delta_b\Delta_cF(x) = 0$ for at least $c_0 |G|^4$ choices of $(x,a,b,c) \in G^4$ (and all 8 arguments of $F$ belong to $A$) and let $\xi$ be a positive quantity. Let $d \in \mathbb{N}$ and let $\varepsilon > 0$ be given. Provided that $\varepsilon$ and $\dim G$ are sufficiently small and large, respectively, in terms of $c_0, d$ and $\xi$, the following holds.\\
\indent Suppose that $F$ respects at most $\varepsilon |G|^3$ additive quadruples in $A$. Then there exists a cost $w_0 + W$ of a subspace of codimension $O_\varepsilon(1)$ such that, if we choose a subspace $U$ of codimension $d$ uniformly at random, then the probability that the set
\[V = \{x \in A \colon x \in w_0 + W, F(x) \in U\}\]
is a $(c', \delta, \xi)$-approximate quadratic variety for some  $\delta \in [2^{-9}c_0^8 p^{-d}, 2p^{-d}]$ and $c' \in [c_0^82^{-9}, 2]$, is at least $0.99$.
\end{proposition}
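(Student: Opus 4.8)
The plan is to regularise $A$ and $F$ by passing to a suitable coset $w_0+W$ of bounded codimension, so that on $w_0+W$ the map $F$ behaves like a genuine non-degenerate quadratic map $q$; then for almost every codimension-$d$ subspace $U$ the level set $\{x:q(x)\in U\}$ is an exact quadratic variety which meets $A$ the way a pseudorandom set of density $p^{-d}$ would, so that $V$ inherits the three defining properties of an approximate quadratic variety. First I would record the elementary reductions. Counting cube configurations all of whose eight vertices lie in $A$ in two ways gives $c_0|G|^4\le |A|\,|G|^3$, so $|A|\ge c_0|G|$. Since $\{x\in A:F(x)=v\}$ has at least $|\{x\in A:F(x)=v\}|^4/|G|^3$ additive quadruples, all respected by $F$, the quadruple hypothesis forces every fibre of $F$ (indeed every preimage $F^{-1}(\text{affine subspace of bounded codimension})$) to have density $O(\varepsilon^{\Omega(1)})$ in $G$; consequently, for a uniform random $U$ of codimension $d$ and fixed $x$ with $F(x)\ne 0$ one has $\mathbb{P}_U[F(x)\in U]=(1+o(1))p^{-d}$, and likewise for joint events.

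Writing $e_p(t):=\exp(2\pi i t/p)$ and $g_s:=\id_A\,e_p(s\cdot F(\cdot))$ for $s\in H^*$, a Fourier expansion of $\id[\Delta_a\Delta_b F(x)=0]$ shows that the number of additive quadruples respected by $F$ equals $|G|^3\,\ex_{s\in H^*}\|g_s\|_{\mathsf{U}^2}^4$, so the quadruple hypothesis reads $\ex_s\|g_s\|_{\mathsf{U}^2}^4\le\varepsilon$. Next I would choose $w_0+W$: using this bound together with an absorption of the large Fourier coefficients of $A$ into $W^\perp$ and a density-increment argument, there are a subspace $W$ of codimension bounded in terms of the parameters and a $w_0$ such that, putting $A':=(A-w_0)\cap W$, $F'(\cdot):=F(w_0+\cdot)$ and $\alpha':=|A'|/|W|$, one has: (i) $\alpha'\in[2^{-9}c_0^8,1]$ and $A'$ contains at least $\Omega(c_0^8)|W|^4$ $F'$-cube configurations; (ii) $\id_{A'}$ is $\mathsf{U}^2$-pseudorandom on $W$ and, crucially, $\|\id_{A'}\|_{\mathsf{U}^3(W)}^8\le(1+o(1))(\alpha')^7$; (iii) $\ex_{s\in H^*}\|\id_{A'}\,e_p(s\cdot F'(\cdot))\|_{\mathsf{U}^2(W)}^4$ is negligible and $F'$ has no non-trivial \emph{linear part} (no $s\ne0$ with $s\cdot F'$ close to affine on $A'$) — combined with the structure theorem for approximate polynomials in high characteristic discussed above, this yields a genuine quadratic map $q:W\to H$ agreeing with $F'$ on all but a $o(1)$-fraction of $A'$ and with every non-zero $s\cdot q$ of rank $\gtrsim\log_p(1/\varepsilon)$, which is large since $\varepsilon$ is small; after a further negligible restriction of $\dim W$ we may assume $q$ has rank growing with $\dim W$.

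Now fix a generic $U$ of codimension $d$, write $U=\ker\pi$ with $\pi:H\twoheadrightarrow\mathbb{F}_p^d$, and consider the exact quadratic variety $V':=\{x\in W:(\pi\circ q)(x)=0\}$, cut out by the $d$ quadratic forms $\pi\circ q$, all of whose non-zero linear combinations still have large rank. Standard equidistribution estimates for high-rank quadratic forms (Gauss sums) give $|V'|=(1+o(1))p^{-d}|W|$, $\|\id_{V'}-p^{-d}\|_{\mathsf{U}^2(W)}$ tiny, and more generally that $V'$ equidistributes relative to $A'$; since $V$ differs from $A'\cap V'$ only on the negligible set where $F'\ne q$, this already gives $|V|=(1+o(1))\alpha' p^{-d}|W|$, hence $\delta:=|V|/|W|\in[2^{-9}c_0^8 p^{-d},2p^{-d}]$. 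For the cube count: each $F'$-cube configuration in $A'$ satisfying the cube identity contributes to $V$ whenever the seven values $F'(x),F'(x+a),\dots,F'(x+b+c)$ lie in $U$ — the eighth then lies in $U$ automatically, $U$ being a subspace — which happens with probability $\ge(1-o(1))p^{-7d}$, so by (i), $\ex_{x,a,b,c}\mder_{a,b,c}\id_V(x)\ge(1-o(1))\,2^{-9}c_0^8 p^{-7d}$; conversely, bounding $\mder_{a,b,c}\id_V(x)$ by the indicator that those seven values lie in $U$, using that for all but a $o(p^{-7d})$-fraction of configurations they span the full seven dimensions (non-degeneracy of $q$) and invoking (ii), one gets $\ex_{x,a,b,c}\mder_{a,b,c}\id_V(x)\le(2-o(1))p^{-7d}\le 2\delta^7$. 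This pins down $c'\in[2^{-9}c_0^8,2]$. Finally, expanding $\widehat{\id_V}(r)=p^{-d}\big(\widehat{\id_{A'}}(r)+\sum_{s\in U^\perp\setminus 0}\widehat{g'_s}(r)\big)$ with $g'_s:=\id_{A'}e_p(s\cdot F'(\cdot))$, the first term is tiny by (ii); for the sum, (iii) and Markov show all but a $o(1)$-fraction of $s\ne0$ have $\|g'_s\|_{\mathsf{U}^2(W)}$ tiny, and for a random $d$-dimensional $U^\perp$ the chance of meeting an exceptional $s$ is $\le 1/100$, so for at least a $0.99$-fraction of $U$ every $s\in U^\perp\setminus 0$ has $|\widehat{g'_s}(r)|\le\|g'_s\|_{\mathsf{U}^2(W)}$ tiny for all $r$; combined with $\sum_r|\widehat{g'_s}(r)|^2=\alpha'$ this gives $\sum_{r\ne0}|\widehat{\id_V}(r)|^4\le\xi^4$, i.e. $\|\id_V-\delta\|_{\mathsf{U}^2(W)}\le\xi$.

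It remains to note that each of the three estimates above holds for all but a small fraction of $U$ — for the size and cube count by second- and fourth-moment computations over $U$ (using the non-concentration of $F$ from the first step), for the $\mathsf{U}^2$-bound by the Markov argument just given — so a union bound over these $O(1)$ bad events leaves at least a $0.99$-fraction of subspaces $U$ for which $V\subseteq w_0+W$ is a $(c',\delta,\xi)$-approximate quadratic variety. I expect the genuinely delicate point to be the regularisation step: one must arrange not merely that $\id_{A'}$ is pseudorandom but that $W$ absorbs all low-complexity structure of $A$ — a subspace contained in $A$ would push $\|\id_{A'}\|_{\mathsf{U}^3(W)}^8$ above $(1+o(1))(\alpha')^7$ and so break the upper bound on $c'$ unless $W$ is chosen inside it — and one must first upgrade $F$ to a $(1-o(1))$-approximate quadratic on $w_0+W$ before applying the structure theorem, so that replacing $F'$ by $q$ changes the cube count only by a factor $1+o(1)$. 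The other load-bearing ingredient is the quantitative equidistribution of high-rank quadratic varieties, where the assumptions $p\ge3$ and $\dim G$ large enter: "few respected quadruples" forces $q$ and its non-zero linear images to have rank growing with $1/\varepsilon$, hence with $\dim G$.
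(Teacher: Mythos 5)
Your overall probabilistic frame (pass to a regular coset $w_0+W$, intersect with a random codimension-$d$ subspace $U$, and prove concentration of the density, the quadruple count and the cube count by moment computations, using the few-respected-quadruples hypothesis to decorrelate the events $F(x)\in U$) is the right one and matches the paper, as does your observation that for a configuration with $\Delta_a\Delta_b\Delta_cF(x)=0$ seven values in $U$ force the eighth. But the structural core of your argument has a genuine gap. You require the regularisation step to produce a single genuine quadratic $q$ agreeing with $F'$ on all but a $o(1)$-fraction of $A'$, extracted from the structure theorem for approximate polynomials. That theorem only gives agreement on an $\Omega_{c_0}(1)$-fraction of points (with far worse quantitative dependence), and the $(1-o(1))$-agreement you need — which you acknowledge needing, but give no mechanism for — is false in general: take $A=A_1\sqcup A_2$ with both halves dense and $F=q_1$ on $A_1$, $F=q_2$ on $A_2$, where $q_1,q_2$ and all nonzero combinations $\lambda\cdot q_1-\mu\cdot q_2$ are of high rank. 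Each half alone supplies $\gg|G|^4$ cube configurations (the cube identity holds identically for a quadratic), while every pattern of respected additive quadruples is a high-rank quadratic condition and so contributes at most $\varepsilon|G|^3$ once $\dim G$ is large; yet no single quadratic agrees with $F$ on $1-o(1)$ of $A'$ for any bounded-codimension coset. The same objection hits your requirement \textbf{(ii)}: the tools you invoke ($\mathsf{U}^2$ regularity, absorption of large Fourier coefficients, density increment) give no control whatsoever of $\|\id_{A'}\|_{\mathsf{U}^3(W)}$, and your upper bound on the cube count, hence $c'\le2$, rests entirely on the unsupported bound $\|\id_{A'}\|_{\mathsf{U}^3(W)}^8\le(1+o(1))(\alpha')^7$.

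The repair — and it is what the paper does — is to note that no polynomial approximation of $F$ and no $\mathsf{U}^3$ control of $A$ are needed at all. Green's $\mathsf{U}^2$ regularity lemma is used only to find a coset on which $\id_A$ is Fourier-uniform and on which $F$ still respects $\gtrsim c_0^8|K|^4$ cubes; after that, everything is a second-moment computation over the random $U$: the hypothesis that $F$ respects at most $\varepsilon|G|^3$ additive quadruples implies (Lemma~\ref{dependencetoadditivequads}) that the relevant tuples of values of $F$ are linearly independent outside a negligible set, so by Lemma~\ref{probrcestimate} the quantities $|V|$, the additive-quadruple count of $V$, and the completed-cube count concentrate at $p^{-d}|A|$, $p^{-4d}\cdot(\text{quadruple count of }A)\approx\delta^4|G|^3$, and $p^{-7d}|\mathcal{G}|$ respectively. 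The $\mathsf{U}^2$ bound on $\id_V-\delta$ then falls out of the quadruple count of $V$ (your alternative route via the expansion $\id(F(x)\in U)=p^{-d}\sum_{s\in U^{\perp}}e_p(s\cdot F(x))$ and Markov over $s$ is workable, but unnecessary), and the upper bound $c'\le2$ comes from bounding cubes in $V$ by the seven-point configurations, which are controlled by the just-established $\mathsf{U}^2$ uniformity of $V$ itself. As written, your proposal cannot be completed along the route you describe; its salvageable parts are exactly the moment computations you mention in passing.
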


We remark that the assumption of $F$ respecting very few additive quadruples is quite natural, as otherwise it becomes an approximate homomorphism instead, which is much simpler and to which we may apply Theorem~\ref{approxHommInvIntro}.\\

Gowers also conjectured that an approximate quadratic variety necessarily has a large intersection with an exact quadratic variety of comparable size. Notice the resemblance to Theorem~\ref{bsgfreimanintro}.

\begin{conjecture}\label{mainconj}Fix a prime $p \geq 3$. Let $V \subseteq G$ be a $(c_0, \delta, \varepsilon)$-approximate quadratic variety. Suppose that $\varepsilon$ is sufficiently small in terms of $\delta$ and $c_0$. Then there exists a quadratic variety $Q$ such that $|Q| \leq O_{c_0}(|V|)$ and $|Q \cap V| \geq  \Omega_{c_0}(|V|)$.\end{conjecture}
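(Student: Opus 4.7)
My strategy would follow the blueprint of the linear Theorem~\ref{bsgfreimanintro}, with the Green-Tao $\mathsf{U}^3$-inverse theorem (Theorem~\ref{GTU3thm}) playing the role of Ruzsa-Freiman and a higher-order arithmetic-regularity argument replacing the Balog-Szemer\'edi-Gowers step. The starting observation is that
\[\|\id_V\|_{\mathsf{U}^3}^8 = \ex_{x,a,b,c}\mder_{a,b,c}\id_V(x),\]
so the additive-cube hypothesis is exactly $\|\id_V\|_{\mathsf{U}^3}^8 = c_0\delta^7$. Since $\|\delta\|_{\mathsf{U}^3} = \delta$ and we are in the regime $c_0 \gg \delta$, the triangle inequality for the $\mathsf{U}^3$ norm gives $\|\id_V - \delta\|_{\mathsf{U}^3} \gtrsim (c_0\delta^7)^{1/8}$. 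The hypothesis $\|\id_V - \delta\|_{\mathsf{U}^2} \leq \varepsilon$ plays a crucial secondary role: it guarantees that any $\mathsf{U}^3$-correlation extracted below is explained by a genuinely quadratic, rather than affine, phase.

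I would then apply Theorem~\ref{GTU3thm} to the $1$-bounded function $\id_V - \delta$ to obtain a quadratic polynomial $q\colon G \to \mathbb{F}_p$ such that $|\ex_x(\id_V(x) - \delta)\exp(2\pi i q(x)/p)| \geq \eta$ with $\eta = \eta(c_0,\delta)$; the $\mathsf{U}^2$ control lets me reduce to the case where $q$ has no affine part correlating with $\id_V - \delta$. This correlation forces the densities of $V$ in the fibres $q^{-1}(j)$ to vary with $j$, so on some fibre $L = q^{-1}(j_0)$ -- itself a quadratic variety -- one has $|V \cap L|/|L| \geq (1 + c'(c_0,\delta))\delta$, a density-increment for $V$ inside a quadratic variety, analogous to the Fourier density-increment that underlies Gowers's proof of Szemer\'edi's theorem.

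From here there are two natural continuations. The first is to iterate the density increment inside $L$, producing a nested chain of quadratic varieties in which $V$'s relative density grows geometrically, terminating at a quadratic variety $Q$ with $|V \cap Q|/|Q| = \Omega_{c_0}(1)$ and, ideally, codimension $O_{c_0}(\log(1/\delta))$ so that $|Q| = O_{c_0}(|V|)$. The second is to invoke a Green-Tao arithmetic-regularity lemma for the $\mathsf{U}^3$ norm upfront, decomposing $\id_V = f_{\mathrm{str}} + f_{\mathrm{sml}} + f_{\mathrm{unif}}$ with $f_{\mathrm{str}}$ measurable with respect to a factor generated by $O_{c_0}(1)$ quadratic forms; the heaviest atom of this factor is then the desired quadratic variety.

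The main obstacle in either route is the \emph{descent} step: verifying that if $V \subseteq G$ is a $(c_0,\delta,\varepsilon)$-approximate quadratic variety, then $V \cap L$ is again an approximate quadratic variety inside $L$, with updated but comparable parameters. This transfer demands quadratic equidistribution results of Green-Tao-Ziegler type so that the $\mathsf{U}^2$ and $\mathsf{U}^3$ norms together with the additive-cube count pass cleanly to a (suitably high-rank) quadratic subvariety. A further delicate point, which I expect is the technically most demanding part, is preventing the Sidon-type example described just before Definition~\ref{approxvardefn} from re-emerging inside $L$: the $\mathsf{U}^2$ hypothesis is designed to rule it out in $G$, and retaining this control after each restriction -- with only polynomial losses absorbed into the $c_0$ dependence -- calls for careful bookkeeping that tracks the interaction between the extracted quadratic form and the ambient variety $L$.
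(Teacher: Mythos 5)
There is a genuine gap, and it is precisely the one the paper devotes the subsection ``Regularity lemmas are insufficient'' to: your route is the $\mathsf{U}^3$-inverse/regularity density-increment argument, and the quantitative trade-off between increment size and codimension cost makes it unable to deliver a quadratic variety $Q$ of size \emph{comparable} to $V$, which is what Conjecture~\ref{mainconj} demands on both sides ($|Q| \leq O_{c_0}(|V|)$ \emph{and} $|Q \cap V| \geq \Omega_{c_0}(|V|)$). Concretely, the balanced function $\id_V - \delta$ has $\mathsf{U}^3$ norm only about $c_0^{1/8}\delta^{7/8}$, so the correlation you extract from Theorem~\ref{GTU3thm} (or from the structured part of a Green--Tao regularity decomposition) is not $\Omega_{c_0}(1)$ but of order $\delta$; the paper's own idealized computation shows the best increment per step is about $p^{-1}\sqrt[6]{c_0}\,\delta$ on a codimension-one quadratic section. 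Each step multiplies the relative density by $(1 + p^{-1}\sqrt[6]{c_0})$ but divides the ambient variety's density by $p$, so reaching relative density $\Omega_{c_0}(1)$ takes $s \approx K\log_p \delta^{-1}$ steps with $K$ blowing up as $c_0 \to 0$, and the terminal variety has density $p^{-s} \leq \delta^K \ll \delta$. The same defect kills the ``heaviest atom'' variant: the number of quadratic forms produced by the regularity lemma is governed by the regularity parameter, not tied to $\log_p\delta^{-1}$, so an atom containing a large share of $V$ has no reason to have size $\Theta_{c_0}(\delta|G|)$. In addition, the ``descent'' step you flag (that $V \cap L$ is again an approximate quadratic variety with comparable parameters) is not supplied, and it is not a routine equidistribution transfer.

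The paper's actual proof avoids increments entirely. It exploits the $\mathsf{U}^2$ hypothesis to show that for many shifts $a$ the large values of the convolution $\id_{V\cap V-a}\conv\id_{V\cap V-a}$ have strong additive structure (via Balog--Szemer\'edi--Gowers and Bogolyubov--Ruzsa--Sanders), producing a family of subspaces $W_a$ forming an \emph{approximate quasirandom linear system} with the key linearity property along additive quadruples of indices (Theorem~\ref{mainstep1thm}); it then parametrizes this family by a biaffine map using the inverse theorem for Freiman bihomomorphisms (Theorem~\ref{approximatelinearsyssub}); and finally it symmetrizes the resulting bilinear map by the Green--Tao symmetry argument together with the bias--rank theorem for trilinear forms, and identifies the linear part via an approximate-homomorphism argument, yielding a single quadratic variety of codimension $\log_p\delta^{-1} + O_{c_0}(1)$ that meets $V$ in $\Omega_{c_0}(\delta|G|)$ points (Proposition~\ref{finalsteprop}). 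If you want to salvage your outline, the missing idea is some mechanism that locates the quadratic structure at the correct codimension in one shot rather than accruing it one codimension at a time; the paper's subspace-system approach is exactly such a mechanism.
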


Having discussed approximate quadratic varieties and the motivation for the definition, we are now ready to state our main result of this paper, which is the resolution of Conjecture~\ref{mainconj} with reasonable dependencies on $c_0$ and $\delta$. 

\begin{theorem}\label{approxQuadVarThm} There exists an absolute constant $D \geq 1$ such the following holds. Assume that $p \geq 3$. Let $V \subseteq G$ be a $(c_0, \delta, \varepsilon)$-approximate quadratic variety. Suppose that
\[\varepsilon \leq  (2^{-1}\delta)^{\exp\big(\log^D (O_p(c_0^{-1}))\big)}.\]
Then there exists a quadratic variety $Q$ such that $|Q| \leq \exp\Big(\exp(\log^D (O_p(c_0^{-1})))\Big) \cdot |V|$ and $|Q \cap V| \geq  \exp\Big(-\exp(\log^D (O_p(c_0^{-1})))\Big) \cdot |V|$.
\end{theorem}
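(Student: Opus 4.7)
\emph{Proof plan.} The plan is to produce $Q$ as a single atom of a quadratic factor of $G$, constructed by applying a $\mathsf{U}^3$-arithmetic regularity lemma to $\id_V$, where the $\mathsf{U}^2$-smallness is used to ensure the factor is purely quadratic and the cube count is used to force concentration of $V$ on a single atom.

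First I would translate the cube count into a $\mathsf{U}^3$-lower bound on the balanced indicator. Writing $c_0\delta^7 = \ex_{x,a,b,c}\mder_{a,b,c}\id_V(x) = \|\id_V\|_{\mathsf{U}^3}^8$ and expanding $\id_V = (\id_V-\delta)+\delta$ across the $8$ corners of the cube, every mixed term omits at least one corner and is therefore controlled by $\|\id_V-\delta\|_{\mathsf{U}^2}\le\varepsilon$ via the generalized von Neumann inequality. This yields $\|\id_V-\delta\|_{\mathsf{U}^3}^8 \ge c_0\delta^7 - \delta^8 - O(\varepsilon^{\Omega(1)}) = \Omega(c_0\delta^7)$ in the intended regime $c_0\gg\delta\gg\varepsilon$.

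Next I would apply a quadratic arithmetic regularity lemma (obtained by iterating Theorem~\ref{GTU3thm} in the spirit of Green and Tao) to $\id_V - \delta$, producing a factor $\mathcal{B}$ generated by $r$ quadratic polynomials $q_1,\ldots,q_r\colon G\to\mathbb{F}_p$, with $r$ and the remaining $\mathsf{U}^3$-error threshold both governed by iterated-exponential dependencies on $c_0^{-1}$ and $\delta^{-1}$ inherited from Theorem~\ref{GTU3thm}. The hypothesis $\|\id_V - \delta\|_{\mathsf{U}^2} \le \varepsilon$ is used here to guarantee that no nontrivial linear polynomial is needed in $\mathcal{B}$ (any such would create Fourier mass in $\id_V - \delta$ incompatible with the $\mathsf{U}^2$ bound), so that every atom of $\mathcal{B}$ is already a genuine quadratic variety $Q_v = \{x : q(x) = v\}$. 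Writing $\bar f = \ex[\id_V\mid \mathcal{B}] = \sum_v \alpha_v\id_{Q_v}$, the cube lower bound passes to $\bar f$ up to the $\mathsf{U}^3$-error. Because the $8$ corners of any additive cube satisfy $\sum_\omega(-1)^{|\omega|}q(x_\omega)=0$ for every quadratic $q$, the cube count of $\bar f$ is an explicit combinatorial polynomial in the atomic densities $(\alpha_v)$; a H\"older or fourth-moment argument then forces the existence of an atom $Q^\ast$ with $\alpha^\ast = |V \cap Q^\ast|/|Q^\ast| \ge \Omega_{c_0,p}(1)$, which combined with the atom-size bound from the complexity $r$ produces the required quadratic variety.

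I expect the hardest step to be extracting a \emph{single} heavy atom rather than a large collection of moderately dense ones. A large value of $\|\bar f\|_{\mathsf{U}^3}$ only forces higher moments of the density distribution $(\alpha_v)_v$ to be large, and the natural pigeonhole outputs many dense atoms; forcing concentration onto one likely requires iteratively refining $\mathcal{B}$, splitting off additional quadratic polynomials at each stage while simultaneously propagating the $\mathsf{U}^2$-smallness to the next stage so that the refined factor remains purely quadratic. The compounding parameter losses from these repeated applications of the $\mathsf{U}^3$-regularity lemma, each inheriting iterated-exponential bounds from Theorem~\ref{GTU3thm}, are what yield the iterated-exponential dependence on $c_0^{-1}$ in the statement, and controlling the deterioration of the $\mathsf{U}^2$ hypothesis through this iteration is the delicate technical core of the argument.
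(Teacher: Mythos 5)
Your route runs into the obstruction that the paper itself isolates in the subsection ``Regularity lemmas are insufficient,'' and the step you flag as hardest --- extracting a single heavy atom --- is exactly where it breaks, not merely where it is delicate. After projecting onto a purely quadratic factor generated by a high-rank $\beta\colon G\to\mathbb{F}_p^r$, the cube count only tells you that the atomic density function $d(\lambda)=|G|^{-1}p^r|V\cap\{\beta(x,x)=\lambda\}|$ satisfies $\sum_\gamma|\hat d(\gamma)|^8\geq c_0\delta^7$ while $\sum_\gamma|\hat d(\gamma)|^2\leq\delta$; by H\"older this yields one nontrivial Fourier coefficient $|\hat d(\gamma)|\geq c_0^{1/6}\delta$, i.e.\ a density increment of relative size only $p^{-1}c_0^{1/6}$ on a codimension-one quadratic variety. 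It does \emph{not} force an atom with $|V\cap Q^\ast|/|Q^\ast|\geq\Omega_{c_0,p}(1)$ at the complexity the regularity lemma hands you: unless $r$ happens to be within $O_{c_0,p}(1)$ of $\log_p\delta^{-1}$, either every atom is far larger than $|V|$ (so all relative densities are $\leq\delta p^r\ll1$) or far smaller, and nothing in the regularity lemma ties $r$ to $\delta$ in this way. The iterative refinement you propose to repair this needs $s\approx C(c_0,p)\log_p\delta^{-1}$ increment steps to push the relative density up to $\Omega_{c_0}(1)$, and the resulting variety then has codimension $s$ and size at most $\delta^K|G|$ for a large constant $K$, since the defining quadratics are quasirandom. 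That violates both conclusions of the theorem, which demand $|Q|$ and $|Q\cap V|$ comparable to $|V|$ up to factors $\exp(\pm\exp(\log^D(O_p(c_0^{-1}))))$ \emph{uniformly in $\delta$}; the loss here is a power of $\delta$, not a function of $c_0$ alone, and no bookkeeping of the $\mathsf{U}^2$ hypothesis through the iteration can recover it. (Your first step, lower-bounding $\|\id_V-\delta\|_{\mathsf{U}^3}$ by handling the mixed terms with the $\mathsf{U}^2$ bound, is fine; the failure is purely in the atom-extraction stage.)

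For contrast, the paper avoids projections onto a pre-built quadratic factor altogether. It first uses the $\mathsf{U}^2$ hypothesis together with the cube count, a duality identity for $\id_{V\cap V-a}\conv\id_{V\cap V-a}$, and Bogolyubov--Sanders--Schoen--Sisask technology to attach to many $a$ a subspace $W_a$ of size about $\delta|G|$ so that the family $(W_a)$ forms an approximate quasirandom linear system (Theorem~\ref{mainstep1thm}); it then parametrizes this system by a biaffine map via the inverse theorem for Freiman bihomomorphisms (Theorem~\ref{approximatelinearsyssub}), and finally symmetrizes the resulting bilinear map by the Green--Tao symmetry argument and the bias--rank theorem for trilinear forms, recovering the quadratic variety directly from $V$ (Proposition~\ref{finalsteprop}). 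This is why the final bounds depend on $c_0$ (and $p$) only, which is precisely what the regularity approach cannot deliver.
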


\vspace{\baselineskip}

Let us remark that if we only had assumptions $|V| = \delta |G|$ and $\ex_{x,a,b,c} \mder_{a,b,c}\id_V(x) = c_0\delta^7$, the theorem above would imply that $V$ is either closely related to a quadratic variety or that $\|\id_V - \delta\|_{\mathsf{U}^2} \geq \Omega_{c_0, \delta}(1)$, becoming $\Omega_\delta(1)$ when $c_0 \geq \delta$, and proving existence of weak linear structure. A strong linear structure would be obtaining a non-trivial Fourier coefficient in the large spectrum of $\id_V$, namely finding $r\not =0$ such that $|\widehat{\id_V}(r)| \geq \Omega_{c_0}(\delta)$. However, the example based on Sidon sets shows that for some absolute constant $\theta > 0$ we may have $|\widehat{\id_V}(r)| \leq \delta^{1 + \theta}$ for all non-zero $r$, and so one cannot hope for a stronger result in a qualitative sense.\\
\indent Furthermore, the methods of this paper could likely prove the theorem above in the case of low characteristic (when $p = 2$) as well, but that would require the use of non-classical polynomials~\cite{TaoZiegler} in the final stages of the proof and a suitable modification of the statement of the result.\\

Finally, let us also note resemblance to nilspaces, introduced by Antol\'in Camarena and Szegedy~\cite{nilspaceDefn}, whose definition we now recall. A \emph{nilspace} is a set $X$ together with a collection of sets $\mathsf{C}^n(X) \subseteq X^{\{0,1\}^n}$, for each non-negative integer $n$, satisfying the following axioms:
\begin{itemize}
\item[\textbf{(i)}] (Composition) For every morphism $\phi \colon \{0,1\}^m \to \{0,1\}^n$ and every $c \in \mathsf{C}^n(X)$, we have $c \circ \phi \in \mathsf{C}^m(X)$. 
\item[\textbf{(ii)}] (Ergodicity) $\mathsf{C}^1(X) = X^{\{0,1\}}$.
\item[\textbf{(iii)}] (Corner completion) Let $c' \colon \{0,1\}^n \setminus \{(1,\dots, 1)\} \to X$ be such that every restriction of $c'$ to an $(n-1)$-face containing $(0,\dots,0)$ is in $\mathsf{C}^{n-1}(X)$. Then there exists $c \in \mathsf{C}^n(X)$ such that $c(v) = c'(v)$ for all $v \not= (1,\dots, 1)$. 
\end{itemize}
If every $(k+1)$-corner has a unique completion, then we say that $X$ is a \emph{$k$-step nilspace}. (In the theory of nilspaces, one also imposes topology on cubes, but here we are interested primarily in algebraic aspects, so we skip such details.) Such nilspaces occur naturally in the higher order Fourier analysis as they arise in the nilspace approach to the inverse theorems for uniformity norms.\\

The key assumption in Theorem~\ref{approxQuadVarThm} can be expressed as 
\[\mathbb{P}(x + a + b + c \in V| x, x + a, x+b, x+c, x+ a + b, x + a + c , x + b + c \in V) \geq c_0\]
so we may think of a an approximate quadratic variety as a combinatorial counterpart of a 2-step nilspace (though in the case of nilspaces the cube collections are abstract and thus more general).

\subsection{Proof overview}

Let us begin with some motivation. If $V$ is a $(c_0, \delta, \varepsilon)$-approximate quadratic variety, then we expect that $V$ is a $c_0$-dense subset of a quadratic variety $Q = \{x \in G \colon \beta(x,x) = 0\}$, where $\beta \colon G \times G \to \mathbb{F}_p^r$ is a symmetric bilinear map, and we know that $V$ is very $\mathsf{U}^2$-quasirandom. Furthermore, as we think of parameters $c_0, \delta$ and $\varepsilon$ as satisfying the relationship $c_0 \gg \delta \gg \varepsilon$, $Q$ has to have density comparable to $\delta$ and to inherit $\mathsf{U}^2$-quasirandomness from $V$. Since $Q$ is a quadratic variety, this in turn implies that $\beta$ is of very high rank. Our goal now is to identify $\beta$ using $V$. To that end, consider the intersection $V \cap V - a$. This is a subset of $\{x \in G \colon \beta(x + a, x + a) = \beta(x,x) = 0\} = \{x \in G \colon \beta(a,x) = -2^{-1}\beta(a,a), \beta(x,x) = 0\} = V \cap \{x \in G \colon \beta(a,x) = -2^{-1}\beta(a,a)\}$. Thus, we expect $V \cap V - a$ to be $\mathsf{U}^2$-quasirandom subset of the subspace coset $\{x \in G \colon \beta(a,x) = -2^{-1}\beta(a,a)\}$. Given $\mathsf{U}^2$-quasirandomness, the set of large values of the convolution $\id_{V \cap V - a} \conv \id_{V \cap V - a}$ should then be equal to the whole subspace $\{x \in G \colon \beta(a,x) = 0 \}$. This motivates the study of subspaces $W_a$ that arise from convolution of $\id_{V\cap V-a}$ via methods such as Bogolyubov argument. We expect these subspaces $W_a$ to be related to $\{x \in G \colon \beta(a,x) = 0\}$.\\
\indent Hence, we now have a collection of subspaces $W_a$, for $a \in G$, of density comparable to $\delta$. However, this is not an arbitrary family of subspaces. In fact, the algebraic structure of indices plays an important role. Namely, if $a_1, a_2, a_3, a_4$ form an additive quadruple, thus $a_1 + a_2 = a_3 + a_4$, we have that $x \in W_{a_1} \cap W_{a_2} \cap W_{a_3}$ implies $0 = \beta(a_1, x) + \beta(a_2, x) - \beta(a_3, x) = \beta(a_1 + a_2 - a_3, x) = \beta(a_4, x)$ and thus we expect that a relationship along the lines of
\begin{equation}W_{a_1} \cap W_{a_2} \cap W_{a_3} \subseteq W_{a_4}\label{linearsystemProp}\end{equation}
holds. The first major step of the proof of Theorem~\ref{approximatePolynomialsAndApproximateVars} achieves this and it appears as Theorem~\ref{mainstep1thm} later in the paper.

\begin{theorem}[\textbf{Step 1}]\label{step1IntroThm}There exists an absolute constant $D \geq 1$ such that the following holds. Let $V \subseteq G$ be a $(c_0, \delta, \varepsilon)$-approximate quadratic variety and suppose that $\varepsilon \leq \exp\Big(- \log^D(2c_0^{-1})\Big) \delta^{288}$. Then there exist a quantity $c_1$, a set $A \subseteq G$ and a collection of subspaces $W_a \leq G$ indexed by elements $a \in A$ such that
\begin{itemize}
\item[\textbf{(i)}] $\exp(-\log^D (2c_0^{-1})) \leq c_1 \leq 1$,
\item[\textbf{(ii)}] $|A| \geq c_1 |G|$,
\item[\textbf{(iii)}] for each $a \in A$ and $b \in W_a$ we have ${\conv}^{(8)} \id_{V \cap V -a}(b) \geq c_1 \delta^{15}$,
\item[\textbf{(iv)}] $c_1 \leq \frac{|W_a|}{\delta|G|} \leq c_1^{-1}$ holds for all $a \in A$,
\item[\textbf{(v)}] for $r \in [9]$, for all but at most $D \varepsilon \delta^{-32r} |G|^r$ choices of $(a_1, \dots, a_r) \in A^r$ we have
\[\frac{|W_{a_1} \cap W_{a_2} \cap \dots \cap W_{a_r}|}{\delta^r|G|} \leq c_1^{-1},\] 
 and,
\item[\textbf{(vi)}] for at least $c_1 |A|^6$ 6-tuples $(b_1, b_2, b_3, x_2, y_3, z_1) \in A^6$ we have that $b_1 + b_2 - b_3,  x_2 - b_2 + b_3, y_3  + b_1 - b_3, b_1 + b_2 - z_1\in A$ and 
\[c_1 \delta^6 |G| \leq |W_{b_1} \cap W_{b_2} \cap W_{b_3} \cap W_{b_1 + b_2 - b_3} \cap W_{x_2} \cap W_{x_2 - b_2 + b_3} \cap W_{y_3} \cap W_{y_3  + b_1 - b_3} \cap W_{z_1} \cap W_{b_1 + b_2 - z_1}|.\]
\end{itemize}
\end{theorem}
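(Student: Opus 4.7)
The plan begins by recasting the cube hypothesis as a statement about quadratic slices. For $a\in G$, set $U_a := V\cap(V-a)$. Partitioning the eight vertices of a cube with parameters $(a,b,c)$ into four $a$-pairs, each required to lie in $V^2$, identifies an additive cube in $V$ with an additive quadruple $(x, x+b, x+c, x+b+c)$ in $U_a$, giving
\[
\ex_{x,a,b,c}\mder_{a,b,c}\id_V(x) \;=\; \ex_a \|\id_{U_a}\|_{\mathsf{U}^2}^4,
\]
so the hypothesis reads $\ex_a \|\id_{U_a}\|_{\mathsf{U}^2}^4 = c_0\delta^7$. Simultaneously, $\|\id_V - \delta\|_{\mathsf{U}^2}\le\varepsilon$ controls slice sizes ($\ex_a(|U_a|/|G| - \delta^2)^2 = O(\varepsilon^2)$) and, via Plancherel, propagates quasirandomness from $V$ down to $\id_{U_a}$ on average. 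A pigeonhole/averaging step then extracts a set $A\subseteq G$ of density $c_1 \geq \exp(-\log^{O(1)}(c_0^{-1}))$ such that for every $a \in A$: $|U_a| \asymp \delta^2|G|$, $\id_{U_a}$ is $\mathsf{U}^2$-quasirandom at a level polynomial in $\varepsilon$ and $\delta^{-1}$, and the number of additive quadruples in $U_a$ is at least $c_1 \delta^7 |G|^3$, exceeding the quasirandom baseline $\delta^8|G|^3$ by a factor $\gtrsim c_1\delta^{-1}$.

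For each $a\in A$ I would define $W_a$ by a Bogolyubov-type argument on $U_a$. From the Fourier identity
\[
\id_{U_a}^{*8}(b) \;=\; \sum_r \widehat{\id_{U_a}}(r)^8\, e(r\cdot b)
\]
and the additive-quadruple excess, the large spectrum $\Lambda_a := \{r\neq 0 : |\widehat{\id_{U_a}}(r)| \geq \eta \delta^2\}$ is nontrivial but of size at most polynomial in $c_1^{-1}$ by Parseval; set $W_a := \Lambda_a^\perp$. Bounding the tail $\sum_{r\notin \Lambda_a\cup\{0\}}|\widehat{\id_{U_a}}(r)|^8$ then yields the pointwise lower bound $\id_{U_a}^{*8}(b)\geq c_1\delta^{15}$ on $W_a$, proving (iii); items (i), (ii) and (iv) follow from the quantitative control on $\eta$ together with the density of $A$ and the codimension of $W_a$.

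For (v), note that $W_{a_1}\cap\cdots\cap W_{a_r} = (\Lambda_{a_1}+\cdots+\Lambda_{a_r})^\perp$; the claim reduces to showing that for all but $D\varepsilon \delta^{-32r}|G|^r$ tuples, the combined span has nearly maximal dimension. I would prove this by a Cauchy--Schwarz-over-tuples estimate combined with $\|\id_V - \delta\|_{\mathsf{U}^2}\leq\varepsilon$: each deviation from the generic dimension is paid for by a factor $\delta^{-O(1)}$ in Fourier weight, and the accumulated losses give the exponent $32r$ in the statement.

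The hard part is (vi), which reflects a specific algebraic coincidence among the ten indices. Heuristically, thinking of $W_a$ as $\{x : \beta(a,x) = 0\}$ for a hidden symmetric bilinear form $\beta$, each of the four derived elements $b_1+b_2-b_3$, $x_2-b_2+b_3$, $y_3+b_1-b_3$ and $b_1+b_2-z_1$ is an $\mathbb{F}_p$-affine combination of three of the six free indices $b_1,b_2,b_3,x_2,y_3,z_1$, so the ten hyperplane constraints collapse to six independent ones, predicting joint intersection size $\delta^6|G|$. To realise this combinatorially I would start from the cube count, unfold into quadratic slices as in the first paragraph, and iterate three Cauchy--Schwarz steps, each duplicating one of $b_1, b_2, b_3$ with a new variable $z_1, x_2, y_3$ respectively; each step forces one of the four derived elements to lie in $A$ and produces a lower bound on $\id_{U_\bullet}^{*8}$ at the appropriate index, which via (iii) transfers to the joint support of the $W_\bullet$'s. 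The principal difficulty is keeping the three successive Cauchy--Schwarz losses from swamping the main term $c_1 \delta^6|G|$, and converting pointwise lower bounds on convolutions into a genuine lower bound on the size of the joint intersection of the ten subspaces; this bookkeeping is what dictates the exponent $288$ in the hypothesis on $\varepsilon$.
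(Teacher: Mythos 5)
Your reduction of the cube count to $\ex_a\|\id_{V\cap(V-a)}\|_{\mathsf{U}^2}^4=c_0\delta^7$ and the averaging step to find a dense set $A$ of good slices match the paper's starting point, but the construction of $W_a$ has a genuine gap. You define $W_a=\Lambda_a^{\perp}$ for the large spectrum $\Lambda_a=\{r\neq0:|\widehat{\id_{U_a}}(r)|\geq\eta\delta^2\}$ and claim $|\Lambda_a|$ is polynomial in $c_1^{-1}$ ``by Parseval''. Parseval only gives $|\Lambda_a|\leq\eta^{-2}\delta^{-2}$, i.e.\ polynomial in $\delta^{-1}$ as well (and to make $\Lambda_a$ capture the additive-energy excess $c_1\delta^7$ one is forced to take $\eta\lesssim\sqrt{c_1\delta}$, giving $|\Lambda_a|\lesssim c_1^{-1}\delta^{-3}$). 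Hence $\Lambda_a^{\perp}$ has codimension of order $\mathrm{poly}(\delta^{-1})$, so $|W_a|$ is about $p^{-\mathrm{poly}(1/\delta)}|G|$, which violates property \textbf{(iv)} (one needs $|W_a|\asymp\delta|G|$, i.e.\ codimension $\approx\log_p\delta^{-1}$, with losses depending only on $c_0$). For the same reason the tail estimate cannot reach the threshold in \textbf{(iii)}: restricting the $8$-th Fourier moment to $\Lambda_a$ and applying Cauchy--Schwarz yields at best $c_1^{O(1)}\delta^{17}$, two powers of $\delta$ short of $c_1\delta^{15}$. This is exactly why the paper does not take the spectral route: it passes through popular differences, shows (using the regularity estimates and a $4$-cycle count in a bipartite graph) that they contain $\Omega_{c_0}(\delta^3|G|^3)$ additive quadruples, and then invokes Balog--Szemer\'edi--Gowers together with the Sanders/Schoen--Sisask robust Bogolyubov--Ruzsa theorem (Theorem~\ref{addquadsFreiman}) to get subspaces of size comparable to $\delta|G|$ in which \emph{every} point has $\gtrsim\delta^{15}|G|^7$ representations as an alternating $8$-fold sum; that representation count, not a spectral tail bound, is the source of the exponent $15$.

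The sketch of \textbf{(vi)} also misses the paper's key idea. Three Cauchy--Schwarz applications to the cube count give upper-bound-type manipulations, but the required statement is a \emph{lower} bound on the correlation of the convolutions at the four-term-related indices, and the paper stresses that this cannot be extracted directly (the relevant pattern is not controlled by $\mathsf{U}^2$). The proof instead uses the duality identity $\id_{V\cap V-a}\conv\id_{V\cap V-a}(b)=\id_{V\cap V-b}\conv\id_{V\cap V-b}(a)$ to swap the roles of the index and the argument, then for many $w$ places the popular-difference set $\tilde P_w$ as a dense subset of a genuine subspace $S_w$ (Bogolyubov again) so that the ten-point configuration can be counted inside $S_w$ via Lemma~\ref{multipleAddQuadsBound}; finally it converts this back into a lower bound on the intersection of the ten subspaces by restricting $w$ to the approximating unions $\tilde U_\bullet$ (Claim~\ref{tildeupassclaim}) and dividing out by the regularity upper bound $\id_{V\cap V-a}\conv\id_{V\cap V-a}(w)\leq2\delta^3$. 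Without the duality step and the subspace structure for the $\tilde P_w$, your bookkeeping cannot produce the $c_1\delta^6|G|$ lower bound, however carefully the Cauchy--Schwarz losses are tracked.
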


\vspace{\baselineskip}

We remark that ${\conv}^{(8)} \id_{V \cap V -a}$ stands for an iterated convolution of the function $\id_{V \cap V -a}$ with itself, see~\eqref{iteratedConvDefn} for a precise definition. Notice that item \textbf{(vi)} is considerably stronger than what we mentioned in the motivation discussion, where an approximate version of $W_{b_1} \cap W_{b_2} \cap W_{b_3} \subseteq W_{b_1 + b_2 - b_3}$ would be $|W_{b_1} \cap W_{b_2} \cap W_{b_3} \cap W_{b_1 + b_2 - b_3}| \geq c_1 \delta^3 |G|$. We shall return to this discrepancy in the discussion of the second major step of the proof.\\
\indent We shall refer to a collection of subspaces in satisfying all properties but \textbf{(iii)} in conclusion of the theorem above as an \emph{approximate quasirandom linear systems of subspaces}. To explain the terminology, linearity stands for property~\eqref{linearsystemProp} for additive quadruples $(a_1, a_2, a_3, a_4)$, quasirandomness suggests that intersections of a vast majority subspaces behave as given by property \textbf{(v)} in the conclusion of the theorem, and all properties are of an approximate form rather than exact.\\

The second major step of the proof is to characterize approximate quasirandom linear systems of subspaces. Recall that we expect $W_a$ to be related to $\{x \in G \colon \beta(a,x) = 0\}$. With this in mind, we expect that $(W_a)_{a \in A}$ comes from some bilinear map. The second step is to obtain such a result, which appears as Theorem~\ref{approximatelinearsyssub}. The assumptions of the theorem are essentially the same as the conclusion of Theorem~\ref{step1IntroThm}.

\begin{theorem}[\textbf{Step 2}]\label{step2IntroThm}There exists an absolute constant $D \geq 1$ such that the following holds. Let $c > 0$ and let $d$ be a positive integer. Let $A \subseteq G$ be a set of size $|A| \geq c|G|$ and let $W_a \leq G$ be a subspace of codimension $d$ for each $a \in A$. Suppose that 
\[|W_{a_1} \cap W_{a_2} \cap \dots \cap W_{a_r}| \leq Kp^{-rd}|G|\]
holds for all but at most $\eta |G|^r$ $r$-tuples $(a_1, a_2, \dots, a_r) \in A^r$ for each $r \in [9]$. Assume furthermore that for at least $c |G|^6$ triples $(a, b_1, b_2, b_3, x_2, x_3, y_1, y_3, z_1, z_2) \in A^{10}$ we have 
\begin{itemize}
\item 
\[a = b_1 + b_2 - b_3,\,\, x_3 = x_2 - b_2 + b_3, \,\, y_1 = y_3 + b_1 - b_3,\,\, z_2 = b_1 + b_2 - z_1,\]
and
\item the subspaces
\[W_a \cap W_{b_1} \cap W_{b_2} \cap W_{b_3} \cap W_{x_2} \cap W_{x_3} \cap W_{y_1} \cap W_{y_3} \cap W_{z_1} \cap W_{z_2}\]
has size at least $K^{-1} p^{-6d}|G|$.
\end{itemize}
Then, provided $\eta \leq 2^{-31}c^3$, there exist parameters $c' \geq \exp\Big(-\exp\Big((\log (2c^{-1}) + \log_p K)^{D}\Big)\Big)$ and $r \leq \exp\Big((\log (2c^{-1}) + \log_p K)^D\Big)$, set $A' \subseteq A$ and a map $\Phi \colon G \times \mathbb{F}_p^d \to G$, affine in the first variable and linear in the second, such that $|A'| \geq c'  |G|$ and for each $a \in A'$ we have $|\on{Im} \Phi(a, \cdot) \cap W_a^\perp| \geq c' p^d$. Moreover, there exists a subspace $\Lambda \leq \mathbb{F}_p^d$ of dimension $r$ such that whenever $\lambda \notin \Lambda$ we have
\[\exx_{x,y} \omega\Big(\Phi(x, \lambda) \cdot y\Big) \leq \Big(\eta {c'}^{-2}\Big)^{1/2r}.\]
\end{theorem}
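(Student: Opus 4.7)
The natural first move is to pass to orthogonal complements. Setting $U_a = W_a^\perp$, each of dimension typically equal to $d$, the quasirandomness hypothesis $|W_{a_1}\cap\cdots\cap W_{a_r}|\leq Kp^{-rd}|G|$ translates into an upper bound $|U_{a_1}+\cdots+U_{a_r}|\leq Kp^{rd}$ for all but $\eta|G|^r$ tuples. The $10$-tuple hypothesis then asserts that, for at least $c|G|^6$ sextuples $(b_1,b_2,b_3,x_2,y_3,z_1)$ with the four derived points in $A$, the ten duals $U_a, U_{b_1}, \ldots, U_{z_2}$ span a subspace of size at most $Kp^{6d}$ rather than the generic $p^{10d}$. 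This codimension-$4d$ deficit is exactly what the four linear relations $a=b_1+b_2-b_3$, $x_3=x_2-b_2+b_3$, $y_1=y_3+b_1-b_3$, $z_2=b_1+b_2-z_1$ would impose if the family $(U_a)_{a\in A}$ were genuinely additive, that is, if $U_{b_1+b_2-b_3}$ were contained in $U_{b_1}+U_{b_2}+U_{b_3}$.

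The heart of the argument is to promote this approximate additivity at the level of subspaces to an approximate additivity at the level of \emph{vectors}. I would try to show that, for a positive proportion of triples $(b_1,b_2,b_3)$ with $a:=b_1+b_2-b_3\in A$ and a positive proportion of $v\in U_a$, there exist $v_i\in U_{b_i}$ with $v=v_1+v_2-v_3$, and moreover that the number of such representations is of the size predicted by the quasirandom intersection bounds. The ten-tuple is tailor-made for a double Cauchy-Schwarz argument, the partners $(x_2,x_3)$, $(y_1,y_3)$, $(z_1,z_2)$ supplying the mirror configurations needed to bound a second moment: expanding character sums over the duals and invoking the upper bound $Kp^{6d}$ on the ten-fold sum yields an inner-product inequality, and combining this with the generic \emph{lower} bound $K^{-1}p^{rd}$ on $r$-fold sums forces the existence of true additive quadruples of vectors.

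With approximate additivity on vectors in hand, I would next sample $d$ independent ``generic'' sections $a\mapsto u_i(a)\in U_a$, so as to reconstruct the full $d$-dimensional structure of $U_a$ on a subset $A'\subseteq A$ of positive density. Each $u_i$ is then an approximate Freiman homomorphism on $A'$, to which Theorem~\ref{bsgfreimanintro} together with the Freiman multihomomorphism structure theorem of~\cite{FMulti} apply to produce affine maps $\phi_i\colon G\to G$ agreeing with $u_i$ on a further positive-density subset. The desired map is then $\Phi(x,\lambda)=\sum_{i=1}^{d}\lambda_i\phi_i(x)$; the quasirandomness conclusion is read off from the assumed quasirandomness of intersections, since if $\Phi(x,\lambda)$ vanished for too many $x$ then $\lambda\cdot(u_1(x),\ldots,u_d(x))=0$ on many $x$, forcing a drop in the generic dimension of $U_{x_1}+\cdots+U_{x_r}$ for large $r$ and hence confining such $\lambda$ to a low-dimensional subspace $\Lambda$ by linearity. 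The hardest step is expected to be the vector-level upgrade in the middle paragraph: propagating the deficit parameter $\eta$, the density $c$, and the quasirandomness constant $K$ through the iterated Cauchy-Schwarz and Freiman-type steps while keeping quantitative control is what ultimately produces the double-exponential bound $\exp\bigl(\exp((\log 2c^{-1}+\log_p K)^D)\bigr)$ appearing in the conclusion.
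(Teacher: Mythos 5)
Your overall skeleton (dualize to $U_a = W_a^\perp$, extract approximate additivity from the $10$-point configuration, feed it into a Freiman-type inverse theorem, and derive the quasirandomness of $\Phi$ by contradiction with the lower bounds on $|U_{a_1}+U_{a_2}|$) matches the paper, but the middle of your argument has a genuine gap. The step ``sample $d$ independent generic sections $a\mapsto u_i(a)\in U_a$; each $u_i$ is then an approximate Freiman homomorphism'' does not work. Even if, for many additive quadruples $(a_1,a_2,a_3,a_4)$, every (or almost every) vector of $U_{a_4}$ had many representations $v_1+v_2-v_3$ with $v_i\in U_{a_i}$, an \emph{independent} random choice of one vector per fibre satisfies $u(a_1)+u(a_2)-u(a_3)-u(a_4)=0$ only with probability on the order of $p^{-2d}$ per quadruple, so a generic section respects essentially no additive quadruples, and no choice of the density parameters rescues this. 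The choices in different fibres must be \emph{coordinated}, and the vector-level statement you propose to prove by double Cauchy--Schwarz is too weak to do that coordination. This is exactly the difficulty the paper's proof is built around: it works at the level of linear isomorphisms $\mathbb{F}_p^d\to U_a$ rather than single vectors. Lemma~\ref{addquadshomsexist} shows that for a good additive quadruple of indices and \emph{any prescribed} isomorphism onto one of the four subspaces there are isomorphisms onto the other three whose alternating sum has rank $O(\log_p K)$; Lemma~\ref{addquadshomsuniq} gives the matching uniqueness. One then anchors everything at a single well-chosen $a\in A$ with a fixed isomorphism $\theta\colon\mathbb{F}_p^d\to U_a$, defines $\phi^1_x$ by randomly selecting a good quadruple through $a$, and uses the full $10$-point configuration (not just the triples $(b_1,b_2,b_3)$) to show the maps are well defined up to bounded rank and respect many additive quadruples in the rank metric; the paper explicitly notes that the simpler condition on triples alone is insufficient for precisely this reason. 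Only then does the bihomomorphism machinery enter, through Corollary~\ref{friemanforlinearhom} (built on Theorem~\ref{FreimanBihomomorphism}) applied to $(a,y)\mapsto\phi^1_a(y)$, which produces the biaffine $\Phi$ in one stroke rather than by assembling $d$ separate affine maps.

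Two smaller points. First, your dualization is stated with the inequality in the wrong direction: $|W_{a_1}\cap\cdots\cap W_{a_r}|\leq Kp^{-rd}|G|$ gives the \emph{lower} bound $|U_{a_1}+\cdots+U_{a_r}|\geq K^{-1}p^{rd}$ (the upper bound $p^{rd}$ is trivial); you use the correct direction later, so this is a slip rather than a structural error, but as written the first reduction says nothing. Second, even granting $d$ approximately additive sections, you would still need them to be linearly independent in each fibre on a common positive-density set in order to get $|\on{Im}\Phi(a,\cdot)\cap W_a^\perp|\geq c'p^d$; arranging that is essentially equivalent to producing an isomorphism-valued approximate homomorphism, i.e.\ the paper's object, so the section-by-section route does not actually simplify the problem.
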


\vspace{\baselineskip}

Finally, once we have obtained a biaffine map $\Phi$ that parametrizes the family of subspaces $(W_a)_{a \in A}$, we have to relate it to the approximate variety $V$ that we start with. This is the content of the final major step of the proof. The proposition below appears as Proposition~\ref{finalsteprop}. 

\begin{proposition}[\textbf{Step 3}]\label{step3IntroThm}There is an absolute constant $D \geq 1$ such that the following holds. Let $c, \delta, \varepsilon > 0$ and $d \in \mathbb{N}$ be such that $c \leq \delta p^{d} \leq c^{-1}$. Suppose that $\varepsilon \leq (2^{-1}c \delta)^D$. Let $V \subseteq G$ be a set of density $\delta$ such that $\|\id_V - \delta\|_{\mathsf{U}^2} \leq \varepsilon$. Suppose that we are also given a subset $A \subseteq G$ of size $|A| \geq c |G|$, a subspace $W_a \leq G$ for each $a \in A$ and a bilinear map $\beta \colon G \times G \to \mathbb{F}_p^d$ such that 
\begin{itemize}
\item[\textbf{(i)}] for each $\lambda \in \mathbb{F}_p^d \setminus \{0\}$ we have $\on{bias} \lambda \cdot \beta \leq \varepsilon$,
\item[\textbf{(ii)}] for each $a \in A$ we have $|W_a \cap \{b \in G \colon \beta(a,b) = 0\}| \geq c p^{-d} |G|$,
\item[\textbf{(iii)}] for each $a \in A$ and $b \in W_a$ we have ${\conv}^{(8)} \id_{V \cap V - a}(b) \geq c \delta^{15}$. 
\end{itemize}
Then there exists a quadratic variety $Q \subseteq G$ of size $|Q| \leq (2c^{-1})^D \delta |G|$ such that $|Q \cap V| \geq \exp\Big(-\log^D (2c^{-1})\Big)\delta |G|$. Moreover, $Q$ is defined as $\{x \in G \colon \gamma(x,x) - \psi(x) = \mu\}$ for a symmetric bilinear map $\gamma \colon G \times G \to \mathbb{F}_p^{\tilde{d}}$, an affine map $\psi \colon G \to \mathbb{F}_p^{\tilde{d}}$ and $\mu \in \mathbb{F}_p^{\tilde{d}}$, where $\on{bias} \lambda \cdot \gamma \leq \varepsilon$ for all $\lambda \not= 0$, for some $d - O(\log_p (2c^{-1})) \leq \tilde{d} \leq d$.
\end{proposition}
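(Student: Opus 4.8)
The plan is to reconstruct the defining data of the quadratic variety from $\beta$, using hypothesis (iii) to anchor $V$ to the family $(W_a)_{a\in A}$, hence to $\beta$. After replacing $\beta$ by an appropriate symmetric form close to it (possible by (i), and using $p\ge 3$), set $q(x)=\gamma(x,x)$ for $\gamma=2^{-1}\beta$, so that $q(x+a)-q(x)-q(a)=\beta(a,x)$ for all $x,a$, and for $a\in A$ write $H_a=\{b:\beta(a,b)=0\}$. Two preliminaries: by the high-rank hypothesis (i), $H_a$ has codimension exactly $d$ for all but a $c\delta$-fraction of $a$, while by (ii) the subspace $W_a$ agrees with $H_a$ up to index at most $c^{-1}$ in each; and since $\|\id_V-\delta\|_{\mathsf{U}^2}\le\varepsilon$, for all but an $\varepsilon^2$-fraction of $a$ the density $\sigma_a$ of $V\cap(V-a)$ satisfies $|\sigma_a-\delta^2|\le\varepsilon$, so $\sigma_a\approx\delta^2$.

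The crux is the following extraction. For $a$ as above let $P_a$ be the distribution on $\mathbb{F}_p^d$ of $\beta(a,x)$ as $x$ runs over $V\cap(V-a)$; up to the factor $\sigma_a$, its Fourier coefficients are exactly those of $\id_{V\cap(V-a)}$ along $H_a^\perp$. Averaging the convolution bound (iii) over $b\in W_a\cap H_a$ (which by (ii) has at least $cp^{-d}|G|$ elements), expanding ${\conv}^{(8)}$ on the Fourier side -- where it becomes a sum of a fixed even power of $|\widehat{\id_{V\cap(V-a)}}(r)|$ against characters -- discarding the characters outside $H_a^\perp$ by a crude bound exploiting the index-$c^{-1}$ slack in (ii), and substituting $\sigma_a\approx\delta^2$ together with $\delta p^d\le c^{-1}$, I obtain $\|P_a^{*4}\|_2^2\ge c^{O(1)}$ for all but an $o(1)$-fraction of $a\in A$. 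In particular each such $P_a$ has an atom of mass $\ge c^{O(1)}$, say at $\nu(a)$; equivalently, for all but an $o(1)$-fraction of $a\in A$ there are at least $c^{O(1)}\delta^2|G|$ pairs with $x,x+a\in V$ and $q(x+a)-q(x)=g(a)$, where $g(a):=\nu(a)+q(a)$. A complementary computation -- rewriting $\ex_{x,a}\id_V(x)\id_V(x+a)\,\omega(\lambda\cdot\beta(a,x))$ as an autocorrelation of $\id_V\cdot\omega(\lambda q)$ and using that $\lambda q$ has high rank for $\lambda\ne0$ -- shows this quantity is $O(\varepsilon^{1/2}\delta)$ for every $\lambda\ne0$, which rules out $\nu$ being constant and pushes it towards an affine, equidistributed map.

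Next I correct $q$ by an affine map. From ``$q(x+a)-q(x)=g(a)$ on many pairs'', a standard $C_4$-counting (Cauchy--Schwarz) argument inside $V$ shows that $g$ respects at least $c^{O(1)}\delta^3|G|^3$ additive quadruples; feeding this into Theorem~\ref{approxHommInvIntro}, with strong (Sanders-type) quantitative Freiman bounds in place of the exponential ones so as to produce a $\log^D$ rather than an $\exp$ loss, yields -- after quotienting $\mathbb{F}_p^d$ by an $O(\log_p c^{-1})$-dimensional subspace to absorb the non-atomic part of the $P_a$ and the index slack in (ii), which is exactly where $\tilde d\le d$ comes from -- an affine map $\psi\colon G\to\mathbb{F}_p^{\tilde d}$ agreeing with the projection of $g$ on a dense set. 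Setting $\tilde q(x)=\gamma(x,x)-\psi(x)$ with $\gamma$ the projected, still high-rank, symmetric form, one has $\tilde q(y)=\tilde q(x)$ precisely when $q(y)-q(x)=\psi(y-x)$; I then argue that this holds for a $c^{O(1)}$-fraction of pairs $x,y\in V$, whence $\sum_\mu\big(|\{x\in V:\tilde q(x)=\mu\}|/|V|\big)^2\ge \exp(-\log^D(2c^{-1}))$ and some level set $Q=\{\tilde q=\mu\}$ meets $V$ in $\ge\exp(-\log^D(2c^{-1}))\,\delta|G|$ points. The size bound $|Q|\le(2c^{-1})^D\delta|G|$ then follows from the high rank of $\gamma$, which forces $|Q|\approx p^{-\tilde d}|G|$, together with $p^{-\tilde d}\le c^{-O(1)}\delta$ (from $p^d\ge c/\delta$ and $p^{d-\tilde d}\le c^{-O(1)}$).

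The main obstacle is the step just described: the atom statement supplies only a \emph{positive-fraction} of good pairs and a \emph{positive-density} set on which $g$ equals the affine map, and the naive inclusion--exclusion needed to intersect ``good pair'' with ``difference lies where $g=\psi$'' fails badly; worse, a black-box application of the approximate-homomorphism theorem loses powers of $\delta$ that the final bound cannot afford. Overcoming this -- rather than merely chaining identities -- requires genuinely additive-combinatorial input (Balog-Szemer\'edi-Gowers together with strong quantitative Freiman-type bounds) to locate a single large coherent piece of $V$ on which all the identities hold simultaneously, together with an exploitation of the very strong $\mathsf{U}^2$-quasirandomness of $V$ to upgrade positive-fraction agreement towards agreement on almost all of the relevant differences, all while keeping the quantitative loss quasi-polynomial in $c^{-1}$. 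It is also where the parameter regime matters: the degenerate range in which the bound $\|P_a^{*4}\|_2^2\ge c^{O(1)}$ is vacuous (roughly $c\lesssim\delta$) must be handled separately, by taking $Q$ to be a variety of bounded codimension -- possibly all of $G$ -- which suffices there because the demanded inequalities then hold for trivial reasons.
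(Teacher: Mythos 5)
Your overall skeleton for the last stage (atoms $\mu(a)$ of the distribution of $\beta(a,\cdot)$ on $V\cap(V-a)$, an approximate homomorphism $a\mapsto\mu(a)+q(a)$, an affine correction $\psi$, a level-set pigeonhole and a rank computation for $|Q|$) does match the paper's Step 3, but there are two genuine gaps. The first is the symmetrization: you set $\gamma=2^{-1}\beta$ and use $q(x+a)-q(x)-q(a)=\beta(a,x)$, which is valid only if $\beta$ is symmetric, and you assert that a symmetric replacement is ``possible by (i), and using $p\ge 3$''. Hypothesis \textbf{(i)} is purely a high-rank/quasirandomness statement and carries no symmetry information: a high-rank bilinear map can have a large antisymmetric part, and replacing $\beta$ by the mean $2^{-1}(\beta(x,y)+\beta(y,x))$ can destroy \textbf{(i)} and in any case changes the subspaces $\{b\colon\beta(a,b)=0\}$, severing the link to $W_a$ and hence to $V$ through \textbf{(ii)}--\textbf{(iii)}. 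In the paper this is where most of the work lies: one first proves \emph{approximate} symmetry by the Green--Tao symmetry argument applied to $\exx_{a,b}\omega^{\lambda\cdot\beta(a,b)}{\conv}^{(8)}\id_{V\cap V-a}(b)$, which is large for many $\lambda$ by \textbf{(ii)}--\textbf{(iii)} and is real because ${\conv}^{(8)}\id_{V\cap V-a}={\conv}^{(8)}\id_{V\cap V+a}$, concluding that the trilinear form $\lambda\cdot(\beta(x,y)-\beta(y,x))$ has large bias; one then needs the inverse theorem for biased trilinear forms (Theorem~\ref{invTheoremBiased}, the partition-versus-analytic-rank input) to pass to an \emph{exactly} symmetric $\gamma$ on a subspace of codimension $O(\log_p(2c^{-1}))$ with codomain $\mathbb{F}_p^{\tilde d}$ --- this is precisely where $\tilde d$ and the stated codimension loss come from. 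The paper even points out that the coordinatewise mean trick cannot substitute for this because the codomain has large dimension, so the one-line appeal to \textbf{(i)} and $p\geq 3$ cannot be repaired cheaply.

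The second gap is the step you yourself flag as the main obstacle and do not complete, proposing instead Balog--Szemer\'edi--Gowers plus Freiman machinery and a separate treatment of the range $c\lesssim\delta$. Neither is needed, and the difficulty you describe comes from a miscount: the additive quadruples respected by $g$ should be counted among the \emph{differences}, not inside $V$. In the paper one forms the bipartite graph on $V\times V$ with edges $\{(x,y)\colon y-x\in P,\ q(y)-q(x)=\tilde\mu(y-x)\}$; it has density $\gtrsim c^{O(1)}$, hence $\gtrsim c^{O(1)}\delta^4|G|^4$ ordered four-cycles, while $\mathsf{U}^2$-quasirandomness (Claim~\ref{varietyTranslatesprops1}) shows that each additive quadruple of differences is realized by at most about $2\delta^4|G|$ four-cycles. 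The factors of $\delta$ cancel, so $\tilde\mu$ respects $\gtrsim c^{O(1)}|G|^3$ additive quadruples and the black-box application of Theorem~\ref{approxHommInv} costs only $\exp(-\log^{O(1)}(2c^{-1}))$, with no $\delta$-loss and no degenerate-range case split. After that, a direct chain of inequalities (restrict to the set where $\tilde\mu=\psi$, drop that restriction as an upper bound, use $q(x+a)-q(x)-q(a)=\gamma(a,x)$, and bound the resulting sum over level sets of $q-\psi$ by a maximum times $\delta$) produces the variety; no inclusion--exclusion, BSG, or further Freiman-type input is required. As written, your proposal therefore omits the symmetry/partition-rank stage entirely and leaves the decisive gluing step unproven.
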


\vspace{\baselineskip}

Let us now say something about the proof of each step.\\

When it comes to \textbf{Step 1}, we rely heavily on the $\mathsf{U}^2$-uniformity assumption on $V$, which itself is sufficient to prove some properties in the conclusion of Theorem~\ref{step1IntroThm}. Most basic consequence of uniformity is a standard fact that intersections of translates of $V$ behave quasirandomly (see Lemma~\ref{varietyTranslatesprops1}). We can control more involved expressions as well, for example
\begin{align*}&\exx_{x, a_1, a_2, a_3} \id_{V \cap V - a_1} \conv \id_{V \cap V - a_1}(x) \id_{V \cap V - a_2} \conv \id_{V \cap V - a_2}(x) \\
&\hspace{2cm}=\exx_{x, y, z, a_1, a_2, a_3} \id_{V \cap V - a_1}(y + x) \id_{V \cap V - a_1}(y) \id_{V \cap V - a_2}(z + x) \id_{V \cap V - a_2}(z) \\
&\hspace{2cm}=\exx_{x, y, z, a_1, a_2, a_3} \id_{V}(y + x + a_1) \id_V(y + x) \id_{V}(y + a_1) \id_V(y) \id_{V}(z + x + a_2) \id_V(z + x) \id_{V}(z + a_2) \id_V(z) \end{align*}
can be shown to be about $\delta^8$. If we think of $W_a$ as the set of elements $x$ where $\id_{V \cap V - a_1} \conv \id_{V \cap V - a_1}(x)$ is about $\delta^3$, then the bound above shows that $|W_{a_1} \cap W_{a_2}|$ is $O(\delta^2|G|)$ most of the time. A more involved argument of this form can be used to prove property \textbf{(v)} of Theorem~\ref{step1IntroThm}.\\
\indent We remark that this resembles the true complexity problem introduced by Gowers and Wolf~\cite{TrueCompl}. However, these are instances of systems of the true complexity 1, and the arguments of this form that we need in this paper are elementary, as we are concerned with the $\mathsf{U}^2$ norm.\\
\indent Let us also note that there are expressions that cannot be directly controlled by $\mathsf{U}^2$ norm, but where we can still easily prove correct upper bounds by neglecting some of the terms. For example, the distribution of $|V \cap V - a \cap V - b \cap V - a - b|$ depends on the quadratic structure of $V$, but it is always at most $|V \cap V - a \cap V - b|$ which is about $\delta^3 |G|$ vast majority of the time.\\ 
\indent However, in order to prove an approximate version of property~\eqref{linearsystemProp}, we need to use the assumption of having many additive cubes in $V$ and, simplifying things greatly by assuming that $W_a$ is the set of elements $x$ where $\id_{V \cap V - a_1} \conv \id_{V \cap V - a_1}(x)$ is about $\delta^3$, we would need a bound of the form
\begin{align*}&\Omega_{c_0}(\delta^{15}) \leq \exx_{a_1, a_2, a_3, x} \id_{V \cap V - a_1} \conv \id_{V \cap V - a_1}(x) \id_{V \cap V - a_2} \conv \id_{V \cap V - a_2}(x)\\
&\hspace{4cm}\id_{V \cap V - a_3} \conv \id_{V \cap V - a_3}(x) \id_{V \cap V - a_1 - a_2 + a_3} \conv \id_{V \cap V - a_1 - a_2 + a_3}(x).\end{align*}
Unlike proving upper bounds on expressions that are not directly controllable by $\mathsf{U}^2$ norm, it is surprisingly challenging to prove such a lower bound. To get the lower bound, we first observe the following \emph{duality} property of convolutions of indicator functions of intersections
\[\id_{V \cap V - a} \conv \id_{V \cap V - a}(b) =\id_{V \cap V - b} \conv \id_{V \cap V - b}(a).\]
This observation allows us to turn the expression above into
\begin{align}&\exx_{a_1, a_2, a_3, x} \id_{V \cap V - x} \conv \id_{V \cap V - x}(a_1) \id_{V \cap V - x} \conv \id_{V \cap V - x}(a_2)\nonumber\\
&\hspace{4cm}\id_{V \cap V - x} \conv \id_{V \cap V - x}(a_3) \id_{V \cap V - x} \conv \id_{V \cap V - x}(a_1 + a_2 - a_3).\label{hardlowerbound}\end{align}
Using the properties of $V$, we show that for many $x$, the set $S_x$ of $a\in G$ such that $\id_{V \cap V - x} \conv \id_{V \cap V - x}(a) \geq \Omega_{c_0}(\delta^3)$ is $\Omega_{c_0}(1)$-dense subset of some subspace $U_x$ of size $|U_x| \geq \Omega_{c_0}(\delta |G|)$ (observe that this is related to other properties in the conclusion of Theorem~\ref{step1IntroThm}). Then a lower bound on~\eqref{hardlowerbound} follows from the fact that $S_x$ has at least $\Omega_{c_0}(\delta^3 |G|^3)$ additive quadruples. Interestingly, we needed to use results in the spirit of Bogolyubov argument in this stage of the proof, as it was essential that we have the subspace structure.\\ 
\indent Finally, as we actually need to prove property \textbf{(vi)} of Theorem~\ref{step1IntroThm} instead of a simpler variant such as~\eqref{linearsystemProp}, the proof is more involved than the sketch above, but the sketch indicates the key ideas.\\

In \textbf{Step 2}, the following observation, stated as Lemma~\ref{addquadshomsexist}, is the main tool of passing from the given family of subspaces to approximate bilinear structure. Namely, if four subspaces $U_1, \dots, U_4$ of dimension $d$ satisfy $K^{-1} p^{3d} \leq |U_{i_1} + U_{i_2} + U_{i_3}|$ for any three distinct indices $i_1, i_2$ and $i_3$ and $|U_1 + U_2 + U_3 + U_4| \leq Kp^{3d}$, then for any linear isomorphism $\phi_4 \colon \mathbb{F}_p^d \to U_4$ there exist linear isomorphisms $\phi_i \colon \mathbb{F}_p^d \to U_i$ for $i \in [3]$ such that
\begin{equation}\on{rank}(\phi_1 + \phi_2  - \phi_3 - \phi_4) \leq O(\log_p K).\label{addquadsubconcl}\end{equation}
We remark that we do not only say that there are 4 isomorphisms satisfying~\eqref{addquadsubconcl}, but rather that there exist suitable $\phi_2, \phi_3$ and $\phi_4$ for \emph{any given} $\phi_1$, which will be crucial in the proof. This fact also shows that we may think of subspaces satisfying~\eqref{linearsystemProp} as additive quadruples of subspaces. Observe that these properties are satisfied by the orthogonal complements of subspaces in our family (we may easily ensure that all $W_a^\perp$ are of same size).\\ 
\indent The approach to proving Theorem~\ref{step2IntroThm} is to find a suitable element $a \in A$, fix an isomorphism $\theta \colon \mathbb{F}_p^d \to W_a^\perp$ and apply the observation above to additive quadruples in $A$ involving $a$ to get isomorphisms to other $W_b^\perp$. This will give us an approximate homomorphism between $G$ and the space of linear maps $\on{Hom}(\mathbb{F}_p^d,G)$, where the distance between elements is measured by rank, from which we shall obtain an exact homomorphism. We achieve this by using a result of Kazhdan and Ziegler~\cite{ApproxCohomology} (see Theorem~\ref{friemanforlinearhom}), which we reproved using the inverse theorem for Freiman bihomomorphisms~\cite{U4paper,KimLiTidor} to get quantitative bounds.\\
\indent It turns out that in this step, the weaker assumption of having many triples $(b_1, b_2, b_3)$ such that $|W_{b_1} \cap W_{b_2} \cap W_{b_3} \cap W_{b_1 + b_2 - b_3}| \geq c_1 \delta^3 |G|$ is not sufficiently strong for the proof to work and that we need more involved configurations of points. The reason is that in defining the linear isomorphisms $\mathbb{F}_p^d \to W_b^\perp$ we need to ensure that they are well-defined and that these linear isomorphisms also respect many additive quadruples.\\

Finally, in \textbf{Step 3}, we first need to show that the given map $\beta$ can be replaced by a symmetric bilinear map. To that end, we use Green-Tao symmetry argument~\cite{GreenTaoU3} to first show that $(x,y) \mapsto \lambda \cdot (\beta(x,y) - \beta(y,x))$ has small rank for many vectors $\lambda \in \mathbb{F}_p^d$. To pass to an exactly symmetric map, we have to make use of the solution of partition versus analytic problem for trilinear forms, first proved by Green and Tao~\cite{GreenTaoPolys} in the case of polynomials, with essentially optimal bounds obtained by Adiprasito, Kazhdan and Ziegler~\cite{AKZ} and by Cohen and Moshkovitz~\cite{CohenMoshkovitz}. Note that it the high characteristic case (when $p \geq 3$) we may usually replace an approximately bilinear form $\gamma(x,y)$ by a simple mean $2^{-1}(\gamma(x,y) + \gamma(y,x))$. However, as we are concerned with $\beta$ whose codomain is subspace of somewhat large dimension, we have to use partition versus analytic problem at some point, even if we use the mean trick at each coordinate of $\beta$ (see for example Lemma 2.8 in~\cite{LukaU56}). Once we may assume that $\beta$ is symmetric, we expect that the approximate variety $V$ comes from a variety defined by $\beta(x,x) + \gamma(x) = \lambda$ for some linear map $\gamma$, which we still have to identify. To that end, we consider intersection of $V \cap V - a$ with $\{x \in G \colon \beta(a,x) = \lambda\}$ for various $\lambda$. As it turns out, for many $a$, there is a value $\lambda(a)$ for which this intersections is almost the whole of $V \cap V - a$. We then use additional graph-theoretic arguments to show that $\lambda(a)$ is an approximate homomorphism, from which we may then pass to a linear map and conclude the proof.

\subsection{Regularity lemmas are insufficient}

A natural approach to proving Theorem~\ref{approxQuadVarThm} is to use arithmetic regularity lemmas of Green and Tao~\cite{GreenTaoReg}. In this short subsection, we briefly discuss why such an approach is not helpful for this problem. In this setting, we could in principle find a bilinear map $\beta \colon G \times G \to \mathbb{F}^r$ and functions $d \colon \mathbb{F}_p^r \to \mathbb{D}$, $f_{\text{err}}, f_{\text{unif}} \colon G \to \mathbb{D}$ such that 
\[\id_V(x) = d(\beta(x,x)) + f_{\text{err}}(x) + f_{\text{unif}}(x),\]
where $f_{\text{err}}$ has small $L^2$ norm, $f_{\text{unif}}$ has extremely small $\|\cdot\|_{\mathsf{U}^3}$ norm and
\[d(\lambda) = \frac{\ex_{x} \id_V(x) \id(\beta(x,x) = \lambda)}{\ex_{x}\id(\beta(x,x) = \lambda)},\]
which comes from projection of $\id_V$ onto layers $\{x \colon \beta(x,x) = \lambda\}$ defined by $\beta$. For simplicity, we ignore the $\ell^2$ error terms and assume that we have perfect approximation, and also that $\beta$ itself is of high rank (we expect this to be the case from the expected structure of $V$). In particular, function $d$ simplifies to $d(\lambda) = |G|^{-1}p^r|V \cap \{x \colon \beta(x,x) = \lambda\}|$. Observe also that the high rank of $\beta$ implies that
\[(x, a, b, c) \mapsto \Big(\beta(x,x), \beta(x + a,x + a), \dots, \beta(x + b + c,x + b + c)\Big)\]
is equidistributed in $(\mathbb{F}_p^r)^7$ (here we listed all points of additive cube associated with $x,a,b,c$ except $x + a + b + c$), while $\beta$ being a bilinear form implies the identity
\begin{align*}\beta&(x + a + b + c, x+ a + b + c) = \beta(x,x) - \beta(x + a, x + a) - \beta(x + b, x + b) - \beta(x + c, x + c)\\
&\hspace{1cm} + \beta(x +a + b, x + a + b) + \beta(x +a +c, x + a + c) + \beta(x +b+c, x + b+c).\end{align*}
Using these facts and given that the error terms are negligible, the count of 3-dimensional additive cubes in $V$ essentially becomes
\[c_0 \delta^7 \leq \exx_{\lambda_1, \dots, \lambda_7 \in \mathbb{F}_p^r} d(\lambda_1)\dots d(\lambda_7) d(\lambda_1 + \lambda_2 + \lambda_3 + \lambda_4 - \lambda_5 - \lambda_6 - \lambda_7) = \sum_{\gamma \in \mathbb{F}^r} |\hat{d}(\gamma)|^8.\] 

Note that $d(\lambda) \leq 1$ for all $\lambda$ and that
\[\sum_{\gamma \in \mathbb{F}^r_p} |\hat{d}(\gamma)|^2 = \exx_{\lambda \in \mathbb{F}_p^r} |d(\lambda)|^2 \leq \exx_{\lambda \in \mathbb{F}_p^r} d(\lambda) = \frac{1}{|G|} \sum_{\lambda \in \mathbb{F}_p^r}|V \cap \{x \colon \beta(x,x) = \lambda\}| = \frac{|V|}{|G|} = \delta.\]

We thus get some $\gamma$ such that $|\hat{d}(\gamma)| \geq \sqrt[6]{c_0} \delta$. We may assume that $\gamma \not= 0$ as the contribution from $\gamma = 0$ above is $\delta^8$.\footnote{In fact, we expect that $V$ is the union of $\{\beta = \lambda\}$ for some subspace $\Lambda$ such that $|\Lambda| = \delta p^r$. If so, we would also have $d(\lambda) = \id_\Lambda(\lambda)$ and thus $\hat{d}(\gamma) = \frac{|\Lambda|}{p^r} \id_{\Lambda^\perp}(\gamma) = \delta\id_{\Lambda^\perp}(\gamma)$.} Thus,
\begin{align*}\sqrt[6]{c_0} \delta \leq & \Big|\exx_{\lambda \in \mathbb{F}_p^r} |G|^{-1}p^r|V \cap \{x \colon \beta(x,x) = \lambda\}| \omega^{-\lambda \cdot \gamma} \Big|\\
= & \Big|\sum_{\lambda \in \mathbb{F}_p^r} \frac{|V \cap \{x \colon \beta(x,x) = \lambda\}|}{|G|} \omega^{-\lambda \cdot \gamma} \Big|\\
= & \Big|\sum_{\mu \in \mathbb{F}_p} \sum_{\ssk{\lambda \in \mathbb{F}_p^r\\\text{s.t. }\gamma \cdot \lambda = \mu}} \frac{|V \cap \{x \colon \beta(x,x) = \lambda\}|}{|G|} \omega^{-\mu} \Big|\\
= & \Big|\sum_{\mu \in \mathbb{F}_p} \frac{|V \cap \{x \colon \gamma \cdot \beta(x,x) = \mu\}|}{|G|} \omega^{-\mu} \Big|\\
= & \Big|\sum_{\mu \in \mathbb{F}_p} \Big(\frac{|V \cap \{x \colon \gamma \cdot \beta(x,x) = \mu\}|}{|G|} - \delta\Big) \omega^{-\mu} \Big|.\end{align*}
Hence, for some $\mu \in \mathbb{F}_p$ and codimension 1 quadratic variety $B = \{x \colon \gamma \cdot \beta(x, x) = \mu\}$ we conclude that $V$ gets a density increment of $p^{-1}\sqrt[6]{c_0} \delta $ on $B$. In particular, after $s$ steps, we may only guarantee density $\delta\Big(1 + p^{-1}\sqrt[6]{c_0}\Big)^s$ on codimension $s$ quadratic variety. Proceeding in this fashion, provided $c_0$ is sufficiently small, we expect that we need $s = K \log_p \delta^{-1}$ steps in order to get density $\Omega_{c_0}(1)$ on some quadratic variety, where $K$ is arbitrarily large. But, such a variety would then be of codimension $s$ and would have density $p^{-s} \leq \delta^K$ as our bilinear map is quasirandom, so we cannot obtain the claimed result in this fashion.\\

\noindent\textbf{Acknowledgements.} I would like to thank Tim Gowers for introducing me to the problem of proving the structure theorem for approximate quadratic varieties. This work was supported by the Ministry of Science, Technological Development and Innovation of the Republic of Serbia through the Mathematical Institute of the Serbian Academy of Sciences and Arts.

\section{Preliminaries}

\noindent\textbf{Notation.} The fixed prime $p$ is assume to be at least 3 throughout the paper and dependencies on $p$ in bounds are suppressed.\\
\indent In this paper, we shall frequently consider subsets of products of two vectors spaces $G$ and $H$. Given $X \subset G \times H$ and an element $x \in G$, we write $X_{x \bcdot} = \{y \in H \colon (x,y) \in X\}$ for the \emph{(vertical) slice} of $X$ in the column indexed by $x$. Likewise, for an element $y \in H$, we write $X_{\bcdot y} = \{x \in G \colon (x,y) \in X\}$ for the \emph{(horizontal) slice} of $X$ in the row indexed by $y$.\\
For two functions $f,g \colon G \to \mathbb{R}$ we define \emph{convolution} as $f \conv g(x) = \ex_{y \in G} f(y + x) \overline{g(x)}$ (note that this is non-standard, as we average over pairs of elements whose difference is $x$, rather than their sum). We also need \emph{iterated convolution} (of \emph{order} $2k$) of a function $f \colon G \to \mathbb{R}$, defined as 
\begin{equation}\label{iteratedConvDefn} {\conv}^{(2k)} f(a) = \ex_{x_1, \dots, x_{2k-1} \in G} f(x_1)f(x_2)\cdots f(x_{2k-1}) f\Big(\sum_{\ell \in [2k-1]}(-1)^{\ell + 1} x_i - a\Big).\end{equation}
Note that the order of iterated convolution $2k$ stands for the number of terms, rather than number of convolutions needed to give the expression above. Also note that there are no complex conjugates in the expression above. This is done to simplify the notation for iterated convolution as it will only be used for real-valued functions in this paper.\\
The \emph{discrete multiplicative derivative operator} $\mder_a$ for shift $a \in G$ is defined by $\mder_a f(x) = f(x + a)\overline{f(x)}$ for functions $f \colon G \to \mathbb{C}$.\\
To save writing in situations where we have many indices of variables appearing in predictable patterns, we use the following convention. Instead of denoting a sequence of length $m$ by $(x_1, \dots, x_m)$, we write $x_{[m]}$, and for $I\subset[m]$ we write $x_I$ for the subsequence with indices in $I$.\\

We need a few auxiliary results. The first one is a robust version of Bogolyubov-Ruzsa lemma, which is essentially due to Schoen and Sisask and builds upon the work of Sanders. 

\begin{theorem}\label{addquadsFreiman} Let $A \subset G$ be a subset having at least $\alpha |A|^3$ additive quadruples. Then there exists a subspace $V \subseteq 2A -2A$ of size $|V| \geq \exp\Big(- O\Big(\log^{O(1)}(2c^{-1})\Big)\Big)|A|$ such that the following holds. Every $y \in V$ can be expressed as $y = a_1 + a_2 - a_3 - a_4$ with $a_1, a_2, a_3, a_4 \in A$ in at least $\alpha^{O(1)}|A|^3$ many ways.\end{theorem}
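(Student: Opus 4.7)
The plan is to combine the Balog-Szemerédi-Gowers theorem with a quasi-polynomial Bogolyubov-Ruzsa lemma of Sanders, in the strengthened form due to Schoen and Sisask, in order to produce a large subspace inside $2A - 2A$ on which a suitable four-fold convolution is bounded below; the robustness claim will then be an unwinding of this lower bound.

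First, I would apply the Balog-Szemerédi-Gowers theorem to pass from the hypothesis on additive quadruples to a subset $A' \subseteq A$ of size $|A'| \geq \alpha^{O(1)}|A|$ satisfying the small doubling condition $|A' + A'| \leq \alpha^{-O(1)}|A'|$. This reduces the problem to the standard Bogolyubov-Ruzsa setting, at the cost of only polynomial factors in $\alpha$ on the density and doubling parameters, neither of which affects the final quasi-polynomial bound.

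The heart of the argument is the second step. To $A'$ one applies the quasi-polynomial Bogolyubov-Ruzsa lemma, yielding a subspace $V \subseteq 2A' - 2A' \subseteq 2A - 2A$ of size $|V| \geq \exp(-\log^{O(1)}(2\alpha^{-1}))|A'| \geq \exp(-\log^{O(1)}(2\alpha^{-1}))|A|$, together with the quantitative conclusion that the normalized convolution $1_{A'} * 1_{A'} * 1_{-A'} * 1_{-A'}$ is bounded below by $\alpha^{O(1)}$ uniformly on $V$. Expanding this lower bound, each $y \in V$ admits at least $\alpha^{O(1)}|G|^3 \geq \alpha^{O(1)}|A|^3$ representations $y = a_1 + a_2 - a_3 - a_4$ with $a_i \in A' \subseteq A$, which is exactly the robustness claim.

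The main obstacle is the quasi-polynomial dependence in the second step. To obtain it, I would invoke the Croot-Sisask almost periodicity lemma applied to convolutions of indicators of $A'$: this produces a large set $T$ of shifts $t$ which approximately preserve $1_{A'} * 1_{-A'}$ in $L^p$ norm. Iterating this almost periodicity argument $O(\log \alpha^{-1})$ times, and converting the resulting approximate symmetry into an honest subspace of shifts via a Chang-type Fourier inequality applied to the large spectrum, one recovers the quasi-polynomial bound on $|V|$. The pointwise lower bound on the convolution on $V$ emerges naturally from this argument because at every stage one tracks the full $L^p$ mass of the convolution rather than merely its support; this is precisely the refinement of Schoen-Sisask over the bare Sanders statement, and I would cite the result in the literature in this strengthened packaged form rather than redo the almost periodicity iteration.
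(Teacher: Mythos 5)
Your proposal follows essentially the same route as the paper: apply Balog--Szemer\'edi--Gowers to pass to $A'\subseteq A$ with $|A'|\geq \alpha^{O(1)}|A|$ and doubling $\alpha^{-O(1)}$, then invoke the robust quasi-polynomial Bogolyubov--Ruzsa result of Sanders in the Schoen--Sisask form, which is exactly how the paper proves it (the paper simply cites Theorem A.2 of Sanders and Theorem 5.1 of Schoen--Sisask after BSG). The only blemish is a normalization slip in your unwinding: the pointwise lower bound on the expectation-normalized convolution $\id_{A'}\conv\id_{A'}\conv\id_{-A'}\conv\id_{-A'}$ on $V$ is of the form $\alpha^{O(1)}\big(|A'|/|G|\big)^{3}$, not $\alpha^{O(1)}$ (no density assumption on $A$ is made, and each $y$ has at most $|A'|^3$ representations), so the count of representations you extract should read $\alpha^{O(1)}|A|^{3}$ rather than $\alpha^{O(1)}|G|^{3}$ --- which is precisely what the theorem asserts, so the argument is otherwise fine.
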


\begin{proof}Apply Balog-Szemer\'edi-Gowers theorem to find a subset $A' \subseteq A$ such that $|A'| \geq \Omega(\alpha^{O(1)}|A|) $ and $|A' + A'| \leq O(\delta^{-O(1)} |A'|)$. The theorem follows from results of Sanders (Theorem A.2 for arbitrary prime $p$ instead of $p = 2$ in~\cite{Sanders}) and Schoen and Sisask (Theorem 5.1 in~\cite{SchSisRob}).\end{proof}

A closely related result is the inverse theorem for approximate homomorphisms, which we already stated in the introduction as Theorem~\ref{approxHommInvIntro}. We make use of a more efficient version, which is proved using Balog-Szemer\'edi-Gowers theorem and Sanders's results on Bogolyubov-Ruzsa lemma~\cite{Sanders}.

\begin{theorem}\label{approxHommInv}Let $G$ and $H$ be finite-dimensional vector spaces over $\mathbb{F}_p$. Let $A \subseteq G$ be a subset and let $\phi \colon A \to H$ be a map which respects at least $c |G|^3$ additive quadruples in $A$. Then there exists an affine map $\Phi \colon G\to H$ such that $\phi = \Phi$ holds for at least $\exp\Big(-\log^{O(1)} (2 c^{-1})\Big)$ points in $A$.\end{theorem}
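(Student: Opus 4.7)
The plan is to pass to the graph of $\phi$ inside $G \times H$ and apply efficient additive-combinatorial tools there. Let
\[\Gamma = \{(x, \phi(x)) : x \in A\} \subseteq G \times H.\]
The key observation is that the condition ``$\phi$ respects at least $c|G|^3$ additive quadruples in $A$'' is precisely the statement that $\Gamma$ contains at least $c|G|^3$ additive quadruples in $G \times H$, since a quadruple $(x,y,z,w) \in A^4$ with $x+y=z+w$ lifts to an additive quadruple in $\Gamma$ exactly when $\phi(x)+\phi(y)=\phi(z)+\phi(w)$. As any quadruple in $A$ is determined by any three of its elements, $|A|^3 \geq c|G|^3$, so $|A| \geq c^{1/3}|G|$, and consequently $\Gamma$ has at least $c|\Gamma|^3$ additive quadruples.

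I would then apply the Balog--Szemer\'edi--Gowers theorem in $G \times H$ to obtain a subset $\Gamma' \subseteq \Gamma$ with $|\Gamma'| \geq c^{O(1)}|\Gamma|$ and doubling $|\Gamma' - \Gamma'| \leq c^{-O(1)}|\Gamma'|$, and invoke Sanders's quantitative Freiman--Ruzsa theorem over $\mathbb{F}_p^n$ (the ingredient already used in the proof of Theorem~\ref{addquadsFreiman}) to place $\Gamma'$ inside a subspace $W \leq G \times H$ with
\[|W| \leq \exp\!\left(\log^{O(1)}(c^{-1})\right)|\Gamma'|.\]

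The final step is to extract an affine map from $W$. Let $W_0 = W \cap (\{0\} \times H)$, identified with a subspace of $H$, and let $\pi_G : G \times H \to G$ denote the projection. Because $\Gamma'$ is a graph over $G$ (its elements have distinct first coordinates), $|\Gamma'| \leq |\pi_G(W)| = |W|/|W_0|$, so
\[|W_0| \leq |W|/|\Gamma'| \leq \exp\!\left(\log^{O(1)}(c^{-1})\right).\]
Choosing any linear complement $W_1$ of $W_0$ in $W$ makes $\pi_G|_{W_1}: W_1 \to \pi_G(W)$ a linear isomorphism, and composing its inverse with $\pi_H$ yields a linear map $\Phi_0: \pi_G(W) \to H$ with the property that $(x,y) \in W$ forces $y - \Phi_0(x) \in W_0$. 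Extend $\Phi_0$ to a linear map $G \to H$ arbitrarily. By construction, for every $x \in \pi_G(\Gamma')$ we have $\phi(x) - \Phi_0(x) \in W_0$. Pigeonholing over the at most $|W_0|$ possible values of this difference, some $w_0 \in W_0$ satisfies $\phi(x) = \Phi_0(x) + w_0$ for at least
\[\frac{|\Gamma'|}{|W_0|} \geq \exp\!\left(-\log^{O(1)}(c^{-1})\right)|G|\]
values of $x \in A$, and $\Phi(x) = \Phi_0(x) + w_0$ is the desired affine map.

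The main obstacle is securing the polylogarithmic quantitative bound in the exponent: this relies on using Sanders's version of Freiman--Ruzsa rather than earlier polynomial-in-$K$ bounds. Once that ingredient is in place, the graph construction and the slice-by-$W_0$ pigeonhole extraction are routine. A minor point to track carefully is that BSG is applied in the ambient group $G \times H$ rather than in $G$, but all additive notions transfer verbatim; no structural feature of the graph is used beyond the fact that its elements have pairwise distinct first coordinates, which is what makes the passage from ``$\Gamma'$ inside a small subspace'' to ``$\phi$ agrees with an affine map'' work.
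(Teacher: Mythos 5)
Your overall architecture (pass to the graph $\Gamma \subseteq G \times H$, apply Balog--Szemer\'edi--Gowers there, then use Freiman-type structure to read off an affine map) is the intended route, and your final extraction step -- complement of the vertical fibre $W_0$, projection, pigeonhole over $W_0$ -- is fine \emph{conditional on} the containment $\Gamma' \subseteq W$. The genuine gap is the middle step: there is no ``Sanders's quantitative Freiman--Ruzsa theorem'' that places a set of doubling $K$ inside a \emph{single subspace} of size $\exp(\log^{O(1)}K)$ times the set. Containment in one subspace inherently costs a factor exponential in $K$: for $A = V \cup \{e_1,\dots,e_K\}$ with $V$ a large subspace and $e_1,\dots,e_K$ independent vectors outside $V$, one has $|A+A| = O(K|A|)$, yet any subspace containing $A$ contains $V + \langle e_1,\dots,e_K\rangle$ and so has size at least $p^{K}|A|/(K+1)$. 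Since BSG only gives doubling $K = c^{-O(1)}$ for $\Gamma'$, this route yields agreement with an affine map on only $\exp(-c^{-O(1)})|G|$ points, which proves the qualitative Theorem~\ref{approxHommInvIntro} but not the quasi-polynomial bound $\exp(-\log^{O(1)}(2c^{-1}))$ that is the whole content of Theorem~\ref{approxHommInv}.

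What Sanders actually provides (and what the paper's citation intends, as in Theorem~\ref{addquadsFreiman}) is a subspace $W$ \emph{inside} $2\Gamma'-2\Gamma'$ of size at least $\exp(-\log^{O(1)}c^{-1})|\Gamma'|$, together with the robust property that every $w \in W$ has at least $\alpha^{O(1)}|\Gamma'|^3$ representations $\gamma_1+\gamma_2-\gamma_3-\gamma_4$ with $\gamma_i \in \Gamma'$. Your argument then needs two extra moves. First, smallness of the vertical fibre $W_0 = W \cap (\{0\}\times H)$ is no longer automatic (elements of $2\Gamma-2\Gamma$ above $0 \in G$ are exactly the discrepancies $\phi(x_1)+\phi(x_2)-\phi(x_3)-\phi(x_4)$ over quadruples that $\phi$ need not respect); it follows instead from the many-representations property, since distinct elements of a fibre are represented by disjoint families of quadruples of graph points with the same first-coordinate sum, and there are at most $|\Gamma'|^3$ such quadruples in total, giving $|W_0| \leq \exp(\log^{O(1)}c^{-1})$. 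Second, since $\Gamma' \not\subseteq W$, you must locate many graph points in a \emph{single coset} of $W$: as $\Gamma' \subseteq \Gamma' + W \subseteq 3\Gamma'-2\Gamma'$, Pl\"unnecke--Ruzsa bounds the number of cosets of $W$ meeting $\Gamma'$ by $K^{O(1)}|\Gamma'|/|W| \leq \exp(\log^{O(1)}c^{-1})$, so some coset $\gamma_0+W$ contains $\exp(-\log^{O(1)}c^{-1})|\Gamma'|$ points of $\Gamma'$; applying your complement-and-pigeonhole extraction to $\Gamma' \cap (\gamma_0+W)$ (which costs an affine shift by $\gamma_0$ and a further factor $|W_0|$) then finishes the proof with the claimed bounds, using your observation $|A| \geq c^{1/3}|G|$ to convert the count into a proportion of $|G|$.
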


Recall that the \emph{bias} of a multilinear form $\phi \colon G^k \to \mathbb{F}_p$ is defined as 
\[\on{bias} \phi = \exx_{x_1, \dots, x_k} \omega^{\phi(x_1, \dots, x_k)},\]
where $\omega = \exp\Big(\frac{2 \pi i}{p}\Big)$. This quantity is a measure of how far from being quasirandom the given form $\phi$ is. We need the inverse theorem for biased trilinear forms. First results in this spirit were proved by Green and Tao~\cite{GreenTaoPolys} for the case of polynomials and a multilinear variant was proved by Bhowmick and Lovett~\cite{BhowLov}. The version below, which has essentially optimal bounds, is due to Adiprasito, Kazhdan and Ziegler~\cite{AKZ} and due to Cohen and Moshkovitz~\cite{CohenMoshkovitz}.

\begin{theorem}[Inverse theorem for biased trilinear forms]\label{invTheoremBiased}Suppose that $\phi \colon G \times G \times G \to \mathbb{F}_p$ is a trilinear form such that $\on{bias} \phi \geq c$. Then there exists a positive integer $r \leq O(\log_p c^{-1})$, linear forms $\alpha_1, \dots,$ $\alpha_r,$ $\beta_1, \dots,$ $\beta_r,$ $\gamma_1, \dots,$ $\gamma_r \colon G \to \mathbb{F}_p$ and bilinear forms $\alpha'_1, \dots,$ $\alpha'_r,$ $\beta'_1, \dots,$ $\beta'_r,$ $\gamma'_1, \dots,$ $\gamma'_r \colon G \times G\to \mathbb{F}_p$ such that
\[\phi(x,y,z) = \sum_{i \in [r]} \alpha_i(x) \alpha'_i(y,z) + \beta_i(y) \beta_i'(x,z) + \gamma_i(z) \gamma'_i(x,y)\]
holds for all $x, y, z \in G$.
\end{theorem}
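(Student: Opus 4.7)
My plan is to use orthogonality of characters to turn the bias hypothesis into an assertion about the vanishing locus of auxiliary linear forms, and then extract partition-rank-one contributions slice by slice.

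First, since $\phi$ is linear in $x$, for fixed $(y,z)$ the inner character average $\ex_{x} \omega^{\phi(x,y,z)}$ equals $1$ if $\phi(\cdot,y,z) \equiv 0$ as a linear form on $G$ and equals $0$ otherwise. Averaging over $y,z$ we obtain
\[ \on{bias}\phi \;=\; \mathbb{P}_{y,z \in G}\bigl[\phi(\cdot, y, z) \equiv 0\bigr], \]
so the hypothesis yields $\mathbb{P}_{y,z}[\phi(\cdot,y,z) \equiv 0] \geq c$. In particular, by pigeonhole over $y$, there exists $y_0 \in G$ such that the linear map $L\colon G \to G^*$ defined by $L(z) = \phi(\cdot, y_0, z)$ has kernel $K \leq G$ of codimension at most $s := \lceil \log_p c^{-1} \rceil$.

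Next, I would extract a partition-rank decomposition at this single slice. Choose linear forms $v_1, \dots, v_s \colon G \to \mathbb{F}_p$ cutting out $K$, together with dual basis vectors $\tilde v_j$ in a complement to $K$, so every $z \in G$ decomposes as $z = z_K + \sum_j v_j(z)\,\tilde v_j$ with $z_K \in K$. Trilinearity then yields
\[ \phi(x, y_0, z) \;=\; \phi(x, y_0, z_K) + \sum_{j=1}^{s} v_j(z)\,\phi(x, y_0, \tilde v_j) \;=\; \sum_{j=1}^{s} v_j(z)\,\phi(x, y_0, \tilde v_j), \]
where the first term vanishes since $z_K \in K = \ker L$. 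This expresses $\phi(x,y_0,\cdot)$ as a sum of $s$ partition-rank-one pieces of the $z$-separated form $\gamma_j(z)\,\gamma'_j(x,y_0)$.

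To globalize across all $y$, I would set
\[ \phi'(x,y,z) \;=\; \phi(x,y,z) \;-\; \sum_{j=1}^{s} v_j(z)\,\phi(x, y, \tilde v_j), \]
so that $\phi'(x,y_0,z) \equiv 0$, and then iterate this procedure. At each round one pays at most $s = O(\log_p c^{-1})$ partition-rank terms and either restricts one of the variables to a subspace of small codimension or reduces the effective dimension of $G$ in one of the three slots; tracking the bias decrement against the number of extracted terms should yield the final bound $r = O(\log_p c^{-1})$. \textbf{The main obstacle} is the bookkeeping in the iteration: a naive peeling of slice-by-slice rank-one terms inflates the bound to $O((\log_p c^{-1})^2)$ or worse, because each round pays an additional factor of $\log_p c^{-1}$. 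Obtaining the essentially optimal linear dependence $O(\log_p c^{-1})$ requires the more delicate amortization carried out by Adiprasito--Kazhdan--Ziegler via schematic/algebraic-geometric ranks of the bilinear map $(y,z) \mapsto \phi(\cdot,y,z) \in G^*$, or equivalently by Cohen--Moshkovitz via a refined density-increment argument on the zero set $\{(y,z) : \phi(\cdot,y,z) \equiv 0\}$ that exploits its bilinear additive structure.
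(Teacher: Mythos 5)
There is a genuine gap: your argument never actually proves the theorem, and you concede as much at the end. Note first that the paper itself does not prove this statement — it is imported as a black box from Adiprasito--Kazhdan--Ziegler and Cohen--Moshkovitz (with earlier versions by Green--Tao and Bhowmick--Lovett), and the bound $r \leq O(\log_p c^{-1})$ \emph{is} the hard content of those works (partition rank bounded linearly by analytic rank for trilinear forms). Your opening observation is fine: since $\phi$ is linear in $x$, $\on{bias}\phi = \mathbb{P}_{y,z}[\phi(\cdot,y,z)\equiv 0]$, and pigeonholing gives a slice $y_0$ whose map $z \mapsto \phi(\cdot,y_0,z)$ has kernel of codimension at most $s = \lceil \log_p c^{-1}\rceil$, yielding $s$ terms of the form $\gamma_j(z)\gamma_j'(x,y)$ after subtraction. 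But this only uses a single row of the large zero set $\{(y,z)\colon \phi(\cdot,y,z)\equiv 0\}$, whereas the cited proofs crucially exploit its bilinear structure as a whole; from one slice you cannot recover the global decomposition.

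Concretely, the iteration you propose does not close. After subtracting, $\phi'$ vanishes on the slice $y = y_0$ (equivalently, descends to $G \times (G/\langle y_0\rangle) \times G$), but: (1) to run the next round you need $\phi'$ to still have large bias, which requires subadditivity of analytic rank (a nontrivial theorem of Lovett that you never invoke), and even granting it the bias degrades from $c$ to roughly $c\,p^{-s}$, so the codimension parameter at least doubles at every round; (2) each round only eliminates one dimension in the $y$-slot, so the number of rounds is bounded only by $\dim G$, not by any function of $c$ — there is no termination argument; and (3) consequently not even the weaker $O((\log_p c^{-1})^2)$ bound you mention is actually delivered by the sketch. Your final paragraph defers exactly the missing step — the amortization via Schmidt/geometric rank (AKZ) or the refined argument on the bilinear zero set (Cohen--Moshkovitz) — to the literature, which is to say the proposal reduces the theorem to the very results it was meant to reprove. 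If the goal is only to use the statement, citing it (as the paper does) is the right move; if the goal is to prove it, a genuinely different mechanism is needed.
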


Next, we need a result on the number of certain arrangements of points (related to additive quadruples) inside dense subsets of vector spaces.

\begin{lemma}\label{multipleAddQuadsBound}Let $G$ be a finite-dimensional vector space over $\mathbb{F}_p$ and let $A \subset G$ be a subset of density $c$. Then
\begin{align*}&\exx_{b_1, b_2, b_3, x_2, y_3, z_1 \in G} \id_{A}(b_1 + b_2 - b_3) \id_{A}(b_1) \id_{A}(b_2) \id_{A}(b_3) \id_{A}(x_2) \id_{A}(x_2 - b_2 + b_3) \\
&\hspace{6cm}\id_{A}(y_3) \id_{A}(y_3 + b_1 - b_3) \id_{A}(z_1) \id_{A}(b_1 + b_2 - z_1) \geq c^{32}.\end{align*}
\end{lemma}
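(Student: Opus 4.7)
The plan is to integrate out the three ``leaf'' variables $x_2, y_3, z_1$, perform a linear change of variables to expose the additive-cube structure, and then apply iterated Cauchy--Schwarz to obtain the bound. Each of $x_2, y_3, z_1$ appears in exactly two of the ten indicator factors. Setting $g(t) = \ex_x \id_A(x)\id_A(x+t)$ and $h(u) = \ex_x \id_A(x)\id_A(u-x)$, both non-negative with average $c^2$, the identity $\ex_{x_2} \id_A(x_2)\id_A(x_2-b_2+b_3) = g(b_3-b_2)$ and its two analogues in $y_3$ and $z_1$ reduce the LHS of the lemma to
\[T = \ex_{b_1, b_2, b_3} \id_A(b_1)\id_A(b_2)\id_A(b_3)\id_A(b_1+b_2-b_3)\cdot g(b_3-b_2)\,g(b_1-b_3)\,h(b_1+b_2).\]

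Next, since $p$ is odd, I would apply the linear bijection $b_3 = a$, $b_1 = a+d$, $b_2 = a+e$ on $G^3$; using the symmetry $g(-t) = g(t)$ (immediate from a change of variables in the definition of $g$), this transforms $T$ into
\[T = \ex_{a, d, e} \id_A(a)\id_A(a+d)\id_A(a+e)\id_A(a+d+e)\cdot g(d)\,g(e)\,h(2a+d+e),\]
displaying $T$ as a weighted count of two-dimensional additive cubes in $A$, with the three convolutions $g(d), g(e), h(2a+d+e)$ playing the role of weights.

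The final step is the lower bound $T \geq c^{32}$. Here I would re-expand each of $g$ and $h$ as an expectation of a product of two $\id_A$'s, bringing $T$ back to a six-variable integral of ten copies of $\id_A$, and then apply Cauchy--Schwarz iteratively: each application doubles a chosen variable and, together with the identity $\id_A^2 = \id_A$, reduces the number of indicator factors, eventually producing a product of elementary counts that can be bounded below using the Cauchy--Schwarz estimate $\ex_u\big((\id_A \ast \id_A)(u)\big)^{2} \geq c^4$ for the number of additive quadruples. The main obstacle I anticipate is the book-keeping of exponents: the three convolution weights share the central variables $a, d, e$ with the outer additive quadruple, so the Cauchy--Schwarz steps must be ordered so that each one genuinely gains on the bound rather than reproducing factors already accounted for. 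Since the target bound $c^{32}$ is considerably weaker than the Sidorenko-style estimate $c^{10}$ one might heuristically expect, the iterated argument can afford substantial slack, and five carefully coordinated doublings should suffice to yield the claimed bound.
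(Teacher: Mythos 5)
Your preliminary reductions are correct but end up doing no work: rewriting the average as $\ex_{b_1,b_2,b_3}\id_A(b_1)\id_A(b_2)\id_A(b_3)\id_A(b_1+b_2-b_3)\,g(b_3-b_2)g(b_1-b_3)h(b_1+b_2)$ and then changing variables to display a weighted cube count is fine, but you immediately ``re-expand $g$ and $h$'' and return to the original ten-fold integral, so the entire content of the lemma is deferred to the last paragraph, which is only an assertion that ``five carefully coordinated doublings should suffice.'' That is exactly where the proof lives, and the mechanism you describe there does not work as stated. A Cauchy--Schwarz doubling \emph{increases} the number of indicator factors (it does not ``reduce'' them via $\id_A^2=\id_A$), and, more importantly, the ten-point configuration in the lemma is not a perfect square in any variable: e.g.\ the pair $(b_2,x_2)$ carries the shift $b_1-b_3$ on one side but $b_3-b_2$ on the other, and a genuine doubling of $b_1$ or $z_1$ would force companion factors such as $\id_A(y_1+x_2-a)$, $\id_A(z_1+x_2-a)$, $\id_A(y_3+z_1-b_3)$ that simply are not present in the statement. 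So you cannot reach the stated expression by blind iterated doubling; you must either build up from the quadruple count $c^4$ and \emph{discard} precisely those companion factors at the right moments (legitimate since all factors lie in $[0,1]$, and this makes the averaged quantity larger), interleaved with the changes of variables that realign the linear forms, or equivalently first insert the missing factors to symmetrize and then un-double. Neither the order of the doublings, the choice of variables, the omissions, nor the variable changes is specified in your proposal, and without them the argument does not close.

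For comparison, the paper's proof is exactly the bottom-up version of what you gesture at: starting from $c^4\le \ex_{b_1,b_2,b_3}\id_A(b_1)\id_A(b_2)\id_A(b_3)\id_A(b_1+b_2-b_3)$, it applies Cauchy--Schwarz three more times (doubling $b_2$ into $x_2$, $b_1$ into $y_1$, then $b_1$ into $z_1$ after a change of variables), each step squaring the bound ($c^4\to c^8\to c^{16}\to c^{32}$), and at two of these steps it deliberately drops the extra companion terms so that, after the substitutions $y_3=y_1+b_2-a$ and $b_2=b_3+a-b_1$, the surviving ten factors are exactly those in the statement. Supplying that explicit chain (or its reverse, after symmetrizing the target by inserting the missing factors) is what your proposal still needs.
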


\begin{proof}Since the density of $A$ is $c$ we have\footnote{In the first step we prove the standard fact that the number of additive quadruples in a set in $G$ of density $c$ is at least $c^4 |G|^3$. We could have simply stated that fact, but since the rest of the proof uses identical method, we opted to start from density assumption.}
\begin{align*}c^4 \leq \Big(\exx_x \id_A(x)\Big)^4 = \Big(\exx_{x, y} \id_A(x) \id_A(y)\Big)^2 =& \Big(\exx_{d} \Big(\exx_x \id_A(x) \id_A(x + d)\Big)\Big)^2 \hspace{2cm}\text{(change of variables)}\\
 \leq &\exx_{d} \Big(\exx_x \id_A(x) \id_A(x + d)\Big)^2\hspace{2cm}\text{(by Cauchy-Schwarz)}\\
 = &\exx_{b_1, b_2, b_3} \id_{A}(b_1 + b_2 - b_3) \id_{A}(b_1) \id_{A}(b_2) \id_{A}(b_3).\end{align*}

Let us make a change of variables and use $a = b_1 + b_2 - b_3$ instead of $b_3$. Then
\begin{align*}c^8 \leq &\Big(\exx_{a, b_1, b_2} \id_{A}(a) \id_{A}(b_1) \id_{A}(b_2) \id_{A}(b_1 + b_2 - a)\Big)^2 = \Big(\exx_{a, b_1} \id_{A}(a) \id_{A}(b_1) \Big(\exx_{b_2} \id_{A}(b_2) \id_{A}(b_1 + b_2 - a)\Big)\Big)^2\\
\leq & \exx_{a, b_1} \id_{A}(a) \id_{A}(b_1) \Big(\exx_{b_2} \id_{A}(b_2) \id_{A}(b_1 + b_2 - a)\Big)^2\hspace{2cm}\text{(by Cauchy-Schwarz)}\\
= & \exx_{a, b_1} \id_{A}(a) \id_{A}(b_1) \exx_{b_2, x_2} \id_{A}(b_2) \id_{A}(b_1 + b_2 - a) \id_{A}(x_2) \id_{A}(b_1 + x_2 - a)\\
= & \exx_{a, b_1, b_2, x_2} \id_{A}(a) \id_{A}(b_1) \id_{A}(b_2) \id_{A}(b_1 + b_2 - a) \id_{A}(x_2) \id_{A}(b_1 + x_2 - a).\end{align*}

Apply Cauchy-Schwarz inequality another time to get
\begin{align*}c^{16} \leq & \Big(\exx_{a, b_2, x_2} \id_{A}(a) \id_{A}(b_2)   \id_{A}(x_2)\Big( \exx_{b_1} \id_{A}(b_1) \id_{A}(b_1 + b_2 - a)\id_{A}(b_1 + x_2 - a)\Big)\Big)^2\\
\leq &\exx_{a, b_2, x_2} \id_{A}(a) \id_{A}(b_2)   \id_{A}(x_2)\Big( \exx_{b_1} \id_{A}(b_1) \id_{A}(b_1 + b_2 - a)\id_{A}(b_1 + x_2 - a)\Big)^2\\
= &\exx_{a, b_2, x_2} \id_{A}(a) \id_{A}(b_2)   \id_{A}(x_2) \exx_{b_1, y_1} \id_{A}(b_1)\id_{A}(y_1) \id_{A}(b_1 + b_2 - a) \id_{A}(y_1 + b_2 - a)\id_{A}(b_1 + x_2 - a)\id_{A}(y_1 + x_2 - a)\\
\leq &\exx_{a, b_1, b_2, x_2, y_1}\id_{A}(a)\id_{A}(b_1) \id_{A}(b_2)\id_{A}(b_1 + b_2 - a)   \id_{A}(x_2) \id_{A}(b_1 + x_2 - a)\id_{A}(y_1) \id_{A}(y_1 + b_2 - a),\end{align*}
where we omitted term $\id_{A}(y_1 + x_2 - a)$ in the last line, which is fine as all terms take values in the interval $[0,1]$.\\
\indent Make another change of variables and use $y_3 = y_1 + b_2 - a$ instead of $y_1$ so we get 
\[c^{16} \leq \exx_{a, b_1, b_2, x_2, y_3}\id_{A}(a)\id_{A}(b_1) \id_{A}(b_2)\id_{A}(b_1 + b_2 - a)   \id_{A}(x_2) \id_{A}(b_1 + x_2 - a)\id_{A}(y_3) \id_{A}(y_3 + a - b_2).\]
Make a further change of variables and use $b_3 = b_1 + b_2 - a$ instead of $b_2$. Thus
\begin{align*}c^{32} \leq &\Big(\exx_{a, b_1, b_3, x_2, y_3} \id_{A}(a)\id_{A}(b_1) \id_{A}(b_3 + a - b_1)\id_{A}(b_3)   \id_{A}(x_2) \id_{A}(b_1 + x_2 - a)\id_{A}(y_3) \id_{A}(y_3 + b_1 - b_3)\Big)^2 \\
= & \Big(\exx_{a, b_3, x_2, y_3} \id_{A}(a) \id_{A}(b_3)   \id_{A}(x_2) \id_{A}(y_3) \Big(\exx_{b_1} \id_{A}(b_1) \id_{A}(b_3 + a - b_1) \id_{A}(b_1 + x_2 - a) \id_{A}(y_3 + b_1 - b_3)\Big)\Big)^2\\
\leq &\exx_{a, b_3, x_2, y_3} \id_{A}(a) \id_{A}(b_3)   \id_{A}(x_2) \id_{A}(y_3) \Big(\exx_{b_1} \id_{A}(b_1) \id_{A}(b_3 + a - b_1) \id_{A}(b_1 + x_2 - a) \id_{A}(y_3 + b_1 - b_3)\Big)^2\\
= & \exx_{a, b_3, x_2, y_3} \id_{A}(a) \id_{A}(b_3)   \id_{A}(x_2) \id_{A}(y_3) \exx_{b_1, z_1} \id_{A}(b_1) \id_{A}(b_3 + a - b_1) \id_{A}(b_1 + x_2 - a) \id_{A}(y_3 + b_1 - b_3)\\
&\hspace{6cm}\id_{A}(z_1) \id_{A}(b_3 + a - z_1) \id_{A}(z_1 + x_2 - a) \id_{A}(y_3 + z_1 - b_3)\\
\leq &  \exx_{a, b_1, b_3, x_2, y_3, z_1} \id_{A}(a) \id_{A}(b_1) \id_{A}(b_3 + a - b_1) \id_{A}(b_3)   \id_{A}(x_2)  \id_{A}(b_1 + x_2 - a) \\
&\hspace{8cm}\id_{A}(y_3)\id_{A}(y_3 + b_1 - b_3) \id_{A}(z_1) \id_{A}(b_3 + a - z_1) 
\end{align*}
where we used Cauchy-Schwarz inequality in the first inequality above and omitted terms $ \id_{A}(z_1 + x_2 - a)$ and $\id_{A}(y_3 + z_1 - b_3)$ in the second. We make the final change of variables and use $b_2 = b_3 + a - b_1$ instead of $a$. Thus 
\begin{align*}c^{32} \leq &  \exx_{b_1,b_2, b_3, x_2, y_3, z_1} \id_{A}(b_1) \id_{A}(b_2) \id_{A}(b_3) \id_{A}(b_1 + b_2 - b_3)   \id_{A}(x_2)  \id_{A}(x_2 - b_2 + b_3) \\
&\hspace{6cm}\id_{A}(y_3)\id_{A}(y_3 + b_1 - b_3) \id_{A}(z_1) \id_{A}(b_1 + b_2 - z_1).\qedhere\end{align*}
\end{proof}

We also need the fact that linear maps of very high rank between spaces of same dimension can be efficiently modified to yield an isomorphism. The proof is elementary.

\begin{lemma}\label{nearisomlemma}Let $A$ and $B$ be two vector spaces of dimension $d$ and let $\phi \colon A \to B$ be a linear map of rank $d - \ell$. Then there exists an isomorphism $\psi \colon A \to B$ such that $\on{rank}(\phi - \psi) \leq \ell$.\end{lemma}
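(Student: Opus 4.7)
The plan is to exploit the fact that the rank deficiency is $\ell$, so both the kernel of $\phi$ and the cokernel of $\phi$ have dimension exactly $\ell$, and we only need to fill in an isomorphism on these $\ell$-dimensional complements without disturbing the part where $\phi$ is already injective.

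Concretely, I would first write $K = \ker \phi$, which has dimension $\ell$ by rank-nullity, and choose a complement $C$ of $K$ in $A$, so that $A = K \oplus C$ and $\phi\restriction_{C}$ is an isomorphism from $C$ onto $\on{Im}\phi$. Similarly, choose a complement $D$ of $\on{Im}\phi$ in $B$, so that $B = \on{Im}\phi \oplus D$ and $\dim D = d - (d - \ell) = \ell$. Since $\dim K = \dim D = \ell$, fix any linear isomorphism $\alpha \colon K \to D$.

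Next, I would define $\psi \colon A \to B$ on the direct sum decomposition by $\psi(k + c) = \phi(c) + \alpha(k)$ for $k \in K$ and $c \in C$. To see that $\psi$ is an isomorphism, suppose $\psi(k+c) = 0$; then $\phi(c) = -\alpha(k)$, and since $\phi(c) \in \on{Im}\phi$ while $\alpha(k) \in D$ and $\on{Im}\phi \cap D = 0$, both summands vanish, giving $\alpha(k) = 0$ and $\phi(c) = 0$. Because $\alpha$ is an isomorphism and $\phi\restriction_C$ is injective, we conclude $k = c = 0$, so $\psi$ has trivial kernel and is therefore an isomorphism between spaces of the same dimension.

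Finally, computing $(\phi - \psi)(k + c) = \phi(k) + \phi(c) - \phi(c) - \alpha(k) = -\alpha(k)$ (using $\phi(k) = 0$ for $k \in K$), we see that $\phi - \psi$ vanishes on $C$ and agrees with $-\alpha$ on $K$. Its image therefore lies in $D$, which has dimension $\ell$, so $\on{rank}(\phi - \psi) \leq \ell$, completing the proof. There is no real obstacle here; the only point worth noting is that one must choose the complement $D$ of $\on{Im}\phi$ inside $B$ (which exists because we work over a field) in order to guarantee $\on{Im}\phi \cap D = 0$, which is what forces $\psi$ to be injective.
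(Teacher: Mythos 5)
Your proposal is correct and is essentially the same argument as the paper's: the paper also writes $A = K \oplus U$ with $K = \ker\phi$, picks a complement $V$ of $\on{Im}\phi$ in $B$, fixes an isomorphism $\theta \colon K \to V$, and sets $\psi = \phi + \theta \circ \pi$, which coincides with your $\psi(k+c) = \phi(c) + \alpha(k)$. The injectivity and rank computations match as well, so there is nothing to add.
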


\begin{proof}By the rank-nullity theorem, the kernel $K$ of $\phi$ has dimension $\ell$. Let $U$ be an arbitrary subspace such that $A = K \oplus U$, and let $\pi \colon A \to K$ be the resulting projection onto $K$. Then $I = \phi(U)$ is the image of the map $\phi$. Let $B = I \oplus V$ for some subspace $V$. Then $\dim V = d - \dim I = \ell = \dim K$, so there exists a linear isomorphism $\theta \colon K \to V$. Define $\psi = \phi + \theta \circ \pi$. We claim that $\psi$ is isomorphism. It suffices to prove that $\psi$ is injective. To that end, let $x \in A$ be such that $\psi(x) = 0$. Then $\phi(x) + \theta(\pi(x)) = 0$. However, $\phi(x) \in I$ and $\theta(\pi(x)) \in V$, so as $I \cap V = 0$, it follows that $\phi(x) = 0$ and $\theta(\pi(x)) = 0$. Hence, $x \in K$ and $\pi(x) = 0$, so $x = 0$, as desired.\end{proof}

Let $G$ and $H$ be finite-dimensional vector spaces over $\mathbb{F}_p$ and let $A \subseteq G$ be a subset $G \times H$. Let $\phi \colon A \to K$ be a map. We say that $\phi$ \emph{respects} all horizontal additive quadruples if for all $y \in H$ and $x_1, x_2, x_3, x_4 \in G$ such that $x_1 + x_2 = x_3 + x_4$ and $(x_i, y) \in A$ for $i \in [4]$ we have $\phi(x_1, y) + \phi(x_2, y) = \phi(x_3, y) + \phi(x_4, y)$. Analogously, we say that $\phi$ \emph{respects} all vertical additive quadruples if the same condition holds with the roles of $G$ and $H$ reversed. If $\phi$ respects all horizontal and vertical additive quadruples we say that $\phi$ is a \emph{Freiman bihomomorphism}. It turns out that global biaffine maps are essentially the only sources of Freiman bihomomorphisms. This theorem was first proved in~\cite{U4paper} by Gowers and the author and the bounds were later improved by Lovett (personal communication), and by Kim, Li and Tidor~\cite{KimLiTidor} by optimizing some steps in the original proof.

\begin{theorem}\label{FreimanBihomomorphism} Let $A \subseteq G$ be a subset $G \times H$ of density $c$. Suppose that $\phi \colon G \times H \to K$ is a Freiman bihomomorphism. Then there eixsts a biaffine map $\Phi \colon G \times H \to K$ such that $\Phi = \phi$ holds for at least $c'|G||H|$ points in $A$, where $c' = \exp\Big(-\exp(O(\log^{O(1)} c^{-1}))\Big)$.\end{theorem}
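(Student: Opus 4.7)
The plan is to apply Theorem~\ref{approxHommInv} twice, once in each coordinate direction, and stitch the resulting affine maps into a global biaffine map. First, let $Y = \{y \in H : |A_{\bcdot y}| \geq (c/2)|G|\}$, which by an averaging argument satisfies $|Y| \geq (c/2)|H|$. For each $y \in Y$, the restriction $\phi(\cdot, y)\colon A_{\bcdot y} \to K$ respects all horizontal additive quadruples in $A_{\bcdot y}$, and since $|A_{\bcdot y}| \geq (c/2)|G|$ there are at least $(c/2)^4|G|^3$ such quadruples. Theorem~\ref{approxHommInv} therefore yields an affine map $\Phi_y(x) = L_y(x) + b_y$, with $L_y \in \operatorname{Hom}(G,K)$ and $b_y \in K$, agreeing with $\phi(\cdot, y)$ on a set $A'_y \subseteq A_{\bcdot y}$ of size at least $c_1|G|$, where $c_1 = \exp(-\log^{O(1)}(c^{-1}))$.

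The heart of the proof is to use the vertical Freiman bihomomorphism property to transfer additive structure to the coefficient maps $y \mapsto L_y$ and $y \mapsto b_y$. For any quadruple $(y_1, y_2, y_3, y_4) \in Y^4$ with $y_1 + y_2 = y_3 + y_4$ and any common witness $x \in A'_{y_1} \cap A'_{y_2} \cap A'_{y_3} \cap A'_{y_4}$, the identity $\phi(x, y_1) + \phi(x, y_2) = \phi(x, y_3) + \phi(x, y_4)$ expands to
\[(L_{y_1} + L_{y_2} - L_{y_3} - L_{y_4})(x) = -(b_{y_1} + b_{y_2} - b_{y_3} - b_{y_4}).\]
A double-counting argument applied to the set $\{(y,x) : y \in Y,\ x \in A'_y\}$, combined with Cauchy--Schwarz, produces at least $c_2|H|^3$ quadruples in $Y^4$ each admitting at least $c_2|G|$ common witnesses, for some $c_2 = \exp(-\log^{O(1)}(c^{-1}))$. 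Since the left-hand side of the identity is affine in $x$ and takes a fixed value on a sufficiently large set of witnesses, its linear part $L_{y_1} + L_{y_2} - L_{y_3} - L_{y_4}$ and constant term $b_{y_1} + b_{y_2} - b_{y_3} - b_{y_4}$ must separately vanish. Thus $L \colon Y \to \operatorname{Hom}(G,K)$ and $b \colon Y \to K$ each respect many additive quadruples.

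A second application of Theorem~\ref{approxHommInv} to $L$ and to $b$ then yields global affine maps $L^* \colon H \to \operatorname{Hom}(G, K)$ and $b^* \colon H \to K$ such that $L^* = L$ and $b^* = b$ simultaneously on a subset $Y' \subseteq Y$ with $|Y'| \geq c_3|H|$, where $c_3 = \exp(-\exp(\log^{O(1)}(c^{-1})))$. Defining $\Phi(x, y) = L^*(y)(x) + b^*(y)$ gives a biaffine map which, for every $y \in Y'$, agrees with $\phi(\cdot, y) = \Phi_y$ on the $c_1|G|$ points of $A'_y \subseteq A_{\bcdot y}$; summing over $Y'$ produces at least $c_3 c_1 |G||H|$ points of $A$ on which $\Phi = \phi$, matching the claimed bound. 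The main obstacle is the middle step: extracting, with controlled losses, many quadruples in $Y^4$ that admit a common-witness density large enough to collapse the affine identity above. This is the source of the doubly exponential dependence in the final bound, and is closely related to the technical core of the proofs in~\cite{U4paper,KimLiTidor}.
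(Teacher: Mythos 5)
There is a genuine gap, and it sits exactly where you flag "the main obstacle". From the identity $(L_{y_1}+L_{y_2}-L_{y_3}-L_{y_4})(x) = -(b_{y_1}+b_{y_2}-b_{y_3}-b_{y_4})$ holding for a set of common witnesses $x$ of density $c_2$, you cannot conclude that the linear part and the constant term "must separately vanish". All you learn is that the linear map $M=L_{y_1}+L_{y_2}-L_{y_3}-L_{y_4}$ is constant on a set of density $c_2$, i.e.\ that set lies in a single coset of $\ker M$; hence $\on{rank} M \leq \log_p(c_2^{-1})$, and the constant on the right-hand side is some element of $\on{Im} M$, not necessarily $0$. (The affine maps $\Phi_y$ produced by Theorem~\ref{approxHommInv} are also far from unique, so there is no canonical choice that would force exact cancellation.) Consequently $y\mapsto L_y$ only respects additive quadruples \emph{up to bounded-rank errors}, and Theorem~\ref{approxHommInv} cannot be applied to it as stated: converting many low-rank-error quadruples into an exact global biaffine (or bilinear-plus-affine) structure is precisely the hard content here. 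The natural tool for that conversion in this paper is Corollary~\ref{friemanforlinearhom}, but that corollary is itself deduced from Theorem~\ref{FreimanBihomomorphism}, so invoking it would be circular; the independent route is the Kazhdan--Ziegler approximate-cohomology argument via the $\mathsf{U}^4$ inverse theorem~\cite{ApproxCohomology}, which is of the same depth as the theorem you are trying to prove.

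For comparison: the paper does not prove Theorem~\ref{FreimanBihomomorphism} at all; it quotes it from~\cite{U4paper} (with improved bounds in~\cite{KimLiTidor}), and remarks that it was originally proved in order to obtain the quantitative inverse theorem for the $\mathsf{U}^4$ norm. That equivalence in strength is a good sanity check that a double application of the linear approximate-homomorphism theorem cannot suffice: your first paragraph (row-wise application of Theorem~\ref{approxHommInv}) and the double-counting for common witnesses are fine, but the passage from "almost additive up to low rank" to "exactly additive" is where the genuinely new, lengthy argument of~\cite{U4paper} lives, and your proposal does not supply it.
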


In this paper, we shall use the following corollary. 

\begin{corollary}\label{friemanforlinearhom}Let $G, H, K$ be finite-dimensional vector spaces over $\mathbb{F}_p$ and let $A \subseteq G$ be a subset of density $c$ and let for each $a \in A$ be given a linear map $\phi_a \colon H \to K$. Suppose that for at least $c |G|^3$ additive quadruples $(a_1, a_2, a_3, a_4) \in A^4$ we have 
\begin{equation}\label{rankapproxbound}\on{rank} \Big(\phi_{a_1} + \phi_{a_2} - \phi_{a_3} - \phi_{a_4}\Big) \leq r.\end{equation}
Then there exists a map $\Phi \colon G \times H \to K$, which is affine in the first coordinate and linear in the second, such that $\on{rank}(\Phi(a, \cdot) - \phi_a) \leq \exp\Big((\log c^{-1} + r)^{O(1)}\Big)$ for at least $\exp\Big(-\exp\Big((\log c^{-1} + r)^{O(1)}\Big)\Big)|G|$ of $a \in A$, where $\Phi(a, \cdot)$ stands for the map from $H$ to $K$ given by $b \mapsto \Phi(a,b)$.\end{corollary}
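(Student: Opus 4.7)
The strategy is to package the data $\{\phi_a\}_{a \in A}$ as a single map $\Psi \colon A \times H \to K$ by $\Psi(a, h) = \phi_a(h)$, force $\Psi$ to be an \emph{exact} Freiman bihomomorphism on a dense subset of $A \times H$, and then apply Theorem~\ref{FreimanBihomomorphism}. By construction $\Psi(a, \cdot) = \phi_a$ is linear, so $\Psi$ respects every vertical additive quadruple automatically; the entire work lies on the horizontal side.

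For each additive quadruple $(a_1, a_2, a_3, a_4) \in A^4$ satisfying the rank bound, the set of $h \in H$ for which $\phi_{a_1}(h) + \phi_{a_2}(h) = \phi_{a_3}(h) + \phi_{a_4}(h)$ is exactly $\ker(\phi_{a_1} + \phi_{a_2} - \phi_{a_3} - \phi_{a_4})$, a subspace of codimension at most $r$ in $H$ and hence of size at least $p^{-r}|H|$. A first-moment (reverse-Markov) argument then produces a subset $H_0 \subseteq H$ of density at least $\tfrac{c}{2} p^{-r}$ such that, for every $h \in H_0$, the slice $\psi_h \colon A \to K$, $\psi_h(a) := \phi_a(h)$, respects at least $\tfrac{c}{2} p^{-r} |G|^3$ additive quadruples in $A$. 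Applying Theorem~\ref{approxHommInv} to each such slice yields an affine map $\tilde\Phi_h \colon G \to K$ coinciding with $\psi_h$ on a set $A_h \subseteq A$ of size at least $c_1 |G|$, where $c_1 = \exp(-\log^{O(1)}(p^r c^{-1}))$. Set
\[B := \{(a, h) \in A \times H : h \in H_0,\ a \in A_h\}.\]
Then $B$ has density at least $\tfrac{c}{2} c_1 p^{-r}$ in $G \times H$, and $\Psi|_B$ is a genuine Freiman bihomomorphism: every horizontal quadruple inside $B$ lies in a single slice $A_h \times \{h\}$ where $\psi_h$ agrees with the affine $\tilde\Phi_h$, and every vertical quadruple is respected automatically by linearity of each $\phi_a$.

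Apply Theorem~\ref{FreimanBihomomorphism} to $\Psi|_B$ to obtain a biaffine map $\Phi^{(0)} \colon G \times H \to K$ with $\Phi^{(0)} = \Psi$ on at least $c' |G| |H|$ points of $B$, where $c' = \exp\bigl(-\exp((\log c^{-1} + r)^{O(1)})\bigr)$. To pass from biaffine to affine-in-$a$, linear-in-$h$, set $\Phi(a, h) := \Phi^{(0)}(a, h) - \Phi^{(0)}(a, 0)$; writing the biaffine decomposition $\Phi^{(0)}(a, h) = \beta(a, h) + L_1(a) + L_2(h) + \mu$ with $\beta$ bilinear, $L_1, L_2$ linear and $\mu$ constant, one gets $\Phi(a, h) = \beta(a, h) + L_2(h)$, which is linear in $h$ and linear (hence affine) in $a$. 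For any $a \in A$ such that $\{h : \Phi^{(0)}(a, h) = \phi_a(h)\}$ has at least $\tfrac{c'}{2} |H|$ elements, this set is the zero locus of the affine-in-$h$ map $\Phi^{(0)}(a, \cdot) - \phi_a$, hence an affine subspace of codimension at most $\log_p(2/c')$; the linear part of this affine map is exactly $\Phi(a, \cdot) - \phi_a$, so its rank is at most $\log_p(2/c') = \exp((\log c^{-1} + r)^{O(1)})$. A standard pigeonhole on the total $c'|G||H|$ agreements inside $B$ produces at least $\tfrac{c'}{2} |G|$ such values of $a$, matching the quantitative conclusion of the corollary.

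The crux of the argument is the transition from the approximate (rank-based) linear data to an exact bihomomorphism structure amenable to Theorem~\ref{FreimanBihomomorphism}. The slicing step above handles this by exploiting the freely available vertical linearity of $\Psi$ to convert the small-rank hypothesis, via positivity of the codimension of the kernel, into a positive density of horizontal quadruples respected on each of a large family of slices; this is precisely the hypothesis demanded by the one-dimensional approximate homomorphism theorem, and it supplies the slice-by-slice affine approximations whose graphs cut out the desired subset $B$.
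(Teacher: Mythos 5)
Your proposal is correct and follows essentially the same route as the paper: both encode the family as $\psi(a,h)=\phi_a(h)$, exploit that vertical quadruples are automatic, use the codimension-$\le r$ kernels plus averaging and Theorem~\ref{approxHommInv} slice by slice to obtain a dense subset of $A\times H$ on which $\psi$ is a Freiman bihomomorphism, apply Theorem~\ref{FreimanBihomomorphism}, subtract $\Phi^{(0)}(a,0)$ to make the map linear in the second variable, and bound $\on{rank}(\Phi(a,\cdot)-\phi_a)$ on columns with many agreement points. The only cosmetic difference is in the last step, where you note that the agreement set in a column is the zero locus of an affine map (hence an affine subspace of small codimension), while the paper reaches the same rank bound via a Cauchy--Schwarz argument on differences.
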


Let us remark that a similar result was proved by Kazhdan and Ziegler in~\cite{ApproxCohomology} by using the inverse theorem for $\mathsf{U}^4$ norm. Unlike the result above, Kazhdan and Ziegler assume that bound~\eqref{rankapproxbound} holds for all choices of additive quadruples in $G$. It is very likely that their proof can be modified to work in this setting as well, but it turns out that property~\eqref{rankapproxbound} almost implies that $\phi$ is a Freiman bihomomorphism, so applying Theorem~\ref{FreimanBihomomorphism} gives a shorter deduction. This is no surprise as Theorem~\ref{FreimanBihomomorphism} was initially proved in order to give a quantitative inverse theorem for $\mathsf{U}^4$ norm.

\begin{proof}Consider map $\psi \colon A \times H \to K$ given by $\psi(a,x) = \phi_a(x)$. Since each $\phi_a$ is linear, the map $\psi$ respects all vertical additive quadruples. When it comes to horizontal additive quadruples, given any additive quadruple $(a_1, a_2, a_3, a_4) \in A^4$ such that 
\[\on{rank} \Big(\phi_{a_1} + \phi_{a_2} - \phi_{a_3} - \phi_{a_4}\Big) \leq r,\]
it follows that the codimension of kernel of $\phi_{a_1} + \phi_{a_2} - \phi_{a_3} - \phi_{a_4}$ is at most $r$, so we have at least $p^{-r} |H|$ of elements $y \in H$ for which $\psi$ respects horizontal additive quadruple $\Big((a_1, y), (a_2, y), (a_3, y), (a_4, y)\Big)$. By averaging and using Theorem~\ref{approxHommInv} we may pass to a subset $B \subseteq A \times H$, of size $|B| \geq \exp\Big(-(\log c^{-1} + r)^{O(1)}\Big) |G||H|$, on which $\psi$ is a Freiman bihomomorphism. By Theorem~\ref{FreimanBihomomorphism} there exists a further subset $B' \subseteq B$ and biaffine map $\Phi \colon G \times H\to K$ such that $\Phi = \psi$ on $B'$, with $|B'| \geq c_1 |G||H|$ where $c_1 \geq  \exp\Big(-\exp\Big((\log c^{-1} + r)^{O(1)}\Big)\Big)$. Define $\tilde{\Phi}(x,y) = \Phi(x,y) - \Phi(x,0)$, which is linear in $y$. Take now any $a \in A$ for which $|B'_{a \bcdot}| \geq \frac{c_1}{2} |H|$. For each $y \in B'_{a \bcdot}$ we have $\Phi(a,y) = \phi_a(y)$. By Cauchy-Schwarz inequality, we have $\tilde{\Phi}(a, y-y') = \Phi(a,y) - \Phi(a,y') = \phi_a(y) - \phi_a(y') = \phi_a(y-y')$  for at least $\frac{c_1^{2}}{4} |H|^2$ choices of $(y,y')$. Thus, $\on{rank}\Big(\tilde{\Phi}(a, \cdot) - \phi_a\Big) \leq \log_p (4c_1^{-2})$, for each such $a$, of which there are at least $\frac{c_1}{2} |G|$.\end{proof}

\section{From approximate quadratic varieties to approximate quasirandom linear systems of subspaces} 

In this section, we begin the study of approximate quadratic varieties. Our goal is to obtain an approximate quasirandom linear systems of subspaces that is closely related to the given quadratic variety. The main result of this section is the following theorem.

\begin{theorem}\label{mainstep1thm}There exists an absolute constant $D \geq 1$ such that the following holds. Let $V \subseteq G$ be a $(c_0, \delta, \varepsilon)$-approximate quadratic variety and suppose that $\varepsilon \leq \exp\Big(- \log^D(2c_0^{-1})\Big) \delta^{288}$. Then there exist a quantity $c_1$, a set $A \subseteq G$ and a collection of subspaces $W_a \leq G$ indexed by elements $a \in A$ such that
\begin{itemize}
\item[\textbf{(i)}] $\exp(-\log^D (2c_0^{-1})) \leq c_1 \leq 1$,
\item[\textbf{(ii)}] $|A| \geq c_1 |G|$,
\item[\textbf{(iii)}] for each $a \in A$ and $b \in W_a$ we have ${\conv}^{(8)} \id_{V \cap V -a}(b) \geq c_1 \delta^{15}$,
\item[\textbf{(iv)}] $c_1 \leq \frac{|W_a|}{\delta|G|} \leq c_1^{-1}$ holds for all $a \in A$,
\item[\textbf{(v)}] for $r \in [9]$, for all but at most $D \varepsilon \delta^{-32r} |G|^r$ choices of $(a_1, \dots, a_r) \in A^r$ we have
\[\frac{|W_{a_1} \cap W_{a_2} \cap \dots \cap W_{a_r}|}{\delta^r|G|} \leq c_1^{-1},\] 
 and,
\item[\textbf{(vi)}] for at least $c_1 |A|^6$ 6-tuples $(b_1, b_2, b_3, x_2, y_3, z_1) \in A^6$ we have that $b_1 + b_2 - b_3,  x_2 - b_2 + b_3, y_3  + b_1 - b_3, b_1 + b_2 - z_1\in A$ and 
\[c_1 \delta^6 |G| \leq |W_{b_1} \cap W_{b_2} \cap W_{b_3} \cap W_{b_1 + b_2 - b_3} \cap W_{x_2} \cap W_{x_2 - b_2 + b_3} \cap W_{y_3} \cap W_{y_3  + b_1 - b_3} \cap W_{z_1} \cap W_{b_1 + b_2 - z_1}|.\]
\end{itemize}
\end{theorem}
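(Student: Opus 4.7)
The plan is to define $W_a$ from the large values of the autocorrelation $\id_{V_a} \conv \id_{V_a}$, where $V_a := V \cap (V-a)$, via Theorem~\ref{addquadsFreiman}, and to analyse the resulting family using both the $\mathsf{U}^2$-quasirandomness of $V$ and the duality identity
\[\id_{V_a} \conv \id_{V_a}(b) \;=\; \id_{V_b} \conv \id_{V_b}(a).\]
Every expression in which the shift variables appear at most quadratically in translates of $\id_V$, in particular the low moments $\ex_b(\id_{V_a}\conv\id_{V_a})^k(b)$ and the $r$-wise intersection sizes needed for (v), is of $\mathsf{U}^2$-complexity at most~$1$, and expanding $\id_V = \delta + (\id_V - \delta)$ under $\|\id_V - \delta\|_{\mathsf{U}^2} \leq \varepsilon$ therefore controls such expressions to within an $O(\varepsilon\delta^{-O(1)})$ error of their random values.

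To construct the subspaces, I first rewrite the cube count hypothesis as $\ex_a\|\id_{V_a}\|_{\mathsf{U}^2}^4 = c_0\delta^7$, so that on a density-$\Omega(c_0)$ set $A_0 \subseteq G$ one has $\ex_b(\id_{V_a}\conv\id_{V_a})^2(b) \geq \Omega(c_0\delta^7)$. Combining this lower bound with the second- and fourth-moment upper bounds from the first paragraph, for $a \in A_0$ the level set $S_a := \{b:(\id_{V_a}\conv\id_{V_a})(b) \geq \eta\delta^3\}$, with $\eta = \eta(c_0)$ sufficiently small, has density $\Omega_{c_0}(\delta)$ and contains at least $\Omega_{c_0}(\delta^3|G|^3)$ additive quadruples. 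Theorem~\ref{addquadsFreiman} then yields a subspace $W_a \subseteq 2S_a - 2S_a$ of size $\Omega_{c_0}(\delta|G|)$, every element $b$ of which admits at least $\Omega_{c_0}(\delta^3|G|^3)$ representations $b = s_1+s_2-s_3-s_4$ with $s_i \in S_a$. Since each $(\id_{V_a}\conv\id_{V_a})(s_i) \geq \eta\delta^3$, summing over representations delivers ${\conv}^{(8)}\id_{V_a}(b) \geq c_1\delta^{15}$, which is (iii); pigeonholing $A \subseteq A_0$ so that $|W_a|$ lies in a window of the form $[c_1\delta|G|, c_1^{-1}\delta|G|]$ then gives (i), (ii) and (iv).

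Property (v) follows from the pointwise majorisation $\id_{W_a}(x) \leq c_1^{-1}\delta^{-15}\,{\conv}^{(8)}\id_{V_a}(x)$, which is immediate from (iii). Averaging $|W_{a_1}\cap\dots\cap W_{a_r}|/|G|$ over $r$-tuples $(a_1,\dots,a_r) \in A^r$ then produces
\[c_1^{-r}\delta^{-15r}\;\exx_{x}\,\exx_{a_1,\dots,a_r}\prod_{i \in [r]}\id_A(a_i)\,{\conv}^{(8)}\id_{V_{a_i}}(x),\]
which after expanding $\id_V = \delta + (\id_V - \delta)$ and applying the $\mathsf{U}^2$-control from the first paragraph is within $O(\varepsilon\delta^{-32r})$ of its random value $\delta^r$; Markov's inequality then produces the exceptional bound $D\varepsilon\delta^{-32r}|G|^r$ claimed in (v).

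The main obstacle is (vi), since $\mathsf{U}^2$-arguments on their own would only yield upper bounds on such intersections. Here I would use the duality identity to convert the product of the ten indicators $\id_{W_{a_i}}(y)$ at an inner variable $y$ into an average over $x$ of $\prod_{i\in[10]}(\id_{V_x}\conv\id_{V_x})(a_i)$, where the $a_i$ are the ten indices appearing in (vi). For a density-$\Omega_{c_0}(1)$ set of $x$'s the analysis of the second paragraph applies with $a$ replaced by $x$, so that $S_x$ is an $\Omega_{c_0}(1)$-dense subset of a set of size $O_{c_0}(\delta|G|)$. Lemma~\ref{multipleAddQuadsBound} applied to $S_x$ then produces at least $\Omega_{c_0}(|G|^6)$ six-tuples $(b_1,b_2,b_3,x_2,y_3,z_1) \in S_x^6$ whose four derived points $b_1+b_2-b_3$, $x_2-b_2+b_3$, $y_3+b_1-b_3$, $b_1+b_2-z_1$ also lie in $S_x$, and each such configuration contributes at least $(\eta\delta^3)^{10}$ to the integrand, yielding a lower bound of the claimed order on the ten-fold intersection. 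The delicate last step is a pigeonhole forcing all ten resulting indices to lie in $A$: the hypothesis $\varepsilon \leq \exp(-\log^D(2c_0^{-1}))\delta^{288}$ is precisely what is required to absorb the accumulated $\delta^{-32r}$ losses from (v) for $r \leq 9$ together with the error terms introduced by the duality rewriting.
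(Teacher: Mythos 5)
Your outline does track the paper's strategy (level sets of $\id_{V\cap V-a}\conv\id_{V\cap V-a}$, Theorem~\ref{addquadsFreiman}, the duality identity, Lemma~\ref{multipleAddQuadsBound}), but the two steps that carry the real weight are asserted rather than proved, and the mechanism you cite for them is wrong. First, the blanket claim that the low moments of $\id_{V\cap V-a}\conv\id_{V\cap V-a}$ are of $\mathsf{U}^2$-complexity at most $1$ and lie within $O(\varepsilon\delta^{-O(1)})$ of their random values is false: the second moment averaged over $a$ is exactly the additive-cube count, which by hypothesis equals $c_0\delta^7\gg\delta^8$; the usable upper bounds come instead from discarding terms (the paper's regularity conditions, Claims~\ref{varietyTranslatesprops1}--\ref{popDiffPropcycles}). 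More seriously, the assertion that the level set $S_a$ contains $\Omega_{c_0}(\delta^3|G|^3)$ additive quadruples does not follow from second- and fourth-moment bounds: a set of density $\approx\delta$ has only $\approx\delta^4|G|^3$ quadruples trivially, and a function bounded by $2\delta^3$ whose level set at height $\eta\delta^3$ is additively unstructured of density $\approx\delta$ is consistent with every moment bound you invoke. That extra factor $\delta^{-1}$ is the crux of the whole step; the paper obtains it via the bipartite graph on $V\cap(V-a)$ whose edges are popular differences, counting $4$-cycles and bounding the number of $4$-cycles per additive quadruple by $\approx\delta^5|G|$ (Claim~\ref{popDiffPropcycles}, i.e.\ regularity condition \textbf{(iii)}), an ingredient entirely absent from your sketch. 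Without it, Theorem~\ref{addquadsFreiman} is applied with energy parameter $\alpha\approx\delta$ and returns subspaces and representation counts degraded by $\exp(-\log^{O(1)}\delta^{-1})$ factors, so \textbf{(i)}, \textbf{(iii)}, \textbf{(iv)} cannot be achieved with $c_1$ depending only on $c_0$.

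For \textbf{(vi)} there is a second structural gap. Duality plus Lemma~\ref{multipleAddQuadsBound} does give (as in Corollary~\ref{commonadditivequads10Cor}) a lower bound on averages of $\prod_{i\in[10]}\id_{V\cap V-t_i}\conv\id_{V\cap V-t_i}(w)$ over the ten-point configurations, but nothing in your construction relates the locus of $w$ where these ten convolutions are simultaneously large to membership in the ten subspaces $W_{t_i}$: Theorem~\ref{addquadsFreiman} produces a subspace inside $2S_a-2S_a$ with no containment (even approximate) of $S_a$ or of the $L^2$-mass of the convolution in $W_a$. The paper needs precisely such a covering property, and builds it: for each $a$ a bounded union $\tilde U_a$ of subspaces capturing all but a $\xi$-fraction of the $L^2$ mass (Proposition~\ref{tildeudefnpropn}, Claim~\ref{tildeupassclaim}), then a Cauchy--Schwarz error analysis (Proposition~\ref{addquad10subintnlarge}) to insert the factors $\id_A(t_i)\id_{\tilde U_{t_i}}(w)$ into the lower bound, then the pointwise bound $\leq 2\delta^3$ to convert convolution values into measure, and finally a random choice of $W_a$ among the pieces together with the upper bound \textbf{(v)} to turn an average bound into many individual $6$-tuples. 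Your closing pigeonhole remark addresses only membership of the derived indices in $A$, not this main difficulty. A smaller but real quantitative issue: in \textbf{(v)}, a first-moment Markov bound cannot produce an exceptional set as small as $D\varepsilon\delta^{-32r}|G|^r$; you need the variance/Chebyshev computation of Claim~\ref{multipleuintnsmall}, which shows $\ex_{a_{[r]}}|F-\delta^{16r}|^2\leq 64r\varepsilon$ for $F(a_{[r]})=\ex_b\prod_i{\conv}^{(8)}\id_{V\cap V-a_i}(b)$.
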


Throughout the section $V, c_0, \delta$ and $\varepsilon$ will be fixed and $V$ will be a $(c_0, \delta, \varepsilon)$-approximate quadratic variety.

\subsection{Elementary estimates}

We frequently need control expressions such as $\exx_{a,b,x} \mder_{a,b}\id_V(x)$ using the fact that the $\mathsf{U}^2$ norm of the difference $\id_V - \delta$ is small. In order to be efficient we record the following lemma which says that if we can find two variables in such an expression that both appear in a single occurrence of $\id_V$, then we may replace that term with $\delta$.

\begin{lemma}\label{u2controlBasic}Let $f_1, \dots, f_r \colon G \to \mathbb{D}$ be functions and let $\lambda_{i,j} \in \mathbb{F}_p$ be coefficients for $i \in [0,r]$, $j \in [s]$. Suppose that for some distinct indices $a , b \in [s]$ we have $\lambda_{i,a}\lambda_{i,b}$ non-zero if and only if $i = 0$. Then
\[\Big|\Big(\exx_{x_1, \dots, x_s} \id_V\Big(\sum_{j \in [s]} \lambda_{0, j} x_j\Big)\prod_{i \in [r]} f_i\Big(\sum_{j \in [s]} \lambda_{i, j} x_j\Big)\Big) - \delta\Big(\exx_{x_1, \dots, x_s} \prod_{i \in [r]} f_i\Big(\sum_{j \in [s]} \lambda_{i, j} x_j\Big)\Big)\Big| \leq \varepsilon.\]\end{lemma}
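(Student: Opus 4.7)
The plan is to set $g := \id_V - \delta$ and rewrite the difference of the two averages in the lemma as
\[D := \exx_{x_1, \dots, x_s} g\Big(\sum_{j \in [s]} \lambda_{0,j} x_j\Big) \prod_{i \in [r]} f_i\Big(\sum_{j \in [s]} \lambda_{i,j} x_j\Big),\]
and then show directly that $|D| \leq \|g\|_{\mathsf{U}^2} \leq \varepsilon$. The strategy is simply to isolate the two distinguished variables $x_a$ and $x_b$ and apply a bilinear $\mathsf{U}^2$ estimate.

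Fixing all other variables $x_j$ with $j \notin \{a,b\}$, the hypothesis that $\lambda_{i,a}\lambda_{i,b} = 0$ for each $i \in [r]$ means that in the argument of every $f_i$ at most one of $x_a$, $x_b$ appears. Partitioning $[r]$ accordingly and collecting terms, the inner expectation takes the form
\[C \cdot \exx_{x_a, x_b} g\big(\lambda_{0,a} x_a + \lambda_{0,b} x_b + y_0\big) F_a(x_a) F_b(x_b),\]
where $C$, $F_a$, $F_b$ are $1$-bounded, and $C$ together with $y_0 := \sum_{j \notin \{a,b\}} \lambda_{0,j} x_j$ depends only on the fixed variables. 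Since $\lambda_{0,a}, \lambda_{0,b} \neq 0$ in $\mathbb{F}_p$, the change of variable $u = \lambda_{0,a} x_a$, $v = \lambda_{0,b} x_b$ (a bijection on $G^2$) rewrites this inner expectation as $C \cdot \exx_{u,v} g(u+v+y_0) \tilde{F}_a(u) \tilde{F}_b(v)$, with $\tilde{F}_a$, $\tilde{F}_b$ still $1$-bounded.

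The key estimate is then the standard bilinear $\mathsf{U}^2$ bound
\[\Big|\exx_{u,v} g(u+v+y_0) \tilde{F}_a(u) \tilde{F}_b(v)\Big| \leq \|g\|_{\mathsf{U}^2},\]
which follows by expanding $g$ in Fourier: the left-hand side equals $\sum_r \hat{g}(r) \omega^{r \cdot y_0} \hat{\tilde{F}_a}(-r) \hat{\tilde{F}_b}(-r)$, and then $\max_r |\hat{g}(r)| \leq \|g\|_{\mathsf{U}^2}$ together with Parseval applied to $\tilde{F}_a, \tilde{F}_b$ gives the claim. Since this bound is uniform in $y_0$ and $|C| \leq 1$, averaging over the remaining variables preserves it and yields $|D| \leq \varepsilon$. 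There is no substantive obstacle here: the content of the lemma is just the observation that singling out two free coordinates which both appear inside the $\id_V$ argument but at most one of which appears in each $f_i$ argument exposes a bilinear average in $g$ that $\mathsf{U}^2$-control handles immediately.
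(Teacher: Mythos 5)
Your proof is correct, but it takes a different route from the paper's. You first condition on the variables $x_j$, $j \notin \{a,b\}$, observe (exactly as the hypothesis is designed to ensure) that each $f_i$ then depends on at most one of $x_a,x_b$, and reduce to the clean bilinear form $\exx_{u,v} g(u+v+y_0)\tilde{F}_a(u)\tilde{F}_b(v)$, which you bound by $\|g\|_{\mathsf{U}^2}$ via Fourier expansion: $\max_\rho |\hat{g}(\rho)| \leq \|g\|_{\mathsf{U}^2}$ (since $\|g\|_{\mathsf{U}^2}^4 = \sum_\rho |\hat{g}(\rho)|^4$) together with Cauchy--Schwarz and Parseval for $\tilde{F}_a,\tilde{F}_b$. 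The paper instead runs the standard generalized von Neumann argument entirely in physical space: two applications of the Cauchy--Schwarz inequality, first in $x_a$ (keeping only the terms indexed by $I = \{i : \lambda_{i,a} \neq 0\}$) and then in $x_b$, which produces $\|g\|_{\mathsf{U}^2}^4$ directly without ever naming Fourier coefficients. The two arguments are equivalent in strength here because the $\mathsf{U}^2$ norm is exactly the $\ell^4$ norm of the Fourier transform; your version is arguably more transparent about \emph{why} the hypothesis on $\lambda_{i,a}\lambda_{i,b}$ is what is needed (it exposes a rank-one bilinear structure in the two distinguished variables), while the paper's Cauchy--Schwarz formulation is the one that scales to higher-order norms and keeps all the $f_i$ in play without the explicit conditioning/grouping step. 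One small point worth making explicit in your write-up: the change of variables $u=\lambda_{0,a}x_a$, $v=\lambda_{0,b}x_b$ is a bijection of $G^2$ precisely because $\lambda_{0,a},\lambda_{0,b}$ are nonzero scalars of $\mathbb{F}_p$ acting on a vector space over $\mathbb{F}_p$, and the functions $F_a, F_b, C$ (hence $\tilde{F}_a,\tilde{F}_b$) depend on the frozen variables, which is harmless since your bilinear bound is uniform in $y_0$ and in these $1$-bounded functions.
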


\begin{proof}Write $g(x) = \id_V(x) - \delta$. Let $I$ be the set of all indices $i \in [r]$ such that $\lambda_{i,a}\not=0$. Standard applications of Cauchy-Schwarz inequality give
\begin{align*} \Big|\exx_{x_1, \dots, x_s} g\Big(\sum_{j \in [s]} \lambda_{0, j} x_j\Big)\prod_{i \in [r]} f_i\Big(\sum_{j \in [s]} \lambda_{i, j} x_j\Big)\Big|^4 \leq &\Big|\exx_{x_{[s] \setminus \{a\}}} \Big|\exx_{x_a} g\Big(\sum_{j \in [s]} \lambda_{0, j} x_j\Big)\prod_{i \in I} f_i\Big(\sum_{j \in [s]} \lambda_{i, j} x_j\Big)\Big|^2\Big|^2\\
=  &\Big|\exx_{x_{[s]}, y} \mder_{\lambda_{0, a}y}g\Big(\sum_{j \in [s]} \lambda_{0, j} x_j\Big) \prod_{i \in I} \mder_{\lambda_{i, a}y}f_i\Big(\sum_{j \in [s]} \lambda_{i, j} x_j\Big)\Big|^2\\
\leq & \exx_{x_{[s]\setminus \{b\}}, y} \Big|\exx_{x_b} \mder_{\lambda_{0, a}y}g\Big(\sum_{j \in [s]} \lambda_{0, j} x_j\Big)\Big|^2\\
=&\exx_{x_{[s]}, y,z} \mder_{\lambda_{0, a}y}\mder_{\lambda_{0, b}z}g\Big(\sum_{j \in [s]} \lambda_{0, j} x_j\Big) = \|g\|_{\mathsf{U}^2}^4.\qedhere\end{align*}\end{proof}

Note that we may still use this lemma if a term $\id_V$ has a variable $x_j$ that occurs only at that place as we can introduce another dummy variable $y$ and make a change of variables $x_j \mapsto x_j + y$.\\

We now use Lemma~\ref{u2controlBasic} to prove some easy properties of the set $V$. The most basic consequence of $\mathsf{U}^2$ uniformity is that translates of $V$ have roughly the same intersection size.

\begin{claim}\label{varietyTranslatesprops1}Let $r$ be a positive integer. We have
\[\exx_{a_1, \dots, a_r} \Big||V \cap (V - a_1) \cap (V - a_2) \cap \dots \cap (V - a_r)| - \delta^{r+1} |G|\Big|^2 \leq 4(r + 1) \varepsilon.\]
In particular, for all but at most $4(r+1)\varepsilon^{1/2} |G|^r$ $r$-tuples $(a_1, \dots, a_r) \in G$ we have $\Big||V \cap (V - a_1) \cap (V - a_2) \cap \dots \cap (V - a_r)| - \delta^{r+1} |G|\Big| \leq \varepsilon^{1/4} |G|$.\end{claim}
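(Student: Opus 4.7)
Proof plan. Write
\[|V\cap(V-a_1)\cap\cdots\cap(V-a_r)|=|G|\,\phi(a_{[r]}),\qquad \phi(a_{[r]})=\exx_x\id_V(x)\prod_{i=1}^r\id_V(x+a_i).\]
Since each $a_i$ appears in only one factor, one has $\exx_{a_{[r]}}\phi(a_{[r]})=\delta^{r+1}$ exactly (factor the inner average and use $\exx_{a_i}\id_V(x+a_i)=\delta$), so
\[\exx_{a_{[r]}}\bigl(\phi(a_{[r]})-\delta^{r+1}\bigr)^2=\exx_{a_{[r]}}\phi(a_{[r]})^2-\delta^{2(r+1)}.\]
The second moment equals
\[\exx_{a_{[r]},x,y}\id_V(x)\id_V(y)\prod_{i=1}^r\id_V(x+a_i)\id_V(y+a_i),\]
which contains $2r+2$ copies of $\id_V$. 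The idea is to reduce each copy to $\delta$ by iterated use of Lemma~\ref{u2controlBasic}, losing at most $\varepsilon$ per step.

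For each $i\in[r]$ the pair of variables $(x,a_i)$ occurs together only in $\id_V(x+a_i)$, so the lemma applies and replaces that factor by $\delta$ at cost $\varepsilon$. Carrying this out for $i=1,\ldots,r$ (and verifying that the hypothesis persists after each substitution) leaves, up to additive error $r\varepsilon$,
\[\delta^r\,\exx_{a_{[r]},x,y}\id_V(x)\id_V(y)\prod_{i=1}^r\id_V(y+a_i).\]
Next, $\id_V(x)$ is the unique factor containing $x$; using the dummy-variable remark following Lemma~\ref{u2controlBasic} (substitute $x\mapsto x+w$ and apply the lemma to the pair $(x,w)$) one replaces it by $\delta$ at cost $\varepsilon$. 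A further $r$ applications—for each $i\in[r]$ the pair $(y,a_i)$ appears only in $\id_V(y+a_i)$—leave just $\exx_y\id_V(y)=\delta$. The total error after $2r+1$ substitutions is at most $(2r+1)\varepsilon$, whence
\[\exx_{a_{[r]}}\bigl(\phi(a_{[r]})-\delta^{r+1}\bigr)^2\le (2r+1)\varepsilon\le 4(r+1)\varepsilon,\]
which, after multiplying by $|G|^2$, gives the first assertion.

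The second assertion is a standard Markov application to the random variable $\phi(a_{[r]})-\delta^{r+1}$: the fraction of $r$-tuples with $|\phi(a_{[r]})-\delta^{r+1}|>\varepsilon^{1/4}$ is at most $4(r+1)\varepsilon/\varepsilon^{1/2}=4(r+1)\varepsilon^{1/2}$, which translates to the stated bound on $r$-tuples with $\bigl||V\cap\bigcap_i(V-a_i)|-\delta^{r+1}|G|\bigr|>\varepsilon^{1/4}|G|$. The only delicate point is bookkeeping the applications of Lemma~\ref{u2controlBasic}, specifically invoking the dummy-variable device at the step where a factor's variable is no longer shared with any surviving factor; once this is done systematically, the rest is routine.
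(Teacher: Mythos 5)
Your proof is correct and follows essentially the same route as the paper: expand the variance, reduce the second moment to $\delta^{2r+2}$ by iterated applications of Lemma~\ref{u2controlBasic} (with the dummy-variable device where needed), and finish with Chebyshev. The only difference is cosmetic — you note that the first moment equals $\delta^{r+1}$ exactly, which saves the paper's extra applications of the lemma to the cross term and even yields the slightly better error $(2r+1)\varepsilon$.
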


\begin{proof}Consider the expression
\begin{align}\exx_{a_1, \dots, a_r} &\Big||V \cap (V - a_1) \cap (V - a_2) \cap \dots \cap (V - a_r)| - \delta^{r+1} |G|\Big|^2\nonumber\\
 = &|G|^2 \exx_{a_1, \dots, a_r} \Big|\exx_x \id_V(x)\id_V(x + a_1) \dots \id_V(x + a_r) - \delta^{r+1}\Big|^2 \nonumber\\
= &|G|^2 \Big(\exx_{\ssk{a_1, \dots, a_r\\x,y}}\id_V(x)\id_V(x + a_1) \dots \id_V(x + a_r) \id_V(y)\id_V(y + a_1) \dots \id_V(y + a_r)\nonumber\\
&\hspace{7cm} - 2\delta^{r+1}\id_V(x)\id_V(x + a_1) \dots \id_V(x + a_r) + \delta^{2r + 2}\Big).\label{varietyTranslatesprops1eq1}\end{align}
Applying Lemma~\ref{u2controlBasic} $2r + 2$ times to terms $\id_V(x+a_i)$ using $x$ and $a_i$ for $i \in [r]$, $\id_V(y+a_i)$ using $y$ and $a_i$ for $i \in [r]$, $\id_V(x)$ using $x$ and $\id_V(y)$ using $y$ in that order,\footnote{Note that the first $2r$ applications of Lemma~\ref{u2controlBasic} have turned terms $\id_V(x+a_i)$ and $\id_V(y+a_i)$ into copies of $\delta$ and after those $2r$ steps $x$ and $y$ only appear in terms $\id_V(x)$ and $\id_V(y)$.} we see that 
\[\Big|\exx_{\ssk{a_1, \dots, a_r\\x,y}}\id_V(x)\id_V(x + a_1) \dots \id_V(x + a_r) \id_V(y)\id_V(y + a_1) \dots \id_V(y + a_r) - \delta^{2r + 2}\Big| \leq (2r + 2)\varepsilon.\]
Applying Lemma~\ref{u2controlBasic} to the simpler expression $\exx_{a_1, \dots, a_r, x}\id_V(x)\id_V(x + a_1) \dots \id_V(x + a_r)$ in a similar fashion, we conclude that expression~\eqref{varietyTranslatesprops1eq1} is at most $(4r + 4)\varepsilon$. The second part of the conclusion follows after averaging.\end{proof}

Next, we give an upper bound on intersections $|V \cap (V-a)\cap (V-b) \cap (V-a-b)|$. We may no longer give a lower bound for the following reason. Recall that we expect $V$ to be closely related to a quadratic variety $Q = \{x \in G \colon \beta(x,x) = 0\}$, defined by some high rank symmetric bilinear map $\beta \colon G \times G \to \mathbb{F}_p^r$. If $x \in V \cap (V-a)\cap (V-b) \cap (V-a-b)$ then $\beta(x,x) = \beta(x + a, x + a) = \beta(x + b, x + b) = \beta(x + a + b, x + a + b) = \lambda$, so $\beta(a,b) = 0$. Notice also that $Q \cap Q - a \cap Q - b\subseteq Q - a - b$. Thus, we expect that the size of intersection $|V \cap (V-a)\cap (V-b) \cap (V-a-b)|$ is about $\delta^3 |G|$ when $\beta(a,b) = 0$, and significantly smaller otherwise. 

\begin{claim}\label{popDiffProp1a}For all but at most $12\varepsilon^{1/2}|G|^2$ pairs of elements $(a, b) \in G^2$ we have $\Big||V \cap (V-a)\cap (V-b)| -\delta^3|G|\Big| \leq \varepsilon^{1/4}|G|$. In particular $|V \cap (V-a)\cap (V-b) \cap (V-a-b)| \leq (\delta^3 + \varepsilon^{1/4})|G|$ holds for all but at most $12\varepsilon^{1/2}|G|^2$ pairs of elements $(a, b) \in G^2$.\end{claim}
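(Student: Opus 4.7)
The plan is to deduce this almost immediately from Claim~\ref{varietyTranslatesprops1}. The first part is literally the $r=2$ instance of that claim: applying it with $r=2$ produces a set of at most $4(r+1)\varepsilon^{1/2}|G|^r = 12\varepsilon^{1/2}|G|^2$ exceptional pairs $(a,b) \in G^2$ for which $\bigl||V \cap (V-a) \cap (V-b)| - \delta^3|G|\bigr| > \varepsilon^{1/4}|G|$. So the first sentence of the claim requires nothing new.

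For the second sentence, the trick is to forget one of the four constraints. Since $V \cap (V-a) \cap (V-b) \cap (V-a-b)$ is a subset of $V \cap (V-a) \cap (V-b)$, we trivially have
\[|V \cap (V-a) \cap (V-b) \cap (V-a-b)| \leq |V \cap (V-a) \cap (V-b)|.\]
Combining this monotonicity with the first part of the claim, for every pair $(a,b)$ outside the same exceptional set of size at most $12\varepsilon^{1/2}|G|^2$ we obtain $|V \cap (V-a) \cap (V-b) \cap (V-a-b)| \leq (\delta^3 + \varepsilon^{1/4})|G|$, as required.

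There is no real obstacle here, since Claim~\ref{varietyTranslatesprops1} already does the heavy lifting via $2r+2$ applications of Lemma~\ref{u2controlBasic}. The remark preceding the claim (namely, that the matching lower bound $|V \cap (V-a) \cap (V-b) \cap (V-a-b)| \geq (\delta^4 - o(1))|G|$ genuinely fails and reflects the hidden quadratic structure through $\beta(a,b)=0$) is motivational but is not needed for the proof itself; we only require the upper bound, which is what monotonicity hands us for free.
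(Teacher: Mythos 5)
Your proof is correct and matches the paper's argument in substance: the paper's own proof of this claim simply redoes the $r=2$ second-moment computation of Claim~\ref{varietyTranslatesprops1} inline (via Lemma~\ref{u2controlBasic}, yielding the same variance bound $12\varepsilon$ and hence the same exceptional set of size $12\varepsilon^{1/2}|G|^2$), and then uses exactly your monotonicity observation $|V \cap (V-a)\cap (V-b) \cap (V-a-b)| \leq |V \cap (V-a)\cap (V-b)|$ for the second sentence. Citing the $r=2$ case of Claim~\ref{varietyTranslatesprops1} instead of re-deriving it is a harmless (indeed slightly cleaner) presentational difference, not a different approach.
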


\begin{proof}Consider
\begin{align*}\exx_{a,b}\Big|\frac{1}{|G|}|V \cap (V-a)\cap (V-b)| -\delta^3\Big|^2 = &\exx_{\ssk{a,b\\x,y}} \id_V(x)\id_V(x + a)\id_V(x+b)\id_V(y)\id_V(y + a)\id_V(y+b)\\
&\hspace{2cm} -2\delta^3 \id_V(x)\id_V(x + a)\id_V(x+b) + \delta^6.\end{align*}
Applying Lemma~\ref{u2controlBasic} 6 times for terms $\id_V(x + a)$ using $x$ and $a$, $\id_V(x + b)$ using $x$ and $b$, $\id_V(x)$ using $x$, $\id_V(y + a)$ using $y$ and $a$, $\id_V(y + b)$ using $y$ and $b$ and $\id_V(y)$ using $y$ in that order, we see that 
\[\Big|\exx_{\ssk{a,b\\x,y}} \id_V(x)\id_V(x + a)\id_V(x+b)\id_V(y)\id_V(y + a)\id_V(y+b) - \delta^6\Big| \leq 6\varepsilon.\]
Similar argument for $\ex_{x,a,b} \id_V(x)\id_V(x + a)\id_V(x+b)$ shows that the expression above is at most $12\varepsilon$. The first part of the claim now follows.\\
\indent For the second part of the claim simply note that $|V \cap (V-a)\cap (V-b) \cap (V-a-b)| \leq |V \cap (V-a)\cap (V-b)|$.\end{proof}

We need one more claim of this form. Notice that we have 8 $\id_V$ terms in the expression below, yet the upper bound is about $\delta^5$, which is due to algebraic dependencies between arguments of $\id_V$.

\begin{claim}\label{popDiffPropcycles}
For all but at most $20\sqrt{\varepsilon} |G|^4$ of $(a,d_1,d_2, d_3) \in G^4$ we have 
\begin{equation}\exx_{x}\id_V(x)\id_V(x+a)\id_V(x+d_1)\id_V(x+d_1+a)\id_V(x+d_1 -d_2)\id_V(x+d_1-d_2 + a)\id_V(x+d_3)\id_V(x+d_3+a) \leq \delta^5 + \sqrt[4]{\varepsilon}.\label{4cycleexpbound}\end{equation}
\end{claim}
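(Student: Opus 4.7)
The plan is to upper-bound the eight-fold product pointwise by a five-fold product, since $\id_V$ only takes values in $[0,1]$, and then read the resulting five-fold expression as an intersection of $V$ with four of its translates, to which Claim~\ref{varietyTranslatesprops1} applies with $r=4$ and gives exactly the right bound $\delta^5 + \varepsilon^{1/4}$.

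Concretely, I would drop the three terms $\id_V(x+d_1+a)$, $\id_V(x+d_1-d_2+a)$, $\id_V(x+d_3+a)$, which are all bounded by $1$. The remaining five terms are $\id_V(x)\id_V(x+a)\id_V(x+d_1)\id_V(x+d_1-d_2)\id_V(x+d_3)$, so averaging over $x$ yields
\[\exx_x \id_V(x)\id_V(x+a)\id_V(x+d_1)\id_V(x+d_1-d_2)\id_V(x+d_3) = \frac{\big|V \cap (V-a) \cap (V-d_1) \cap (V-d_1+d_2) \cap (V-d_3)\big|}{|G|},\]
which is an intersection of $V$ with four translates $(a, d_1, d_1 - d_2, d_3)$. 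Now I apply Claim~\ref{varietyTranslatesprops1} with $r=4$: outside an exceptional set of at most $4(r+1)\sqrt{\varepsilon}|G|^4 = 20\sqrt{\varepsilon}|G|^4$ quadruples $(a, d_1, d_1-d_2, d_3)$, the intersection has size at most $(\delta^5+\varepsilon^{1/4})|G|$. Since the map $(a,d_1,d_2,d_3)\mapsto (a,d_1,d_1-d_2,d_3)$ is a bijection of $G^4$, the exceptional set in the original parameterization also has size at most $20\sqrt{\varepsilon}|G|^4$, and the claim follows at once.

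The one step that actually requires a moment's thought — and is effectively the only ``obstacle'' — is the choice of which three terms to drop. I need to retain exactly five $\id_V$ terms (to get $\delta^5$ from Claim~\ref{varietyTranslatesprops1} with $r=4$), and the five shifts that remain must parameterize four algebraically independent quantities in $(a,d_1,d_2,d_3)$ (so that the induced map from $G^4$ to $G^4$ is a bijection and the exceptional bound transfers cleanly). Keeping the five shifts $0, a, d_1, d_1 - d_2, d_3$ achieves both goals simultaneously, and once this choice is made the rest of the argument is an immediate appeal to the already established translate count estimate.
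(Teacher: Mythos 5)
Your proof is correct. You drop exactly the same three factors $\id_V(x+d_1+a)$, $\id_V(x+d_1-d_2+a)$, $\id_V(x+d_3+a)$ as the paper does, so the truncation step coincides; the difference is in how the remaining five-fold average is controlled. The paper redoes a second-moment computation from scratch: it applies Lemma~\ref{u2controlBasic} repeatedly to show $\ex_{a,d_1,d_2,d_3}\big|\ex_x \id_V(x)\id_V(x+a)\id_V(x+d_1)\id_V(x+d_1-d_2)\id_V(x+d_3)-\delta^5\big|^2\le 20\varepsilon$ and then averages. You instead recognize that this five-fold average is $|G|^{-1}\big|V\cap(V-a)\cap(V-d_1)\cap(V-(d_1-d_2))\cap(V-d_3)\big|$ and invoke the already established Claim~\ref{varietyTranslatesprops1} with $r=4$, transferring the exceptional-set bound $4(r+1)\sqrt{\varepsilon}\,|G|^4=20\sqrt{\varepsilon}\,|G|^4$ back through the bijection $(a,d_1,d_2,d_3)\mapsto(a,d_1,d_1-d_2,d_3)$. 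This is a legitimate shortcut: since the four nonzero shifts $a, d_1, d_1-d_2, d_3$ reparameterize $G^4$ bijectively, the paper's ad hoc computation is subsumed by the earlier claim, and you obtain the same constants ($20\sqrt{\varepsilon}|G|^4$ exceptional quadruples, bound $\delta^5+\varepsilon^{1/4}$) with less work; what you lose is nothing of substance, as the underlying mechanism (Cauchy--Schwarz against the $\mathsf{U}^2$ bound) is identical, merely packaged inside Claim~\ref{varietyTranslatesprops1}.
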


Having terms $\id_V(x+d_2)\id_V(x+d_2 + a)$ instead of $\id_V(x+d_1 -d_2)\id_V(x+d_1-d_2 + a)$ would be more natural, but we opted for this formulation as we get this expression when the claim is applied.

\begin{proof}Notice that the expression in question is at most 
\[\exx_{x}\id_V(x)\id_V(x+a)\id_V(x+d_1)\id_V(x+d_1 -d_2)\id_V(x+d_3).\]
We now show that
\[\exx_{a,d_1, d_2, d_3}\Big|\exx_{x}\id_V(x)\id_V(x+a)\id_V(x+d_1)\id_V(x+d_1 -d_2)\id_V(x+d_3) - \delta^5\Big|^2 \leq 20\varepsilon.\]
By (by now) usual argument based on Lemma~\ref{u2controlBasic} we get that
\begin{align*}&\exx_{a,d_1, d_2, d_3}\Big(\exx_{x}\id_V(x)\id_V(x+a)\id_V(x+d_1)\id_V(x+d_1 -d_2)\id_V(x+d_3)\Big)^2\\
&\hspace{2cm}=\exx_{\ssk{a,d_1, d_2, d_3\\x,y}}\id_V(x)\id_V(x+a)\id_V(x+d_1)\id_V(x+d_1 -d_2)\id_V(x+d_3)\\
&\hspace{6cm}\id_V(y)\id_V(y+a)\id_V(y+d_1)\id_V(y+d_1 -d_2)\id_V(y+d_3)\end{align*}
differs from $\delta^{10}$ by at most $10\varepsilon$. Namely, we apply Lemma~\ref{u2controlBasic} for term $\id_V(x + d_3)$ using $x$ and $d_3$, for term $\id_V(x + d_1  - d_2)$ using $x$ and $d_2$, for term $\id_V(x + d_1)$ using $x$ and $d_1$, for term $\id_V(x + a)$ using $x$ and $a$, for term $\id_V(x)$ using $x$, in that order, and similarly for terms involving $y$. Similarly,
\[\exx_{a,d_1, d_2, d_3, x}\id_V(x)\id_V(x+a)\id_V(x+d_1)\id_V(x+d_1 -d_2)\id_V(x+d_3)\]
differs from $\delta^5$ by at most $5\varepsilon$. The claim follows after averaging.\end{proof}

\subsection{Additive structure}

In this subsection show that the set of large values iterated convolutions of $\id_{V \cap V -a}$ has significant additive structure for many $a \in G$. We begin with an important technical definition.\\
\indent Let $\eta > 0$ be a parameter. We say that an element $a \in G$ is \emph{$\eta$-regular} if it satisfies 
\begin{itemize}
\item[\textbf{(i)}] $\frac{1}{2}\delta^2 |G|\leq |V \cap V - a| \leq 2\delta^2 |G|$,
\item[\textbf{(ii)}] for all but at most $\eta |G|$ of $b \in G$ we have
\[|V \cap (V-a) \cap (V-b) \cap (V-a-b)| \leq 2\delta^3 |G|,\]
and
\item[\textbf{(iii)}] for all but at most $\eta|G|^3$ of triples $(d_1, d_2, d_3) \in G$ we have
\[\exx_{x}\id_V(x)\id_V(x+a)\id_V(x+d_1)\id_V(x+d_1+a)\id_V(x+d_1 -d_2)\id_V(x+d_1-d_2 + a)\id_V(x+d_3)\id_V(x+d_3+a) \leq 2\delta^5.\]
\end{itemize}

Let us show that regular elements are abundant.

\begin{proposition}\label{regularitycount}Suppose that $\varepsilon \leq 2^{-4}\delta^{20}$. Then all but at most $40\sqrt[4]{\varepsilon}|G|$ elements are $\varepsilon^{1/4}$-regular.\end{proposition}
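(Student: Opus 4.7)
The plan is to verify that each of the three defining conditions of $\eta$-regularity with $\eta = \varepsilon^{1/4}$ fails for only a small fraction of $a \in G$, and then apply a union bound. The three failure sets are controlled by the three elementary estimates already proved: Claim~\ref{varietyTranslatesprops1}, Claim~\ref{popDiffProp1a}, and Claim~\ref{popDiffPropcycles}. Since $\varepsilon \leq 2^{-4}\delta^{20}$, we have $\varepsilon^{1/4} \leq \tfrac{1}{2}\delta^5 \leq \tfrac{1}{2}\delta^3 \leq \tfrac{1}{2}\delta^2$, which will absorb all of the error terms into the factors of $2$ allowed in the definition of regularity.

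For condition~\textbf{(i)}, I invoke Claim~\ref{varietyTranslatesprops1} with $r=1$: all but at most $8\sqrt{\varepsilon}|G|$ elements $a$ satisfy $\bigl||V \cap (V-a)| - \delta^{2}|G|\bigr| \leq \varepsilon^{1/4}|G|$. Because $\varepsilon^{1/4} \leq \tfrac{1}{2}\delta^2$, this forces $\tfrac{1}{2}\delta^{2}|G| \leq |V \cap (V-a)| \leq 2\delta^{2}|G|$ for all but $8\sqrt{\varepsilon}|G|$ of $a$.

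For condition~\textbf{(ii)}, Claim~\ref{popDiffProp1a} gives $12\sqrt{\varepsilon}|G|^2$ as the bound on pairs $(a,b)$ for which $|V \cap (V-a) \cap (V-b) \cap (V-a-b)| > (\delta^3 + \varepsilon^{1/4})|G|$; the threshold $\delta^{3}+\varepsilon^{1/4} \leq 2\delta^{3}$ is met since $\varepsilon^{1/4} \leq \delta^3$. By Markov's inequality, the number of $a$ for which more than $\varepsilon^{1/4}|G|$ values of $b$ are bad is at most $12\sqrt{\varepsilon}|G|^2 / (\varepsilon^{1/4}|G|) = 12\varepsilon^{1/4}|G|$. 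The same Markov argument applied to Claim~\ref{popDiffPropcycles} (with $20\sqrt{\varepsilon}|G|^4$ bad quadruples and threshold $\varepsilon^{1/4}|G|^3$) yields at most $20\varepsilon^{1/4}|G|$ bad $a$ for condition~\textbf{(iii)}; the threshold $\delta^5 + \varepsilon^{1/4} \leq 2\delta^5$ is again secured by the assumption on $\varepsilon$.

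Summing the three failure sets gives at most $8\sqrt{\varepsilon}|G| + 12\varepsilon^{1/4}|G| + 20\varepsilon^{1/4}|G| \leq 40\varepsilon^{1/4}|G|$ non-regular elements, using $\sqrt{\varepsilon} \leq \varepsilon^{1/4}$. There is no real obstacle here; the proposition is essentially a packaging of the three claims via Markov's inequality, and the only bookkeeping is to check that the constants and powers of $\delta$ in the regularity definition line up with the error terms given by the hypothesis $\varepsilon \leq 2^{-4}\delta^{20}$.
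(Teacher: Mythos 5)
Your proof is correct and follows essentially the same route as the paper: apply Claim~\ref{varietyTranslatesprops1} (with $r=1$) for condition \textbf{(i)}, and Markov/averaging over Claims~\ref{popDiffProp1a} and~\ref{popDiffPropcycles} for conditions \textbf{(ii)} and \textbf{(iii)}, with the hypothesis $\varepsilon \leq 2^{-4}\delta^{20}$ absorbing the $\varepsilon^{1/4}$ error terms into the factors of $2$. The bookkeeping ($8\sqrt{\varepsilon} + 12\varepsilon^{1/4} + 20\varepsilon^{1/4} \leq 40\varepsilon^{1/4}$) matches the paper's.
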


\begin{proof}By Claim~\ref{varietyTranslatesprops1}, provided that $\varepsilon \leq 2^{-4}\delta^8$, we have
\[\Big||V \cap (V - a)| - \delta^2 |G|\Big| \leq \frac{1}{2}\delta^2 |G|\]
for all but at most $8\varepsilon^{1/2} |G|$ elements $a \in G$. Next, provided that $\varepsilon \leq \delta^{12}$, it follows from Claim~\ref{popDiffProp1a} that 
\begin{equation}\label{vabineqreg}|V \cap (V-a)\cap (V-b) \cap (V-a-b)| \leq 2\delta^3|G|\end{equation}
holds for all but at most $12\varepsilon^{1/2}|G|^2$ pairs of elements $(a, b) \in G^2$. Hence, the number of $a$ such that~\eqref{vabineqreg} fails for at least $\varepsilon^{1/4}|G|$ elements $b \in G$ is at most $12\varepsilon^{1/4}|G|$. Finally, using Claim~\ref{popDiffPropcycles} and provided $\varepsilon \leq \delta^{20}$, we conclude that the number of elements $a$ such that~\eqref{4cycleexpbound} fails for at least $\sqrt[4]{\varepsilon}|G|^3$ triples $(d_1, d_2, d_3) \in G^3$ is at most $20\varepsilon^{1/4}|G|$. The proposition now follows.\end{proof}

The following proposition finds desired additive structure if we are given a regular element.

\begin{proposition}\label{popDiffAddQuads}Let $\eta > 0$ and let $a \in G$ be an $\eta$-regular element. Let $P \subseteq G$ be a set and let $c' >0$ be such that 
\begin{equation}\label{addquadsl2assumption}\exx_{b} \id_P(b) \Big(\id_{V \cap V -a} \conv \id_{V \cap V -a}(b)\Big)^2 \geq c' \delta^7.\end{equation}
Suppose that $\eta \leq 2^{-41}{c'}^8\delta^8$. Then there exists a subset $\tilde{P} \subseteq P$ which contains at least $2^{-42}{c'}^8\delta^3 |G|^3$ additive quadruples, has size $\frac{c'\delta}{16} |G| \leq |\tilde{P}| \leq 2^9{c'}^{-2}\delta|G|$ and every element $b \in \tilde{P}$ satisfies
\[\id_{V \cap V -a} \conv \id_{V \cap V -a}(b) \geq \frac{1}{8}c' \delta^3.\] 
\end{proposition}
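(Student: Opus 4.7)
The plan is to take $\tilde P$ to be the thresholded subset of $P$ on which the function
\[F(b) := f_a \conv f_a(b) = \frac{|V \cap (V-a) \cap (V-b) \cap (V-a-b)|}{|G|}\]
lies in the range $[T, 2\delta^3]$, where $T := c'\delta^3/8$. More precisely, letting $B := \{b \in G : F(b) > 2\delta^3\}$ be the exceptional set from property \textbf{(ii)} of $\eta$-regularity of $a$, so that $|B| \leq \eta|G|$, I set
\[\tilde P := \{b \in P \setminus B : F(b) \geq T\}.\]
The pointwise lower bound on $f_a \conv f_a$ on $\tilde P$ then holds by construction.

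The size bounds are obtained by a three-way decomposition of the hypothesis $\ex_b \id_P(b) F(b)^2 \geq c'\delta^7$. The contribution from $b \in B$ is at most $|B|/|G| \leq \eta$, which is negligible since $\eta \leq 2^{-41}c'^8\delta^8 \ll c'\delta^7$. The contribution from $\{b \in P \setminus B : F(b) < T\}$ is at most $T \cdot \ex_b F(b) = T(|W|/|G|)^2 \leq 4T\delta^4 = c'\delta^7/2$, using $|W| = |V \cap (V-a)| \leq 2\delta^2|G|$ from regularity \textbf{(i)}. Hence $\ex_b \id_{\tilde P}(b) F(b)^2 \geq c'\delta^7/4$. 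Combining with $F \leq 2\delta^3$ on $\tilde P$ yields $|\tilde P|/|G| \geq c'\delta/16$, while combining $T \cdot |\tilde P|/|G| \leq \ex_b \id_{\tilde P}(b)F(b) \leq \ex_b F(b) \leq 4\delta^4$ gives $|\tilde P|/|G| \leq 32\delta/c' \leq 2^9c'^{-2}\delta$.

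The remaining task, showing that $\tilde P$ contains at least $2^{-42}c'^8\delta^3|G|^3$ additive quadruples, is where the main difficulty lies. The naive Cauchy--Schwarz bound $N \geq |\tilde P|^4/|G| \geq 2^{-16}c'^4\delta^4|G|^3$ already meets the claimed target whenever $\delta \geq 2^{-26}c'^4$, so the essential case is when $\delta$ is much smaller than $c'^4$. In that regime one exploits the fact that $F = \id_W \conv \id_W$ has non-negative Fourier transform, so its super-level set inherits additive structure from the large spectrum of $\id_W$: combining a Bogolyubov/Sanders-style argument, applied to an auxiliary set with many additive quadruples arising from the pair $(W, \tilde P)$, with Theorem~\ref{addquadsFreiman}, is expected to produce a subspace-coset $U$ of size at most $c'^{-O(1)}|\tilde P|$ that contains a positive proportion of $\tilde P$. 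A final Cauchy--Schwarz inside $U$ then yields $N \geq |\tilde P|^4/|U| \geq c'^{O(1)}|\tilde P|^3$, which, after balancing the polynomial factors in $c'^{-1}$, produces the claimed lower bound $N \geq 2^{-42}c'^8\delta^3|G|^3$.

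The main obstacle is thus the last step: extracting a subspace coset containing (a large piece of) $\tilde P$ with a polynomial dependence on $c'^{-1}$ strong enough to survive the Cauchy--Schwarz step, particularly in the small-$\delta$ regime. All the other ingredients --- the thresholding, the $L^2$-splitting, the elementary size bounds, and the pointwise convolution inequality --- are routine consequences of Cauchy--Schwarz and the regularity properties of $a$.
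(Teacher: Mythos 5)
Your thresholding, $L^2$-splitting and size bounds coincide with the paper's argument and are fine (they use only regularity properties \textbf{(i)} and \textbf{(ii)}), but the heart of the proposition --- the lower bound of $2^{-42}{c'}^8\delta^3|G|^3$ on the number of additive quadruples in $\tilde P$ --- is exactly the part you leave unproved, and the route you sketch for it does not work. Invoking a Bogolyubov/Sanders-type statement or Theorem~\ref{addquadsFreiman} to produce a coset $U$ with $|U| \leq {c'}^{-O(1)}|\tilde P|$ containing a positive proportion of $\tilde P$ is circular: those results take as input a set that already has many additive quadruples (or small doubling), which is precisely what is to be established, and the ``auxiliary set arising from the pair $(W,\tilde P)$'' is never identified. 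Nor is it true in general that the super-level set of $\id_W \conv \id_W$ at the low threshold $\tfrac{1}{8}c'\delta^3 \approx \tfrac{c'\delta}{16}\cdot\tfrac{|W|}{|G|}$ inherits Freiman-type structure merely because the Fourier transform of $\id_W\conv\id_W$ is non-negative; at thresholds far below half the density, popular-difference sets can be quite unstructured (the Sidon-type example in the introduction is built on exactly this phenomenon). Your observation that plain Cauchy--Schwarz suffices when $\delta \geq 2^{-26}{c'}^4$ is correct, but the intended regime is $c' \gg \delta$, so the main case is untouched.

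The missing ingredient is regularity property \textbf{(iii)}, which your argument never uses and which is what makes the count possible with the stated bound. The paper forms the bipartite graph on two copies of $V\cap(V-a)$ with an edge between $x$ and $y$ when $x-y \in \tilde P$; by the pointwise bound on $\tilde P$ and the size bounds already proved, this graph has density at least $2^{-9}{c'}^2$, hence at least $2^{-36}{c'}^8|V\cap(V-a)|^4$ ordered $4$-cycles, and every $4$-cycle yields an additive quadruple of elements of $\tilde P$. To convert the cycle count into a count of \emph{distinct} additive quadruples one needs an upper bound on how many $4$-cycles realize a fixed quadruple $(d_1,d_2,d_3,d_2+d_3-d_1)$: that number is $\sum_y \id_{V\cap(V-a)}(y)\id_{V\cap(V-a)}(y+d_1)\id_{V\cap(V-a)}(y+d_1-d_2)\id_{V\cap(V-a)}(y+d_3)$, which property \textbf{(iii)} bounds by $2\delta^5|G|$ for all but $\eta|G|^3$ triples (and trivially by $|G|$ otherwise). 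Double counting then gives $2^{-40}{c'}^8\delta^8|G|^4 \leq Q\cdot 2\delta^5|G| + \eta|G|^4$ and hence $Q \geq 2^{-42}{c'}^8\delta^3|G|^3$. Without some use of \textbf{(iii)} (or an equivalent input) one cannot expect the $\delta^3|G|^3$ bound, so the gap in your proposal is substantive, not merely technical.
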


\begin{proof}By assumption \textbf{(ii)} of $\eta$-regularity, for all but at most $\eta |G|$ of $b \in G$ we have
\begin{equation}\label{popdiffscondquadsproof}\id_{V \cap V -a} \conv \id_{V \cap V -a}(b) = \ex_x \id_{V \cap (V-a)}(x+b) \id_{V \cap (V-a)}(x) = \frac{1}{|G|}|V \cap (V-a) \cap (V-b) \cap (V-a-b)| \leq 2\delta^3.\end{equation}
By removing $b$ for elements $b \in G$ when this inequality fails, we may find a subset $P' \subseteq P$ of size $|P'| \geq |P| - \eta |G|$ such that~\eqref{popdiffscondquadsproof} holds for all $b \in P'$. From~\eqref{addquadsl2assumption} we deduce that 
\[\exx_{b} \id_{P'}(b) \Big(\id_{V \cap V -a} \conv \id_{V \cap V -a}(b)\Big)^2 \geq c' \delta^7 -\eta.\]
Let $\tilde{P} \subseteq P'$ be the set of all elements $b \in P'$ such that $\id_{V \cap V -a} \conv \id_{V \cap V -a}(b) \geq \frac{1}{8}c' \delta^3$. Then
\begin{align*}\exx_{b} \id_{P' \setminus \tilde{P}}(b) \Big(\id_{V \cap V -a} \conv \id_{V \cap V -a}(b)\Big)^2 \leq &\frac{1}{8}c' \delta^3 \exx_{b} \id_{P' \setminus \tilde{P}}(b) \id_{V \cap V -a} \conv \id_{V \cap V -a}(b)\\
\leq &\frac{1}{8}c' \delta^3 \exx_{b}\id_{V \cap V -a} \conv \id_{V \cap V -a}(b)\\
\leq &\frac{1}{8}c' \delta^3 \Big(\frac{|V \cap V -a|}{|G|}\Big)^2 \leq \frac{1}{2}c' \delta^7,\end{align*}
where we used assumption \textbf{(i)} of $\eta$-regularity in the last step.\\
\indent Hence 
\[\exx_{b} \id_{\tilde{P}}(b) \Big(\id_{V \cap V -a} \conv \id_{V \cap V -a}(b)\Big)^2 \geq \frac{1}{2}c' \delta^7 -\eta,\]
which, since  $\tilde{P} \subseteq P'$, also gives $|\tilde{P}| \geq \Big(\frac{c'\delta}{8} - \eta \delta^{-6}\Big) |G| \geq \frac{c'\delta}{16} |G|$, since $\eta \leq \frac{c' \delta^7}{16}$. The same argument as the one above and the fact that $\tilde{P} \subseteq P'$ imply
\[\frac{|\tilde{P}|}{|G|}\Big(\frac{1}{8}c' \delta^3\Big)^2 \leq \exx_{b} \id_{\tilde{P}}(b) \Big(\id_{V \cap V -a} \conv \id_{V \cap V -a}(b)\Big)^2 \leq 2\delta^3  \exx_{b}\id_{V \cap V -a} \conv \id_{V \cap V -a}(b) \leq 8\delta^7\]
so $|\tilde{P}| \leq 2^9{c'}^{-2}\delta|G|$.\\

We say that $b$ is a \emph{popular difference} if $b \in \tilde{P}$. Consider the bipartite graph $\Gamma$ with both vertex classes $\mathcal{C}_1$ and $\mathcal{C}_2$ being a copy of $V \cap (V-a)$ and edges between $x \in \mathcal{C}_1$ and $y \in \mathcal{C}_2$ if $x-y$ is a popular difference. By a \emph{cycle of length 4} we mean an ordered quadruple of vertices $v_1, v_2, v_3, v_4$ such that $v_1v_2, \dots, v_4 v_1$ are edges.\footnote{It would be more precise to call these quadruples homomorphisms from $C_4$ to the given graph.} This is a graph of density 
\[\frac{\sum_{b \in \tilde{P}} \id_{V \cap V -a} \conv \id_{V \cap V -a}(b)|G|}{|V \cap (V-a)|^2} \geq \frac{\frac{c'\delta}{16} |G| \cdot \frac{1}{8}c' \delta^3|G|}{4\delta^4 |G|^2} \geq 2^{-9} {c'}^2\]
so it has $2^{-36}{c'}^8|V \cap (V-a)|^4$ cycles of length 4. Notice that each such cycle gives an additive quadruple in $\tilde{P}$: namely if $x_1, x_2$ are vertices in class $\mathcal{C}_1$ and $y_1$ and $y_2$ are vertices in class $\mathcal{C}_2$, then $p_{i,j} = x_i - y_j$ is a popular difference and $p_{1,1} + p_{2,2} = p_{1,2} + p_{2,1}$. On the other hand, if we fix an additive quadruple $(d_1, d_2, d_3, d_2 + d_3 - d_1)$ of popular differences it gives 
\[\sum_{y} \id_{V \cap (V-a)}(y) \id_{V \cap (V-a)}(y + d_1) \id_{V \cap (V-a)}(y + d_1 -d_2) \id_{V \cap (V-a)}(y + d_3)\]
cycles of length 4. This quantity is bounded above by $2\delta^5 |G|$ for all but at most $\eta|G|^3$ of triples $(d_1, d_2, d_3) \in G$ by assumption \textbf{(iii)} of $\eta$-regularity, while it is always at most $|G|$. Let $Q$ be the number of additive quadruples in $\tilde{P}$. By double-counting we get
\[2^{-40}{c'}^8\delta^8|G|^4 \leq 2^{-36}{c'}^8|V \cap (V-a)|^4 \leq Q \cdot 2\delta^5 |G| + \eta |G|^3 \cdot |G|.\]
Since $\eta  \leq 2^{-41}{c'}^8\delta^8$, we conclude that $Q \geq 2^{-42}{c'}^8\delta^3 |G|^3$, as desired.\end{proof}

Let $\xi > 0$ be a parameter to be chosen later (it will be a function of $c$) and let $A^\xi\subseteq G$ be the set of all elements $a \in G$ which are $\varepsilon^{1/4}$-regular and satisfy 
\[\exx_{b} \Big(\id_{V \cap V -a} \conv \id_{V \cap V -a}(b)\Big)^2 \geq \xi \delta^7.\]

We first show that $A^\xi$ is large.

\begin{claim}\label{Alargeness} Suppose that $\xi \leq c_0/2$ and $\varepsilon \leq 2^{-32} c_0^4 \delta^{28}$. We have $|A^\xi| \geq \frac{c_0}{40} |G|$ and 
\begin{equation}\label{l2smallcontribreg}\exx_{a,b} \id_{G \setminus A^\xi}(a)\Big(\id_{V \cap V -a} \conv \id_{V \cap V -a}(b)\Big)^2 \leq \xi \delta^7 + 40\sqrt[4]{\varepsilon}.\end{equation}
\end{claim}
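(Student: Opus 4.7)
The conceptual heart of the claim is the identity
\[\exx_{a,b}\bigl(\id_{V\cap V-a}\conv\id_{V\cap V-a}(b)\bigr)^2 \;=\; \exx_{x,a,b,c}\mder_{a,b,c}\id_V(x) \;=\; c_0\delta^7,\]
which follows by expanding the convolution, writing $\id_{V\cap V-a}(z)=\id_V(z)\id_V(z+a)$ at each of the four occurrences, and substituting $c=y-x$ for the two dummy variables $y,y'$ appearing in the squared convolution to recognise the $8$-term product as $\mder_{a,b,c}\id_V(x)$. The last equality is then precisely the additive-cube assumption in the definition of an approximate quadratic variety.

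To prove the displayed $L^2$-bound on $G\setminus A^\xi$, I would split $G\setminus A^\xi$ into the non-$\varepsilon^{1/4}$-regular elements and the regular elements that fail the energy threshold. By Proposition \ref{regularitycount} (using $\varepsilon\le 2^{-4}\delta^{20}$, which is implied by the hypothesis), the non-regular elements number at most $40\sqrt[4]{\varepsilon}|G|$; since $\id_{V\cap V-a}\conv\id_{V\cap V-a}(b)\le 1$ pointwise, their contribution to the expectation is at most $40\sqrt[4]{\varepsilon}$. For regular $a\notin A^\xi$, the failure of the defining condition of $A^\xi$ gives $\exx_b(\id_{V\cap V-a}\conv\id_{V\cap V-a}(b))^2<\xi\delta^7$, so their contribution is at most $\xi\delta^7$. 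Summing yields the desired bound $\xi\delta^7+40\sqrt[4]{\varepsilon}$.

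For the lower bound on $|A^\xi|$, subtract the $L^2$-bound just proved from the identity to get
\[\exx_{a,b}\id_{A^\xi}(a)\bigl(\id_{V\cap V-a}\conv\id_{V\cap V-a}(b)\bigr)^2 \;\geq\; c_0\delta^7-\xi\delta^7-40\sqrt[4]{\varepsilon} \;\geq\; \tfrac{1}{2}c_0\delta^7-40\sqrt[4]{\varepsilon},\]
using $\xi\le c_0/2$. The hypothesis $\varepsilon\le 2^{-32}c_0^4\delta^{28}$ makes $40\sqrt[4]{\varepsilon}\le \tfrac{5}{32}c_0\delta^7$, so this lower bound is of order $c_0\delta^7$. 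To convert into a bound on $|A^\xi|$, upper-bound the left side by exploiting that each $a\in A^\xi$ is regular, so $|V\cap V-a|\le 2\delta^2|G|$ and therefore $f_a(b):=\id_{V\cap V-a}\conv\id_{V\cap V-a}(b)$ satisfies $\|f_a\|_\infty\le 2\delta^2$ and $\exx_b f_a(b)\le 4\delta^4$, giving $\exx_b f_a(b)^2\le \|f_a\|_\infty\cdot \exx_b f_a\le 8\delta^6$. Dividing yields the desired lower bound on $|A^\xi|/|G|$ of order $c_0\delta$ (up to absolute constants).

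The main obstacle, really the only non-routine step, is spotting that the second moment of the convolution equals the normalised additive cube count; once this is in hand, everything reduces to splitting the complement according to the two defining properties of $A^\xi$ and combining with the already-established bounds from Proposition \ref{regularitycount} and the regularity conditions.
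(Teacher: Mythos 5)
The identity you open with (the second moment of $\id_{V\cap V-a}\conv\id_{V\cap V-a}$ averaged over $a$ equals the normalised additive cube count $c_0\delta^7$) is correct, and your proof of the bound~\eqref{l2smallcontribreg} is exactly the paper's argument: split $G\setminus A^\xi$ into the non-$\varepsilon^{1/4}$-regular elements, whose contribution is at most $40\sqrt[4]{\varepsilon}$ by Proposition~\ref{regularitycount} and the trivial bound $\id_{V\cap V-a}\conv\id_{V\cap V-a}(b)\le 1$, and the regular elements outside $A^\xi$, whose contribution is at most $\xi\delta^7$ by the definition of $A^\xi$.

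There is, however, a genuine gap in your lower bound on $|A^\xi|$. For $a\in A^\xi$ you bound $\ex_b f_a(b)^2\le \|f_a\|_\infty\,\ex_b f_a(b)\le 2\delta^2\cdot 4\delta^4=8\delta^6$, using only property \textbf{(i)} of regularity. Dividing the lower bound $\Omega(c_0\delta^7)$ by $8\delta^6$ gives only $|A^\xi|\gtrsim c_0\delta\,|G|$, which you describe as "the desired lower bound" — but the claim asserts $|A^\xi|\ge \frac{c_0}{40}|G|$, a density bound independent of $\delta$, and this independence is essential later (property \textbf{(ii)} of Theorem~\ref{mainstep1thm} needs $|A|\ge c_1|G|$ with $c_1$ depending only on $c_0$, and $\delta$ is thought of as much smaller than $c_0$). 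The missing ingredient is property \textbf{(ii)} of $\varepsilon^{1/4}$-regularity: for regular $a$ one has $f_a(b)=\frac{1}{|G|}|V\cap(V-a)\cap(V-b)\cap(V-a-b)|\le 2\delta^3$ for all but at most $\varepsilon^{1/4}|G|$ values of $b$, so that
\[\exx_{b} f_a(b)^2 \leq \sqrt[4]{\varepsilon} + 2\delta^3 \exx_b f_a(b) \leq \sqrt[4]{\varepsilon} + 8\delta^7 \leq 10\delta^7,\]
using $\varepsilon\le\delta^{28}$ and property \textbf{(i)}. With this per-element bound of order $\delta^7$ rather than $\delta^6$, dividing the lower bound $(c_0-\xi)\delta^7-40\sqrt[4]{\varepsilon}\ge \frac{c_0\delta^7}{4}$ by $10\delta^7$ yields the claimed $|A^\xi|\ge\frac{c_0}{40}|G|$, which is how the paper concludes.
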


\begin{proof}Write $A = A^\xi$ to simplify the notation. The fact that $V$ is a $(c_0, \delta, \varepsilon)$-approximate quadratic variety implies that
\begin{equation}\exx_{a,b} \Big(\id_{V \cap V -a} \conv \id_{V \cap V -a}(b)\Big)^2 \geq c_0\delta^7.\label{approxvarassumptionl2bounds}\end{equation}
Let $I$ be the set of elements $a \in G$ which are not $\varepsilon^{1/4}$-regular. Proposition~\ref{regularitycount} implies $|I| \leq 40\sqrt[4]{\varepsilon}|G|$. Their contribution to~\eqref{approxvarassumptionl2bounds} is
\[\exx_{a,b} \id_{I}(a)\Big(\id_{V \cap V -a} \conv \id_{V \cap V -a}(b)\Big)^2 \leq \exx_a \id_I(a) \leq 40\sqrt[4]{\varepsilon}.\]
Furthermore, the contribution from $G \setminus (I \cup A)$ is
\[\exx_{a,b} \id_{I^c \cap A^c}(a)\Big(\id_{V \cap V -a} \conv \id_{V \cap V -a}(b)\Big)^2 = \exx_a \id_{I^c \cap A^c}(a) \bigg(\exx_{b}\Big(\id_{V \cap V -a} \conv \id_{V \cap V -a}(b)\Big)^2 \bigg)\leq \xi \delta^7,\]
from which we obtain~\eqref{l2smallcontribreg}.\\
\indent To prove the bounds on $|A|$, notice first that if $a$ is $\varepsilon^{1/4}$-regular then
\begin{align*}\exx_{b}\Big(\id_{V \cap V -a} \conv \id_{V \cap V -a}(b)\Big)^2 = &\exx_b \frac{|V \cap (V - a) \cap (V - b) \cap (V - a - b)|^2}{|G|^2} \\
\leq &\sqrt[4]{\varepsilon} + 2\delta^3\exx_b \frac{|V \cap (V - a) \cap (V - b) \cap (V - a - b)|}{|G|} \\
\leq &\sqrt[4]{\varepsilon} + 2\delta^3 \frac{|V \cap V - a|^2}{|G|^2} \leq \sqrt[4]{\varepsilon} + 8\delta^7 \leq 10\delta^7,\end{align*}
provided $\varepsilon \leq \delta^{28}$, where we used assumption \textbf{(ii)} of $\sqrt[4]{\varepsilon}$-regularity in the second line and assumption \textbf{(i)} of $\sqrt[4]{\varepsilon}$-regularity in the third line. Combining~\eqref{l2smallcontribreg} and ~\eqref{approxvarassumptionl2bounds} we get
\[10\delta^7 \frac{|A|}{|G|} \geq \exx_a \id_A(a) \exx_{b}\Big(\id_{V \cap V -a} \conv \id_{V \cap V -a}(b)\Big)^2 \geq (c_0 - \xi) \delta^7 - 40\sqrt[4]{\varepsilon} \geq \frac{c_0\delta^7}{4},\]
using the assumptions on $\xi$ and $\varepsilon$ in the last inequality.
\end{proof}

As a corollary, we get lower bounds on the number of occurrences of another configuration in $V$. 

\begin{corollary}\label{commonadditivequads10Cor} Provided $\varepsilon \leq 2^{-200} c_0^8 \delta^{28}$, we have 
\begin{align*}&\exx_{b_1, b_2, b_3, x_2, y_3, z_1}\exx_w \id_{V \cap V - (b_1 + b_2 - b_3)} \conv\id_{V \cap V -(b_1 + b_2 - b_3)}(w) \,\,\id_{V \cap V - b_1} \conv \id_{V \cap V - b_1}(w)\,\, \id_{V \cap V - b_2} \conv \id_{V \cap V - b_2}(w)\\
&\hspace{2cm} \id_{V \cap V - b_3} \conv \id_{V \cap V - b_3}(w)\,\, \id_{V \cap V - x_2} \conv \id_{V \cap V - x_2}(w)\,\, \id_{V \cap V - (x_2 - b_2 + b_3)} \conv \id_{V \cap V - (x_2 - b_2 + b_3)}(w) \\
&\hspace{2cm} \id_{V \cap V - y_3} \conv \id_{V \cap V - y_3}(w)\,\, \id_{V \cap V - (y_3 + b_1 - b_3)} \conv \id_{V \cap V - (y_3 + b_1 - b_3)}(w) \\
&\hspace{2cm} \id_{V \cap V - z_1} \conv \id_{V \cap V - z_1}(w)\,\, \id_{V \cap V - (b_1 + b_2 - z_1)} \conv \id_{V \cap V - (b_1 + b_2 - z_1)}(w) \geq \exp\Big(- O\Big(\log^{O(1)}(2c_0^{-1})\Big)\Big) \delta^{36}.\end{align*}\end{corollary}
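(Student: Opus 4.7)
The plan is to use duality to move the outer variable $w$ into the argument of the convolutions, exploit the additive structure supplied by Proposition~\ref{popDiffAddQuads}, pass to a subspace via Balog-Szemer\'edi-Gowers plus Freiman-Ruzsa, and invoke Lemma~\ref{multipleAddQuadsBound} inside that subspace. The starting observation is the symmetry
\[
\id_{V\cap V-a}\conv\id_{V\cap V-a}(w) = \id_{V\cap V-w}\conv\id_{V\cap V-w}(a),
\]
valid because both sides equal $|G|^{-1}|\{x : x, x+a, x+w, x+a+w \in V\}|$. Writing $f_w(a) := \id_{V\cap V-w}\conv\id_{V\cap V-w}(a)$ and letting $a_1, \ldots, a_{10}$ denote the ten linear combinations of $(b_1,b_2,b_3,x_2,y_3,z_1)$ that appear in Lemma~\ref{multipleAddQuadsBound}, the left-hand side becomes $\ex_{w}\ex_{b_1,\ldots,z_1}\prod_{i=1}^{10}f_w(a_i)$.

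I then restrict $w$ to $A^\xi$ for a suitable $\xi$ comparable to $c_0$. By Claim~\ref{Alargeness}, $|A^\xi| \geq (c_0/40)|G|$ and every $w \in A^\xi$ satisfies $\ex_b f_w(b)^2 \geq \xi \delta^7$. For each such $w$, Proposition~\ref{popDiffAddQuads} (applied with $P = G$ and $c' = \xi$) produces a set $\tilde P_w \subseteq G$ of size $\gtrsim c_0\delta|G|$ on which $f_w \geq \xi\delta^3/8$, containing at least $c_0^{O(1)}\delta^3|G|^3$ additive quadruples---equivalently, quadruple density $c_0^{O(1)}$ inside $\tilde P_w$. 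Applying the Balog-Szemer\'edi-Gowers theorem yields a subset $\tilde P_w' \subseteq \tilde P_w$ with $|\tilde P_w'| \geq c_0^{O(1)}|\tilde P_w|$ and $|\tilde P_w'+\tilde P_w'| \leq c_0^{-O(1)}|\tilde P_w'|$, and Theorem~\ref{frthm} in its Sanders-sharp form embeds $\tilde P_w'$ in a subspace $U_w \leq G$ of size $|U_w| \leq \exp(\log^{O(1)}(2c_0^{-1}))|\tilde P_w'|$. Consequently $|U_w|/|G| \geq \exp(-\log^{O(1)}(2c_0^{-1}))\delta$ and $|\tilde P_w'|/|U_w| \geq \exp(-\log^{O(1)}(2c_0^{-1}))$.

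With the subspace $U_w$ in hand I invoke Lemma~\ref{multipleAddQuadsBound} on $\tilde P_w'$ regarded as a subset of $U_w$. Since every $a_i$ is a linear combination of the six free variables, $\prod_i\id_{\tilde P_w'}(a_i)$ is supported on $U_w^6$, and the lemma inside $U_w$ gives $\ex_{b_1,\ldots,z_1\in U_w}\prod_i\id_{\tilde P_w'}(a_i) \geq (|\tilde P_w'|/|U_w|)^{32} \geq \exp(-\log^{O(1)}(2c_0^{-1}))$. Converting to an expectation over $G^6$ costs $(|U_w|/|G|)^6 \geq \exp(-\log^{O(1)}(2c_0^{-1}))\delta^6$, and ten uses of the pointwise bound $f_w \geq (\xi\delta^3/8)\id_{\tilde P_w'}$ contribute $c_0^{O(1)}\delta^{30}$; together these produce
\[
\ex_{b_1,\ldots,z_1\in G}\prod_i f_w(a_i) \geq \exp(-\log^{O(1)}(2c_0^{-1}))\,\delta^{36}
\]
for every $w\in A^\xi$. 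Averaging over $w$ absorbs the $c_0/40$ into the exponential and yields the claim.

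The hardest step is the Freiman-Ruzsa passage in the second paragraph: lifting the additive-quadruple-richness of $\tilde P_w$ to a subspace $U_w$ with $\tilde P_w'$ dense inside is precisely where the $\exp(-\log^{O(1)}(2c_0^{-1}))$ loss enters, while every other step is lossless up to polynomial factors in $c_0$ and $\delta$. The naive alternative of applying Lemma~\ref{multipleAddQuadsBound} to $\tilde P_w$ directly as a density-$c_0\delta$ subset of $G$ would yield only $(c_0\delta)^{32}$, far too weak to reach the required $\delta^{36}$; it is the relative density $\Omega_{c_0}(1)$ of $\tilde P_w'$ inside the small ambient subspace $U_w$ that saves the $\delta$-exponent.
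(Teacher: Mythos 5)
Your overall architecture is the same as the paper's: apply the duality $\id_{V\cap V-a}\conv\id_{V\cap V-a}(w)=\id_{V\cap V-w}\conv\id_{V\cap V-w}(a)$, restrict $w$ to $A^\xi$ with $\xi\sim c_0$ (Claim~\ref{Alargeness}), extract $\tilde P_w$ from Proposition~\ref{popDiffAddQuads} with $P=G$, place (a large piece of) $\tilde P_w$ inside a subspace of size comparable to $\delta|G|$, run Lemma~\ref{multipleAddQuadsBound} relative to that subspace, and finish with the pointwise bound $f_w\geq \Omega(c_0\delta^3)$ on $\tilde P_w$ and an average over $w$. However, there is a genuine quantitative gap in the one step you single out as the crucial one. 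You pass from $\tilde P_w$ to a small doubling set $\tilde P_w'$ via Balog--Szemer\'edi--Gowers and then claim that ``Theorem~\ref{frthm} in its Sanders-sharp form'' gives a subspace $U_w\supseteq \tilde P_w'$ with $|U_w|\leq \exp\big(\log^{O(1)}(2c_0^{-1})\big)|\tilde P_w'|$. No such containment theorem with quasipolynomial loss exists: Sanders' quasipolynomial bounds pertain to the Bogolyubov--Ruzsa lemma (a large subspace \emph{inside} $2A-2A$), not to covering $A$ by a subspace. For the covering statement the loss is necessarily exponential in the doubling constant (take $A$ to be $K$ linearly independent vectors: $|A+A|\leq K|A|$ but any subspace containing $A$ has size $\geq p^{K}$). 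Since BSG only gives doubling $c_0^{-O(1)}$, your route yields relative density of $\tilde P_w'$ in $U_w$ of the shape $\exp(-c_0^{-O(1)})$, and after the exponent $32$ from Lemma~\ref{multipleAddQuadsBound} the final bound degrades to $\exp(-c_0^{-O(1)})\delta^{36}$, which is strictly weaker than the claimed $\exp\big(-O(\log^{O(1)}(2c_0^{-1}))\big)\delta^{36}$ and would propagate to the main theorem.

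The fix is exactly the paper's Theorem~\ref{addquadsFreiman} (Schoen--Sisask building on Sanders): from the $c_0^{O(1)}\delta^3|G|^3$ additive quadruples in $\tilde P_w$ one obtains a subspace $S_w\subseteq 2\tilde P_w-2\tilde P_w$ with $|S_w|\geq \exp\big(-\log^{O(1)}(2c_0^{-1})\big)\delta|G|$ such that, after averaging (or a Ruzsa covering argument), some translate of $S_w$ contains a $\exp\big(-\log^{O(1)}(2c_0^{-1})\big)$-dense portion of $\tilde P_w$; density in a large subspace, not containment in a small one, is what carries quasipolynomial bounds. The rest of your argument then goes through essentially verbatim: all ten points $a_i$ are affine combinations of the six free variables with coefficients summing to $1$, so they stay in the same coset of $S_w$, and Lemma~\ref{multipleAddQuadsBound} applies after translating (or after enlarging $S_w$ by one dimension to a subspace containing the coset, as in the paper's footnote). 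With that replacement your proof coincides with the paper's.
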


\begin{proof}In this proof, we set temporarily $A = A^{c_0/2}$. For each $w \in A$ we have 
\[\exx_{a} \Big(\id_{V \cap V - w} \conv \id_{V \cap V - w}(a)\Big)^2 \geq \frac{c_0}{2} \delta^7.\]
By Claim~\ref{Alargeness} we have $|A| \geq \frac{c}{40} |G|$. For each $w \in A$, apply Proposition~\ref{popDiffAddQuads} with set $P = G$ to obtain $\tilde{P}_w \subseteq G$ which contains at least $2^{-50} c_0^8 \delta^3 |G|^3$ additive quadruples, has size $|\tilde{P}_w| \leq 2^{11}c_0^{-2} \delta |G| $ and every element $a \in \tilde{P}_w$ satisfies $\id_{V \cap V - w} \conv \id_{V \cap V - w}(a) \geq \frac{1}{16}c_0 \delta^3$. Theorem~\ref{addquadsFreiman} applies to give a subspace\footnote{Averaging over elements $a_2, a_3, a_4$ in conclusion of Theorem~\ref{addquadsFreiman} we get a coset instead of a subspace for $S_w$, but then we simply take a subspace contatining $S_w$ with dimension greater by 1.} $S_w$ such that $|S_w| \geq c_1 \delta |G|$ and $|\tilde{P}_w \cap S_w| \geq c_1 |S_w|$, where $c_1 \geq \exp\Big(- O\Big(\log^{O(1)}(2c_0^{-1})\Big)\Big)$. Write $\tilde{P}'_w = \tilde{P}_w \cap S_w$.\\
 
Note that 

\begin{align*}&\exx_w \exx_{b_1, b_2, b_3, x_2, y_3, z_1} \id_{\tilde{P}_w}(b_1 + b_2 - b_3) \id_{\tilde{P}_w}(b_1) \id_{\tilde{P}_w}(b_2) \id_{\tilde{P}_w}(b_3) \id_{\tilde{P}_w}(x_2) \id_{\tilde{P}_w}(x_2 - b_2 + b_3) \\
&\hspace{6cm}\id_{\tilde{P}_w}(y_3) \id_{\tilde{P}_w}(y_3 + b_1 - b_3) \id_{\tilde{P}_w}(z_1) \id_{\tilde{P}_w}(b_1 + b_2 - z_1)\\
&\hspace{2cm}\geq \exx_w \frac{|S_w|^6}{|G|^6} \Big(\exx_{b_1, b_2, b_3, x_2, y_3, z_1 \in S_w} \id_{\tilde{P}'_w}(b_1 + b_2 - b_3) \id_{\tilde{P}'_w}(b_1) \id_{\tilde{P}'_w}(b_2) \id_{\tilde{P}'_w}(b_3) \id_{\tilde{P}'_w}(x_2) \id_{\tilde{P}'_w}(x_2 - b_2 + b_3) \\
&\hspace{6cm}\id_{\tilde{P}'_w}(y_3) \id_{\tilde{P}'_w}(y_3 + b_1 - b_3) \id_{\tilde{P}'_w}(z_1) \id_{\tilde{P}'_w}(b_1 + b_2 - z_1)\Big) \geq (c_1\delta)^6 \cdot c_1^{32},\end{align*}

where we used Lemma~\ref{multipleAddQuadsBound} for each $\tilde{P}'_w$ and $S_w$ in the last step.\\

However, every element $a \in \tilde{P}_w$ satisfies $\id_{V \cap V - w} \conv \id_{V \cap V - w}(a) \geq \frac{1}{16}c \delta^3$ so we conclude that

\begin{align}&\exx_w \exx_{b_1, b_2, b_3, x_2, y_3, z_1} \id_{\tilde{P}_w}(b_1 + b_2 - b_3) \id_{V \cap V - w} \conv \id_{V \cap V - w}(b_1 + b_2 - b_3)\nonumber \\
&\hspace{1cm}\id_{\tilde{P}_w}(b_1) \id_{V \cap V - w} \conv \id_{V \cap V - w}(b_1) \id_{\tilde{P}_w}(b_2) \id_{V \cap V - w} \conv \id_{V \cap V - w}(b_2) \id_{\tilde{P}_w}(b_3) \id_{V \cap V - w} \conv \id_{V \cap V - w}(b_3)\nonumber\\
&\hspace{1cm} \id_{\tilde{P}_w}(x_2) \id_{V \cap V - w} \conv \id_{V \cap V - w}(x_2) \id_{\tilde{P}_w}(x_2 - b_2 + b_3)  \id_{V \cap V - w} \conv \id_{V \cap V - w}(x_2 - b_2 + b_3)\nonumber\\
&\hspace{1cm}\id_{\tilde{P}_w}(y_3) \id_{V \cap V - w} \conv \id_{V \cap V - w}(y_3) \id_{\tilde{P}_w}(y_3 + b_1 - b_3) \id_{V \cap V - w} \conv \id_{V \cap V - w}(y_3 + b_1  - b_3)\nonumber \\
&\hspace{1cm}\id_{\tilde{P}_w}(z_1) \id_{V \cap V - w} \conv \id_{V \cap V - w}(z_1) \id_{\tilde{P}_w}(b_1 + b_2 - z_1) \id_{V \cap V - w} \conv \id_{V \cap V - w}(b_1 + b_2 - z_1) \geq 2^{-40} c_0^{10} c_1^{38} \delta^{36}.\label{intersectionlarge10tuples}\end{align}

Finally, notice
\begin{align*}\id_{V \cap V - w} \conv \id_{V \cap V - w}(t) = &\exx_x \id_{V \cap V - w}(x + t) \id_{V \cap V - w}(x) =  \exx_x \id_V(x + t) \id_V(x + t + w) \id_V(x) \id_V(x + w)\\
 = &\exx_x \id_{V - t}(x) \id_{V - t- w}(x) \id_V(x) \id_{V - w}(x) = \frac{|V \cap (V - t) \cap (V - w) \cap (W - w - t)|}{|G|},\end{align*}
which is symmetric in $w$ and $t$, and thus $\id_{V \cap V - w} \conv \id_{V \cap V - w}(t) =\id_{V \cap V - t} \conv \id_{V \cap V - t}(w)$. Using this identity 10 times in~\eqref{intersectionlarge10tuples} completes the proof.\end{proof}

\vspace{\baselineskip}

Next, we combine Proposition~\ref{popDiffAddQuads} with standard additive combinatorial arguments to approximate the set of large values of $b \mapsto \id_{V \cap V -a} \conv \id_{V \cap V -a}(b)$ by a union of large subspaces for each $a\in A^\xi$.\\

\begin{proposition} \label{tildeudefnpropn}Suppose that $\varepsilon \leq 2^{-500}\xi^{32}\delta^{32}$. There exist positive integers $M, K \leq \exp\Big(\log^{O(1)} (2\xi^{-1})\Big)$ such that the following holds. For each $a \in A^\xi$, we may find $m = m_a \leq M$ pairs of subspaces $U_{a, 1} \leq U'_{a,1}, U_{a, 2} \leq U'_{a,2}, \dots, U_{a, m} \leq U'_{a,m}$, such that
\begin{itemize}
\item[\textbf{(i)}] for each $i \in [m]$ the bounds $K^{-1} \delta |G| \leq |U_{a,i}| \leq |U'_{a,i}| \leq K\delta |G|$ hold,
\item[\textbf{(ii)}] for each $i \in [m]$ we have $|U_{a,i}| = p^{-1}|U'_{a,i}|$,
\item[\textbf{(iii)}] writing $\tilde{U}_a = \cup_{i \in [m]} U'_{a,i}$, we have 
\begin{equation}\exx_{b} \id_{G \setminus \tilde{U}_a}(b) \Big(\id_{V \cap V -a} \conv \id_{V \cap V -a}(b)\Big)^2 \leq \frac{\xi}{100} \delta^7,\label{subspacesl2approx}\end{equation} 
and,
\item[\textbf{(iv)}] for each $b \in \cup_{i \in [m]} U_{a,i}$ the number of ordered 8-tuples $(x_1, x_2, \dots, x_8) \in (V \cap (V-a))^8$ such that $\sum_{i \in [8]} (-1)^ix_i = b$ is at least $(\xi/2)^{O(1)}\delta^{15} |G|^7$.
\end{itemize}
\end{proposition}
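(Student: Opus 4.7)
\textit{Proof outline.} The plan is an iterative Bogolyubov-type extraction applied to $f_a(b) := \id_{V\cap V-a} \conv \id_{V\cap V-a}(b)$, peeling off $L^2$-mass round by round. At round $k$, write $\tilde{U}_a^{(k-1)} := \bigcup_{i<k} U'_{a,i}$ (with $\tilde{U}_a^{(0)} = \emptyset$) and $M_{k-1} := \exx_b \id_{G \setminus \tilde{U}_a^{(k-1)}}(b) f_a(b)^2$. If $M_{k-1} < \tfrac{\xi}{100}\delta^7$ the procedure halts; otherwise apply Proposition~\ref{popDiffAddQuads} with $P := G \setminus \tilde{U}_a^{(k-1)}$ and $c' := M_{k-1}/\delta^7 \geq \xi/100$ to obtain $\tilde{P}^{(k)} \subseteq P$ on which $f_a \geq (c'/8)\delta^3$, with at least $\xi^{O(1)}\delta^3|G|^3$ additive quadruples and $|\tilde{P}^{(k)}| \geq \xi\delta|G|/1600$. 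Evenness of $f_a$ lets us assume $\tilde{P}^{(k)}$ is symmetric.

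Balog-Szemer\'edi-Gowers together with Theorem~\ref{frthm} (Ruzsa) yields a subset $Q^{(k)} \subseteq \tilde{P}^{(k)}$ of relative density $\xi^{O(1)}$ contained in a coset $t_k + H^{(k)}$ of some subspace $H^{(k)} \leq G$ with $|H^{(k)}| \leq \xi^{-O(1)}\delta|G|$. Applying Theorem~\ref{addquadsFreiman} to the translate $Q^{(k)} - t_k$ then gives a subspace $S^{(k)} \leq H^{(k)}$ with
\[
\exp\bigl(-\log^{O(1)}(2\xi^{-1})\bigr)\delta|G|\ \leq\ |S^{(k)}|\ \leq\ \xi^{-O(1)}\delta|G|,
\]
where every element $b \in S^{(k)}$ has $\geq \xi^{O(1)}|\tilde{P}^{(k)}|^3$ representations as $q_1 + q_2 - q_3 - q_4$ with $q_i \in Q^{(k)}$. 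Since $|H^{(k)}|/|S^{(k)}| \leq \exp(\log^{O(1)}(\xi^{-1}))$, the coset $t_k + H^{(k)}$ contains at most this many cosets of $S^{(k)}$; for each such coset $v + S^{(k)}$ meeting $Q^{(k)}$ set $U_{a,j} := S^{(k)}$ and $U'_{a,j} := S^{(k)} \oplus \mathbb{F}_p v'$ where $v' \in (v+S^{(k)}) \setminus S^{(k)}$ (or any $v' \notin S^{(k)}$ if $v \in S^{(k)}$). Then $|U'_{a,j}| = p\,|U_{a,j}|$, $v + S^{(k)} \subseteq U'_{a,j}$, and so $Q^{(k)} \subseteq \bigcup_j U'_{a,j}$. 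The contribution of $Q^{(k)}$ to $M_{k-1}$ is at least $(|Q^{(k)}|/|G|)(c'\delta^3/8)^2 \geq \xi^{O(1)}\delta^7$, hence $M_k \leq (1 - \xi^{O(1)}) M_{k-1}$ and the iteration halts after $\exp(\log^{O(1)}(\xi^{-1}))$ rounds, producing $M \leq \exp(\log^{O(1)}(\xi^{-1}))$ pairs in total.

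Properties \textbf{(i)}--\textbf{(iii)} follow from the size bounds on $S^{(k)}$ and the stopping criterion (with $K \leq p\cdot\exp(\log^{O(1)}(\xi^{-1}))$). For \textbf{(iv)}, any $b \in U_{a,j} = S^{(k)}$ admits $\geq \xi^{O(1)}|\tilde{P}^{(k)}|^3 \geq \xi^{O(1)}\delta^3|G|^3$ representations $b = q_1 + q_2 + q_3 + q_4$ with $q_i \in \tilde{P}^{(k)}$ (flipping signs using the symmetry of $\tilde{P}^{(k)}$), and each $q_i$ has $|G| \cdot f_a(q_i) \geq (c'/8)\delta^3 |G|$ representations as $q_i = x - y$ with $x, y \in V \cap V-a$. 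Setting $a_i = x_{2i-1} - x_{2i}$ and combining the four pair-decompositions produces $\geq \xi^{O(1)}\delta^{15}|G|^7$ alternating-sign 8-tuples in $(V \cap V-a)^8$ summing to $b$, so ${\conv}^{(8)}\id_{V \cap V-a}(b) \geq (\xi/2)^{O(1)}\delta^{15}$.

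The main obstacle is ensuring the geometric decay $M_k \leq (1-\xi^{O(1)})M_{k-1}$ together with a uniform $\exp(\log^{O(1)}(\xi^{-1}))$ bound on the number of pairs generated per round. This requires the BSG $\to$ Ruzsa $\to$ Bogolyubov chain to produce $S^{(k)}$ and $H^{(k)}$ that are both of size $\Theta(\delta|G|)$ up to polylogarithmic factors in $\xi^{-1}$, which in turn rests on the Sanders-Schoen-Sisask bounds in Theorem~\ref{addquadsFreiman} and standard Pl\"unnecke-Ruzsa estimates to control $|S^{(k)}|$ from above via $|2Q^{(k)} - 2Q^{(k)}|$.
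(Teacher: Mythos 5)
Your overall scheme---iterate, and at each round apply Proposition~\ref{popDiffAddQuads} to the uncovered set followed by Theorem~\ref{addquadsFreiman} to extract a subspace of size comparable to $\delta|G|$ whose elements have many $\tilde{P}$-representations (which is exactly where property \textbf{(iv)} comes from)---is the same as the paper's. The genuine gap is in the step you yourself flag as the main obstacle: to get the geometric $L^2$-decay you must cover \emph{all} of $Q^{(k)}$ by cosets of $S^{(k)}$ at each round, and for that you invoke Balog--Szemer\'edi--Gowers plus Theorem~\ref{frthm} to place $Q^{(k)}$ inside a coset of a subspace $H^{(k)}$ with $|H^{(k)}|\leq \xi^{-O(1)}\delta|G|$. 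Theorem~\ref{frthm} does not give this: its $O_K(|A|)$ has exponential-type dependence on the doubling constant (Ruzsa's bound is of the shape $K^2p^{K^4}|A|$), and after BSG the doubling constant of $Q^{(k)}$ is only $\xi^{-O(1)}$, so the container could have size $\exp(\xi^{-O(1)})\delta|G|$. A polynomial container of the kind you assert is essentially the polynomial Freiman--Ruzsa theorem, which is not among the tools of this paper. Since the number of pairs you create in a round is $|H^{(k)}|/|S^{(k)}|$, this is precisely the quantity that must be $\exp\big(\log^{O(1)}(2\xi^{-1})\big)$ for the stated bound on $M$ (and it feeds into the later applications, e.g.\ Claim~\ref{multipleuintnsmall}), so as written the claimed bounds on $M$ and $K$ do not follow. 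Your closing appeal to Pl\"unnecke--Ruzsa only fixes the \emph{upper bound} on $|S^{(k)}|$ (via $S^{(k)}\subseteq 2Q^{(k)}-2Q^{(k)}$), not the covering multiplicity.

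Two repairs are available. The paper's own device avoids covering $Q^{(k)}$ altogether: after Theorem~\ref{addquadsFreiman} one averages over $(b_2,b_3,b_4)$ to find a single translate $t$ with $|(t+U_{a,i})\cap P|\geq \exp\big(-\log^{O(1)}(2\xi^{-1})\big)\delta|G|$, sets $U'_{a,i}=U_{a,i}+\langle t\rangle$, and terminates not by $L^2$-decay but by counting: the total number of $b$ with $\id_{V\cap V-a}\conv\id_{V\cap V-a}(b)\geq \frac{\xi}{1000}\delta^3$ is at most $(2\xi^{-1})^{O(1)}\delta|G|$ (by regularity of $a$), and each round covers quasi-polynomially many new such $b$, so there are at most $\exp\big(\log^{O(1)}(2\xi^{-1})\big)$ rounds with one pair per round. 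Alternatively, your mass-decrement variant can be rescued without any Freiman-type container by Ruzsa's covering lemma: $|Q^{(k)}+S^{(k)}|\leq |3Q^{(k)}-2Q^{(k)}|\leq \xi^{-O(1)}|Q^{(k)}|$ together with $|S^{(k)}|\geq \exp\big(-\log^{O(1)}(2\xi^{-1})\big)|Q^{(k)}|$ shows $Q^{(k)}$ is covered by $\exp\big(\log^{O(1)}(2\xi^{-1})\big)$ cosets of $S^{(k)}$, which restores your per-round bound; but this argument is not in your write-up, and the step as stated would fail.
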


Note that property \textbf{(iv)} in the conclusion can be expressed more succinctly for each $b \in \cup_{i \in [m]} U_{a,i}$ using iterated convolution notation as 
\begin{align*}(\xi/2)^{O(1)}\delta^{15} |G|^7 \leq &\sum_{x_1, \dots, x_8 \in G} \id_{V \cap V - a}(x_1) \dots \id_{V \cap V - a}(x_8) \id\Big(\sum_{i \in [8]} (-1)^ix_i = b\Big)\\
= &\sum_{x_1, \dots, x_7 \in G} \id_{V \cap V - a}(x_1) \dots \id_{V \cap V - a}(x_7) \id_{V \cap V - a}\Big(x_1 - x_2 + x_3 -x_4 + x_5 - x_6 + x_7 + b\Big)\\
= &|G|^7 {\conv}^{(8)} \id_{V \cap V-a}(-b).\end{align*}
Since each $ U_{a,i}$ is symmetric, we have ${\conv}^{(8)} \id_{V \cap V-a}(b) \geq (\xi/2)^{O(1)}\delta^{15}$ for all $b \in \cup_{i \in [m]} U_{a,i}$. 

\begin{proof}[Proof of Proposition~\ref{tildeudefnpropn}]Fix $a \in A^\xi$. We use an iterative argument, where at each step we find additional subspaces $U_{a, i}$ and $U'_{a,i}$ until~\eqref{subspacesl2approx} is satisfied. Suppose that so far we have subspaces $U_{a, 1} \leq U'_{a,1}, U_{a, 2} \leq U'_{a,2}, \dots, U_{a, i-1} \leq U'_{a,i-1}$, for some $i \geq 1$. Assume that~\eqref{subspacesl2approx} fails with the current definition of $\tilde{U}_a$. Let $P \subseteq G \setminus \tilde{U}_a$ be the set of all elements $b \in G \setminus \tilde{U}_a$ such that $\id_{V \cap V -a} \conv \id_{V \cap V -a}(b) \geq \frac{\xi}{1000}\delta^3$. Then we have
\begin{align*}\exx_{b} \id_{G \setminus (\tilde{U}_a \cup P)}(b) \Big(\id_{V \cap V -a} \conv \id_{V \cap V -a}(b)\Big)^2 \leq & \frac{\xi}{1000}\delta^3\exx_{b} \id_{G \setminus (\tilde{U}_a \cup P)}(b) \id_{V \cap V -a} \conv \id_{V \cap V -a}(b)\\
\leq &\frac{\xi}{1000}\delta^3\exx_{b} \id_{V \cap V -a} \conv \id_{V \cap V -a}(b) \leq \frac{\xi}{250}\delta^7,
\end{align*}
where we used assumption \textbf{(i)} of regularity for $a$. Since~\eqref{subspacesl2approx} fails, we have
\[\exx_{b} \id_{P}(b) \Big(\id_{V \cap V -a} \conv \id_{V \cap V -a}(b)\Big)^2 \geq \frac{\xi}{200}\delta^7.\]
By Proposition~\ref{popDiffAddQuads}, provided $\sqrt[4]{\varepsilon} \leq 2^{-105}\xi^8\delta^8$, there exists a subset $\tilde{P} \subseteq P$ such that $|\tilde{P}| \leq 2^{25}\xi^{-2} \delta |G|$ and $\tilde{P}$ has at least $2^{-106}\xi^8\delta^3|G|^3$ additive quadruples. By Theorem~\ref{addquadsFreiman}, there exists a subspace $U_{a,i} \leq G$ of size $|U_{a,i}| \geq \exp\Big(-\log^{O(1)} (2\xi^{-1})\Big) \delta |G|$ such that for each $v \in U_{a,i}$ there are at least $(\xi/2)^{O(1)}\delta^3|G|^3$ quadruples $(b_1, b_2, b_3, b_4) \in \tilde{P}^4$ such that $b_1 + b_2 - b_3 - b_4 = v$, from which the last property in the conclusion follows. (Note that this condition also implies $|U_{a,i}| \leq \exp\Big(-\log^{O(1)} (2\xi^{-1})\Big) \delta |G|$.) In particular, by averaging over $b_2, b_3, b_4$, we obtain an element $t \in G$ such that $|t + U_{a,i} \cap P| \geq \exp\Big(-\log^{O(1)} (2\xi^{-1})\Big)  \delta |G|$. We now set $U'_{a,i} = U_{a,i} + \langle t \rangle$.\\
\indent By regularity of $a$, we have $\ex_b \id_{V \cap V -a} \conv \id_{V \cap V -a}(b) \leq 4\delta^4$, so the total number of $b\in G$ such that $\id_{V \cap V -a} \conv \id_{V \cap V -a}(b) \geq \frac{\xi}{1000}\delta^3$ is at most $(2\xi^{-1})^{O(1)} \delta|G|$. The procedure terminates after at most $\exp\Big(\log^{O(1)} (2\xi^{-1})\Big)$ steps, since we cover at least $\exp\Big(-\log^{O(1)} (2\xi^{-1})\Big)  \delta |G|$ of such elements $b$ by $U'_{a,i}$ that were not covered by $\tilde{U}_a$ previously.\end{proof}

By the way we defined the set $A^\xi$ and unions of subspaces $\tilde{U}_a$, we deduce the following approximation property.

\begin{claim}\label{tildeupassclaim}Provided $\xi \leq c_0/2$ and $\varepsilon \leq 2^{-500}\xi^{32}\delta^{32}$, we have
\[\exx_{a,b} \Big(1 - \id_{A^\xi}(a)\id_{\tilde{U}_a}(b)\Big)\id_{V \cap V -a} \conv \id_{V \cap V -a}(b)^2 \leq 2\xi \delta^7 + 40\sqrt[4]{\varepsilon}.\]\end{claim}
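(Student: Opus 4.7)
The plan is to decompose the expectation into two pieces according to whether $a \in A^\xi$ or not, handling each piece with a result already established. Concretely, I would write
\begin{align*}
\exx_{a,b}\Big(1 - \id_{A^\xi}(a)\id_{\tilde{U}_a}(b)\Big)&\Big(\id_{V \cap V-a}\conv\id_{V \cap V-a}(b)\Big)^2 \\
&= \exx_{a,b}\id_{G \setminus A^\xi}(a)\Big(\id_{V \cap V-a}\conv\id_{V \cap V-a}(b)\Big)^2 \\
&\quad + \exx_{a,b}\id_{A^\xi}(a)\id_{G \setminus \tilde{U}_a}(b)\Big(\id_{V \cap V-a}\conv\id_{V \cap V-a}(b)\Big)^2,
\end{align*}
using simply that $1 - \id_{A^\xi}(a)\id_{\tilde{U}_a}(b) = \id_{G \setminus A^\xi}(a) + \id_{A^\xi}(a)\id_{G \setminus \tilde{U}_a}(b)$.

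For the first summand, the hypotheses $\xi \leq c_0/2$ and $\varepsilon \leq 2^{-500}\xi^{32}\delta^{32}$ comfortably imply the conditions $\xi \leq c_0/2$ and $\varepsilon \leq 2^{-32}c_0^4\delta^{28}$ of Claim~\ref{Alargeness}, so inequality~\eqref{l2smallcontribreg} applies and yields the bound $\xi\delta^7 + 40\sqrt[4]{\varepsilon}$.

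For the second summand, property \textbf{(iii)} of Proposition~\ref{tildeudefnpropn} (whose hypothesis $\varepsilon \leq 2^{-500}\xi^{32}\delta^{32}$ is exactly what we assume) asserts that for every $a \in A^\xi$,
\[\exx_b \id_{G \setminus \tilde{U}_a}(b)\Big(\id_{V \cap V-a}\conv\id_{V \cap V-a}(b)\Big)^2 \leq \frac{\xi}{100}\delta^7.\]
Averaging this estimate over $a \in A^\xi$ (and using the trivial bound $|A^\xi|/|G| \leq 1$) gives at most $\tfrac{\xi}{100}\delta^7$ for the second summand. Summing the two contributions yields
\[\xi\delta^7 + 40\sqrt[4]{\varepsilon} + \tfrac{\xi}{100}\delta^7 \leq 2\xi\delta^7 + 40\sqrt[4]{\varepsilon},\]
as claimed. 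There is no real obstacle here; the statement is essentially an aggregation of the two previous results, and the small numerical slack ($2\xi$ rather than $\xi + \xi/100$) is tailored exactly to absorb both contributions with room to spare.
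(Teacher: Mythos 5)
Your proposal is correct and follows essentially the same route as the paper: split the weight $1 - \id_{A^\xi}(a)\id_{\tilde{U}_a}(b)$ into the contribution from $a \notin A^\xi$, bounded by~\eqref{l2smallcontribreg} of Claim~\ref{Alargeness}, and the contribution from $a \in A^\xi$, $b \notin \tilde{U}_a$, bounded by property \textbf{(iii)} of Proposition~\ref{tildeudefnpropn}, then add the two bounds. The hypothesis bookkeeping you perform is consistent with what the paper implicitly assumes, so no further changes are needed.
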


\begin{proof}The expression in question is the sum of
\begin{equation}\label{smallAUexp1}\exx_{a,b} \Big(1 - \id_{A^\xi}(a)\Big)\id_{V \cap V -a} \conv \id_{V \cap V -a}(b)^2\end{equation}
and
\begin{equation}\label{smallAUexp2}\exx_{a,b} \id_{A^\xi}(a)\Big(1 - \id_{\tilde{U}_a}(b)\Big)\id_{V \cap V -a} \conv \id_{V \cap V -a}(b)^2\end{equation}
which we estimate in turn. The inequality~\eqref{l2smallcontribreg} shows that the expression in~\eqref{smallAUexp1} is at most $\xi \delta^7 + 40\sqrt[4]{\varepsilon}$. On the other hand, by properties in the conclusion of Proposition~\ref{tildeudefnpropn}, we get that the expression in~\eqref{smallAUexp2} is at most $\frac{\xi}{100}\delta^7$.\end{proof}

\subsection{Finding approximate quasirandom linear systems of subspaces}

The following proposition is the key step towards obtaining an approximate quasirandom linear system of subspaces.

\begin{proposition}\label{addquad10subintnlarge} There is an absolute constant $D \geq 1$ such that the following holds. Write $A = A^\xi$. Provided $\xi \leq \exp(-D \log^D (2c_0^{-1}))$ and $\varepsilon \leq 2^{-500} \xi^{32} \delta^{288}$, we have
\begin{align*}&\exx_{\ssk{b_1, b_2, b_3\\x_2, y_3, z_1}} \id_A(b_1 + b_2 - b_3)\id_A(b_1)\id_A(b_2)\id_A(b_3) \id_A(x_2) \id_A(x_2 - b_2 + b_3) \id_A(y_3)\id_A(y_3 + b_1 - b_3)\id_A(z_1)\id_A(b_1 + b_2 - z_1) \\
&\hspace{2cm}|\tilde{U}_{b_1 + b_2 - b_3} \cap \tilde{U}_{b_1} \cap \tilde{U}_{b_2} \cap \tilde{U}_{b_3} \cap \tilde{U}_{x_2} \cap \tilde{U}_{x_2 - b_2 + b_3}\cap  \tilde{U}_{y_3} \cap \tilde{U}_{y_3 + b_1 - b_3}\cap  \tilde{U}_{z_1} \cap \tilde{U}_{b_1 + b_2 - z_1}| \\
&\hspace{13cm}\geq \exp(-D \log^D (2c_0^{-1})) \delta^{6}|G|.\end{align*}
\end{proposition}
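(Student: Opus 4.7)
The plan is to mimic the proof of Corollary~\ref{commonadditivequads10Cor}, with one essential refinement: instead of working with the full set $\tilde{P}_w$ of popular differences for each fixed $w$, I restrict to a subset $\tilde{Q}_w \subseteq \tilde{P}_w$ whose elements $a$ lie in $A$ \emph{and} satisfy $w \in \tilde{U}_a$. An application of Lemma~\ref{multipleAddQuadsBound} to this refined set then transports the desired $\delta^6$ lower bound on the 10-fold $\tilde{U}$-intersection directly from a 6-tuple count, bypassing the need for sharp pointwise estimates on $\id_{V\cap V-a}\conv \id_{V\cap V-a}$.

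For each $w \in A$, the construction inside the proof of Corollary~\ref{commonadditivequads10Cor} (via Proposition~\ref{popDiffAddQuads} applied with $a = w$ and $P = G$, followed by Theorem~\ref{addquadsFreiman}) yields a set $\tilde{P}_w \subseteq G$ and a subspace $S_w \leq G$ with $|S_w| \geq c_1 \delta|G|$ and $|\tilde{P}_w \cap S_w| \geq c_1|S_w|$, where $c_1 \geq \exp(-O(\log^{O(1)}(2c_0^{-1})))$. By convolution symmetry, every $a \in \tilde{P}_w$ satisfies $\id_{V\cap V-a}\conv\id_{V\cap V-a}(w) \geq c_0\delta^3/16$. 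I then define the \emph{bad} set
\[B_w := \Big\{a \in G : \id_{V\cap V-a}\conv\id_{V\cap V-a}(w) \geq \tfrac{c_0\delta^3}{16},\ \text{and}\ \bigl(a \notin A\ \text{or}\ w \notin \tilde{U}_a\bigr)\Big\}.\]
A Chebyshev-type bound combined with Claim~\ref{tildeupassclaim} gives
\[\sum_{w \in G} |B_w| \leq \Big(\tfrac{c_0\delta^3}{16}\Big)^{\!-2}\,\Big(\exx_{a,w}\bigl(1 - \id_A(a)\id_{\tilde{U}_a}(w)\bigr)\bigl(\id_{V\cap V-a}\conv\id_{V\cap V-a}(w)\bigr)^{\!2}\Big)\,|G|^2 \leq O\!\bigl(c_0^{-2}\delta^{-6}(\xi\delta^7 + \sqrt[4]{\varepsilon})\bigr)|G|^2.\]
When $D$ is sufficiently large, the hypotheses on $\xi$ and $\varepsilon$ force the right-hand side to be at most $c_0 c_1^2 \delta|G|^2/10^4$, so Markov's inequality yields $|\{w \in G : |B_w| > c_1^2\delta|G|/100\}| \leq c_0|G|/100$. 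Combined with $|A| \geq c_0|G|/40$ from Claim~\ref{Alargeness}, this produces a subset $A^* \subseteq A$ of size $|A^*| \geq 3c_0|G|/200$ on which $|B_w| \leq c_1^2\delta|G|/100$.

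For each $w \in A^*$, the set $\tilde{Q}_w := (\tilde{P}_w \cap S_w) \setminus B_w$ has density at least $c_1 - c_1/100 \geq c_1/2$ in the subspace $S_w$, and every element $a \in \tilde{Q}_w$ lies in $A$ and satisfies $w \in \tilde{U}_a$ by the very definition of $B_w$. Since $S_w$ is a subspace, the condition $(b_1, b_2, b_3, x_2, y_3, z_1) \in S_w^6$ forces all ten expressions $a_1 = b_1 + b_2 - b_3,\, a_2 = b_1,\, \ldots,\, a_{10} = b_1 + b_2 - z_1$ to lie in $S_w$ as well. Applying Lemma~\ref{multipleAddQuadsBound} to $\tilde{Q}_w$ inside $S_w$ and rescaling to $G^6$ gives
\[\exx_{\vec{b} \in G^6}\prod_{i=1}^{10}\id_{\tilde{Q}_w}(a_i) \geq \Big(\tfrac{|S_w|}{|G|}\Big)^{\!6}\Big(\tfrac{c_1}{2}\Big)^{\!32} \geq 2^{-32} c_1^{38}\delta^6.\]
Since $\id_{\tilde{Q}_w}(a) \leq \id_A(a)\id_{\tilde{U}_a}(w)$, averaging over $w \in A^*$ yields
\[\exx_{\vec{b},w}\prod_{i=1}^{10}\id_A(a_i)\id_{\tilde{U}_{a_i}}(w) \geq \frac{|A^*|}{|G|}\cdot 2^{-32} c_1^{38}\delta^6 \geq \exp\!\bigl(-D\log^D(2c_0^{-1})\bigr)\,\delta^6\]
after enlarging $D$ to absorb universal constants and the Bogolyubov-type dependence of $c_1$ on $c_0$. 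Multiplying by $|G|$ and recognising the left-hand side as the average displayed in the proposition completes the argument.

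The main technical point is balancing the two Markov-type thresholds: $|B_w|$ must be small enough (compared to $|\tilde{P}_w \cap S_w|$) to leave $\tilde{Q}_w$ with constant density in $S_w$, while simultaneously the measure of exceptional $w$ must stay well below the density of $A$. The exponent-type hypothesis $\xi \leq \exp(-D\log^D(2c_0^{-1}))$ is designed to supply exactly this, after absorbing the $c_0^{-2}$ loss inherent to the Chebyshev step and the $c_1^{-O(1)}$ loss coming from the Bogolyubov--Ruzsa dependence of $c_1$ on $c_0$.
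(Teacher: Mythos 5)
Your argument is correct, and it takes a genuinely different route from the paper. The paper proves this proposition by taking Corollary~\ref{commonadditivequads10Cor} as a black box and then inserting the ten indicator factors $\id_A(s)\id_{\tilde U_s}(w)$ one position at a time: each insertion is paid for by a Cauchy--Schwarz step whose second factor is a $36$-variable expression that must be estimated by some $65$ applications of Lemma~\ref{u2controlBasic} (the $\delta^{65}$ bound), and at the end the ten convolution weights are removed using the pointwise regularity bound $\id_{V\cap V-a}\conv\id_{V\cap V-a}(w)\le 2\delta^3$, costing a factor $2^{10}\delta^{30}$. You instead re-enter the proof of Corollary~\ref{commonadditivequads10Cor}: for each fixed good $w$ you prune the popular-difference set $\tilde P_w\cap S_w$ by the bad set $B_w$, whose total size over $w$ is controlled by a single Chebyshev/Markov application of Claim~\ref{tildeupassclaim} (using the duality $\id_{V\cap V-w}\conv\id_{V\cap V-w}(a)=\id_{V\cap V-a}\conv\id_{V\cap V-a}(w)$ to know the convolution is pointwise $\ge c_0\delta^3/16$ on $\tilde P_w$), and then apply Lemma~\ref{multipleAddQuadsBound} directly to the pruned set $\tilde Q_w$ inside the Bogolyubov subspace $S_w$. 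This bypasses both the telescoping Cauchy--Schwarz chain and the final division by convolution upper bounds, and the threshold bookkeeping you describe (namely that $c_1$ depends only on $c_0$, so $\xi\le\exp(-D\log^D(2c_0^{-1}))$ with $D$ large absorbs the $c_0^{-2}c_1^{-2}$ losses) is exactly what makes it work. One small imprecision: the construction of $\tilde P_w$, $S_w$ with $c_0$-dependent parameters requires $\exx_b(\id_{V\cap V-w}\conv\id_{V\cap V-w}(b))^2\ge \tfrac{c_0}{2}\delta^7$, so $w$ must range over $A^{c_0/2}$ (as in the corollary's proof), not over $A=A^\xi$ itself; since $A^{c_0/2}\subseteq A^\xi$ and $|A^{c_0/2}|\ge \tfrac{c_0}{40}|G|$ by Claim~\ref{Alargeness}, restricting $w$ to $A^{c_0/2}$ costs nothing and your estimates go through verbatim.
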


\begin{proof}Let us begin by applying Corollary~\ref{commonadditivequads10Cor}. Thus
\begin{align*}&\exx_{b_1, b_2, b_3, x_2, y_3, z_1}\exx_w \id_{V \cap V - (b_1 + b_2 - b_3)} \conv\id_{V \cap V -(b_1 + b_2 - b_3)}(w) \,\,\id_{V \cap V - b_1} \conv \id_{V \cap V - b_1}(w)\,\, \id_{V \cap V - b_2} \conv \id_{V \cap V - b_2}(w)\\
&\hspace{2cm} \id_{V \cap V - b_3} \conv \id_{V \cap V - b_3}(w)\,\, \id_{V \cap V - x_2} \conv \id_{V \cap V - x_2}(w)\,\, \id_{V \cap V - (x_2 - b_2 + b_3)} \conv \id_{V \cap V - (x_2 - b_2 + b_3)}(w) \\
&\hspace{2cm} \id_{V \cap V - y_3} \conv \id_{V \cap V - y_3}(w)\,\, \id_{V \cap V - (y_3 + b_1 - b_3)} \conv \id_{V \cap V - (y_3 + b_1 - b_3)}(w) \\
&\hspace{2cm} \id_{V \cap V - z_1} \conv \id_{V \cap V - z_1}(w)\,\, \id_{V \cap V - (b_1 + b_2 - z_1)} \conv \id_{V \cap V - (b_1 + b_2 - z_1)}(w) \geq c_1 \delta^{36},\end{align*}
where $c_1 \geq \exp\Big(- O\Big(\log^{O(1)}(2c_0^{-1})\Big)\Big)$. In the next step of the proof, we show that we may restrict attention to elements $b_1, b_2, b_3, x_2, y_3, z_1$ such that
\[b_1 + b_2 - b_3, b_1, b_2, b_3, x_2, x_2 - b_2 + b_3, y_3, y_3 + b_1 - b_3,z_1, b_1 + b_2 - z_1 \in A\]
and
\[w \in \tilde{U}_{b_1 + b_2 - b_3}, \tilde{U}_{b_1}, \tilde{U}_{b_2}, \tilde{U}_{b_3}, \tilde{U}_{x_2}, \tilde{U}_{x_2 - b_2 + b_3}, \tilde{U}_{y_3}, \tilde{U}_{y_3 + b_1 - b_3}, \tilde{U}_{z_1}, \tilde{U}_{b_1 + b_2 - z_1}.\]
To see that, we give an upper bound for
\begin{align}&\exx_{b_1, b_2, b_3, x_2, y_3, z_1}\exx_w \Big(1 - \id_{A}(b_1)\id_{\tilde{U}_{b_1}}(w)\Big) \id_{V \cap V - b_1} \conv \id_{V \cap V - b_1}(w)\,\,\id_{V \cap V - (b_1 + b_2 - b_3)} \conv\id_{V \cap V -(b_1 + b_2 - b_3)}(w) \,\, \nonumber\\
&\hspace{1cm} \id_{V \cap V - b_2} \conv \id_{V \cap V - b_2}(w)\,\,\id_{V \cap V - b_3} \conv \id_{V \cap V - b_3}(w)\,\, \id_{V \cap V - x_2} \conv \id_{V \cap V - x_2}(w) \nonumber\\
&\hspace{1cm} \id_{V \cap V - (x_2 - b_2 + b_3)} \conv \id_{V \cap V - (x_2 - b_2 + b_3)}(w)\,\,\id_{V \cap V - y_3} \conv \id_{V \cap V - y_3}(w)\,\, \id_{V \cap V - (y_3 + b_1 - b_3)} \conv \id_{V \cap V - (y_3 + b_1 - b_3)}(w) \nonumber\\
&\hspace{1cm} \id_{V \cap V - z_1} \conv \id_{V \cap V - z_1}(w)\,\, \id_{V \cap V - (b_1 + b_2 - z_1)} \conv \id_{V \cap V - (b_1 + b_2 - z_1)}(w).\label{elementsrestrictionError10subsets}\end{align}
Using Cauchy-Schwarz inequality (note below that $1 - \id_{A}(b_1)\id_{\tilde{U}_{b_1}}(w) \in \{0,1\}$ so it equals its square), this expression is at most

\begin{align}&\Big(\exx_{b_1,w} \Big(1 - \id_{A}(b_1)\id_{\tilde{U}_{b_1}}(w)\Big) \id_{V \cap V - b_1} \conv \id_{V \cap V - b_1}(w)^2\Big)^{1/2} \Big(\exx_{b_1,w} \Big(\exx_{b_2, b_3, x_2, y_3, z_1} \id_{V \cap V - (b_1 + b_2 - b_3)} \conv\id_{V \cap V -(b_1 + b_2 - b_3)}(w) \,\, \nonumber\\
&\hspace{1cm} \id_{V \cap V - b_2} \conv \id_{V \cap V - b_2}(w)\,\,\id_{V \cap V - b_3} \conv \id_{V \cap V - b_3}(w)\,\, \id_{V \cap V - x_2} \conv \id_{V \cap V - x_2}(w) \nonumber\\
&\hspace{1cm} \id_{V \cap V - (x_2 - b_2 + b_3)} \conv \id_{V \cap V - (x_2 - b_2 + b_3)}(w)\,\,\id_{V \cap V - y_3} \conv \id_{V \cap V - y_3}(w)\,\, \id_{V \cap V - (y_3 + b_1 - b_3)} \conv \id_{V \cap V - (y_3 + b_1 - b_3)}(w) \nonumber\\
&\hspace{1cm} \id_{V \cap V - z_1} \conv \id_{V \cap V - z_1}(w)\,\, \id_{V \cap V - (b_1 + b_2 - z_1)} \conv \id_{V \cap V - (b_1 + b_2 - z_1)}(w)\Big)^2\Big)^{1/2}.\label{passingtoACSstep1_10subsets}\end{align}

Claim~\ref{tildeupassclaim} implies
\begin{equation}\label{tildeupassclaimEqnBound10subsets}\exx_{b_1,w} \Big(1 - \id_{A}(b_1)\id_{\tilde{U}_{b_1}}(w)\Big)\id_{V \cap V - b_1} \conv \id_{V \cap V - b_1}(w)^2 \leq 2\xi \delta^7 + 40\sqrt[4]{\varepsilon}.\end{equation}

When it comes to the second long sum in~\eqref{passingtoACSstep1_10subsets}, note that  
\begin{align*}
&\exx_{b_1,w} \Big(\exx_{b_2, b_3, x_2, y_3, z_1} \id_{V \cap V - (b_1 + b_2 - b_3)} \conv\id_{V \cap V -(b_1 + b_2 - b_3)}(w) \,\, \id_{V \cap V - b_2} \conv \id_{V \cap V - b_2}(w)\,\,\id_{V \cap V - b_3} \conv \id_{V \cap V - b_3}(w)\nonumber\\
&\hspace{1cm} \id_{V \cap V - x_2} \conv \id_{V \cap V - x_2}(w) \,\,\id_{V \cap V - (x_2 - b_2 + b_3)} \conv \id_{V \cap V - (x_2 - b_2 + b_3)}(w)\,\,\id_{V \cap V - y_3} \conv \id_{V \cap V - y_3}(w)\nonumber\\
&\hspace{1cm} \id_{V \cap V - (y_3 + b_1 - b_3)} \conv \id_{V \cap V - (y_3 + b_1 - b_3)}(w) \,\,\id_{V \cap V - z_1} \conv \id_{V \cap V - z_1}(w)\,\, \id_{V \cap V - (b_1 + b_2 - z_1)} \conv \id_{V \cap V - (b_1 + b_2 - z_1)}(w)\Big)^2\\
= &\exx_{b_1,w} \exx_{\ssk{b_2, b_3, x_2, y_3, z_1\\b'_2, b'_3, x'_2, y'_3, z'_1}} \id_{V \cap V - (b_1 + b_2 - b_3)} \conv\id_{V \cap V -(b_1 + b_2 - b_3)}(w) \,\, \id_{V \cap V - b_2} \conv \id_{V \cap V - b_2}(w)\,\,\id_{V \cap V - b_3} \conv \id_{V \cap V - b_3}(w)\nonumber\\
&\hspace{1cm} \id_{V \cap V - x_2} \conv \id_{V \cap V - x_2}(w) \,\,\id_{V \cap V - (x_2 - b_2 + b_3)} \conv \id_{V \cap V - (x_2 - b_2 + b_3)}(w)\,\,\id_{V \cap V - y_3} \conv \id_{V \cap V - y_3}(w)\nonumber\\
&\hspace{1cm} \id_{V \cap V - (y_3 + b_1 - b_3)} \conv \id_{V \cap V - (y_3 + b_1 - b_3)}(w) \,\,\id_{V \cap V - z_1} \conv \id_{V \cap V - z_1}(w)\,\, \id_{V \cap V - (b_1 + b_2 - z_1)} \conv \id_{V \cap V - (b_1 + b_2 - z_1)}(w)\\
&\hspace{1cm}\id_{V \cap V - (b_1 + b'_2 - b'_3)} \conv\id_{V \cap V -(b_1 + b'_2 - b'_3)}(w) \,\, \id_{V \cap V - b'_2} \conv \id_{V \cap V - b'_2}(w)\,\,\id_{V \cap V - b'_3} \conv \id_{V \cap V - b'_3}(w)\nonumber\\
&\hspace{1cm} \id_{V \cap V - x'_2} \conv \id_{V \cap V - x'_2}(w) \,\,\id_{V \cap V - (x'_2 - b'_2 + b'_3)} \conv \id_{V \cap V - (x'_2 - b'_2 + b'_3)}(w)\,\,\id_{V \cap V - y'_3} \conv \id_{V \cap V - y'_3}(w)\nonumber\\
&\hspace{1cm} \id_{V \cap V - (y'_3 + b_1 - b'_3)} \conv \id_{V \cap V - (y'_3 + b_1 - b'_3)}(w) \,\,\id_{V \cap V - z'_1} \conv \id_{V \cap V - z'_1}(w)\,\, \id_{V \cap V - (b_1 + b'_2 - z'_1)} \conv \id_{V \cap V - (b_1 + b'_2 - z'_1)}(w).\end{align*}

To evade unnecessary writing, note that each of the 18 convolutions above gives rise to additional dummy variable in further expansion. Let us name these variables $v_1, \dots, v_9$ and $v'_1, \dots, v'_9$ in this order. Thus, $\id_{V \cap V - (b_1 + b_2 - b_3)} \conv\id_{V \cap V -(b_1 + b_2 - b_3)}(w)$ expands as 
\[\exx_{v_1} \id_V(v_1) \id_V(v_1 + b_1 + b_2 - b_3) \id_V(v_1 + w) \id_V(v_1 + b_1 + b_2 - b_3 + w)\]
and the last term $\id_{V \cap V - (b_1 + b'_2 - z'_1)} \conv \id_{V \cap V - (b_1 + b'_2 - z'_1)}(w)$ expands as
\[\exx_{v'_9} \id_V(v'_9) \id_V(v'_9+ b_1 + b'_2 - z'_1) \id_V(v'_9 + w) \id_V(v'_9 + b_1 + b'_2 - z'_1 + w),\]
so we get 72 terms in total. Let us omit the following 7 terms
\begin{align*}&\id_V(v_5 + x_2 - b_2 + b_3 + w),\, \id_V(v_7 + y_3 + b_1 - b_3 + w),\, \id_V(v_9 + b_1 + b_2 - z_1 + w),\, \id_V(v'_1 + b_1 + b'_2 - b'_3 + w),\\
&\hspace{2cm}\id_V(v'_5 + x'_2 - b'_2 + b'_3 + w),\, \id_V(v'_7 + y'_3 + b_1 - b'_3 + w),\, \id_V(v'_9 + b_1 + b'_2 - z'_1 + w).\end{align*}
We may use Lemma~\ref{u2controlBasic} 65 times to get the bound $\delta^{65} + 65\varepsilon$, as follows. First, use $w$ with each of 11 variables $b_1, b_2, b'_2, \dots, z_1, z'_1$ to remove the remaining 11 terms that involve $w$ and one of the mentioned variables. Then use $w$ and a variable among $v_1, \dots, v'_9$ to remove further 18 terms. The remaining 36 terms have obvious choices of variables.\\

Hence, the second long sum in~\eqref{passingtoACSstep1_10subsets} is at most $2\delta^{65}$, since $\varepsilon \leq \frac{1}{65}\delta^{65}$.\\

Let us define $t \colon G^6 \to G^{10}$
\[t(b_1, b_2, b_3, x_2, y_3, z_1) = \Big(b_1, b_2, b_3, b_1 + b_2 - b_3, x_2, x_2 - b_2 + b_3, y_3, y_3 + b_1 - b_3, z_1, b_1 + b_2 - z_1\Big)\]
which is the 10-tuple of points appearing in the expressions above. Furthermore, let $F(b_1, b_2, b_3, x_2, y_3, z_1; w)$ for the product $\prod_{i \in [10]}\id_{V \cap V - t_i(b_1, b_2, b_3, x_2, y_3, z_1)} \conv\id_{V \cap V - t_i(b_1, b_2, b_3, x_2, y_3, z_1)}(w)$. The upper bound on~\eqref{elementsrestrictionError10subsets} then becomes
\[\exx_{b_1, b_2, b_3, x_2, y_3, z_1, w} \Big(1 - \id_{A}(b_1)\id_{\tilde{U}_{b_1}}(w)\Big) F(b_1, b_2, b_3, x_2, y_3, z_1; w) \leq 2\sqrt{\xi}\delta^{36} + 10\sqrt[8]{\varepsilon}.\]

Note that the actual signs in the linear combinations such as $b_1 + b_2 -b_3$ do not play a role in the argument above, so we could have had any $\pm b_1 \pm b_2 \pm b_3$ instead. The argument can be used to prove the same inequality but with $\Big(1 - \id_{A}(s)\id_{\tilde{U}_{s}}(w)\Big)$ instead of $\Big(1 - \id_{A}(b_1)\id_{\tilde{U}_{b_1}}(w)\Big)$, where $s$ is any of the remaining 9 possibilities. Namely, for $s$ among $b_2, b_3, x_2, y_3, z_1$ we apply almost the same arguments with $s$ in place of $b_1$, with the slight difference in the terms that are neglected. On the other hand, for other possibilities, we need to change variables. When $s = b_1 + b_2 -b_3$ we replace $b_3$ by $b_1 + b_2 - b_3$ and $x_2$ by $y_3$, thus reducing that case to $s = b_3$ (with a slight difference in signs in linear combinations, which is not an issue). When $s= x_2 - b_2 + b_3$ we replace $x_2$ by $x_2 + b_2 - b_3$ to reduce to the case when $s = x_2$, with similar arguments for $s \in \{y_3 + b_1 - b_3, b_1 + b_2 - z_1\}$.\\
\indent Let $I(s; w) = \Big(1 - \id_{A}(s)\id_{\tilde{U}_{s}}(w)\Big)$, which takes values 0 and 1. In particular
\begin{align*}&\bigg|\bigg(\exx_{b_1, b_2, b_3, x_2, y_3, z_1, w} \Big(\prod_{i \in [10]} \Big(1 - I(t_i(b_1, b_2, b_3, x_2, y_3, z_1); w)\Big)\Big) F(b_1, b_2, b_3, x_2, y_3, z_1; w)\bigg) \\
&\hspace{9cm}- \bigg(\exx_{b_1, b_2, b_3, x_2, y_3, z_1, w}F(b_1, b_2, b_3, x_2, y_3, z_1; w)\bigg)\bigg|\\
&\hspace{1cm} = \bigg|\sum_{j \in [10]} \bigg(\exx_{b_1, b_2, b_3, x_2, y_3, z_1, w}\Big(\prod_{i \in [j, 10]} \Big(1 - I(t_i(b_1, b_2, b_3, x_2, y_3, z_1); w)\Big)\Big) F(b_1, b_2, b_3, x_2, y_3, z_1; w)\\
&\hspace{6cm} -\Big(\prod_{i \in [j + 1, 10]} \Big(1 - I(t_i(b_1, b_2, b_3, x_2, y_3, z_1); w)\Big)\Big) F(b_1, b_2, b_3, x_2, y_3, z_1; w)\bigg) \bigg|\\
&\hspace{1cm}\leq\sum_{j \in [10]} \bigg|\exx_{b_1, b_2, b_3, x_2, y_3, z_1, w} I(t_j(b_1, b_2, b_3, x_2, y_3, z_1); w) \Big(\prod_{i \in [j+1, 10]} \Big(1 - I(t_i(b_1, b_2, b_3, x_2, y_3, z_1); w)\Big)\Big)\\
&\hspace{12cm} F(b_1, b_2, b_3, x_2, y_3, z_1; w)\bigg|\\
&\hspace{1cm}\leq \sum_{j \in [10]} \exx_{b_1, b_2, b_3, x_2, y_3, z_1, w} I(t_j(b_1, b_2, b_3, x_2, y_3, z_1); w) F(b_1, b_2, b_3, x_2, y_3, z_1; w),\end{align*}

where we used the fact that all terms take values in $[0,1]$ to neglect some of them in the last line. Returing to our original notation, we conclude that
\begin{align*}&\exx_{b_1, b_2, b_3, x_2, y_3, z_1}\exx_w \id_{A}(b_1)\id_{\tilde{U}_{b_1}}(w) \id_{V \cap V - b_1} \conv \id_{V \cap V - b_1}(w)\,\,\id_{A}(b_2)\id_{\tilde{U}_{b_2}}(w) \id_{V \cap V - b_2} \conv \id_{V \cap V - b_2}(w) \nonumber\\
&\hspace{0.5cm}\id_{A}(b_3)\id_{\tilde{U}_{b_3}}(w)\id_{V \cap V - b_3} \conv \id_{V \cap V - b_3}(w) \,\,\id_{A}(b_1 + b_2 - b_3)\id_{\tilde{U}_{b_1 + b_2 - b_3}}(w)\id_{V \cap V - (b_1 + b_2 - b_3)} \conv\id_{V \cap V -(b_1 + b_2 - b_3)}(w)\\
&\hspace{0.5cm}\id_{A}(x_2)\id_{\tilde{U}_{x_2}}(w) \id_{V \cap V - x_2} \conv \id_{V \cap V - x_2}(w)\,\,\id_{A}(x_2 - b_2 + b_3)\id_{\tilde{U}_{x_2 - b_2 + b_3}}(w) \id_{V \cap V - (x_2 - b_2 + b_3)} \conv \id_{V \cap V - (x_2 - b_2 + b_3)}(w)\\
&\hspace{0.5cm}\id_{A}(y_3)\id_{\tilde{U}_{y_3}}(w)\id_{V \cap V - y_3} \conv \id_{V \cap V - y_3}(w)\,\, \id_{A}(y_3 + b_1 - b_3)\id_{\tilde{U}_{y_3 + b_1 - b_3}}(w) \id_{V \cap V - (y_3 + b_1 - b_3)} \conv \id_{V \cap V - (y_3 + b_1 - b_3)}(w) \nonumber\\
&\hspace{0.5cm} \id_{A}(z_1)\id_{\tilde{U}_{z_1}}(w)\id_{V \cap V - z_1} \conv \id_{V \cap V - z_1}(w)\,\, \id_{A}(b_1 + b_2 - z_1)\id_{\tilde{U}_{b_1 + b_2 - z_1}}(w) \id_{V \cap V - (b_1 + b_2 - z_1)} \conv \id_{V \cap V - (b_1 + b_2 - z_1)}(w)\\
&\hspace{3cm} \geq \frac{c_1}{2}\delta^{36},\end{align*}
provided $\xi \leq 2^{-20}c_1^2$ and $\varepsilon \leq 2^{-100}c_1^8\delta^{288}$.\\

Since every element $a \in A$ is $\sqrt[4]{\varepsilon}$-regular, by property \textbf{(ii)} we have that $\id_{V \cap V -a} \conv \id_{V \cap V -a}(b) \leq 2\delta^3$ holds for all but at most $\sqrt[4]{\varepsilon}|G|$ of $b \in G$. Combining this fact with the inequality above, we see that
\begin{align*} &\frac{c_1}{2}\delta^{36} \leq 10\sqrt[4]{\varepsilon} + 2^{10}\delta^{30}\exx_{b_1, b_2, b_3, x_2, y_3, z_1}\exx_w  \id_{A}(b_1)\id_{\tilde{U}_{b_1}}(w) \id_{A}(b_2)\id_{\tilde{U}_{b_2}}(w) \id_{A}(b_3)\id_{\tilde{U}_{b_3}}(w) \id_{A}(b_1 + b_2 - b_3)\id_{\tilde{U}_{b_1 + b_2 - b_3}}(w)\\
&\hspace{2cm}\id_{A}(x_2)\id_{\tilde{U}_{x_2}}(w)\id_{A}(x_2 - b_2 + b_3)\id_{\tilde{U}_{x_2 - b_2 + b_3}}(w) \id_{A}(y_3)\id_{\tilde{U}_{y_3}}(w)\id_{A}(y_3 + b_1 - b_3)\id_{\tilde{U}_{y_3 + b_1 - b_3}}(w)\\
&\hspace{2cm} \id_{A}(z_1)\id_{\tilde{U}_{z_1}}(w) \id_{A}(b_1 + b_2 - z_1)\id_{\tilde{U}_{b_1 + b_2 - z_1}}(w).\end{align*}

Provided $\varepsilon \leq 2^{-40} c_1^4 \delta^{144}$, the claim follows. \end{proof}

\hspace{\baselineskip}

We now set \framebox{$\xi = \exp(-D \log^{D} (2c_0^{-1}))$}, where $D$ is the constant from Proposition~\ref{addquad10subintnlarge} and write $A = A^\xi$.\\

Let us also show that subspaces $U_{a,i}$ cannot have a large intersection for different choices of $a$.

\begin{claim}\label{multipleuintnsmall} Let $r \in \mathbb{N}$ and let $\varepsilon \leq 2^{-500}\xi^{32}\delta^{32}$. There exists $K \leq \exp\Big(r\log^{O(1)} (2\xi^{-1})\Big)$ such that for all but at most $64r \delta^{-32r} \varepsilon  |G|^r$ choices of $(a_1, \dots, a_r) \in A^r$ we have 
\[|U_{a_1, i_1} \cap U_{a_2, i_2} \cap \dots \cap U_{a_r, i_r}|  \leq K \delta^r |G|\]
for all indices $i_1 \in[m_{a_1}], \dots, i_r \in [m_{a_r}]$.\end{claim}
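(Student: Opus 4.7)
\textit{Plan.} My plan is to bound the indicator of each $U_{a_j,i_j}$ pointwise by a scalar multiple of ${\conv}^{(8)}\id_{V\cap V-a_j}$, thereby removing the dependence on the indices $i_{[r]}$, and then to control the resulting moment via the $\mathsf{U}^2$-uniformity of $V$. Concretely, by property~\textbf{(iv)} of Proposition~\ref{tildeudefnpropn}, for every $a \in A$, every $i \in [m_a]$ and every $b \in U_{a,i}$ we have ${\conv}^{(8)}\id_{V\cap V-a}(b) \geq c_2\delta^{15}$ with $c_2 = (\xi/2)^{O(1)}$, so
\[\frac{|U_{a_1,i_1}\cap\dots\cap U_{a_r,i_r}|}{|G|}=\exx_b\prod_{j=1}^r\id_{U_{a_j,i_j}}(b) \leq c_2^{-r}\delta^{-15r}\exx_b\prod_{j=1}^r{\conv}^{(8)}\id_{V\cap V-a_j}(b).\]
Since the right-hand side is independent of $i_{[r]}$, it suffices to prove that for all but $64r\delta^{-32r}\varepsilon|G|^r$ tuples $(a_1,\dots,a_r) \in G^r$, the quantity $\exx_b\prod_j{\conv}^{(8)}\id_{V\cap V-a_j}(b)$ is at most $2\delta^{16r}$.

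To establish the average
\[\exx_{a_{[r]},b}\prod_{j=1}^r{\conv}^{(8)}\id_{V\cap V-a_j}(b) = \delta^{16r} + O(r\varepsilon),\]
I expand each convolution as
\[{\conv}^{(8)}\id_{V\cap V-a_j}(b) = \exx_{y^{(j)}_{[7]}}\prod_{i=1}^7\id_V(y^{(j)}_i)\id_V(y^{(j)}_i+a_j)\cdot\id_V(s_j-b)\id_V(s_j-b+a_j),\]
where $s_j = \sum_{i=1}^7(-1)^{i+1}y^{(j)}_i$, obtaining a product of $16r$ copies of $\id_V$. I then apply Lemma~\ref{u2controlBasic} in three rounds totaling $9r$ steps. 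First, for each $j$, I replace $\id_V(s_j-b+a_j)$ by $\delta$ using the pair $\{b,a_j\}$, which co-appears only in that factor (the other occurrences of $a_j$ are of the form $\id_V(y^{(j)}_i+a_j)$ and do not involve $b$). Next, for each $j$, I replace $\id_V(s_j-b)$ by $\delta$ using the pair $\{b,y^{(j)}_1\}$; thanks to the first round, these two variables now co-appear only in the target factor. Finally, for each pair $(j,i)$, I replace $\id_V(y^{(j)}_i+a_j)$ by $\delta$ using the pair $\{y^{(j)}_i,a_j\}$, which by this stage co-appears only in that factor. Each application contributes an error of at most $\varepsilon$, for a total of $9r\varepsilon$. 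The remaining $7r$ factors $\id_V(y^{(j)}_i)$ depend on pairwise independent variables, so their average equals $\delta^{7r}$ exactly, producing $\delta^{9r}\cdot\delta^{7r} + O(r\varepsilon) = \delta^{16r} + O(r\varepsilon)$, as claimed.

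Markov's inequality then yields that the set of tuples $(a_1,\dots,a_r)\in G^r$ with $\exx_b\prod_j{\conv}^{(8)}\id_{V\cap V-a_j}(b) > 2\delta^{16r}$ has size at most $O(r\varepsilon)\delta^{-16r}|G|^r \leq 64r\delta^{-32r}\varepsilon|G|^r$ (using $\delta\leq 1$). Intersecting with $A^r$ gives the bad set in the claim. For every remaining tuple $(a_1,\dots,a_r)\in A^r$ and every choice of $i_{[r]}$, the displayed pointwise bound yields $|U_{a_1,i_1}\cap\dots\cap U_{a_r,i_r}| \leq 2c_2^{-r}\delta^r|G|$, so I may take $K = 2c_2^{-r} \leq \exp\big(r\log^{O(1)}(2\xi^{-1})\big)$.

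The main obstacle is to choose the order of the applications of Lemma~\ref{u2controlBasic} so that the co-appearance hypothesis holds at each step. In particular, the pair $\{b,y^{(j)}_1\}$ needed to remove $\id_V(s_j-b)$ initially also co-appears in $\id_V(s_j-b+a_j)$, so the latter family of factors must be eliminated first (via the pair $\{b,a_j\}$). Once all factors simultaneously involving $b$ and some $a_{j'}$ have been removed, each $y^{(j)}_i$ is local to a single group index $j$, and the remaining pair $\{y^{(j)}_i,a_j\}$ becomes admissible without further work. The $U^2$-control is strong enough to supply $\delta^{16r}$ with essentially optimal error $O(r\varepsilon)$, which is the reason the final bound on the bad set involves only a single factor of $\varepsilon$.
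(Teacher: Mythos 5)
Your reduction of the claim to controlling $F(a_1,\dots,a_r)=\ex_b\prod_j{\conv}^{(8)}\id_{V\cap V-a_j}(b)$ is sound and is essentially the paper's route: property \textbf{(iv)} of Proposition~\ref{tildeudefnpropn} gives the pointwise bound $\id_{U_{a_j,i_j}}(b)\leq c_2^{-1}\delta^{-15}\,{\conv}^{(8)}\id_{V\cap V-a_j}(b)$ (the paper instead averages over the indices $i_1,\dots,i_r$ with a factor $M^{-r}$, but your pointwise version is fine and the resulting $K$ is of the same shape), and your bookkeeping of the $9r$ applications of Lemma~\ref{u2controlBasic} for the first moment $\ex_{a_{[r]}}F=\delta^{16r}+O(r\varepsilon)$ is correct, including the ordering of the removals.

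However, there is a genuine gap in the concentration step. Knowing only $\ex_{a_{[r]}}F=\delta^{16r}+O(r\varepsilon)$ and $F\geq 0$, Markov's inequality gives $\mathbb{P}\big(F>2\delta^{16r}\big)\leq\frac{\delta^{16r}+O(r\varepsilon)}{2\delta^{16r}}\approx\frac12$, not $O(r\varepsilon)\delta^{-16r}$; and no first-moment bound can do better, since a function equal to $2\delta^{16r}$ on half of $G^r$ and $0$ on the other half has the right mean but a bad set of density $\frac12$. To get a bound proportional to $\varepsilon$ you must control the deviation, which is exactly what the paper does: it also computes the second moment $\ex_{a_{[r]}}F^2\leq\delta^{32r}+32r\varepsilon$ (by the same expansion, now with $32r$ copies of $\id_V$ coming from two convolution variables $u,v$, each removable by Lemma~\ref{u2controlBasic}), deduces $\ex_{a_{[r]}}|F-\delta^{16r}|^2\leq 64r\varepsilon$, and applies Chebyshev to bound the set where $F>2\delta^{16r}$ by $64r\varepsilon\delta^{-32r}|G|^r$ — note that the $\delta^{-32r}$ in the claimed bad-set bound is precisely the signature of this second-moment argument. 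Your proof becomes complete once you add this second-moment computation (it is a routine extension of the first-moment argument you already carried out); as written, the Markov step does not yield the stated conclusion.
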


\begin{proof}Recall from Proposition~\ref{tildeudefnpropn} which constructs subspaces $U_{a,i}$ that we have parameters $M \leq \exp\Big(\log^{O(1)} (2\xi^{-1})\Big)$ and $\alpha \geq (\xi/2)^{O(1)} \delta^{15}$ such that $m_a \leq M$ for all $a \in A$ and that ${\conv}^{(8)}\id_{V \cap V-a}(b) \geq \alpha$ for all $b \in U_{a,i}$. Fix $a_1, \dots, a_r \in A$. Note that
\begin{align}&\alpha^r M^{-r} \sum_{i_1 \in [m_{a_1}], i_2 \in [m_{a_2}], \dots, i_r \in [m_{a_r}]}|U_{a_1, i_1} \cap U_{a_2, i_2} \cap \dots \cap U_{a_r, i_r}|\nonumber\\
&\hspace{2cm}\leq M^{-r}\sum_{i_1 \in [m_{a_1}], i_2 \in [m_{a_2}], \dots, i_r \in [m_{a_r}]} \exx_{b} \id_{U_{a_1, i_1} \cap \dots \cap U_{a_r, i_r}}(b) \Big({\conv}^{(8)}\id_{V \cap V-{a_1}}(b) \Big)\Big({\conv}^{(8)}\id_{V \cap V-a_2}(b) \Big) \nonumber\\
&\hspace{12cm}\dots \Big({\conv}^{(8)}\id_{V \cap V-a_r}(b)\Big) \nonumber\\
&\hspace{2cm} = \exx_{b}  \bigg(M^{-r}\sum_{i_1 \in [m_{a_1}], i_2 \in [m_{a_2}], \dots, i_r \in [m_{a_r}]} \id_{U_{a_1, i_1} \cap \dots \cap U_{a_r, i_r}}(b) \bigg) \Big({\conv}^{(8)}\id_{V \cap V-a_1}(b) \Big) \dots \Big({\conv}^{(8)}\id_{V \cap V-a_r}(b)\Big) \nonumber\\
&\hspace{2cm}\leq \exx_{b} \Big({\conv}^{(8)}\id_{V \cap V-a_1}(b)\Big) \dots \Big({\conv}^{(8)}\id_{V \cap V-a_r}(b)\Big).\label{allsubspacesintersectionIneq}\end{align}

Let us define $F(a_1, \dots, a_r) = \ex_{b} \Big({\conv}^{(8)}\id_{V \cap V-a_1}(b)\Big) \dots \Big({\conv}^{(8)}\id_{V \cap V-a_r}(b)\Big)$. We claim that $\|F - \delta^{16r}\|^2_{L^2(G^r)} \leq 64r\varepsilon$. To see that we first approximate $\ex_{a_1, \dots, a_r} F(a_1, \dots, a_r)^2$ by $\delta^{32r}$. We have
\begin{align*}&\exx_{a_1, \dots, a_r} F(a_1, \dots, a_r)^2 = \exx_{a_{[r]}} \exx_{u, v} \Big({\conv}^{(8)}\id_{V \cap V-a_1}(u)\Big) \dots \Big({\conv}^{(8)}\id_{V \cap V-a_r}(u)\Big)\Big({\conv}^{(8)}\id_{V \cap V-a_1}(v)\Big) \dots \Big({\conv}^{(8)}\id_{V \cap V-a_r}(v)\Big)\\
&\hspace{2cm}=\exx_{a_{[r]}} \exx_{u, v} \exx_{x_{[r] \times [7]}, y_{[r]\times [7]}} \bigg(\prod_{i \in [r]} \Big(\prod_{j \in [7]} \id_V(x_{i,j}) \id_V(x_{i,j} + a_i)\Big) \id_V\Big(\sum_{\ell \in [7]}(-1)^{\ell +1}x_{i, \ell} - u\Big)\\
&\hspace{12cm}\id_V\Big(\sum_{\ell \in [7]}(-1)^{\ell +1}x_{i, \ell} - u +a_i\Big)\bigg) \\
&\hspace{4cm}\bigg(\prod_{i \in [r]} \Big(\prod_{j \in [7]} \id_V(y_{i,j}) \id_V(y_{i,j} + a_i)\Big) \id_V\Big(\sum_{\ell \in [7]}(-1)^{\ell +1}y_{i, \ell} - v\Big)\\
&\hspace{12cm}\id_V\Big(\sum_{\ell \in [7]}(-1)^{\ell +1}y_{i, \ell} - v +a_i\Big)\bigg).\end{align*}

We apply Lemma~\ref{u2controlBasic} $32r$ times to the expression above, first we use pairs $(u, a_i)$ for $\id_V\Big(\sum_{\ell \in [7]}(-1)^{\ell +1}x_{i, \ell} - u + a_i\Big)$ and $(v, a_i)$ for $\id_V\Big(\sum_{\ell \in [7]}(-1)^{\ell +1}y_{i, \ell} - v +a_i\Big)$, then pairs $(u, x_{i, 1})$ for $\id_V\Big(\sum_{\ell \in [7]}(-1)^{\ell +1}x_{i, \ell} - u\Big)$ and $(v, y_{i, 1})$ for $\id_V\Big(\sum_{\ell \in [7]}(-1)^{\ell +1}y_{i, \ell} - v\Big)$ and obvious variables for the remaining terms. Similarly, we get $\Big|\ex_{a_{[r]}} F(a_1, \dots, a_r) -\delta^{16r}\Big| \leq 16r \varepsilon$. We conclude
\[\exx_{a_{[r]}} \Big|F(a_1, \dots, a_r) -\delta^{16r}\Big|^2 \leq 64r \varepsilon,\]
as claimed. In particular, $F(a_1, \dots, a_r) \leq 2\delta^{16r}$ holds for all but at most $64r \varepsilon \delta^{-32r}$ proportion of $(a_1, \dots, a_r) \in G^r$. Returning to~\eqref{allsubspacesintersectionIneq}, the proof is complete.\end{proof}

We are now ready to pass from an approximate quadratic variety to an approximate quasirandom linear system of subspaces.

\begin{proof}[Proof of Theorem~\ref{mainstep1thm}]Recall that $\xi = \exp(-D \log^{D} (2c^{-1}))$, where $D$ is the constant from Proposition~\ref{addquad10subintnlarge} and set $A = A^\xi$. Claim~\ref{Alargeness} implies that $|A^\xi| \geq \frac{c_0}{40}|G|$, giving property \textbf{(ii)}. Apply Proposition~\ref{tildeudefnpropn} for each $a \in A^\xi$ and then take $W_a$ uniformly at random from $U_{a,1}, \dots, U_{a, m_a}$. Properties \textbf{(iii)} and \textbf{(iv)} follow immediately. Property \textbf{(v)} follows from Claim~\ref{multipleuintnsmall}.\\
\indent Proposition~\ref{addquad10subintnlarge} and the fact that $m_a \leq \exp(\log^{O(1)} (2c_0^{-1}))$ for all $a \in A$ imply
\begin{align*}&\exx \sum_{\ssk{b_1, b_2, b_3\\x_2, y_3, z_1}} \id_A(b_1 + b_2 - b_3)\id_A(b_1)\id_{A^\xi}(b_2)\id_A(b_3) \id_A(x_2) \\
&\hspace{8cm}\id_A(x_2 - b_2 + b_3) \id_A(y_3)\id_A(y_3 + b_1 - b_3)\id_A(z_1)\id_A(b_1 + b_2 - z_1) \\
&\hspace{2cm}|W_{b_1 + b_2 - b_3} \cap W_{b_1} \cap W_{b_2} \cap W_{b_3} \cap W_{x_2} \cap W_{x_2 - b_2 + b_3}\cap  W_{y_3} \cap W_{y_3 + b_1 - b_3}\cap  W_{z_1} \cap W_{b_1 + b_2 - z_1}| \\
&\hspace{13cm}\geq \exp(- \log^{O(1)} (2c_0^{-1})) \delta^{6}|G|^7.\end{align*}
As
\begin{align*}&|W_{b_1 + b_2 - b_3} \cap W_{b_1} \cap W_{b_2} \cap W_{b_3} \cap W_{x_2} \cap W_{x_2 - b_2 + b_3}\cap  W_{y_3} \cap W_{y_3 + b_1 - b_3}\cap  W_{z_1} \cap W_{b_1 + b_2 - z_1}|\\
&\hspace{2cm}\leq |W_{b_1} \cap W_{b_2} \cap W_{b_3} \cap W_{x_2} \cap  W_{y_3}\cap  W_{z_1}|\end{align*}
which is at most $\exp(\log^{O(1)} (2c_0^{-1}))  \delta^6 |G|$ for all but at most $O(\delta^{-200} \varepsilon  |G|^6)$ choices of $(b_1, b_2, b_3, x_2, y_3, z_1)$, we may choose the subspaces $W_a$ so that property \textbf{(vi)} holds. The value of $c_1$ is taken as the minimum of relevant quantities arising in the previous steps.\end{proof}

\section{Structure of approximate quasirandom linear systems of subspaces}

This section is devoted to proving a structure theorem for approximate quasirandom linear systems of subspaces. The main result is the following theorem. Note that the conditions below are reformulated and simplified conclusions of Theorem~\ref{mainstep1thm}.

\begin{theorem}\label{approximatelinearsyssub}There exists an absolute constant $D \geq 1$ such that the following holds. Let $c > 0$ and let $d$ be a positive integer. Let $A \subseteq G$ be a set of size $|A| \geq c|G|$ and let $W_a \leq G$ be a subspace of codimension $d$ for each $a \in A$. Suppose that 
\[|W_{a_1} \cap W_{a_2} \cap \dots \cap W_{a_r}| \leq Kp^{-rd}|G|\]
holds for all but at most $\eta |G|^r$ $r$-tuples $(a_1, a_2, \dots, a_r) \in A^r$ for each $r \in [9]$. Assume furthermore that for at least $c |G|^6$ triples $(a, b_1, b_2, b_3, x_2, x_3, y_1, y_3, z_1, z_2) \in A^{10}$ we have 
\begin{itemize}
\item 
\begin{equation}\label{10subspacesConditionEqualities}a = b_1 + b_2 - b_3,\,\, x_3 = x_2 - b_2 + b_3, \,\, y_1 = y_3 + b_1 - b_3,\,\, z_2 = b_1 + b_2 - z_1,\end{equation}
and
\item the subspaces
\begin{equation}W_a \cap W_{b_1} \cap W_{b_2} \cap W_{b_3} \cap W_{x_2} \cap W_{x_3} \cap W_{y_1} \cap W_{y_3} \cap W_{z_1} \cap W_{z_2}\label{10subspacesCondition}\end{equation}
has size at least $K^{-1} p^{-6d}|G|$.
\end{itemize}
Then, provided $\eta \leq 2^{-31}c^3$, there exist parameters $c' \geq \exp\Big(-\exp\Big((\log (2c^{-1}) + \log_p K)^{D}\Big)\Big)$ and $r \leq \exp\Big((\log (2c^{-1}) + \log_p K)^D\Big)$, set $A' \subseteq A$ and a map $\Phi \colon G \times \mathbb{F}_p^d \to G$, affine in the first variable and linear in the second, such that $|A'| \geq c'  |G|$ and for each $a \in A'$ we have $|\on{Im} \Phi(a, \cdot) \cap W_a^\perp| \geq c' p^d$. Moreover, there exists a subspace $\Lambda \leq \mathbb{F}_p^d$ of dimension $r$ such that whenever $\lambda \notin \Lambda$ we have
\[\exx_{x,y} \omega\Big(\Phi(x, \lambda) \cdot y\Big) \leq \Big(\eta {c'}^{-2}\Big)^{1/2r}.\]
\end{theorem}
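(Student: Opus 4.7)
The plan is to build linear isomorphisms $\phi_b:\mathbb{F}_p^d\to W_b^\perp$ for $b$ in a large subset of $A$ that respect many additive quadruples modulo bounded rank error, and then invoke Corollary~\ref{friemanforlinearhom} to package them into the desired biaffine map $\Phi$. The core tool is the observation sketched around equation~\eqref{addquadsubconcl}: whenever four $d$-dimensional subspaces $U_1,\ldots,U_4$ of $G$ satisfy $|U_{i_1}+U_{i_2}+U_{i_3}|\geq K^{-1}p^{3d}$ on all triples and $|U_1+U_2+U_3+U_4|\leq Kp^{3d}$, an arbitrary isomorphism $\phi_1:\mathbb{F}_p^d\to U_1$ admits isomorphic completions $\phi_2,\phi_3,\phi_4$ onto the remaining $U_i$ with $\on{rank}(\phi_1+\phi_2-\phi_3-\phi_4)\leq O(\log_p K)$. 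Via the duality $|W_{a_1}^\perp+\cdots+W_{a_k}^\perp|=|G|/|W_{a_1}\cap\cdots\cap W_{a_k}|$, the three-fold hypothesis of this lemma becomes the $r=3$ intersection upper bound in our hypothesis, while the four-fold hypothesis becomes a lower bound on four-fold intersections along additive quadruples, which the 10-tuple condition~\eqref{10subspacesCondition} is engineered to secure simultaneously along several linked quadruples.

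First I would pick, by averaging over the $c|G|^6$ structured 10-tuples, a popular base vertex $a_0\in A$ that participates in at least $c^{O(1)}|G|^5$ of them, and fix once and for all an isomorphism $\theta:\mathbb{F}_p^d\to W_{a_0}^\perp$. Using the subspaces lemma along additive quadruples $(a_0,b,b',b'')\in A^4$ anchored at $a_0$ with $\theta$ in the role of $\phi_1$, one attempts to define a preferred isomorphism $\phi_b:\mathbb{F}_p^d\to W_b^\perp$ for each $b$ in a large set $A'\subseteq A$. The 10-tuple hypothesis is now exactly what is needed to make this construction both well defined and globally quadruple-respecting: the 10-tuple packages four coupled quadruples $(b_1,b_2,b_3,a)$, $(b_2,b_3,x_2,x_3)$, $(b_1,b_3,y_1,y_3)$, $(b_1,b_2,z_1,z_2)$ tied together by sharing the base vertices $a,b_1,b_2,b_3$, and the uniform lower bound on the ten-fold intersection forces the relevant four-fold intersections to meet the subspaces lemma's hypothesis simultaneously along all four quadruples. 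This allows one to verify on the one hand that two competing definitions of $\phi_b$ differ by a map of rank $O(\log_p K)$, so that $\phi_b$ is well defined up to bounded rank, and on the other hand that for at least $c'|G|^3$ additive quadruples $(a_1,a_2,a_3,a_4)\in{A'}^4$ the rank of $\phi_{a_1}+\phi_{a_2}-\phi_{a_3}-\phi_{a_4}$ is bounded by the same $O(\log_p K)$.

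Corollary~\ref{friemanforlinearhom} then immediately yields a biaffine $\Phi:G\times\mathbb{F}_p^d\to G$ with $\on{rank}(\Phi(a,\cdot)-\phi_a)$ bounded by $\exp((\log(2c^{-1})+\log_p K)^{O(1)})$ for an $\exp(-\exp((\log(2c^{-1})+\log_p K)^{O(1)}))$-proportion of $a\in A'$, and since $\on{Im}\phi_a=W_a^\perp$ this gives $|\on{Im}\Phi(a,\cdot)\cap W_a^\perp|\geq c'p^d$ on the required set. To construct $\Lambda$, observe that $\ex_{x,y}\omega(\Phi(x,\lambda)\cdot y)=\ex_x\id[\Phi(x,\lambda)=0]$ since averaging $\omega(v\cdot y)$ over $y$ kills every nonzero $v$; hence large bias forces $\Phi(\cdot,\lambda)$ to concentrate on a coset of a high-codimension subspace of $G$. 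Letting $\Lambda$ be the span of all $\lambda$ witnessing such large bias, if $\dim\Lambda$ were too large then for many $a\in A'$ the image $\on{Im}\Phi(a,\cdot)\cap W_a^\perp$ would lie substantially inside a fixed low-dimensional subspace $L\leq G$; by duality, averaging over independent choices of $r$ such $a$'s would produce many $r$-tuples $(a_1,\dots,a_r)\in A^r$ with $|W_{a_1}\cap\cdots\cap W_{a_r}|$ exceeding $Kp^{-rd}|G|$ on more than an $\eta$-fraction of inputs, contradicting the quasirandomness hypothesis at some $r\in[9]$. Choosing this $r$ appropriately gives the claimed dimension bound on $\Lambda$ and the final bias bound $(\eta{c'}^{-2})^{1/2r}$.

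The main obstacle I expect is the simultaneous well-definedness and additive-quadruple-respecting step. A single arbitrary defining quadruple for $\phi_b$ yields many mutually incompatible candidates, and reconciling them forces one to thread multiple coupled quadruples through the subspaces lemma in a consistent way while carrying along all the $r$-tuple quasirandomness for $r\in[9]$. It is precisely this step that the 10-tuple hypothesis is engineered for, and the accumulation of rank errors across four coupled applications of the subspaces lemma, together with the logarithmic loss per invocation and the double-exponential loss in Corollary~\ref{friemanforlinearhom}, is what produces the $\exp(\exp(\cdots))$-type parameter dependence in the conclusion.
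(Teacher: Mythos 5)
Your overall strategy is the same as the paper's: anchor at a popular element $a$, fix $\theta \colon \mathbb{F}_p^d \to W_a^\perp$, use the completion lemma for quadruples of subspaces (Lemma~\ref{addquadshomsexist}) to manufacture isomorphisms $\phi_b \colon \mathbb{F}_p^d \to W_b^\perp$ respecting many additive quadruples up to rank $O(\log_p K)$, feed this into Corollary~\ref{friemanforlinearhom}, and finish with a bias argument played off against the intersection quasirandomness. However, the two steps you describe as being "exactly what the 10-tuple hypothesis is engineered for" are asserted rather than proved, and one of the assertions is not a valid deduction. First, the ten-fold intersection lower bound $K^{-1}p^{-6d}|G|$ does \emph{not} by itself force the four-fold intersections $W_{b_1}\cap W_{b_2}\cap W_{b_3}\cap W_a$ (etc.) to have size $\geq K^{-O(1)}p^{-3d}|G|$, which is what the dual hypothesis $|U_1+U_2+U_3+U_4|\leq K^{O(1)}p^{3d}$ of Lemma~\ref{addquadshomsexist} requires; it only gives $\geq K^{-1}p^{-6d}|G|$, i.e.\ a four-fold sum bound $Kp^{6d}$, and plugging that into the lemma produces rank errors of order $d$, which is vacuous. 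The paper closes this gap with a separate argument (its first Claim): it pairs two 7-fold intersections coming from two 6-tuples attached to the same $(a,b_1,b_2,b_3)$, uses $|P||Q|=|P+Q|\,|P\cap Q|$, and invokes the $r=9$ intersection upper bound to squeeze out $|W_a\cap W_{b_1}\cap W_{b_2}\cap W_{b_3}|\geq K^{-3}p^{-3d}|G|$. This is precisely where the hypothesis for $r$ up to $9$ is consumed, and your proposal never uses it for this purpose.

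Second, the consistency step is where the real work lies, and your sketch both overstates and underspecifies it. It is not true (and not needed) that "two competing definitions of $\phi_b$ differ by a map of rank $O(\log_p K)$"; no global well-definedness is available. The paper instead chooses, for each $b$ and each of the three quadruple positions, a \emph{random} good defining quadruple through $a$, and shows that with probability $\Omega(c)$ the chosen quadruple is one of the linked ones supplied by the 10-tuples, in which case Lemma~\ref{addquadshomsuniq} forces $\phi^i_b$ to agree up to rank $O(\log_p K)$ with the canonical completion attached to the specific triple $(b_1,b_2,b_3)$. That uniqueness lemma needs the auxiliary bound $|U_{b_1}\cap (U_{b_2}+U_{x_2}+U_{x_3}+U_{b_3})|\leq K^4$ (property \textbf{(iv)} of Claim~\ref{10tupleslongclaim}), which again has to be extracted from the quasirandomness hypotheses; and even after independence of the three random choices gives many "very good" quadruples $(b_1,b_2,b_3,a)$ anchored at $a$, a further bipartite 4-cycle/double-counting argument is required to convert these into $\Omega(c^{O(1)})|G|^3$ genuine additive quadruples inside a single family $(\phi^1_x)$, which is what Corollary~\ref{friemanforlinearhom} actually takes as input. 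You correctly identify this as the main obstacle, but identifying it is not resolving it: as written, the proposal has a genuine gap at exactly the step that carries the content of the theorem. (Your closing bias argument is also looser than the paper's — the paper intersects the zero sets of $\Phi(\cdot,\lambda_i)$, passes to a popular coset, and contradicts only the $r=2$ quasirandomness — but that part could be repaired along your lines.)
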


The key observation that allows us to pass from subspaces to mutually related linear maps is the next lemma.

\begin{lemma}\label{addquadshomsexist}Let $U_1, U_2, U_3, U_4 \leq G$ be subspaces of dimension $d$. Let $K \geq 1$ be a parameter such that 
\[K^{-1} p^{3d} \leq |U_{i_1} + U_{i_2} + U_{i_3}| \leq |U_1 + U_2 + U_3 + U_4|  \leq Kp^{3d}\]
for any three distinct elements $i_1, i_2, i_3 \in [4]$. Let $\phi_4 \colon \mathbb{F}_p^d \to U_4$ be a linear isomorphism. Then there exist linear isomorphisms $\phi_i \colon \mathbb{F}_p^d \to U_i$ for $i \in [3]$ such that $\on{rank} (\phi_1 + \phi_2 - \phi_3 - \phi_4) \leq 20 \log_p K$.\end{lemma}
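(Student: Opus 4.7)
I would construct the maps $\phi_1, \phi_2, \phi_3$ in three stages: produce a linear section of the addition map $(u_1,u_2,u_3) \mapsto u_1 + u_2 - u_3$ over the intersection $(U_1+U_2+U_3) \cap U_4$, pull this section back to most of $\mathbb{F}_p^d$ using $\phi_4$, and then repair the resulting linear maps into isomorphisms via Lemma~\ref{nearisomlemma}. The hypotheses on the sums give two crucial inputs. First, for any three distinct indices $i, j, k \in [4]$, the inequality $|U_{i} + U_{j} + U_{k}| \geq K^{-1} p^{3d}$ translates into $\dim(U_\ell \cap (U_m + U_n)) \leq \log_p K$ for every partition $\{\ell\} \sqcup \{m,n\}$ of $\{i,j,k\}$, so any three of the $U_i$ are almost in direct sum. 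Second, combining this with $|U_1+U_2+U_3+U_4| \leq K p^{3d}$ gives
\[
|S| \;=\; \frac{|U_4| \cdot |U_1+U_2+U_3|}{|U_1+U_2+U_3+U_4|} \;\geq\; K^{-2} p^{d}, \quad \text{where } S := (U_1+U_2+U_3) \cap U_4,
\]
so $S$ has codimension at most $2\log_p K$ in $U_4$.

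Using that $\dim(U_1 \cap U_2)$ and $\dim((U_1+U_2)\cap U_3)$ are each at most $\log_p K$, pick subspaces $U'_i \leq U_i$ giving a direct sum decomposition $U_1 + U_2 + U_3 = U'_1 \oplus U'_2 \oplus U'_3$ in the standard way (take $U'_1 = U_1$, then $U'_2$ a complement of $U_1 \cap U_2$ in $U_2$, then $U'_3$ a complement of $(U_1+U_2) \cap U_3$ in $U_3$). For $s \in U_1 + U_2 + U_3$ write the unique expression $s = u'_1 + u'_2 + u'_3$ and set $\Sigma_1(s) = u'_1$, $\Sigma_2(s) = u'_2$, $\Sigma_3(s) = -u'_3$, so that $\Sigma_1(s) + \Sigma_2(s) - \Sigma_3(s) = s$ identically. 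The key rank estimate is that for each $i \in [3]$ the kernel of $\Sigma_i|_S$ is contained in $S \cap (U_j + U_k) \subseteq U_4 \cap (U_j + U_k)$ (where $\{i,j,k\} = \{1,2,3\}$), whose dimension is at most $\log_p K$ by the first input applied to the triple $\{j,k,4\}$. Hence $\on{rank}(\Sigma_i|_S) \geq \dim S - \log_p K \geq d - 3\log_p K$.

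Let $W = \phi_4^{-1}(S)$, of codimension at most $2\log_p K$ in $\mathbb{F}_p^d$, and define $\psi_i = \Sigma_i \circ \phi_4|_W \colon W \to U_i$. By construction $\psi_1(w) + \psi_2(w) - \psi_3(w) = \phi_4(w)$ for every $w \in W$, and each $\psi_i$ has rank at least $d - 3\log_p K$. Extend each $\psi_i$ by zero along a fixed complement of $W$ to obtain $\tilde\phi_i \colon \mathbb{F}_p^d \to U_i$; the extension preserves the rank, and the map $\tilde\phi_1 + \tilde\phi_2 - \tilde\phi_3 - \phi_4$ vanishes on $W$, hence has rank at most $2\log_p K$. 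Apply Lemma~\ref{nearisomlemma} for each $i$ to replace $\tilde\phi_i$ by an isomorphism $\phi_i \colon \mathbb{F}_p^d \to U_i$ with $\on{rank}(\phi_i - \tilde\phi_i) \leq 3\log_p K$. Adding up, $\on{rank}(\phi_1 + \phi_2 - \phi_3 - \phi_4) \leq 2\log_p K + 3 \cdot 3\log_p K = 11 \log_p K$, well within the claimed bound of $20\log_p K$.

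The only step that genuinely uses all of the hypotheses is the rank lower bound on $\Sigma_i|_S$: without the inequality $|U_j + U_k + U_4| \geq K^{-1}p^{3d}$ it could happen that $S$ lies inside $U_j + U_k$, forcing $\Sigma_i$ to vanish on $S$ and collapsing the construction. Once the three triple hypotheses involving $U_4$ are in hand, only routine dimension counting is required.
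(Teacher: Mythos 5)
Your argument is correct and follows essentially the same route as the paper: decompose the triple sum $U_1+U_2+U_3$ as a direct sum to obtain coordinate maps splitting $s$ as $\Sigma_1(s)+\Sigma_2(s)-\Sigma_3(s)$, pull these back along $\phi_4$ on the preimage of the intersection with $U_4$, and repair the resulting near-isomorphisms via Lemma~\ref{nearisomlemma}, with the kernel bounds coming from the three triple-sum hypotheses involving $U_4$. The only difference is your triangular choice of complements ($U_1\oplus U'_2\oplus U'_3$ spanning all of $U_1+U_2+U_3$, so $S=(U_1+U_2+U_3)\cap U_4$ has codimension only $2\log_p K$ in $U_4$) instead of the paper's symmetric complements $V_i$ inside each $U_i$, which is why your losses are slightly smaller ($11\log_p K$ versus the paper's bound).
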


\begin{proof}By assumptions we have
\[|U_1 \cap (U_2 + U_3)| = \frac{|U_2 + U_3| |U_1|}{|U_1 + U_2 + U_3|} \leq  \frac{|U_2|| U_3| |U_1|}{|U_1 + U_2 + U_3|} \leq K.\]
Let $V_1 \leq U_1$ be an arbitrary subspace such that $U_1 = \Big(U_1 \cap (U_2 + U_3 )\Big) \oplus V_1$. Thus $|V_1| \geq |U_1|/K$. Similarly, we may find subspaces $V_2 \leq U_2$ and $V_3 \leq U_3$ such that $U_2 = \Big(U_2 \cap (U_3 + U_1)\Big) \oplus V_2$ and $U_3 = \Big(U_3 \cap (U_1 + U_2)\Big) \oplus V_3$ which also satisfy $|V_2| \geq |U_2|/K$ and $|V_3| \geq |U_3|/K$. We claim that $V_1 + V_2 + V_3$ is actually a direct sum. To see this, let $x \in V_1 \cap (V_2 + V_3)$ be arbitrary. Since $V_1 \cap (V_2 + V_3) \subseteq U_1 \cap (U_2 + U_3)$, we have that $x \in V_1 \cap (U_1 \cap (U_2 + U_3))$. But this intersection is $0$, proving that $x = 0$.\\
\indent Write $S =  V_1 + V_2 + V_3$. Since this is a direct sum, there exist linear maps $\pi_i \colon S \to V_i$ for $i \in [3]$ such that $s = \pi_1(s) + \pi_2(s) - \pi_3(s)$ for all $s \in S$.\\
\indent Going back to assumptions, we see that
\[| U_4 \cap S| = \frac{|U_4| |S|}{| U_4 + S|} \geq \frac{K^{-3} p^{4d}}{| U_4 + U_1 + U_2 + U_3|} \geq K^{-4}p^d.\]

Recall that we are given a linear isomorphism $\phi_4 \colon \mathbb{F}_p^d \to U_4$. For $i \in [3]$, let $\phi'_i \colon \mathbb{F}_p^d \to U_i$ be linear map defined as follows. We first define $\phi'_i(x) = \pi_i(\phi_4(x))$ for all $x \in \phi_4^{-1}(U_4 \cap S)$, and then extend to whole $\mathbb{F}_p^d$ arbitrarily. We claim that $\on{rank} \phi'_i \geq d - 5\log_p K$. To see this, we need to estimate $|\on{ker} \pi_i \cap (U_4 \cap S)|$. By definition of $\pi_i$, we have $\on{ker} \pi_i = V_j + V_k$ for $\{j,k\} = [3] \setminus \{i\}$. Thus 
\[|(V_j + V_k) \cap (U_4 \cap S)| \leq |(U_j + U_k) \cap U_4| = \frac{|U_j + U_k| |U_4|}{|U_j + U_k + U_4|} \leq \frac{|U_j| |U_k| |U_4|}{|U_j + U_k + U_4|} \leq K,\]
from which we deduce
\begin{align*}\on{rank} \phi'_i \geq \on{rank} \phi'_i|_{\phi_4^{-1}(U_4 \cap S)} &\geq \dim (U_4 \cap S) - \dim \on{ker} \phi'_i|_{\phi_4^{-1}(U_4 \cap S)}\\
&\geq d - 4 \log_p K - \dim (\on{ker} \pi_i \cap (U_4 \cap S)) \geq d - 5\log_p K.\end{align*}
Using Lemma~\ref{nearisomlemma}, we may find a linear isomorphism $\phi_i \colon \mathbb{F}_p^d \to U_i$ such that $\on{rank}(\phi_i - \phi'_i) \leq 5\log_p K$. From the definition, we see that $\phi'_1 + \phi'_2 - \phi'_3 - \phi_4$ vanishes on $\phi_4^{-1}(U_4 \cap S)$. Thus $\on{rank}(\phi'_1 + \phi'_2 - \phi'_3 - \phi_4) \leq 4 \log_p K$, and the claim follows.\end{proof}

We also need a related uniqueness result.

\begin{lemma}\label{addquadshomsuniq}Let $W, U_1, U_2, V_1, V_2 \leq G$ be subspaces of dimension $d$. Let $K \geq 1$ be a parameter such that 
\[|W \cap (U_1 + U_2 + V_1 + V_2)| \leq K.\]
Suppose that $\phi_i \colon \mathbb{F}_p^d \to U_i$, $i \in [2]$, $\psi_i \colon \mathbb{F}_p^d \to V_i$, $i \in [2]$ and $\theta \colon \mathbb{F}_p^d \to W$ are linear maps such that 
\[\on{rank} \Big(\phi_1 + \phi_2 + \psi_1 + \psi_2 + \theta\Big) \leq r.\]
Then $\on{rank} \theta \leq r + \log_p K$. 
\end{lemma}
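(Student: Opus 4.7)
The plan is to analyze the behavior of $\theta$ on the kernel of the low-rank map $L := \phi_1+\phi_2+\psi_1+\psi_2+\theta$.

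First I would observe that since $\on{rank} L \leq r$, the kernel $N := \ker L \leq \mathbb{F}_p^d$ has dimension at least $d-r$. For any $x \in N$ we have the identity
\[\theta(x) = -\bigl(\phi_1(x) + \phi_2(x) + \psi_1(x) + \psi_2(x)\bigr),\]
and the left-hand side lies in $W$ while the right-hand side lies in $U_1 + U_2 + V_1 + V_2$. Therefore $\theta(N) \subseteq W \cap (U_1 + U_2 + V_1 + V_2)$.

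By the hypothesis $|W \cap (U_1 + U_2 + V_1 + V_2)| \leq K$, this intersection has dimension at most $\log_p K$. Hence the restriction $\theta|_N$ has rank at most $\log_p K$, so $\dim(\ker \theta \cap N) \geq \dim N - \log_p K \geq d - r - \log_p K$. Combining with the rank-nullity theorem applied to $\theta$ itself yields $\on{rank} \theta = d - \dim \ker \theta \leq d - (d - r - \log_p K) = r + \log_p K$, as desired. There is no real obstacle here; the argument is essentially a one-line dimension count using the two subspace constraints and the rank bound on $L$.
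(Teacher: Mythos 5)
Your proof is correct. It is the kernel-side dual of the paper's argument: the paper works with images, setting $I = \on{im}(\phi_1+\phi_2+\psi_1+\psi_2+\theta)$ and observing $\on{im}\,\theta \subseteq (I + U_1+U_2+V_1+V_2)\cap W$, then bounding $|(I + U_1+U_2+V_1+V_2)\cap W| \leq |I|\cdot|(U_1+U_2+V_1+V_2)\cap W| \leq p^rK$; you instead restrict to $N = \ker(\phi_1+\phi_2+\psi_1+\psi_2+\theta)$, note $\theta(N) \subseteq W\cap(U_1+U_2+V_1+V_2)$, and finish with rank--nullity. Both are one-line dimension counts exploiting the same two facts (the rank bound on the sum and the smallness of $W\cap(U_1+U_2+V_1+V_2)$); your version applies the hypothesis verbatim and so avoids the paper's small auxiliary estimate for $|(I+U_1+U_2+V_1+V_2)\cap W|$, at the cost of invoking rank--nullity twice (for the sum and for $\theta|_N$), whereas the paper's image formulation reads off the rank of $\theta$ directly from the size of the containing subspace. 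There is no gap in your argument.
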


\begin{proof}Let $I = \on{im}\Big(\phi_1 + \phi_2 + \psi_1 + \psi_2 + \theta\Big)$, which is a vector space of dimension at most $r$. Let $J = \on{im} \theta$. Then we have $J \subseteq (I + U_1 + U_2 + V_1 + V_2) \cap W$. But
\[|(I + U_1 + U_2 + V_1 + V_2) \cap W| \leq |I| |U_1 + U_2 + V_1 + V_2) \cap W| \leq p^r K,\]
from which the claim follows.\end{proof}

We are now ready to prove Theorem~\ref{approximatelinearsyssub}.

\begin{proof}[Proof of Theorem~\ref{approximatelinearsyssub}] Let us begin the proof by showing the following claim.

\begin{claim}Suppose $\eta \leq 2^{-31} c^3$. For at least $\frac{c}{2}|G|^6$ of 10-tuples $(a, b_1, b_2, b_3, x_2, x_3, y_1, y_3, z_1, z_2) \in A^{10}$ we have 
\begin{itemize}
\item $a = b_1 + b_2 - b_3,\,\, x_3 = x_2 - b_2 + b_3, \,\, y_1 = y_3 + b_1 - b_3,\,\, z_2 = b_1 + b_2 - z_1$, and
\item each of 7 subspaces
\begin{align*}W_{b_1} \cap W_{b_2} \cap &W_{b_3} \cap W_a,\,\, W_{b_1} \cap W_{x_2} \cap W_{x_3} \cap W_a,\,\, W_{b_2} \cap W_{x_3} \cap W_{x_2} \cap W_{b_3},\,\, W_{y_1} \cap W_{b_2} \cap W_{y_3} \cap W_a\\
&W_{b_1} \cap W_{y_3} \cap W_{y_1} \cap W_{b_3},\,\,W_{z_1} \cap W_{z_2} \cap W_{b_3} \cap W_a,\,\, W_{b_1} \cap W_{b_2} \cap W_{z_1} \cap W_{z_2}\end{align*}
has size at least $K^{-3} p^{-3d}|G|$.
\end{itemize}\end{claim}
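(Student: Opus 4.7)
Let $S\subseteq A^{10}$ denote the set of 10-tuples furnished by the hypothesis, of cardinality at least $c|G|^6$. The plan is a union bound over the seven cases: for each of the seven 4-wise intersections listed in the conclusion, I will bound the number of 10-tuples in $S$ where that intersection has size less than $K^{-3}p^{-3d}|G|$ by at most $\tfrac{c}{14}|G|^6$, so that at least $\tfrac{c}{2}|G|^6$ tuples in $S$ satisfy all seven lower bounds simultaneously.

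Fix one case, say the first, with 4-wise intersection $U=W_a\cap W_{b_1}\cap W_{b_2}\cap W_{b_3}$ and complementary 6-wise intersection $V=W_{x_2}\cap W_{x_3}\cap W_{y_1}\cap W_{y_3}\cap W_{z_1}\cap W_{z_2}$, so that $U\cap V$ coincides with the full 10-wise intersection. The subspace identity $|U\cap V|\cdot|U+V|=|U|\cdot|V|$ gives $|U|=|U\cap V|\cdot|U+V|/|V|$. For $t\in S$ the numerator is at least $K^{-1}p^{-6d}|G|$ by definition of $S$, and for all but at most $\eta|G|^6$ complementary 6-tuples the assumption for $r=6$ gives $|V|\leq Kp^{-6d}|G|$. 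The crux is to lower-bound $|U+V|$ by $K^{-1}p^{-3d}|G|$; I plan to do this by passing to the dual $(U+V)^\perp=U^\perp\cap V^\perp$, where $U^\perp=\Pi_a+\Pi_{b_1}+\Pi_{b_2}+\Pi_{b_3}$ and $V^\perp$ is the analogous six-term sum of the $\Pi_\alpha:=W_\alpha^\perp$, and showing $|U^\perp\cap V^\perp|\leq Kp^{3d}$. The argument combines the assumption for $r=3$ applied to $(b_1,b_2,b_3)$, which yields $|\Pi_{b_1}+\Pi_{b_2}+\Pi_{b_3}|\geq K^{-1}p^{3d}$, with the 10-wise bound $|U^\perp+V^\perp|\leq Kp^{6d}$, to constrain how $\Pi_a$ and the $xyz$-span can align inside the ambient ten-term sum.

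For the first case the map $(b_1,b_2,b_3,x_2,y_3,z_1)\mapsto(x_2,x_2-b_2+b_3,y_3+b_1-b_3,y_3,z_1,b_1+b_2-z_1)$ from 10-tuple parameters to the complementary 6-tuple is an affine bijection from $G^6$ onto $G^6$, with inverse given by an explicit linear formula requiring the invertibility of $2$ in $\mathbb{F}_p$ (which is where the standing hypothesis $p\geq 3$ is used); hence bad 6-tuples pull back to at most $\eta|G|^6$ bad 10-tuples. For each of the remaining six cases the analogous parametrization loses exactly one free parameter in the complementary indices (for instance $x_2$ in case $2$), and I will handle these by applying the hypothesis for $r=5$ (or smaller) to a bijectively parametrized sub-intersection of the six complementary indices, again producing an $O(\eta|G|^6)$ bound per case. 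Summing the seven case-wise contributions and using $\eta\leq 2^{-31}c^3\leq c/14$ yields the stated conclusion.

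The principal obstacle I foresee is the dual-side bound $|U^\perp\cap V^\perp|\leq Kp^{3d}$. In the idealized setting where the $W_\alpha$ arise from a bilinear form $\beta$, the relation $a=b_1+b_2-b_3$ forces $\Pi_a\subseteq\Pi_{b_1}+\Pi_{b_2}+\Pi_{b_3}$, so $U^\perp=\Pi_{b_1}+\Pi_{b_2}+\Pi_{b_3}$ has size at most $p^{3d}$ and the desired dual bound is immediate; in the approximate setting I will need to bound the deviation of $\Pi_a$ from the $b$-span by carefully using the 10-wise $\Pi$-sum bound, with attention paid to recovering the stated $K^{-3}$ constant rather than a worse power.
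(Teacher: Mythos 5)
Your outer frame is fine (the seven-case union bound, the identity $|X|\,|Y|=|X+Y|\,|X\cap Y|$, and the odd-$p$ bijective parametrisation that transfers the exceptional sets of the $r$-wise hypotheses to 10-tuples), but the step you yourself flag as the ``principal obstacle'' is a genuine gap, and the route you sketch for it cannot close it. With $U=W_a\cap W_{b_1}\cap W_{b_2}\cap W_{b_3}$, $V$ the complementary six-wise intersection and $\Pi_\alpha=W_\alpha^\perp$, your chain $|U|\ge K^{-2}|U+V|$ needs $|U+V|\ge K^{-1}p^{-3d}|G|$, i.e.\ $|U^\perp\cap V^\perp|\le Kp^{3d}$. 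But the two facts you propose to combine point the wrong way: the $r=3$ hypothesis gives a \emph{lower} bound $|\Pi_{b_1}+\Pi_{b_2}+\Pi_{b_3}|\ge K^{-1}p^{3d}$ on a subspace of $U^\perp$, and the ten-wise intersection lower bound gives an \emph{upper} bound $|U^\perp+V^\perp|\le Kp^{6d}$, whereas upper-bounding $|U^\perp\cap V^\perp|=|U^\perp|\,|V^\perp|/|U^\perp+V^\perp|$ requires upper bounds on $|U^\perp|,|V^\perp|$ and a lower bound on $|U^\perp+V^\perp|$. Using the only available such bounds ($|U^\perp|\le p^{4d}$, $|V^\perp|\le p^{6d}$, and $|U^\perp+V^\perp|\ge K^{-1}p^{6d}$ from a generic $r=6$ application) yields only $|U^\perp\cap V^\perp|\le Kp^{4d}$, hence $|U|\ge K^{-3}p^{-4d}|G|$, short of the claim by a factor $p^d$. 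Removing that factor amounts to showing that $\Pi_a$ essentially lies inside $\Pi_{b_1}+\Pi_{b_2}+\Pi_{b_3}$, i.e.\ $|U^\perp|\le Kp^{3d}$ up to powers of $K$ --- which is exactly the statement being proved, so the plan is circular at its crux. Moreover, within a single 10-tuple of $S$ every subfamily of indices whose intersection contains the ten-wise one is automatically atypical for the quasirandomness hypotheses, so no amount of care in applying the $r$-wise upper bounds to index families drawn from that one tuple can supply the missing containment.

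The paper's proof supplies precisely this missing ingredient by bringing in \emph{independent} indices: it fixes a triple $(b_1,b_2,b_3)$ admitting at least $\frac{c}{1000}|G|^3$ completions to tuples of $\mathcal{T}$, and takes \emph{two} completions $(x,y,z)$ and $(x',y',z')$. Each gives a seven-wise intersection $X=W_a\cap W_{b_1}\cap W_{b_2}\cap W_{b_3}\cap W_x\cap W_y\cap W_z$, resp.\ $Y$ with $(x',y',z')$, of size at least $K^{-1}p^{-6d}|G|$; then $X+Y\subseteq W_a\cap W_{b_1}\cap W_{b_2}\cap W_{b_3}$, while $X\cap Y$ lies in the nine-wise intersection of the $b$'s with the six completion indices, to which the $r=9$ hypothesis applies for most \emph{pairs} of completions (this is why the hypotheses go up to $r=9$). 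The identity $|X|\,|Y|=|X+Y|\,|X\cap Y|$ then yields $|W_a\cap W_{b_1}\cap W_{b_2}\cap W_{b_3}|\ge K^{-3}p^{-3d}|G|$, and a short double counting (the sets $\mathcal{B},\mathcal{B}'$ in the paper) shows this holds for all but a small fraction of tuples in each of the seven cases, after which your union-bound framework does apply. Without this two-completion device your argument does not go through.
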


\begin{proof}Let $\mathcal{T}$ be the set of all 10-tuples $(a, b_1, b_2, b_3, x_2, x_3, y_1, y_3, z_1, z_2) \in A^{10}$ that satisfy~\eqref{10subspacesConditionEqualities} and~\eqref{10subspacesCondition}. Thus, $|\mathcal{T}| \geq c|G|^6$. We now show that at most $\frac{c}{40}|G|^6$ 10-tuples in $\mathcal{T}$ have $|W_{b_1} \cap W_{b_2} \cap W_{b_3} \cap W_a| < K^{-1}p^{-3d}|G|$. Same argument will apply to other 6 subspaces and the claim will follow. (Note that the indices of each of the 7 subspaces in the claim form an additive quadruple, with any three indices behaving independently; these are the only properties that we shall use in the proof.)\\
\indent Let $\mathcal{B}$ be the set of all $(b_1, b_2, b_3) \in A^3$ such that the number of 10-tuples in $\mathcal{T}$ with these 3 elements is at least $\frac{c}{1000}|G|^3$. The number of 10-tuples in $\mathcal{T}$ whose $(b_1, b_2, b_3)$ belongs to $\mathcal{B}$ is therefore at least $\frac{999c}{1000}|G|^6$. Let $\mathcal{B}'$ be the subset $\mathcal{B}$ consisting of those triples $(b_1, b_2, b_3)$ for whom the number of $6$-tuples $(x,y,z,x',y',z') \in A^6$ with 
\[|W_{b_1} \cap W_{b_2} \cap W_{b_3} \cap W_x \cap W_y \cap W_z \cap W_{x'} \cap W_{y'} \cap W_{z'}| \leq K p^{-9d} |G|\]
is at most $\frac{c^2}{2\cdot 10^6}|G|^6$. Thus the number of 10-tuples in $\mathcal{T}$ whose $(b_1, b_2, b_3)$ belongs to $\mathcal{B}'$ is therefore at least $\frac{999c}{1000}|G|^6 - \eta 2 \cdot 10^6 c^{-2}|G|^6 \geq \frac{998c}{1000}|G|^6$, provided $\eta \leq c^3 / (2 \cdot 10^9)$.\\
\indent Pick any 10-tuple $(a, b_1, b_2, b_3, x_2, x_3, y_1, y_3, z_1, z_2)$ such that $(b_1, b_2, b_3) \in \mathcal{B}'$. Then $a = b_1 + b_2 - b_3$ and we have at least $\frac{c^2}{10^6}|G|^6$ of 6-tuples $(x,y,z,x',y',z')$ such that
\[|W_{a} \cap W_{b_1} \cap W_{b_2} \cap W_{b_3} \cap W_x \cap W_y \cap W_z|, |W_{a} \cap W_{b_1} \cap W_{b_2} \cap W_{b_3} \cap W_{x'} \cap W_{y'} \cap W_{z'}| \geq K^{-1} p^{-6d} |G|.\]
Standard properties of subspaces then imply
\begin{align*}&K^{-2}p^{-12d} |G|^2 \leq |W_{a} \cap W_{b_1} \cap W_{b_2} \cap W_{b_3} \cap W_x \cap W_y \cap W_z| \cdot |W_{a} \cap W_{b_1} \cap W_{b_2} \cap W_{b_3} \cap W_{x'} \cap W_{y'} \cap W_{z'}|\\
&\hspace{2cm} = \Big|\Big(W_{a} \cap W_{b_1} \cap W_{b_2} \cap W_{b_3} \cap W_x \cap W_y \cap W_z\Big) \,\, + \,\,\Big(W_{a} \cap W_{b_1} \cap W_{b_2} \cap W_{b_3} \cap W_{x'} \cap W_{y'} \cap W_{z'}\Big)\Big| \\
&\hspace{4cm} |W_{a} \cap W_{b_1} \cap W_{b_2} \cap W_{b_3} \cap W_x \cap W_y \cap W_z \cap W_{x'} \cap W_{y'} \cap W_{z'}|\\
&\hspace{2cm} \leq |W_{a} \cap W_{b_1} \cap W_{b_2} \cap W_{b_3}| \cdot |W_{b_1} \cap W_{b_2} \cap W_{b_3} \cap W_x \cap W_y \cap W_z \cap W_{x'} \cap W_{y'} \cap W_{z'}|.\end{align*}

But there exists a choice of $(x,y,z,x',y',z')$ for which
\[|W_{b_1} \cap W_{b_2} \cap W_{b_3} \cap W_x \cap W_y \cap W_z \cap W_{x'} \cap W_{y'} \cap W_{z'}| \leq Kp^{-9d} |G|\]
so we get $|W_{a} \cap W_{b_1} \cap W_{b_2} \cap W_{b_3}| \geq K^{-3}p^{-3d}|G|$, as required.\end{proof}

Write $U_a = W_a^\perp$ for each $a \in A$. The assumptions on $W_a$ and the claim above imply that
\begin{equation}\label{approximatelinearsyssubQRcond3}|U_{a_1} + U_{a_2} + \dots + U_{a_r}| \geq K^{-1} p^{rd} \end{equation}
holds for all but at most $\eta |G|^r$ $r$-tuples $(a_1, a_2, \dots, a_r) \in A^r$ for $r \in [9]$, and that for at least $\frac{c}{2} |G|^6$ 10-tuples $(a, b_1, b_2, b_3, x_2, x_3, y_1, y_3, z_1, z_2) \in A^{10}$ we have 
\begin{itemize}
\item $a = b_1 + b_2 - b_3,\,\, x_3 = x_2 - b_2 + b_3, \,\, y_1 = y_3 + b_1 - b_3,\,\, z_2 = b_1 + b_2 - z_1$, and
\item each of 7 subspaces
\begin{align}U_{b_1} + U_{b_2} + &U_{b_3} + U_a,\,\, U_{b_1} + U_{x_2} + U_{x_3} + U_a,\,\, U_{b_2} + U_{x_3} + U_{x_2} + U_{b_3},\,\, U_{y_1} + U_{b_2} + U_{y_3} + U_a\nonumber\\
&U_{b_1} + U_{y_3} + U_{y_1} + U_{b_3},\,\,U_{z_1} + U_{z_2} + U_{b_3} + U_a,\,\, U_{b_1} + U_{b_2} + U_{z_1} + U_{z_2}\label{sevenSubspacesCondition}\end{align}
\end{itemize}
has size at most $K^3 p^{3d}$.

Our aim is to use Lemma~\ref{addquadshomsexist} to define linear isomorphisms between $\mathbb{F}_p^d$ and $U_a$. To that end, we say that an additive quadruple $(x_1, x_2, x_3, x_4)$ (where $x_1 + x_2 = x_3 + x_4$) is \emph{good} if we have $K^{-1} p^{3d} \leq |U_{x_{j_1}} + U_{x_{j_2}} + U_{x_{j_3}}|$ for any distinct indices $j_1, j_2, j_3 \in [4]$ and $|U_{x_1} + U_{x_2} + U_{x_3} + U_{x_4}| \leq K^3p^{3d}$. Notice that the associated subspaces to elements of any good additive quadruple satisfy conditions of Lemma~\ref{addquadshomsexist}.\\
\indent Our next claim shows that we may find many $10$-tuples $(a, b_1, b_2, b_3, x_2, x_3, y_1, y_3, z_1, z_2) \in A^{10}$ which satisfy stronger conditions than guaranteed by~\eqref{sevenSubspacesCondition}, and these stronger conditions will allow us to apply Lemma~\ref{addquadshomsuniq}.

\begin{claim}\label{10tupleslongclaim}Suppose that $\eta \leq \frac{c}{100}$. For at least $\frac{c}{20} |G|^6$ 10-tuples $(a, b_1, b_2, b_3, x_2, x_3, y_1, y_3, z_1, z_2) \in A^{10}$ we have 
\begin{itemize}
\item[\textbf{(i)}] $a = b_1 + b_2 - b_3,\,\, x_3 = x_2 - b_2 + b_3, \,\, y_1 = y_3 + b_1 - b_3,\,\, z_2 = b_1 + b_2 - z_1$,
\item[\textbf{(ii)}] each of 7 subspaces
\begin{align*}U_{b_1} + U_{b_2} + &U_{b_3} + U_a,\,\, U_{b_1} + U_{x_2} + U_{x_3} + U_a,\,\, U_{b_2} + U_{x_3} + U_{x_2} + U_{b_3},\,\, U_{y_1} + U_{b_2} + U_{y_3} + U_a\nonumber\\
&U_{b_1} + U_{y_3} + U_{y_1} + U_{b_3},\,\,U_{z_1} + U_{z_2} + U_{b_3} + U_a,\,\, U_{b_1} + U_{b_2} + U_{z_1} + U_{z_2}\end{align*}
has size at most $K^3 p^{3d}$,
\item[\textbf{(iii)}] each of 16 subspaces obtained by taking sum of 3 subspaces inside any of the 4 quadruples $\{U_{b_1}, U_{b_2}, U_{b_3}, U_a\}, \{U_{b_1}, U_{x_2}, U_{x_3}, U_a\}, \{U_{y_1}, U_{b_2}, U_{y_3}, U_a\}$ and $\{U_{z_1}, U_{z_2} , U_{b_3} , U_a\}$ has size at least $K^{-1} p^{3d}$,
\item[\textbf{(iv)}] each of 3 subspaces
\begin{align*}U_{b_1} \cap (U_{b_2} + U_{x_3} + U_{x_2} + U_{b_3}),\,\, U_{b_2} \cap (U_{b_1} + U_{y_3} + U_{y_1} + U_{b_3}),\,\, U_{b_3} \cap (U_{b_1} + U_{b_2} + U_{z_1} + U_{z_2})\end{align*}
has size at most $K^4$.
\end{itemize}
\end{claim}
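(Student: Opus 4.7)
The plan is to start from the at least $\frac{c}{2}|G|^6$ ten-tuples satisfying (i) and (ii) provided by the preceding claim, and to show that at most $O(\eta|G|^6)$ of them fail (iii) or (iv), so that the good set still has size at least $\frac{c}{20}|G|^6$ once $\eta \leq c/100$. Throughout, I pass to dual subspaces $U_a := W_a^\perp$; since $|\bigcap_i W_{a_i}| \cdot |\sum_i U_{a_i}| = |G|$, the hypothesis becomes: for each $r \in [9]$, $|U_{a_1} + \cdots + U_{a_r}| \geq K^{-1} p^{rd}$ for all but at most $\eta |G|^r$ tuples $(a_1, \dots, a_r) \in A^r$. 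In the same dual language, (ii) says $|U_{b_1} + U_{b_2} + U_{b_3} + U_a| \leq K^3 p^{3d}$ and similarly for the other three quadruples.

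For (iii), each of the 16 triples among the 10 coordinates gives three linear/affine functions of the six free parameters $(b_1, b_2, b_3, x_2, y_3, z_1)$, and in every case (e.g.\ $\{a, x_2, x_3\} = \{b_1+b_2-b_3,\, x_2,\, x_2-b_2+b_3\}$) the $3 \times 6$ coefficient matrix has rank $3$, as is straightforward to verify case by case. Hence the projection from the parameter space onto each such triple is $|G|^3$-to-$1$, so the at most $\eta |G|^3$ bad triples lift to at most $\eta |G|^6$ bad ten-tuples, and union-bounding over all 16 triples produces at most $16\eta |G|^6$ failures of (iii).

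For (iv), the key identity is
\[ |U_{b_1} \cap (U_{b_2} + U_{x_3} + U_{x_2} + U_{b_3})| = \frac{p^d \cdot |U_{b_2} + U_{x_3} + U_{x_2} + U_{b_3}|}{|U_{b_1} + U_{b_2} + U_{x_3} + U_{x_2} + U_{b_3}|}, \]
together with the monotonicity bound $|U_{b_1} + U_{b_2} + U_{x_3} + U_{x_2} + U_{b_3}| \geq |U_{b_1} + U_{b_2} + U_{b_3} + U_{x_2}|$. Combined with (ii)'s upper bound $K^3 p^{3d}$ on the numerator, this reduces the task of bounding the intersection by $K^4$ to verifying the $r = 4$ quasirandomness condition $|U_{b_1} + U_{b_2} + U_{b_3} + U_{x_2}| \geq K^{-1} p^{4d}$. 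Since $(b_1, b_2, b_3, x_2)$ are four of the six free parameters, the projection from the parameter space is $|G|^2$-to-$1$, so the at most $\eta |G|^4$ bad 4-tuples lift to at most $\eta |G|^6$ failures. Analogous arguments using the 4-tuples $(b_1, b_2, b_3, y_3)$ and $(b_1, b_2, b_3, z_1)$ handle the other two intersections in (iv), adding $2\eta |G|^6$.

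Putting everything together, at most $(16 + 3)\eta |G|^6 = 19\eta |G|^6$ ten-tuples fail (iii) or (iv), leaving at least $(\tfrac{c}{2} - 19\eta)|G|^6 \geq \tfrac{c}{20}|G|^6$ good ten-tuples. The main subtlety, and the step I expect to require the most care, is the reduction in (iv): naively one would try to invoke the $r=5$ hypothesis, but the restricted structure of the ten-tuple (only six free parameters) makes direct application awkward; instead the upper bound from (ii) combined with monotonicity lets us replace the 5-subspace lower bound by a 4-subspace lower bound, which is then directly supplied by the hypothesis at the cheaper index $r=4$.
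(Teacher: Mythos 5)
Your proposal is correct and follows essentially the same route as the paper: start from the $\tfrac{c}{2}|G|^6$ ten-tuples with properties (i) and (ii), discard the at most $19\eta|G|^6$ ten-tuples violating the $r=3$ lower bounds for the 16 triples or the $r=4$ lower bounds for $(b_1,b_2,b_3,x_2)$, $(b_1,b_2,b_3,y_3)$, $(b_1,b_2,b_3,z_1)$, and then obtain (iv) from the identity $|U_{b_1}\cap(U_{b_2}+U_{x_3}+U_{x_2}+U_{b_3})| = p^d\,|U_{b_2}+U_{x_3}+U_{x_2}+U_{b_3}|/|U_{b_1}+U_{b_2}+U_{x_3}+U_{x_2}+U_{b_3}|$ together with (ii) and monotonicity of the denominator. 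This is exactly the paper's argument, including the $(\tfrac{c}{2}-19\eta)|G|^6$ count and the observation that the five-fold sum must be lower-bounded via the retained four-fold sum rather than by a direct appeal to the $r=5$ hypothesis.
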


\begin{proof} Combining conditions~\eqref{approximatelinearsyssubQRcond3} for $r \in \{3,4\}$ and~\eqref{sevenSubspacesCondition} we get at least $(2^{-1}c - 19\eta) |G|^6$ 10-tuples $(a, b_1, b_2, b_3, x_2, x_3,$ $y_1,$ $y_3,$ $z_1, z_2) \in A^{10}$ such that conditions \textbf{(i)} and \textbf{(ii)} in the conclusion of the claim hold and additionally we have each of 16 subspaces obtained by taking sum of 3 subspaces inside any of the 4 quadruples $\{U_{b_1}, U_{b_2}, U_{b_3}, U_a\},$  $\{U_{b_1}, U_{x_2}, U_{x_3}, U_a\},$ $\{U_{y_1}, U_{b_2}, U_{y_3}, U_a\}$ and $\{U_{z_1}, U_{z_2} , U_{b_3} , U_a\}$ has size at least $K^{-1} p^{3d}$ and each of 3 subspaces
\[U_{b_1} + U_{b_2} + U_{b_3} + U_{x_2},\,\,U_{b_1} + U_{b_2} + U_{b_3} + U_{y_3},\,\,U_{b_1} + U_{b_2} + U_{b_3} + U_{z_1}\]
has size at least $K^{-1} p^{4d}$. We now show that each such 10-tuple has the properties described in the claim.\\
\indent Take any such a 10-tuple $(a, b_1, b_2, b_3, x_2, x_3, y_1, y_3, z_1, z_2) \in A^{10}$. By the way we choose 10-tuples we see that property \textbf{(iii)} holds. It remains to prove property \textbf{(iv)}. We have
\[|U_{b_1} \cap (U_{b_2} + U_{x_3} + U_{x_2} + U_{b_3})| = \frac{|U_{b_1}| |U_{b_2} + U_{x_3} + U_{x_2} + U_{b_3}|}{|U_{b_1} + U_{b_2} + U_{x_3} + U_{x_2} + U_{b_3}|} \leq \frac{p^d \cdot K^3p^{3d}}{|U_{b_1} + U_{b_2} + U_{x_2} + U_{b_3}|} \leq K^4.\]
Similar arguments prove the other two bounds.\end{proof}

\indent Let $a \in A$ be an element that we shall specify later and let $\theta \colon \mathbb{F}_p^d \to U_a$ be a linear isomorphism. For each index $i \in [3]$, let $A_i \subseteq A$ be the set of all elements $x \in A$ that appear at $i$\tss{th} position in a good additive quadruple in which $a$ appears as the last element. For each such element $x \in A_i$, we pick a random good additive quadruple $(y_1, y_2, y_3, a)$ such that $y_i = x$, uniformly among all such quadruples. We do this independently for all $x$. Once we have chosen such quadruple $(y_1, y_2, y_3, a)$ we apply Lemma~\ref{addquadshomsexist} with chosen linear isomorphism $\theta \colon \mathbb{F}_p^d \to U_a$ to obtain linear isomorphisms $\psi_j \colon \mathbb{F}_p^d \to U_{y_j}$ such that $\on{rank} \Big(\psi_1 + \psi_2 - \psi_3 - \theta\Big) \leq 60 \log_p K$. Finally, define $\phi^{i}_x = \psi_i$.\\
\indent The crucial claim is that, if we choose $a$ suitably, the system of maps $(\phi^1_x)_{x \in A^1}$ exhibits a considerable amount of additive structure.

\begin{claim}There exists an element $a \in A$ such that in the above procedure we get at least $2^{-28}c^{4} |G|^2$ additive quadruples $(b_1, b_2, b_3, a) \in A^1 \times A^2 \times A^3 \times A$ such that $\on{rank}\Big(\phi^1_{b_1} + \phi^2_{b_2} - \phi^3_{b_3} - \theta\Big) \leq 500 \log_p K$.\end{claim}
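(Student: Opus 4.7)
The plan is to combine the combinatorial output of Claim~\ref{10tupleslongclaim} with the uniqueness principle of Lemma~\ref{addquadshomsuniq}, transferring the rank estimate produced by Lemma~\ref{addquadshomsexist} on a ``canonical'' quadruple to the randomly chosen isomorphisms $\phi^i_{b_i}$. Since Claim~\ref{10tupleslongclaim} supplies at least $\tfrac{c}{20}|G|^6$ 10-tuples, averaging over the first coordinate yields an element $a\in A$ that is the first coordinate of at least $\tfrac{c}{20}|G|^5$ such 10-tuples; I fix this $a$ together with an arbitrary linear isomorphism $\theta\colon\mathbb{F}_p^d\to U_a$ for the rest of the argument.

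For each 10-tuple $T=(a,b_1,b_2,b_3,x_2,x_3,y_1,y_3,z_1,z_2)$ with first coordinate $a$, I apply Lemma~\ref{addquadshomsexist} to the quadruple $(b_1,b_2,b_3,a)$ --- which is good with parameter $K^3$ by parts~\textbf{(ii)} and~\textbf{(iii)} of Claim~\ref{10tupleslongclaim} --- and to $\theta$, obtaining ``canonical'' isomorphisms $\alpha_i\colon\mathbb{F}_p^d\to U_{b_i}$ with $\on{rank}(\alpha_1+\alpha_2-\alpha_3-\theta)\le 60\log_p K$. Separately, the random procedure produces $\phi^1_{b_1}$ from some good quadruple $(b_1,u_2,u_3,a)$ together with auxiliary isomorphisms $\sigma_{u_2},\sigma_{u_3}$ satisfying $\on{rank}(\phi^1_{b_1}+\sigma_{u_2}-\sigma_{u_3}-\theta)\le 60\log_p K$. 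Subtracting the two estimates and applying Lemma~\ref{addquadshomsuniq} with $W=U_{b_1}$, $U_1=U_{u_2}$, $U_2=U_{b_2}$, $V_1=U_{u_3}$, $V_2=U_{b_3}$ yields
\[\on{rank}(\phi^1_{b_1}-\alpha_1)\le 120\log_p K+\log_p\big|U_{b_1}\cap(U_{u_2}+U_{b_2}+U_{u_3}+U_{b_3})\big|.\]
This intersection is at most $p^{5d}/|U_{b_1}+U_{u_2}+U_{u_3}+U_{b_2}+U_{b_3}|$, hence at most $K$ provided the 5-tuple $(b_1,u_2,u_3,b_2,b_3)$ is quasirandom in the sense of the hypothesis, giving $\on{rank}(\phi^1_{b_1}-\alpha_1)\le 121\log_p K$. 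The symmetric arguments for $i\in\{2,3\}$ combine with the canonical estimate through the triangle inequality to produce $\on{rank}(\phi^1_{b_1}+\phi^2_{b_2}-\phi^3_{b_3}-\theta)\le(60+3\cdot 121)\log_p K\le 500\log_p K$.

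It remains to control the failure probability, the union of three events that one of the three 5-tuples drawn by the procedure is non-quasirandom. By hypothesis the set of non-quasirandom 5-tuples has size at most $\eta|G|^5$, and a standard consequence of the $r=2$ assumption is that for all but a $\mathrm{poly}(c)$-small fraction of $b_i\in A_i$ the number of good quadruples through $b_i$ with last coordinate $a$ is at least $\mathrm{poly}(c)\cdot|G|$, so the uniform measure over such quadruples compares to that on $A^2$ up to a $\mathrm{poly}(c^{-1})$ factor. Transferring the $\eta$-bound through these comparisons shows that the expected number of 10-tuples exhibiting any failure is at most $\eta\,c^{-O(1)}|G|^5$, which, under $\eta\le 2^{-31}c^3$, is negligible against $\tfrac{c}{20}|G|^5$. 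A realization of the randomness therefore gives at least $\tfrac{c}{40}|G|^5$ 10-tuples for which the rank estimate holds; since each quadruple $(b_1,b_2,b_3,a)$ extends to at most $|G|^3$ of these 10-tuples (free parameters being $x_2,y_3,z_1$), projecting yields at least $\tfrac{c}{40}|G|^2$ distinct good quadruples, comfortably exceeding the target $2^{-28}c^4|G|^2$. The main obstacle is the probability analysis just sketched: one must convert the $\eta$-bound from a uniform statement on $A^5$ into a bound for the non-uniform 5-tuples drawn by the procedure, and verify that typical $b_i\in A_i$ admit polynomially many good extending quadruples so that the transfer incurs only a polynomial blow-up in $c^{-1}$.
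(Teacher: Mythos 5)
Your overall skeleton matches the paper's: fix $a$ by averaging, build canonical isomorphisms for $(b_1,b_2,b_3,a)$ via Lemma~\ref{addquadshomsexist}, compare them with the randomly generated $\phi^i_{b_i}$ through Lemma~\ref{addquadshomsuniq}, and conclude by a triangle inequality and an expectation count. The gap is in the one step that the whole argument turns on: your justification of the bound $|U_{b_1}\cap(U_{u_2}+U_{b_2}+U_{u_3}+U_{b_3})|\leq K$ for the \emph{randomly drawn} quadruple $(b_1,u_2,u_3,a)$. You derive it from the hypothesis that all but $\eta|G|^5$ of $5$-tuples in $A^5$ are quasirandom, and then try to ``transfer'' this to the tuples produced by the procedure by comparing measures. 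But the $5$-tuples $(b_1,u_2,u_3,b_2,b_3)$ that actually occur satisfy two linear relations ($u_3=b_1+u_2-a$ and $b_3=b_1+b_2-a$), so for a fixed $a$ they range over a set of size about $|G|^3$ (about $|G|^4$ over all $a$). A bound of $\eta|G|^5$ on bad \emph{unconstrained} $5$-tuples is vacuous for such a constrained family: every single constrained tuple could be bad without violating the hypothesis. No comparison of the procedure's measure with the uniform measure on $A^5$ can bridge this, which is exactly why the theorem's hypotheses include the $10$-point configuration condition and why the paper routes the needed bound through property \textbf{(iv)} of Claim~\ref{10tupleslongclaim}: that property supplies $|U_{b_1}\cap(U_{b_2}+U_{x_3}+U_{x_2}+U_{b_3})|\leq K^4$ for the specific constrained pairs $(x_2,x_3)$ coming from tuples of $\mathcal{T}'$, of which there are at least $\tfrac{c}{80}|G|$; since there are trivially at most $|G|$ candidate quadruples through $b_1$ with last coordinate $a$, the random choice hits one of these with probability at least $\tfrac{c}{80}$, and the three indices are handled independently, giving the $c^3$ loss and hence the $2^{-28}c^4|G|^2$ count. (This is also why the paper's count is $\sim c^4|G|^2$ rather than your claimed $\tfrac{c}{40}|G|^2$: one cannot expect the drawn quadruple to be usable with probability close to $1$.)

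A secondary unjustified step is your assertion that, as a ``standard consequence of the $r=2$ assumption,'' most $b_i\in A_i$ lie on at least $\mathrm{poly}(c)|G|$ good quadruples with last coordinate $a$. Goodness includes the \emph{upper} bound $|U_{x_1}+U_{x_2}+U_{x_3}+U_{x_4}|\leq K^3p^{3d}$, which a generic quadruple of subspaces fails (generically the sum has size about $p^{4d}$); abundance of good quadruples is again a consequence of the $10$-tuple hypothesis, not of pairwise quasirandomness. The paper's argument avoids needing any such lower bound: it only uses the trivial upper bound of $|G|$ on the number of quadruples through $b_1$ together with the lower bound on the number of useful ones provided by $\mathcal{S}_{b_1,b_2,b_3}$.
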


\begin{proof}Let $\mathcal{T}$ be the set of all 10-tuples satisfying properties in Claim~\ref{10tupleslongclaim}. Let $a \in A$ be such that
\begin{align*}\mathcal{T}' = \Big\{(b_1, b_2, b_3, &x_2, x_3, y_1, y_3, z_1, z_2) \in A^9 \colon\\
&(a, b_1, b_2, b_3, x_2, x_3, y_1, y_3, z_1, z_2) \in \mathcal{T}\Big\}\end{align*}
has size at least $\frac{c}{40} |G|^5$. Now take any $(b_1, b_2, b_3) \in A^3$ such that $b_1 + b_2 = b_3 + a$ and such that the number of 6-tuples $(x_2, x_3, y_1, y_3, z_1, z_2) \in A^6$ such that $(b_1, b_2, b_3, x_2,$ $x_3, y_1, y_3,$ $z_1, z_2) \in \mathcal{T}'$ is at least $\frac{c}{80}|G|^3$. By averaging, the set of such triples $\mathcal{B}$ has size at least $\frac{c}{80}|G|^2$. Let $\mathcal{S}_{b_1, b_2, b_3}$ be the set of the 6-tuples we considered above for $(b_1, b_2, b_3)$.  Let us observe for any $(b_1, b_2, b_3, x_2,$ $x_3, y_1, y_3,$ $z_1, z_2) \in \mathcal{T}'$ that properties \textbf{(ii)} and \textbf{(iii)} of Claim~\ref{10tupleslongclaim} imply that quadruples $(b_1, b_2, b_3, a)$, $(b_1, x_2, x_3, a)$, $(y_1, b_2, y_3, a)$ and $(z_1, z_2, b_3, a)$ are good and thus $b_1, y_1, z_1 \in A_1,$ $b_2, x_2, z_2 \in A_2$ and $b_3, x_3, y_3 \in A_3$. We now show that for every $(b_1, b_2, b_3) \in \mathcal{B}$ the event $\on{rank}\Big(\phi^1_{b_1} + \phi^2_{b_2} - \phi^3_{b_3} - \theta\Big) \leq 150\log_p K$ occurs with probability at least $2^{-18}c^3$.\\
\indent To see that, fix $(b_1, b_2, b_3) \in \mathcal{B}$. Apply Lemma~\ref{addquadshomsexist} to quadruple of subspaces $(U_{b_1}, U_{b_2}, U_{b_3}, U_a)$ and isomorphism $\theta \colon \mathbb{F}_p^d \to U_a$. We thus get linear isomorphisms $\theta_i \colon \mathbb{F}_p^d \to U_{b_i}$ for $i \in [3]$ such that $\on{rank} \Big(\theta_1 + \theta_2 - \theta_3 - \theta\Big) \leq 60 \log_p K$. Since $|\mathcal{S}_{b_1, b_2, b_3}| \geq \frac{c}{80}|G|^3$, we see in particular that there at least $\frac{c}{80}|G|$ of choices of $x_2 \in A_2$ such that for $x_3 = b_1 + x_2 - a$ we have that additive quadruple $(b_1, x_2, x_3, a)$ is good and $|U_{b_1} \cap (U_{b_2} + U_{x_3} + U_{x_2} + U_{b_3})| \leq K^4$ (using property \textbf{(iv)} in the conclusion of Claim~\ref{10tupleslongclaim}). If it happens that  $(b_1, x_2, x_3, a)$ is used to define $\phi^1_{b_1}$, then we get also linear isomorphisms $\rho_2 \colon \mathbb{F}_p^d \to U_{x_2}$ and $\rho_3 \colon \mathbb{F}_p^d \to U_{x_3}$ such that $\on{rank}\Big(\phi^1_{b_1} + \rho_2 - \rho_3 - \theta\Big) \leq 60\log_p K$. But then
\[\on{rank}\Big(\Big(\phi^1_{b_1} - \theta_1\Big)+ \rho_2 - \rho_3  - \theta_2 + \theta_3\Big) \leq 120\log_p K\]
so by Lemma~\ref{addquadshomsuniq} we get $\on{rank}(\phi^1_{b_1} - \theta_1) \leq 124 \log_p K$. In particular, this happens with probability at least $\frac{c}{80}$.\\
\indent Similarly, events $\on{rank}(\phi^2_{b_2} - \theta_2) \leq 124 \log_p K$ and $\on{rank}(\phi^3_{b_3} - \theta_3) \leq 124 \log_p K$ happen with probability at least $\frac{c}{80}$ each, and all three events are independent, so we get 
\[\on{rank}\Big(\phi^1_{b_1} + \phi^2_{b_2} - \phi^3_{b_3} - \theta\Big) \leq \on{rank} \Big(\theta_1 + \theta_2 - \theta_3 - \theta\Big) + \on{rank}(\phi^1_{b_1} - \theta_1) + \on{rank}(\phi^2_{b_2} - \theta_2) + \on{rank}(\phi^3_{b_3} - \theta_3) \leq 432 \log_p K\]
with probability at least $2^{-21}c^3$.\\
\indent Thus, the expected number of triples $(b_1, b_2, b_3) \in B$ for which $\on{rank}\Big(\phi^1_{b_1} + \phi^2_{b_2} - \phi^3_{b_3} - \theta\Big) \leq 500 \log_p K$ holds is at least $2^{-28}c^4|G|^2$, proving the claim.\end{proof}

We say that an additive quadruple $(b_1, b_2, b_3, a) \in A^1 \times A^2 \times A^3 \times A$ is \emph{very good} if $\on{rank}\Big(\phi^1_{b_1} + \phi^2_{b_2} - \phi^3_{b_3} - \theta\Big) \leq 500 \log_p K$. By the claim above we have $c'|G|^2$ very good additive quadruples, where $c' = 2^{-28}c^4$. Consider the bipartite graph whose vertex classes $\mathcal{C}_1$ and $\mathcal{C}_2$ are copies of $G$ and where we put edge between $b_2 \in \mathcal{C}_1$ and $b_3\in \mathcal{C}_2$ if $(b_3 + a - b_2, b_2, b_3, a)$ is very good. This is a graph of density $c'$ and hence it contains at least ${c'}^4|G|^4$ ordered 4-cycles. \\
Let $(b_2, b_3, b'_2, b'_3)$ be an ordered 4-cycle in the above graph. Let $x_1 = b_3 + a - b_2$, $x_2 = b_3 + a - b'_2$, $x_3 = b'_3 + a - b'_2$ and $x_4 = b'_3 + a - b_2$, so additive quadruples 
\[(x_1, b_2, b_3, a),\,\,(x_2, b'_2, b_3, a),\,\,(x_3, b'_2, b'_3, a),\,\,(x_4, b_2, b'_3, a)\]
are very good. Observe also that $x_1 + x_3 = x_2 + x_4$ and that
\begin{align*}\phi^1_{x_1} + \phi^1_{x_3} - \phi^1_{x_2} - \phi^1_{x_4} = \Big(\phi^1_{x_1} + \phi^2_{b_2} - \phi^3_{b_3} - \theta\Big) + \Big(\phi^1_{x_3} + &\phi^2_{b'_2} - \phi^3_{b'_3} - \theta\Big) \\
- &\Big(\phi^1_{x_2} + \phi^2_{b'_2} - \phi^3_{b_3} - \theta\Big) - \Big(\phi^1_{x_4} + \phi^2_{b_2} - \phi^3_{b'_3} - \theta\Big),\end{align*} 
so $\on{rank}\Big(\phi^1_{x_1} + \phi^1_{x_3} - \phi^1_{x_2} - \phi^1_{x_4}\Big) \leq 2000 \log_p K$. Thus, by double-counting, there are at least ${c'}^4|G|^3$ additive quadruples $(x_1, x_3, x_2, x_4) \in (A^1)^4$ with $\on{rank}\Big(\phi^1_{x_1} + \phi^1_{x_3} - \phi^1_{x_2} - \phi^1_{x_4}\Big) \leq 2000 \log_p K$, as each such additive quadruple can arise from at most $|G|$ 4-cycles considered above.\\
\indent By Corollary~\ref{friemanforlinearhom} there exists a map $\Phi \colon G \times \mathbb{F}_p^d \to G$, which is affine in the first coordinate and linear in the second, such that $\on{rank}(\Phi(a, \cdot) - \phi^1_a) \leq R$ for at least $c_1|G|$ elements $a \in A$, where $c_1 \geq \exp\Big(-\exp\Big((\log {c'}^{-1} + \log_p K)^{O(1)}\Big)\Big)$ and $R \leq \exp\Big((\log {c'}^{-1} + \log_p K)^{O(1)}\Big)$. Let $B$ be the set of such $a$.\\
\indent Finally, we prove that $\Phi$ is highly quasirandom. Let $r = \lceil 2R + \log_p K \rceil$ and $\alpha = (2\eta c_1^{-2})^{1/2r}$. We claim there are no $r$ linearly independent elements $\lambda_1, \dots, \lambda_r \in \mathbb{F}_p^d$ such that for each $i \in [r]$ the bilinear form $(x,y) \in G^2 \to \Phi(x, \lambda_i) \cdot y$ has bias at least $\alpha$. Suppose the contrary. This implies that for each $i \in [r]$, there are at least $\alpha |G|$ elements $x$ for which $\Phi(x, \lambda_i) = 0$. Intersecting all these sets, we get a subspace $S \leq G$ of size $|S| \geq \alpha^r |G|$ such that for each $x \in S$ we have $\Phi(x, \lambda_i) = 0$ for all $i \in [r]$. Averaging over cosets of $S$, we may find $t$ such that $|B \cap (t +S)| \geq c_1|S|$.\\
\indent Whenever $a \in B \cap (t +S)$, the condition $\on{rank}(\Phi(a, \cdot) - \phi^1_a) \leq R$ implies that $\Phi(a, \lambda) = \phi^1_a(\lambda)$ holds for at least $p^{d-R}$ elements $\lambda \in \mathbb{F}_p^d$. Hence, $|\on{Im} \Phi(a, \cdot) \cap \on{Im} \phi^1_a| \geq p^{d-R}$, so recalling that $U_a = \on{Im} \phi^1_a$, we may find a subspace $T_a$ of dimension at most $R$ such that $U_a \subseteq T_a + \on{Im} \Phi(a, \cdot)$. In particular, for any two elements $a_1, a_2 \in B \cap (t +S)$ we have
\begin{align*}|U_{a_1} + U_{a_2}| \leq &|(T_{a_1} + \on{Im} \Phi(a_1, \cdot)) + (T_{a_2} + \on{Im} \Phi(a_2, \cdot))| \\
\leq &|T_{a_1}| |T_{a_2}| |\on{Im} \Phi(a_1, \cdot) + \on{Im} \Phi(a_2, \cdot)| \leq p^{2R} |\on{Im} \Phi(a_1, \cdot) + \on{Im} \Phi(a_2, \cdot)|.\end{align*}
However, since $a_1, a_2 \in B \cap (t +S)$, whenever $i \in [r]$, we have $\Phi(a_1, \lambda_i) = \Phi(a_1 - t, \lambda_i) + \Phi(t, \lambda_i) = \Phi(t, \lambda_i) = \Phi(a_2, \lambda_i)$. Thus $|\on{Im} \Phi(a_1, \cdot) + \on{Im} \Phi(a_2, \cdot)| \leq p^{2d - r}$ and we get
\[|U_{a_1} + U_{a_2}| \leq p^{2R + 2d -r} < K^{-1} p^{2d}\]
by the choice of $r$. However, this is in contradiction with~\eqref{approximatelinearsyssubQRcond3}, since $\eta < c_1^2 \alpha^{2r}$ by the choice of $\alpha$.\end{proof}

\section{Structure of approximate quadratic varieties}

In the final part of the proof, we use the map $\Phi$ to deduce the structure of the set $V$. The last step is articulated as the following proposition.

\begin{proposition}\label{finalsteprop}There is an absolute constant $D \geq 1$ such that the following holds. Let $c, \delta, \varepsilon > 0$ and $d \in \mathbb{N}$ be such that $c \leq \delta p^{d} \leq c^{-1}$. Suppose that $\varepsilon \leq (2^{-1}c \delta)^D$. Let $V \subseteq G$ be a set of density $\delta$ such that $\|\id_V - \delta\|_{\mathsf{U}^2} \leq \varepsilon$. Suppose that we are also given a subset $A \subseteq G$ of size $|A| \geq c |G|$, a subspace $W_a \leq G$ for each $a \in A$ and a bilinear map $\beta \colon G \times G \to \mathbb{F}_p^d$ such that 
\begin{itemize}
\item[\textbf{(i)}] for each $\lambda \in \mathbb{F}_p^d \setminus \{0\}$ we have $\on{bias} \lambda \cdot \beta \leq \varepsilon$,
\item[\textbf{(ii)}] for each $a \in A$ we have $|W_a \cap \{b \in G \colon \beta(a,b) = 0\}| \geq c p^{-d} |G|$,
\item[\textbf{(iii)}] for each $a \in A$ and $b \in W_a$ we have ${\conv}^{(8)} \id_{V \cap V - a}(b) \geq c \delta^{15}$. 
\end{itemize}
Then there exists a quadratic variety $Q \subseteq G$ of size $|Q| \leq (2c^{-1})^D \delta |G|$ such that $|Q \cap V| \geq \exp\Big(-\log^D (2c^{-1})\Big)\delta |G|$. Moreover, $Q$ is defined as $\{x \in G \colon \gamma(x,x) - \psi(x) = \mu\}$ for a symmetric bilinear map $\gamma \colon G \times G \to \mathbb{F}_p^{\tilde{d}}$, an affine map $\psi \colon G \to \mathbb{F}_p^{\tilde{d}}$ and $\mu \in \mathbb{F}_p^{\tilde{d}}$, where $\on{bias} \lambda \cdot \gamma \leq \varepsilon$ for all $\lambda \not= 0$, for some $d - O(\log_p (2c^{-1})) \leq \tilde{d} \leq d$.
\end{proposition}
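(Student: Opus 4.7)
My plan is to carry out three stages: symmetrize the bilinear map $\beta$; for each $a \in A$ identify a single ``correct'' value $\nu(a) \in \mathbb{F}_p^{\tilde d}$ capturing the distribution of $\beta(a,\cdot)$ on $V \cap (V-a)$; and finally upgrade $\nu$ (suitably shifted) to an exact affine map to define $\psi$. For Stage 1, I intend to replace $\beta$ by a symmetric bilinear form $\gamma \colon G \times G \to \mathbb{F}_p^{\tilde d}$ with $\tilde d \geq d - O(\log_p(2c^{-1}))$, preserving unbiasedness of each $\lambda \cdot \gamma$. The key observation is that the iterated convolution $\conv^{(8)} \id_{V \cap V - a}(b)$ is symmetric in $a$ and $b$, so hypotheses \textbf{(ii)} and \textbf{(iii)} attach symmetric roles to the pairs $(W_a, \{b : \beta(a,b) = 0\})$ and $(W_b, \{a : \beta(b,a) = 0\})$. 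A Green--Tao style symmetry calculation then shows that for each $\lambda$ outside a small subspace of $\mathbb{F}_p^d$ the antisymmetric form $(x,y) \mapsto \lambda \cdot (\beta(x,y) - \beta(y,x))$ has small rank; applying Theorem~\ref{invTheoremBiased} to the trilinear form $(x, y, \lambda) \mapsto \lambda \cdot (\beta(x,y) - \beta(y,x))$ lets me isolate the low-rank antisymmetric obstructions into a subspace of dimension $O(\log_p(2c^{-1}))$, and then (since $p \geq 3$) I take $\gamma$ to be the symmetrization of $\beta$ on the complementary coordinates.

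For Stage 2, hypothesis \textbf{(iii)} says that the Fourier transform of $\id_{V \cap V - a}$ is concentrated on $W_a^\perp$, which by hypothesis \textbf{(ii)} and Stage~1 agrees (up to a small error) with the row span of $2\gamma(a,\cdot)$. I plan to use the $\mathsf{U}^2$-uniformity of $V$ to upgrade this Fourier-support statement to phase coherence: for most $a \in A$ there exists $\nu(a) \in \mathbb{F}_p^{\tilde d}$ such that
\[
\exx_x \id_{V \cap V - a}(x)\, \omega^{-2 \lambda \cdot \gamma(a,x)} \;\approx\; \delta^2\, \omega^{-\lambda \cdot \nu(a)} \qquad \text{for all } \lambda \in \mathbb{F}_p^{\tilde d},
\]
which by Fourier inversion gives $|\{x \in V \cap (V-a) : 2\gamma(a,x) = \nu(a)\}| \geq (1 - o_c(1))\, \delta^2 |G|$. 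For Stage 3, I set $\tilde\nu(a) := \nu(a) + \gamma(a,a)$; this would be linear if $V$ were an exact variety $\{\gamma(x,x) - L(x) = \mu\}$ with $L$ linear. For additive triples $a, b, a+b \in A$, the $\mathsf{U}^2$-uniformity of $V$ guarantees $|V \cap (V-a) \cap (V-b) \cap (V-a-b)| \approx \delta^4 |G|$ for most $(a,b)$; picking $x$ in this fourfold intersection and applying Stage~2 to each of $a$, $b$, $a+b$ (together with bilinearity of $\gamma$) yields $\nu(a+b) = \nu(a) + \nu(b) - 2\gamma(a,b)$, i.e.\ $\tilde\nu(a+b) = \tilde\nu(a) + \tilde\nu(b)$. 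Since $A$ has density $c$, many additive quadruples in $A^4$ have their common sum also in $A$, so the triple identity applied twice gives $\tilde\nu(a_1) + \tilde\nu(a_2) = \tilde\nu(a_3) + \tilde\nu(a_4)$ for $\Omega(c^{O(1)} |G|^3)$ additive quadruples in $A$. Theorem~\ref{approxHommInv} then produces an affine $\psi \colon G \to \mathbb{F}_p^{\tilde d}$ with $\psi(a) = \tilde\nu(a)$ on $\exp\!\bigl(-\log^{O(1)}(2c^{-1})\bigr) |G|$ elements of $A$.

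To conclude, define $f(x) := \gamma(x,x) - \psi(x)$. For any ``good'' $a$ (with $\psi(a) = \tilde\nu(a)$) and $x \in V \cap (V-a)$ with $2\gamma(a,x) = \nu(a)$, a direct expansion gives $f(x+a) = f(x)$; averaging this translation invariance over the many good $a$'s forces $f|_V$ to concentrate on a single value $\mu \in \mathbb{F}_p^{\tilde d}$, and $Q := \{x \in G : f(x) = \mu\}$ satisfies the required lower bound on $|Q \cap V|$, while the size estimate $|Q| \leq (2c^{-1})^{O(1)} \delta |G|$ follows from the preserved high rank of $\gamma$. The main obstacle is Stage~1: preserving coordinate-wise unbiasedness of $\beta$ while forcing symmetry is delicate, as the naive $\tfrac12(\beta + \beta^\top)$ need not inherit low bias, so routing through Theorem~\ref{invTheoremBiased} is essential to absorb the low-rank antisymmetric part into a lower-dimensional codomain. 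A secondary subtlety is that the triple identity in Stage~3 depends on the fourfold intersection having size $\approx \delta^4 |G|$, a $\mathsf{U}^3$-scale quantity that fortunately happens to be controllable by the $\mathsf{U}^2$-uniformity of $V$ alone.
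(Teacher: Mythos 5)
Your Stage 1 is essentially the paper's route (Green--Tao symmetry argument followed by Theorem~\ref{invTheoremBiased} to absorb the antisymmetric obstruction into $O(\log_p(2c^{-1}))$ coordinates/codimensions), and apart from the inaccurate intermediate claim that the antisymmetric form has small rank for \emph{all} $\lambda$ outside a small subspace (the symmetry argument only gives this for a dense set of $\lambda$, which is why one passes to the trilinear form and its bias), it would go through. The genuine gaps are in Stages 2 and 3. In Stage 2 you assert that for most $a$ the set $V\cap(V-a)$ lies, up to a $(1-o_c(1))$ fraction, in a single level set $\{x \colon 2\gamma(a,x)=\nu(a)\}$, with only the hope that $\mathsf{U}^2$-uniformity ``upgrades Fourier support to phase coherence.'' No mechanism is given, and the statement is false in general: the hypotheses are compatible with $V$ being a union of dense subsets of a few parallel varieties $\{q=\mu_i\}$, in which case the mass of $V\cap(V-a)$ splits among several level sets, each receiving only a constant proportion. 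What is actually provable (and what the paper proves, via the lower bound $c_2\delta^{16}\leq \ex_a\sum_{\mu_1,\dots,\mu_7}\rho_a(\mu_1)\cdots\rho_a(\mu_7)M_a$) is the much weaker statement that the \emph{most popular} value $\mu(a)$ captures a $\mathrm{poly}(c)$-fraction of $V\cap(V-a)$ on average; the rest of the argument must be designed to survive with only this.

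This breaks Stage 3 as you describe it. Your derivation of $\nu(a+b)=\nu(a)+\nu(b)-2\gamma(a,b)$ rests on the claim that $|V\cap(V-a)\cap(V-b)\cap(V-a-b)|\approx \delta^4|G|$ for most $(a,b)$, ``controllable by the $\mathsf{U}^2$-uniformity alone.'' That is incorrect: $\mathsf{U}^2$ controls only the \emph{average} of this quantity over $(a,b)$ (and upper bounds via the threefold intersection); its distribution is a $\mathsf{U}^3$-scale feature, and in the structured case it is essentially supported on a $p^{-d}\approx\delta$ proportion of pairs, where it has size about $\delta^3|G|$, and is negligible otherwise --- this is exactly the point the paper makes before Claim~\ref{popDiffProp1a}. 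So for most pairs $(a,b)$ there is no $x$ to pick, and even where there is, the popular values $\mu(a),\mu(b),\mu(a+b)$ need not be simultaneously attained at a common $x$ once the concentration is only a constant fraction. The paper circumvents both problems by a different counting argument: it forms the bipartite graph on $V\times V$ with edges $(x,y)$ when $y-x\in P$ and $q(y)-q(x)=\tilde\mu(y-x)$, counts ordered $4$-cycles (density $\geq \mathrm{poly}(c)$ gives $\mathrm{poly}(c)|V|^4$ of them), and bounds the multiplicity of each resulting additive quadruple by $\approx 2\delta^4|G|$ using Claim~\ref{varietyTranslatesprops1} applied to \emph{generic} triples of shifts, which $\mathsf{U}^2$ does control; only then does Theorem~\ref{approxHommInv} apply. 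Your final step (``translation invariance forces $f|_V$ to concentrate on a single value'') similarly only yields, and only needs, a $\mathrm{poly}(c)$-fraction fiber, but as written Stages 2--3 need to be replaced by arguments of this popular-value/4-cycle type rather than near-exact identities.
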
 

\begin{proof}The proof consists of three steps. In the first one we show that $\beta$ is approximately symmetric, in the second we pass to an exactly symmetric bilinear map and in the final step we use the symmetry to find the desired quadratic variety.\\

\noindent\textbf{Approximate symmetry.} This step of the proof is based on the symmetry argument of Green and Tao~\cite{GreenTaoU3}. We may combine assumptions \textbf{(ii)} and \textbf{(iii)} to deduce that
\begin{align*}&\exx_{a,b \in G} \id(\beta(a,b) = 0) {\conv}^{(8)} \id_{V \cap V -a}(b) \geq \exx_{a,b \in G} \id_A(a) \id(\beta(a,b) = 0) {\conv}^{(8)} \id_{V \cap V -a}(b) \\
&\hspace{2cm} \geq \exx_{a,b \in G} \id_A(a) \id(\beta(a,b) = 0) \id_{W_a}(b) {\conv}^{(8)} \id_{V \cap V -a}(b)  \\
&\hspace{2cm} \geq \exx_{a\in G} \id_A(a) \exx_b \Big(\id(\beta(a,b) = 0) \id_{W_a}(b)\Big) {\conv}^{(8)} \id_{V \cap V -a}(b)  \geq c^{2} p^{-d} \delta^{15} \geq c^3 \delta^{16}.\end{align*}
Thus
\begin{align*}c^3 \delta^{16} \leq &\exx_{a,b \in G} \id(\beta(a,b) = 0) {\conv}^{(8)} \id_{V \cap V -a}(b)\\
= & p^{-d}\sum_{\lambda \in \mathbb{F}^d_p} \exx_{a,b \in G}\omega^{\lambda \cdot \beta(a,b)} \exx_{x_1, \dots, x_7} \id_V(x_1)\id_V(x_1 + a) \dots \id_V(x_7) \id_V(x_7 + a)\\
& \hspace{6cm}\id_V(x_1 - x_2 + \dots -x_6 + x_7 - b)\id_V(x_1 - x_2 + \dots -x_6 + x_7 - b + a).\end{align*}
Observe furthermore that ${\conv}^{(8)} \id_{V \cap V -a}(b) = {\conv}^{(8)} \id_{V \cap V + a}(b)$. Using this fact, we see
that
\begin{align*}\overline{\exx_{a,b \in G}\omega^{\lambda \cdot \beta(a,b)} {\conv}^{(8)} \id_{V \cap V -a}(b)} = &\exx_{a,b \in G}\omega^{- \lambda \cdot \beta(a,b)} {\conv}^{(8)} \id_{V \cap V -a}(b) = \exx_{a,b \in G}\omega^{\lambda \cdot \beta(-a,b)} {\conv}^{(8)} \id_{V \cap V -a}(b) \\
= &\exx_{a,b \in G}\omega^{\lambda \cdot \beta(a,b)} {\conv}^{(8)} \id_{V \cap V + a}(b) = \exx_{a,b \in G}\omega^{\lambda \cdot \beta(a,b)} {\conv}^{(8)} \id_{V \cap V -a}(b),\end{align*}
implying that $\ex_{a,b \in G}\omega^{\lambda \cdot \beta(a,b)} {\conv}^{(8)} \id_{V \cap V -a}(b)$ is a real number.\\

Since for each $\lambda \in \mathbb{F}^d_p$
\begin{align*}&\bigg|\exx_{a,b \in G}\omega^{\lambda \cdot \beta(a,b)} {\conv}^{(8)} \id_{V \cap V -a}(b)\bigg| = \bigg|\exx_{a,b}\omega^{\lambda \cdot \beta(a,b)} \exx_{x_1, \dots, x_7} \id_V(x_1)\id_V(x_1 + a) \dots \id_V(x_7) \id_V(x_7 + a)\\
&\hspace{7cm} \id_V(x_1 - x_2 + \dots -x_6 + x_7 - b)\id_V(x_1 - x_2 + \dots -x_6 + x_7 - b + a)\bigg|\\
\leq & \exx_{a,b} \exx_{x_1, \dots, x_7} \id_V(x_1)\id_V(x_1 + a) \dots \id_V(x_7) \id_V(x_7 + a)\\
&\hspace{7cm} \id_V(x_1 - x_2 + \dots -x_6 + x_7 - b)\id_V(x_1 - x_2 + \dots -x_6 + x_7 - b + a)\\
= & \exx_a |G|^{-8} |V \cap V - a|^8 \leq 2\delta^{16},\end{align*}
by Claim~\ref{varietyTranslatesprops1} and provided $\varepsilon \leq 2^{-100}\delta^{64}$, we obtain a set $\Lambda \subset \mathbb{F}_p^d$ of size $|\Lambda| \geq 2^{-2}c^3  p^d$ such that 
\begin{align*}\exx_{a,b}\omega^{\lambda \cdot \beta(a,b)} \exx_{x_1, \dots, x_7} &\id_V(x_1)\id_V(x_1 + a) \dots \id_V(x_7) \id_V(x_7 + a)\\
& \id_V(x_1 - x_2 + \dots -x_6 + x_7 - b)\id_V(x_1 - x_2 + \dots -x_6 + x_7 - b + a) \geq 2^{-1}c^3 \delta^{16}.\end{align*}
We now show that for such a $\lambda$ we have that $\lambda \cdot (\beta - \beta \circ (1\,\,2))$ is of low rank.\\

Introduce an auxiliary variable $t$, and make a change of variables $w_i = x_i - t$. Then
\begin{align*}\exx_{a,b}\omega^{\lambda \cdot \beta(a,b)} \exx_{t, w_{[7]}} &\id_V(w_1 + t)\id_V(w_1 + t + a) \dots \id_V(w_7 + t) \id_V(w_7 + t + a)\\
& \id_V(w_1 - w_2 + \dots -w_6 + w_7 + t - b)\id_V(w_1 - w_2 + \dots -w_6 + w_7 + t - b + a) \geq 2^{-1}c^3 \delta^{16}\end{align*}

Make a further change of variables $x = t, y = t + a, z = 2t + a - b$ instead of $t,a,b$. Then
\begin{align*}2^{-1}c^3 \delta^{16} \leq & \exx_{x,y}\omega^{\lambda \cdot \beta(y-x, x + y - z)} \exx_{z, w_{[7]}} \id_V(w_1 + x)\id_V(w_1 + y) \dots \id_V(w_7 + x) \id_V(w_7 + y)\\
&\hspace{3cm} \id_V(w_1 - w_2 + \dots -w_6 + w_7 + z-y)\id_V(w_1 - w_2 + \dots -w_6 + w_7 + z - x)\\
=&\exx_{x,y,z}\omega^{\lambda \cdot \beta(y,x) - \lambda \cdot \beta(x,y)} \omega^{\lambda \cdot \beta(x-y,z) - \lambda \cdot \beta(x,x) + \lambda \cdot \beta(y,y)} \exx_{w_{[7]}} \id_V(w_1 + x)\id_V(w_1 + y) \dots \id_V(w_7 + x) \id_V(w_7 + y)\\
&\hspace{3cm} \id_V(w_1 - w_2 + \dots -w_6 + w_7 + z-y)\id_V(w_1 - w_2 + \dots -w_6 + w_7 + z - x).\end{align*}

Write $\rho(x,y) = \lambda \cdot \beta(y,x) - \lambda \cdot \beta(x,y)$ and for $\pmb{w} = w_{[7]}$ let 
\[f_{\pmb{w}}(t) = \prod_{i \in [7]}\id_V(w_i + t)\]
and
\[g_{\pmb{w}}(t) = \id_V(w_1 - w_2 + \dots -w_6 + w_7 + t).\]
With the new notation we have
\[2^{-1}c^3 \delta^{16} \leq\exx_{\ssk{x,y,z\\\pmb{w}}} \omega^{\rho(x,y)} \Big(\omega^{\lambda \cdot \beta(x,z) - \lambda \cdot \beta(x,x)}f_{\pmb{w}}(x) g_{\pmb{w}}(z - x)\Big)\Big(\omega^{-\lambda \cdot \beta(y,z) + \lambda \cdot \beta(y,y)}f_{\pmb{w}}(y) g_{\pmb{w}}(z - y)\Big).\]

Note that $f_{\pmb{w}}(y) g_{\pmb{w}}(z - y)$ takes values 0 and 1 so $f_{\pmb{w}}(y) g_{\pmb{w}}(z - y) = f_{\pmb{w}}(y)^2 g_{\pmb{w}}(z - y)^2$. Applying Cauchy-Schwarz inequality we get 
\[2^{-2}c^6 \delta^{32} \leq \bigg(\exx_{\ssk{y,z\\\pmb{w}}} f_{\pmb{w}}(y) g_{\pmb{w}}(z - y)\Big|\exx_x \omega^{\rho(x,y)} \omega^{\lambda \cdot \beta(x,z) - \lambda \cdot \beta(x,x)}f_{\pmb{w}}(x) g_{\pmb{w}}(z - x)\Big|^2\bigg) \bigg(\exx_{\ssk{y,z\\\pmb{w}}} f_{\pmb{w}}(y) g_{\pmb{w}}(z - y)\bigg).\]
Observe that the second term in the product above equals $\delta^8$. Thus
\[2^{-2}c^6 \delta^{24} \leq \exx_{\ssk{y,z,x,x'\\\pmb{w}}} f_{\pmb{w}}(y) g_{\pmb{w}}(z - y)  \omega^{\rho(x -x',y)} \omega^{\lambda \cdot \beta(x - x',z) - \lambda \cdot \beta(x,x) + \lambda \cdot \beta(x',x')}f_{\pmb{w}}(x) g_{\pmb{w}}(z - x)f_{\pmb{w}}(x') g_{\pmb{w}}(z - x').\]
Another application of the Cauchy-Schwarz inequality gives
\begin{align*}2^{-4}c^{12} \delta^{48} \leq& \bigg(\exx_{\ssk{z,x,x'\\\pmb{w}}} f_{\pmb{w}}(x) g_{\pmb{w}}(z - x)f_{\pmb{w}}(x') g_{\pmb{w}}(z - x')\Big| \exx_y f_{\pmb{w}}(y) g_{\pmb{w}}(z - y)  \omega^{\rho(x -x',y)} \Big|^2\bigg) \\
&\hspace{2cm}\bigg(\exx_{\ssk{z,x,x'\\\pmb{w}}} f_{\pmb{w}}(x) g_{\pmb{w}}(z - x)f_{\pmb{w}}(x') g_{\pmb{w}}(z - x')\bigg).\end{align*}
Usual argument based on Lemma~\ref{u2controlBasic} allows to bound the second term in the product from above by $\delta^{16} + 16\varepsilon \leq 2\delta^{16}$. Thus
\begin{align*}&2^{-5}c^{12} \delta^{32} \leq \exx_{\ssk{z,x,x'\\y,y', \pmb{w}}} f_{\pmb{w}}(x) g_{\pmb{w}}(z - x)f_{\pmb{w}}(x') g_{\pmb{w}}(z - x') f_{\pmb{w}}(y) g_{\pmb{w}}(z - y) f_{\pmb{w}}(y') g_{\pmb{w}}(z - y')  \omega^{\rho(x -x',y - y')}\\
&\hspace{2cm}=\exx_{\ssk{z,x,x'\\y,y', \pmb{w}}} \Big(\prod_{i \in [7]}\id_V(w_i + x)\Big)\Big(\prod_{i \in [7]}\id_V(w_i + x')\Big)   \Big(\prod_{i \in [7]}\id_V(w_i + y)\Big) \Big(\prod_{i \in [7]}\id_V(w_i + y')\Big) \\
&\hspace{4cm}\id_V(w_1 - w_2 + \dots -w_6 + w_7 + z-x) \id_V(w_1 - w_2 + \dots -w_6 + w_7 + z - x') \\
&\hspace{4cm}\id_V(w_1 - w_2 + \dots -w_6 + w_7 + z - y) \id_V(w_1 - w_2 + \dots -w_6 + w_7 + z - y')  \omega^{\rho(x -x',y - y')}.\end{align*}

Again, usual argument based on Lemma~\ref{u2controlBasic} allows\footnote{Note that Lemma~\ref{u2controlBasic} is stated for function of a single variable, while we have additional term $\omega^{\rho(x -x',y - y')}$ in the expression above. However, the proof of Lemma~\ref{u2controlBasic} only uses the fact that some variables do not appear in the functions that we want to remove (and that those functions take values in $\mathbb{D}$) and variables $z,w_1, \dots, w_7$ do not appear in $\omega^{\rho(x -x',y - y')}$.} to replace all terms involving $\id_V$ by $\delta$, namely we use variable $z$ and one of $x, x', y, y'$ for each of the last 4 such terms, and then the two variables appearing in each of the remaining terms. Thus
\[2^{-6}c^{12} \delta^{32} \leq 2^{-5}c^{12} \delta^{32} - 32 \varepsilon \leq \exx_{\ssk{x,x'\\y,y'}} \delta^{32} \omega^{\rho(x -x',y - y')},\]
from which we see that $\on{bias} \rho \geq 2^{-6}c^{12}$, provided $\varepsilon \leq 2^{-12}c^{12}\delta^{32}$.\\

\noindent \textbf{Obtaining exact symmetry.} Let $R \colon \mathbb{F}_p^d \times G \times G \to \mathbb{F}_p$ be the trilinear form defined by $R(\lambda, x,y) = \lambda \cdot (\beta(x,y) - \beta(y,x))$. From the work above,
\[\on{bias} R \geq \frac{|\Lambda|}{p^d} \cdot 2^{-6}c^{12} \geq 2^{-12}c^{15}.\]
By the Theorem~\ref{invTheoremBiased} we obtain an integer $s \leq O(\log_p (2c^{-1}))$, linear forms $\theta_1, \dots, \theta''_s$ and bilinear forms $\rho_1, \dots, \rho''_s$ such that 
\[R(\lambda, x, y) = \sum_{i \in [s]} \theta_i(\lambda) \rho_i(x,y) + \sum_{i \in [s]} \theta'_i(x) \rho'_i(\lambda,y) + \sum_{i \in [s]} \theta''_i(y) \rho''_i(\lambda,x).\]
Define subspace $U = \{u \in G \colon \theta'(u) = 0, \theta''(u) = 0\}$, which has codimension at most $2s$ in $G$. Let $e_1, \dots, e_{\tilde{d}}$ be a basis of the subspace $\{\lambda \in \mathbb{F}_p^d \colon \theta(\lambda) = 0\}$, where $\tilde{d} \geq d - s$. Let $\tilde{\beta} \colon G \times G \to \mathbb{F}_p^{\tilde{d}}$ be a bilinear map given by $\tilde{\beta}_i(x,y) = e_i \cdot \beta(x,y)$. Then we have $\tilde{\beta}(u,v) = \tilde{\beta}(v,u)$ for all $u,v \in U$. We now want to replace $\beta$ by a map which is symmetric on the whole space $G$.\\

Going back to assumptions of the proposition, recall that we have a set $A$ of size $|A| \geq c |G|$, a subspace $W_a$ for each $a \in A$ such that 
\[|W_a \cap  \{b \in G \colon \beta(a,b) = 0\}| \geq c p^{-d}|G|\]
and for each $b \in W_a$ we have
\[{\conv}^{(8)} \id_{V \cap V -a}(b) \geq c \delta^{15}.\]

Since $W_a$ is a subspace for each $a \in A$, we conclude that (observe that $b$ ranges over $U$ in the expectation below)
\begin{align*}&\exx_{a \in G, b \in U} \id(\tilde{\beta}(a,b) = 0) {\conv}^{(8)} \id_{V \cap V -a}(b) \geq \exx_{a \in G} \id_A(a) \exx_{b \in G} \id(\beta(a,b) = 0) \id_U(b) {\conv}^{(8)} \id_{V \cap V -a}(b) \\
&\hspace{2cm}\geq c \delta^{15} \exx_{a \in G} \id_A(a) \exx_{b \in G} \id(\beta(a,b) = 0) \id_U(b) \id_{W_a}(b)\\
&\hspace{2cm}\geq c \delta^{15} \exx_{a \in G} \id_A(a)  \frac{|\{b \in G \colon \beta(a,b) = 0\} \cap W_a \cap U|}{|G|}\\
&\hspace{2cm}\geq c p^{-2s} \delta^{15} \exx_{a \in G} \id_A(a)  \frac{|\{b \in G \colon \beta(a,b) = 0\} \cap W_a|}{|G|}\\
&\hspace{2cm}\geq c^3 p^{-2s-d} \delta^{15} \geq c^4 p^{-2s} \delta^{16}.\end{align*}

Let $T \leq G$ be an arbitrary subspace with the property that $G = U \oplus T$ and let $c_1 = c^4 p^{-2s}$. Thus,
\[c_1 \delta^{16} \leq \exx_{t \in T, a, b \in U} \id(\tilde{\beta}(a + t,b) = 0) {\conv}^{(8)} \id_{V \cap V - a - t}(b).\]
By averaging, there exists $t_0 \in T$ such that
\begin{align*} c_1 \delta^{16} \leq &\exx_{a, b \in U} \id(\tilde{\beta}(a + t_0,b) = 0) {\conv}^{(8)} \id_{V \cap V - a - t_0}(b). \\
 = &\exx_{\lambda \in \mathbb{F}_p^{\tilde{d}}} \exx_{\ssk{a, b \in U\\x_1, \dots, x_7 \in G}} \omega^{\lambda \cdot \tilde{\beta}(a + t_0,b)} \id_V(x_1) \id_V(x_1 + a + t_0) \dots \id_V(x_7) \id_V(x_7 + a + t_0)\\
&\hspace{5cm}\id_V(x_1 - x_2 + \dots + x_7 - b) \id_V(x_1 -x_2 + \dots + x_7 - b + a + t_0).\end{align*}

By Cauchy-Schwarz inequality we get
\begin{align*}&c_1^2\delta^{32} \leq \bigg(\exx_{\ssk{b \in U, \lambda \in \mathbb{F}_p^{\tilde{d}}\\x_1, \dots, x_7 \in G}} \id_V(x_1) \dots \id_V(x_7) \id_V(x_1 - x_2 + \dots + x_7 - b)\bigg)\\
&\hspace{2cm}\bigg(\exx_{\ssk{b \in U, \lambda \in \mathbb{F}_p^{\tilde{d}}\\x_1, \dots, x_7 \in G}} \id_V(x_1) \dots \id_V(x_7) \id_V(x_1 - x_2 + \dots + x_7 - b) \\
&\hspace{4cm}\Big|\exx_{a \in U}\omega^{\lambda \cdot \tilde{\beta}(a + t_0, b)} \id_V(x_1 + a + t_0) \dots \id_V(x_7 + a + t_0) \id_V(x_1 -x_2 + \dots + x_7 - b + a + t_0)\Big|^2\bigg).\end{align*}

The first long sum above equals $\delta^8$ and hence
\begin{align*}&c_1^2\delta^{24} \leq \exx_{\ssk{a,b,u \in U, \lambda \in \mathbb{F}_p^{\tilde{d}}\\x_1, \dots, x_7 \in G}} \id_V(x_1) \dots \id_V(x_7) \id_V(x_1 - x_2 + \dots + x_7 - b)\omega^{\lambda \cdot \tilde{\beta}(u, b)}\\
&\hspace{2cm}\id_V(x_1 + a + t_0) \dots \id_V(x_7 + a + t_0) \id_V(x_1 -x_2 + \dots + x_7 - b + a + t_0)\\
&\hspace{2cm}\id_V(x_1 + a + t_0 + u) \dots \id_V(x_7 + a + t_0 + u) \id_V(x_1 -x_2 + \dots + x_7 - b + a + t_0 + u)\\
&\hspace{1cm}=\exx_{\ssk{a,b,u \in U, \lambda \in \mathbb{F}_p^{\tilde{d}}\\x_1, \dots, x_7 \in G}} \id_V(x_1 - a) \dots \id_V(x_7 - a) \id_V(x_1 - x_2 + \dots + x_7 - a - b)\omega^{\lambda \cdot \tilde{\beta}(u, b)}\\
&\hspace{2cm}\id_V(x_1 + t_0) \dots \id_V(x_7  + t_0) \id_V(x_1 -x_2 + \dots + x_7 - b  + t_0)\\
&\hspace{2cm}\id_V(x_1 + t_0 + u) \dots \id_V(x_7 + t_0 + u) \id_V(x_1 -x_2 + \dots + x_7 - b + t_0 + u),\end{align*}
where we made a change of variables by shifting $x_i$ by $a$.\\
\indent Applying Lemma~\ref{u2controlBasic},\footnote{As remarked before, the lemma still applies with the additional bilinear phase term $\omega^{\lambda \cdot \tilde{\beta}(u, b)}$.} we may replace the first 8 $\id_V$ terms by $\delta^8$ using variables $b,a$ for $\id_V(x_1 - x_2 + \dots + x_7 - a + b)$ and then $x_i,a$ for each $\id_V(x_i - a)$. Hence, provided $\varepsilon \leq 2^{-4}c_1^2\delta^{24}$,

\begin{align*}&2^{-1}c_1^2\delta^{24}\leq c_1^2\delta^{24} - 8\varepsilon \leq \delta^8 \exx_{\ssk{b,u \in U, \lambda \in \mathbb{F}_p^{\tilde{d}}\\x_1, \dots, x_7 \in G}}\omega^{\lambda \cdot \tilde{\beta}(u, b)} \id_V(x_1 + t_0) \dots \id_V(x_7 + t_0) \id_V(x_1 -x_2 + \dots + x_7 - b + t_0)\\
&\hspace{2cm}\id_V(x_1 + t_0 + u) \dots \id_V(x_7 + t_0 + u) \id_V(x_1 -x_2 + \dots + x_7 - b + t_0 + u)\\
&\hspace{1cm}=\delta^8 \exx_{\ssk{b,u \in U\\x_1, \dots, x_7 \in G}} \id(\tilde{\beta}(u, b) = 0) \id_{V-t_0}(x_1) \dots \id_{V-t_0}(x_7) \id_{V-t_0}(x_1 -x_2 + \dots + x_7 - b)\\
&\hspace{2cm}\id_{V-t_0}(x_1 + u) \dots \id_{V-t_0}(x_7 + u) \id_{V-t_0}(x_1 -x_2 + \dots + x_7 - b+ u).\end{align*}

Let us misuse the notation and write $V$ instead of $V - t_0$, which is fine as the quadratic variety that we obtain for $V-t_0$ can then be shifted by $t_0$ to get the desired conclusion. Recall that the subspace $U$ has codimension at most $2s$ in $G$. Extend $\tilde{\beta}|_{U \times U}$ arbitrarily to a symmetric bilinear map $\gamma \colon G \times G\to \mathbb{F}_p^{\tilde{d}}$. Then $\on{rank} \lambda \cdot \gamma \geq \on{rank} \lambda \cdot \tilde{\beta}$ for all $\lambda \not= 0$ and hence 
\[\on{bias} \lambda \cdot \gamma = p^{-\on{rank} \lambda \cdot \gamma } \leq p^{-\on{rank} \lambda \cdot \tilde{\beta} } = \on{bias} \lambda \cdot \tilde{\beta} \leq \varepsilon.\] 
Furthermore,
\begin{align*}2^{-1}p^{-4s}c_1^2\delta^{16} \leq &\exx_{\ssk{a,b \in G\\x_1, \dots, x_7 \in G}} \id(\gamma(a,b) = 0) \id_V(x_1) \dots \id_V(x_7) \id_V(x_1 -x_2 + \dots + x_7 - b)\\
&\hspace{2cm}\id_V(x_1 + a) \dots \id_V(x_7 + a) \id_V(x_1 -x_2 + \dots + x_7 - b + a).\end{align*}

\noindent\textbf{Finding quadratic variety.} Let $q(x) = \frac{1}{2}\gamma(x,x)$. Observe that for each $a, t \in G$ we have 
\[q(t+a) - q(t) = \frac{1}{2}\gamma(t+a, t+a) - \frac{1}{2}\gamma(t,t) = \gamma(a,t) + \frac{1}{2}\gamma(a,a) = \gamma(a,t) + q(a).\]

For $a \in G$ and $\mu \in \mathbb{F}_p^{\tilde{d}}$, let us define $\rho_a(\mu) = \ex_x \id_{V \cap V - a}(x) \id(\gamma(a,x) = \mu)$. Let $M_a = \max_{\mu \in \mathbb{F}_p^{\tilde{d}}} \rho_a(\mu)$. Writing $c_2 = 2^{-1}p^{-4s}c_1^2$, we have
\begin{align}c_2\delta^{16} \leq &\exx_{\ssk{a,b \in G\\x_1, \dots, x_7 \in G}} \id(\gamma(a,b) = 0) \id_{V \cap V-a} (x_1) \dots \id_{V \cap V-a}(x_7) \id_{V \cap V-a} (x_1 -x_2 + \dots + x_7 - b)\nonumber\\
= &\exx_{\ssk{a \in G\\x_1, \dots, x_8\in G}} \id(\gamma(a,x_1 - x_2 + \dots - x_8) = 0) \id_{V \cap V-a} (x_1) \dots \id_{V \cap V-a}(x_7) \id_{V \cap V-a} (x_8)\nonumber\\
= & \sum_{\mu_1, \dots, \mu_8 \in \mathbb{F}_p^{\tilde{d}}} \exx_{\ssk{a \in G\\x_1, \dots, x_8\in G}} \id(\mu_1 - \mu_2 + \dots -\mu_8 = 0) \prod_{i \in [8]} \Big( \id_{V \cap V - a}(x_i) \id(\gamma(a, x_i) = \mu_i)\Big)\nonumber\\
= & \sum_{\mu_1, \dots, \mu_7 \in \mathbb{F}_p^{\tilde{d}}} \exx_a \rho_a(\mu_1) \dots \rho_a(\mu_7) \rho_a(\mu_1 - \mu_2 + \dots + \mu_7)\nonumber\\
\leq &\exx_a \sum_{\mu_1, \dots, \mu_7 \in \mathbb{F}_p^{\tilde{d}}} \rho_a(\mu_1) \dots \rho_a(\mu_7) M_a .\label{rhomuineq}\end{align}

Note that 
\[\sum_{\mu \in \mathbb{F}_p^{\tilde{d}}} \rho_a(\mu) = \sum_{\mu \in \mathbb{F}_p^{\tilde{d}}} \exx_x \id_{V \cap V - a}(x) \id(\gamma(a,x) = \mu) = \exx_x \id_{V \cap V - a}(x) = \frac{|V \cap V -a|}{|G|}.\]

By Claim~\ref{varietyTranslatesprops1}, we see for all but at most $8\sqrt{\varepsilon}|G|$ of $a \in G$ we have that the sum above is at most $2\delta^2$. Let $P \subseteq G$ be the set of all $a \in G$ such that $M_a \geq 2^{-10}c_2 \delta^2$ and $|V \cap V -a| \leq 2\delta^2 |G|$. Inequality~\eqref{rhomuineq} gives 
\begin{align*}c_2\delta^{16} \leq 8\sqrt{\varepsilon} + 2^7 \delta^{14} \Big(2^{-10}c_2 \delta^2 + \exx_a \id_P(a) M_a\Big),\end{align*}
from which we obtain $\ex_a \id_P(a) M_a \geq 2^{-10}c_2 \delta^2$, provided $\varepsilon \leq 2^{-8}c_2^2 \delta^{32}$.\\

For each $a \in P$, let $\mu(a) \in \mathbb{F}_p^{\tilde{d}}$ be an argument for which $M_a$ is attained. Write $\tilde{\mu}(t) = \mu(t) + q(t)$. Then
\begin{align*}2^{-10}c_2 \delta^2 \leq &\exx_a \id_P(a) M_a = \exx_{a, x} \id_P(a) \id_{V \cap V - a}(x) \id(\gamma(a,x) = \mu(a)) \\
= &\exx_{a, x} \id_P(a) \id_{V}(x) \id_{V}(x + a) \id(\gamma(a,x) = \mu(a)) \\
= &\exx_{a, x} \id_P(a)  \id_{V}(x) \id_{V}(x + a) \id(q(a + x) - q(x) = \mu(a) + q(a)) \\
= &\exx_{x, y} \id_P(y-x) \id_{V}(x) \id_{V}(y) \id(q(y) - q(x) = \tilde{\mu}(y-x)).\end{align*}

Let $\Gamma$ be the bipartite graph whose vertex classes $\mathcal{C}_1$ and $\mathcal{C}_2$ are two copies of $V$, with $(x,y) \in \mathcal{C}_1 \times \mathcal{C}_2$ being an edge if $y-x \in P$ and $q(y) - q(x) = \tilde{\mu}(y-x)$. The inequality above shows that the density of $\Gamma$ is at least $2^{-10}c_2$. This means that $\Gamma$ has at least $2^{-40}c_2^4 |V|^4$ ordered 4-cycles. Each ordered 4-cycle $(x_1, y_1, x_2, y_2)$ gives rise to an additive quadruple in $P$ respected by $\tilde{\mu}$ as $(y_1 - x_1) + (y_2 - x_2) = (y_1 - x_2) + (y_2 - x_1)$ and
\begin{align*}&\tilde{\mu}(y_1-x_1) + \tilde{\mu}(y_2-x_2) - \tilde{\mu}(y_1-x_2) - \tilde{\mu}(y_2-x_1)\\
&\hspace{2cm} = \Big(q(y_1) - q(x_1)\Big) + \Big(q(y_2) - q(x_2)\Big) - \Big(q(y_1) - q(x_2)\Big)- \Big(q(y_2) - q(x_1)\Big) = 0.\end{align*}

By Claim~\ref{varietyTranslatesprops1} and provided $\varepsilon \leq \delta^{16}$, all but at most $16\sqrt{\varepsilon} |G|^3$ additive quadruple $(a, a', b, b')$ in $P$ can arise from at most $2\delta^4 |G|$ ordered 4-cycles, since this corresponds to finding $x$ such that $x, x + a, x + a - b, x + a - b + a' \in V$, i.e. $x \in V \cap (V - a) \cap (V- a + b) \cap (V - a + b - a')$. Hence, as long as $\varepsilon \leq 2^{-100} c_2^{8}\delta^{8}$, $\tilde{\mu}$ respects at least least $2^{-42}c_2^4 |G|^3$ additive quadruples in $P$.\\

We may apply Theorem~\ref{approxHommInv} to find an affine map $\psi \colon G \to \mathbb{F}_p^{\tilde{d}}$ and a parameter $c_3 \geq \exp\Big(-\log^{O(1)}(2c_2^{-1})\Big)$ such that $\psi(a) = \tilde{\mu}(a)$ holds for at least $c_3 |G|$ elements $a \in P$. Let $P'$ be the set of such elements. Hence
 \begin{align*}2^{-10} c_2 c_3\delta^2 \leq &\exx_{a} \id_{P'}(a) \rho_a(\mu(a)) = \exx_{a} \id_{P'}(a) \Big(\exx_x \id_{V \cap V - a}(x) \id(\gamma(a,x) = \mu(a))\Big)\\
 = &\exx_{a} \id_{P'}(a) \Big(\exx_x \id_{V \cap V - a}(x) \id(\gamma(a,x) = \tilde{\mu}(a) - q(a))\Big)\\
= &\exx_{a} \id_{P'}(a) \Big(\exx_x \id_{V \cap V - a}(x) \id(\gamma(a,x) = \psi(a) - q(a))\Big)\\
\leq &\exx_{a,x} \id_{V \cap V - a}(x) \id(\gamma(a,x) = \psi(a) - q(a))\\
 = &\exx_{a,x} \id_V(x) \id_V(x + a) \id(q(x + a) - q(x) - q(a) = \psi(a) - q(a)) \\
= &\exx_{a,x} \id_V(x) \id_V(x + a) \id\Big(q(x + a) -\psi(x + a)  = q(x) - \psi(x) + \psi(0)\Big)\\
= & \exx_{x, y} \id_V(x) \id_V(y) \id\Big(q(y) -\psi(y)  = q(x) - \psi(x) + \psi(0)\Big)\\
= & \sum_{\mu \in \mathbb{F}_p^{\tilde{d}}} \Big(\exx_x \id_V(x)  \id(q(x) - \psi(x) = \mu)\Big) \Big(\exx_y \id_V(y)  \id(q(y) - \psi(y) = \mu + \psi(0))\Big)\\
\leq & \max_{\nu \in \mathbb{F}_p^{\tilde{d}}} \Big(\exx_x \id_V(x)  \id(q(x) - \psi(x) = \nu)\Big) \cdot \sum_{\mu \in \mathbb{F}_p^{\tilde{d}}} \Big(\exx_y \id_V(y)  \id(q(y) - \psi(y) = \mu)\Big)\\
\leq & \delta  \max_{\nu \in \mathbb{F}_p^{\tilde{d}}} \Big(\exx_x \id_V(x)  \id(q(x) - \psi(x) = \nu)\Big),
\end{align*}

so there exists $\mu$ such that
\[2^{-10} c_2 c_3\delta \leq \exx_x \id_V(x)  \id(q(x) - \psi(x) = \mu) = \frac{|V \cap \{x \in G \colon q(x) - \psi(x) = \mu\}|}{|G|}.\]
Thus, the quadratic variety $Q = \{x \in G \colon q(x) - \psi(x) = \mu\}$ satisfies $|V \cap Q| \geq 2^{-10} c_2 c_3 |V|$. Finally, notice that
\begin{align*}\Big||G|^{-1}|Q| - p^{-\tilde{d}}\Big| = &\Big|\exx_{x \in G} \id(q(x) - \psi(x) = \mu) - p^{-\tilde{d}}\Big| = \Big|\exx_{x \in G} \Big(p^{-\tilde{d}}\sum_{\lambda \in \mathbb{F}_p^{\tilde{d}}} \omega^{\lambda \cdot (q(x) - \psi(x) - \mu)}\Big) - p^{-\tilde{d}}\Big|\\
\leq & p^{-\tilde{d}}\sum_{\lambda \in \mathbb{F}_p^{\tilde{d}}\setminus \{0\}} \Big| \exx_{x \in G} \omega^{\lambda \cdot (q(x) - \psi(x) - \mu)}\Big|.\end{align*}

It remains to give a bound on $| \ex_{x \in G} \omega^{\lambda \cdot (q(x) - \psi(x) - \mu)}|$ for non-zero $\lambda$. Note that
\begin{align*}&\Big| \exx_{x} \omega^{\lambda \cdot (q(x) - \psi(x) - \mu)}\Big|^4 = \Big|\exx_{x, y} \omega^{\lambda \cdot (q(x) - \psi(x) - \mu) - \lambda \cdot (q(y) - \psi(y) - \mu)}\Big|^2\\
&\hspace{2cm} = \Big|\exx_{x, a} \omega^{\lambda \cdot (q(x + a) - q(x) - \psi(a) + \psi(0))}\Big|^2 = \Big|\exx_a\Big(\exx_{x} \omega^{\lambda \cdot (q(x + a) - q(x) - \psi(a) + \psi(0))}\Big)\Big|^2\\
&\hspace{2cm} \leq \exx_a\Big|\exx_{x} \omega^{\lambda \cdot (q(x + a) - q(x) - \psi(a) + \psi(0))}\Big|^2\hspace{2cm}(\text{by Cauchy-Schwarz})\\
&\hspace{2cm} = \exx_{x,a,b} \omega^{\lambda \cdot (q(x + a + b) -  q(x + a) - q(x + b) + q(x))} = \exx_{a,b} \omega^{\lambda \cdot \gamma(a,b)} = \on{bias} \Big(\lambda \cdot \gamma\Big) \leq \varepsilon.\end{align*}

Thus $|Q| \leq \Big(p^{-\tilde{d}} + \sqrt[4]{\varepsilon}\Big) |G| \leq \Big(p^{s-d} + \sqrt[4]{\varepsilon}\Big) |G|$, as desired.\end{proof}

Finally, we may prove Theorem~\ref{approxQuadVarThm}.

\begin{proof}[Proof of Theorem~\ref{approxQuadVarThm}] Let $D$ be the maximum of the three absolute constants appearing in Theorems~\ref{mainstep1thm} and~\ref{approximatelinearsyssub} and Propostion~\ref{finalsteprop}. Let $V \subseteq G$ be the given $(c_0, \delta, \varepsilon)$-approximate quadratic variety. Apply Theorem~\ref{mainstep1thm} to obtain a positive quantity $c_1$, a set $A \subseteq G$ and a collection of subspaces $W_a \leq G$ indexed by $a \in A$ which satisfy properties \textbf{(i)}-\textbf{(vi)} from the conclusion of that theorem. Let $d = \lceil  \log_p(c_1^{-1} \delta^{-1}) \rceil$. By property~\textbf{(iv)}, the codimension of each $W_a$ is at most $d$, so we may misuse the notation and write $W_a$ for an arbitrary subspace inside it of codimension exactly $d$. We need to replace $c_1$ by $c_2 = c_1^{11}$ so that the conditions \textbf{(i)}-\textbf{(vi)} still hold.\\
\indent Apply Theorem~\ref{approximatelinearsyssub} with parameters $c = c_2, K = c_2^{-1}$ and $\eta = D \varepsilon \delta^{-288}$. We thus obtain parameters $c' \geq \exp\Big(-\exp\Big(\log^D (2c_2^{-1})\Big)\Big)$ and $r \leq \exp\Big(\log^D (2c_2^{-1})\Big)$, set $A' \subseteq A$ and a map $\Phi \colon G \times \mathbb{F}_p^d \to G$, affine in the first variable and linear in the second, such that $|A'| \geq c'  |G|$ and for each $a \in A'$ we have $|\on{Im} \Phi(a, \cdot) \cap W_a^\perp| \geq c' p^d$. Moreover, there exists a subspace $\Lambda \leq \mathbb{F}_p^d$ of dimension $r$ such that whenever $\lambda \notin \Lambda$ we have
\[\exx_{x,y} \omega\Big(\Phi(x, \lambda) \cdot y\Big) \leq \varepsilon',\]
where $\varepsilon' = \Big(\eta {c'}^{-2}\Big)^{1/2r}$.\\
\indent Let $M$ be a subspace of $\mathbb{F}_p^d$ of codimension $r$ such that $\mathbb{F}_p^d = \Lambda + M$ and let $e_1, \dots, e_{d-r}$ be a basis of $M$. Let us define bilinear map $\beta \colon G \times G \to \mathbb{F}_p^{d-r}$ by $\beta_i(x,y) = \Phi(x, e_i) \cdot y$ for $i \in [d-r]$. Given any $\lambda \in \mathbb{F}_p^{d-r} \setminus \{0\}$, we have
\[\on{bias} \lambda \cdot \beta = \exx_{x,y} \omega^{\sum_{i \in [d-r]} \lambda_i \beta_i(x,y)} = \exx_{x,y} \omega^{\sum_{i \in [d-r]} \lambda_i \Phi(x, e_i) \cdot y} = \exx_{x,y} \omega^{ \Phi\big(x, \sum_{i \in [d-r]} \lambda_i e_i\big) \cdot y} \leq \varepsilon',\]
since $\sum_{i \in [d-r]} \lambda_i e_i \notin \Lambda$.\\
\indent Next, we have $|\on{Im} \Phi(a, \cdot) \cap W_a^\perp| \geq c' p^d$ for all $a \in A'$. Thus,
\[{c'}^{-1} p^{-d} |G| \geq |\on{Im} \Phi(a, \cdot)^\perp + W_a| = \frac{|\on{Im} \Phi(a, \cdot)^\perp||W_a|}{|\on{Im} \Phi(a, \cdot)^\perp \cap W_a|}\]
so
\[|\on{Im} \Phi(a, \cdot)^\perp \cap W_a| \geq \frac{c' p^d |\on{Im} \Phi(a, \cdot)^\perp||W_a|}{|G|} \geq c' |\on{Im} \Phi(a, \cdot)^\perp| \geq c' p^{-d} |G|.\]
However, when $b \in \on{Im} \Phi(a, \cdot)^\perp$ then we have $0 = \Phi(a, e_i) \cdot b = \beta_i(a,b)$ for all $i \in [d-r]$, and hence $\beta(a,b) = 0$. Thus
\[|\{b \in G \colon \beta(a,b) = 0\} \cap W_a| \geq c' p^{-d} |G|.\]
All conditions of Propostion~\ref{finalsteprop} are satisfied, as long as $\varepsilon' \leq (2^{-1} c' \delta)^D$, hence there exists a quadratic variety $Q \subseteq G$ of size $|Q| \leq (2{c'}^{-1})^D \delta |G|$ such that $|Q \cap V| \geq \exp\Big(-\log^D (2{c'}^{-1})\Big)\delta |G|$. 
\end{proof}

\section{Approximate polynomials and approximate varieties}

In this appendix we show the connection between approximate quadratic polynomials and approximate quadratic varieties. As in the rest of the paper, $G$ and $H$ are finite-dimensional vector spaces over $\mathbb{F}_p$. We need some preliminary lemmas. The first one estimates the probability that a given tuple of vectors belongs to a random coset of fixed codimension. 

\begin{lemma}\label{probrcestimate}Let $v_1, \dots, v_r \in G$ and let $n = \dim G$. Let $U \leq G$ be a random subspace of codimension $d$, among such subspaces. Then, if the maximum size of independent set inside $v_1, v_2, \dots, v_r$ has size $m$, we have
\begin{equation}\label{probRC}\mathbb{P}(v_1, \dots, v_r \in U) = \frac{(p^{n-d} - 1)(p^{n-d} - p) \cdots (p^{n-d} - p^{m-1})}{(p^{n} - 1)(p^{n} - p) \cdots (p^{n} - p^{m-1})}.\end{equation}
In particular, provided $m + d < n - 2$,
\begin{equation}\label{probRCpart}p^{-md} - 4p^{m + d} p^{-n} \leq \mathbb{P}(v_1, \dots, v_r \in C) \leq p^{-md}.\end{equation}
\end{lemma}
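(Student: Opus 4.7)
The plan is to reduce to the case of linearly independent vectors and then compute the probability by a chain-rule argument. First I would observe that the event $\{v_1,\dots,v_r \in U\}$ is equivalent to $\{\on{span}(v_1,\dots,v_r) \subseteq U\}$, so picking a maximal linearly independent subcollection $w_1,\dots,w_m$ of the $v_i$'s, we have $\mathbb{P}(v_1,\dots,v_r \in U) = \mathbb{P}(w_1,\dots,w_m \in U)$. It therefore suffices to prove the formula for independent $w_1,\dots,w_m$.

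Next I would compute $\mathbb{P}(w_1,\dots,w_m\in U)$ by the chain rule. Conditioning on $w_1,\dots,w_{i-1}\in U$, the conditional law of $U$ is uniform over codimension-$d$ subspaces of $G$ containing $W_{i-1} := \on{span}(w_1,\dots,w_{i-1})$, by symmetry of the uniform measure. These are in bijection (via $U \mapsto U/W_{i-1}$) with codimension-$d$ subspaces of the quotient $G/W_{i-1}$, which has dimension $n-i+1$, and the event $w_i\in U$ translates to the event that the nonzero image $\bar w_i$ lies in a uniformly random codimension-$d$ subspace of $\mathbb{F}_p^{n-i+1}$. A straightforward double count shows that for any fixed nonzero $v$ in $\mathbb{F}_p^k$ a uniformly random codimension-$d$ subspace contains $v$ with probability $(p^{k-d}-1)/(p^k-1)$, by comparing the $p^{k-d}-1$ nonzero elements of such a subspace to the $p^k-1$ nonzero elements in total and invoking the symmetry of acting by $\on{GL}_k(\mathbb{F}_p)$. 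Hence the $i$-th conditional probability equals $(p^{n-i+1-d}-1)/(p^{n-i+1}-1)$, and multiplying over $i=1,\dots,m$ yields $\prod_{j=0}^{m-1}(p^{n-j-d}-1)/(p^{n-j}-1)$. Clearing a factor of $p^j$ from numerator and denominator of each factor rewrites this as the expression in \eqref{probRC}.

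For the approximation in \eqref{probRCpart}, I would rewrite each factor as $p^{-d}\cdot(1-p^{j+d-n})/(1-p^{j-n})$, so that
\[\mathbb{P}(v_1,\dots,v_r\in U) = p^{-md}\prod_{j=0}^{m-1}\frac{1-p^{j+d-n}}{1-p^{j-n}}.\]
For the upper bound, since $d\geq 0$ we have $p^{j+d-n}\geq p^{j-n}$, so each ratio is at most $1$ and the whole product is at most $p^{-md}$. For the lower bound, since $1 - p^{j-n}\leq 1$ each ratio is at least $1 - p^{j+d-n}$; then $\prod_j(1-p^{j+d-n})\geq 1-\sum_j p^{j+d-n}\geq 1-2p^{m+d-n}$, by summing the geometric series and using $p\geq 2$. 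Combining gives $\mathbb{P} \geq p^{-md}(1 - 2p^{m+d-n}) \geq p^{-md} - 2p^{m+d-n}$, which is at least $p^{-md} - 4p^{m+d-n}$ as claimed.

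There is no real obstacle: the whole argument is elementary subspace counting, with the only judgement call being whether to derive \eqref{probRC} via the sequential conditioning sketched above or equivalently via the ratio of Gaussian binomial coefficients $\binom{n-m}{d}_p/\binom{n}{d}_p$; the former has the advantage of landing directly on the form displayed in the lemma statement.
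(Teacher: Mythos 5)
Your proof is correct, and it arrives at the exact formula \eqref{probRC} by a different mechanism than the paper. The paper realizes the random subspace as $U=\mathcal{L}(U_0)$ for a uniformly random automorphism $\mathcal{L}\in\on{GL}(G)$ and a fixed codimension-$d$ subspace $U_0$, notes that $(\mathcal{L}^{-1}(u_1),\dots,\mathcal{L}^{-1}(u_m))$ is uniformly distributed over independent $m$-tuples, and gets \eqref{probRC} in one stroke as the ratio of the number of ordered independent $m$-tuples in $U_0$ to the number in $G$; you instead run a chain rule, pass to the quotient $G/W_{i-1}$ at each step, and evaluate each conditional probability by the one-vector double count $(p^{k-d}-1)/(p^{k}-1)$. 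Both arguments are elementary and share the same initial reduction to a maximal independent subfamily; the paper's version buys a single counting identity with no conditioning bookkeeping, while yours avoids the automorphism model and makes the factorized form of \eqref{probRC} appear directly (your conditional-uniformity and bijection steps are all sound, and in the regime $m+d<n-2$ every conditioning event has positive probability, so the chain rule is legitimate). For the estimate \eqref{probRCpart} you use the inequality $\prod_j(1-x_j)\geq 1-\sum_j x_j$ plus a geometric series, whereas the paper uses $1-2x\leq \exp(-2x)\leq 1-x$; your route even yields the slightly stronger constant $2$ in place of $4$, which of course still implies the stated bound.
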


\begin{proof}We may view random model in this lemma as follows. Let $\mathcal{L} \in \on{GL}(G)$ be a linear automorphism chosen uniformly at random. Fix arbitrary subspace $U_0$ of codimension $d$ and set $U = \mathcal{L}(U_0)$. Let $u_1, \dots, u_m$ be a maximal independent subset of $v_1, v_2, \dots, v_r$. Thus
\[\mathbb{P}\Big(v_1, \dots, v_r \in U\Big) = \mathbb{P}\Big(u_1, u_2, \dots, u_m \in U\Big).\]
However, recalling that $U = \mathcal{L}(U_0)$, we have that $u_1, u_2, \dots, u_m \in U$ if and only $\mathcal{L}^{-1}(u_1), \dots, \mathcal{L}^{-1}(u_m) \in U_0$. Since $\mathcal{L}$ is chosen uniformly at random and $u_1, \dots, u_m$ are independent, the $m$-tuple $\Big(\mathcal{L}^{-1}(u_1), \dots, \mathcal{L}^{-1}(u_m)\Big)$ is uniformly distributed over all $m$-tuples of independent vectors. Thus, $\mathbb{P}(u_1, u_2, \dots, u_m \in U) = N_U / N_G$, where $N_U$ is the number of independent ordered $m$-tuples in $U$, and $N_G$ is the corresponding quantity in $G$. Equality~\eqref{probRC} follows from a direct counting argument.\\
\indent To conclude~\eqref{probRCpart}, note first that $\frac{p^{n-d} - k}{p^n - k} \leq p^{-d}$, giving the upper bound. For the lower bound, we use elementary inequalities $1 - 2x \leq \exp(-2x) \leq 1 - x$, which holds for $x \in [0,1/2]$. We have
\[\frac{p^{n-d} - p^k}{p^n - p^k} = p^{-d} \frac{p^{n-d} - p^k}{p^{n-d} - p^{k - d}} = p^{-d} \Big(1 - \frac{p^k - p^{k-d}}{p^{n-d} - p^{k-d}}\Big) \geq p^{-d}\Big(1 - p^{k + d-n} \Big) \geq p^{-d} \exp( -2p^{k + d - n}),\]
as long as $k + d + 2 < n$. Thus
\begin{align*}\mathbb{P}\Big(v_1, \dots, v_r \in C\Big) \geq \prod_{k = 0}^{m-1} \Big(p^{-d}\exp( -2p^{k+d - n})\Big) = p^{-md}  \exp\Big( -2\sum_{k = 0}^{m-1} p^{k + d - n}\Big) \geq &p^{-md} \exp(-4p^{m + d-n}) \\
\geq &p^{-md} - 4p^{m + d -n},\end{align*}
as long as $m + d + 2 < n$.\end{proof}

The second lemma is a version of Lemma~\ref{u2controlBasic} in the context of maps between vector spaces, rather than $\mathbb{C}$-valued functions on a vector space.

\begin{lemma}\label{dependencetoadditivequads}Suppose that $A \subset G$ and that $F \colon A \to H$ is a map. Let $r,s \in \mathbb{N}$ and let $\lambda_{i\,j} \in \mathbb{F}_p$ for $i\in [r], j \in [s]$ and $\mu_i \in \mathbb{F}_p$ for $i \in [r]$ be scalars. Assume that
\begin{equation}\label{muLCcondition}\sum_{i \in [r]} \mu_i F\Big(\sum_{j \in [s]} \lambda_{i\,j} u_j\Big) = 0\end{equation}
holds for at least $\alpha|G|^s$ of $s$-tuples of $(u_1, \dots, u_s) \in G^s$. Suppose that there are distinct indices $a$ and $b$ in $[s]$ and an index $i_0 \in [r]$ such that $\mu_{i_0} \not= 0$ and $\lambda_{i\,a}\lambda_{i\,b} \not=0$ holds if and only if $i = i_0$. Then $F$ respects at least $\alpha^4 |G|^3$ additive quadruples in $A$.  
\end{lemma}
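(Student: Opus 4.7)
The plan is to imitate the proof of Lemma~\ref{u2controlBasic}: apply Cauchy--Schwarz twice in the variables $u_a$ and $u_b$, introducing differences $h$ and $k$ respectively, so that the resulting parallelogram-type identity isolates the single $i_0$-term and produces an additive quadruple respected by $F$.

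Concretely, set $T(u_{[s]}) = \id\bigl(\sum_{i \in [r]} \mu_i F(v_i) = 0\bigr)$, where $v_i = \sum_j \lambda_{ij} u_j$. By hypothesis, $\ex_{u_{[s]}} T \geq \alpha$. Apply Cauchy--Schwarz in $u_a$ to obtain
\[\alpha^2 \leq \exx_{u_{[s]}, h} T(u_{[s]})\, T(u_{[s]} + h e_a),\]
where $u_{[s]} + h e_a$ denotes replacing $u_a$ by $u_a + h$. Then apply Cauchy--Schwarz in $u_b$ to arrive at
\[\alpha^4 \leq \exx_{u_{[s]}, h, k} T(u_{[s]})\, T(u_{[s]} + h e_a)\, T(u_{[s]} + k e_b)\, T(u_{[s]} + h e_a + k e_b).\]
Whenever all four indicators equal $1$, taking the alternating sum of the four equations yields
\[\sum_{i \in [r]} \mu_i \Bigl(F(v_i) - F(v_i + \lambda_{ia} h) - F(v_i + \lambda_{ib} k) + F(v_i + \lambda_{ia} h + \lambda_{ib} k)\Bigr) = 0.\]
For each $i \neq i_0$, at least one of $\lambda_{ia}, \lambda_{ib}$ vanishes, which collapses the corresponding summand to $0$. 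What remains is
\[\mu_{i_0}\Bigl(F(v_{i_0}) + F(v_{i_0} + \lambda_{i_0 a} h + \lambda_{i_0 b} k) - F(v_{i_0} + \lambda_{i_0 a} h) - F(v_{i_0} + \lambda_{i_0 b} k)\Bigr) = 0,\]
and since $\mu_{i_0} \neq 0$, this is exactly an additive quadruple in $A$ respected by $F$.

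It remains to count distinct additive quadruples. Given an ordered quadruple $(x_1, x_2, x_3, x_4) = (v_{i_0}, v_{i_0} + \lambda_{i_0 a} h + \lambda_{i_0 b} k, v_{i_0} + \lambda_{i_0 a} h, v_{i_0} + \lambda_{i_0 b} k)$, the values $h = \lambda_{i_0 a}^{-1}(x_3 - x_1)$ and $k = \lambda_{i_0 b}^{-1}(x_4 - x_1)$ are determined, while the tuple $u_{[s]}$ must satisfy the linear constraint $v_{i_0}(u_{[s]}) = x_1$, which (since $\lambda_{i_0 a} \neq 0$ makes $v_{i_0}$ a nonzero linear functional) admits exactly $|G|^{s-1}$ solutions. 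Hence each additive quadruple arises from at most $|G|^{s-1}$ of the triples $(u_{[s]}, h, k)$ counted by the $\alpha^4 |G|^{s+2}$ lower bound, yielding at least $\alpha^4 |G|^3$ additive quadruples respected by $F$, as claimed. There is no substantive obstacle here; the only care needed is in tracking the alternating cancellation that makes the hypothesis on $\lambda_{ia}\lambda_{ib}$ do its work.
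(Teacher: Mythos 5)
Your proof is correct and takes essentially the same route as the paper's: two applications of Cauchy--Schwarz in the distinguished variables $u_a$ and $u_b$, cancellation of every term except the $i_0$ one thanks to the hypothesis on $\lambda_{i\,a}\lambda_{i\,b}$, and a counting step showing each respected quadruple arises from at most $|G|^{s-1}$ of the $\alpha^4|G|^{s+2}$ tuples. The paper parametrizes the differenced variables as pairs $(v_a,v'_a)$, $(w_b,w'_b)$ rather than shifts $h,k$, but this is only a cosmetic difference.
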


As in the case of Lemma~\ref{u2controlBasic}, the lemma still applies if the situation is simpler and there is a variable $u_i$ that appears in a single copy $F$ in the expression above.

\begin{proof}Let $I$ be the set of indices $i \in [r]$ such that $\lambda_{i\,a} \not = 0$. For given $u_{[s] \setminus \{a\}}$, let $N(u_{[s] \setminus \{a\}})$ be the number of $u_a$ such that~\eqref{muLCcondition} holds. By Cauchy-Schwarz inequality, we have
\[\sum_{u_{[s] \setminus \{a\}}} N(u_{[s] \setminus \{a\}})^2 \geq |G|^{1-s} \Big(\sum_{u_{[s] \setminus \{a\}}} N(u_{[s] \setminus \{a\}})\Big)^2 \geq \alpha^2 |G|^{s+1}.\]
In particular, we have at least $\alpha^2 |G|^{s+1}$ $(s+1)$-tuples $(u_{[s] \setminus \{a\}}, v_a, v'_a)$ such that 
\begin{align}0 = &\bigg(\sum_{i \in [r]} \mu_i F\Big(\sum_{j \in [s] \setminus \{a\}} \lambda_{i\,j} u_j + \lambda_{i\,a} v_a\Big)\bigg) - \bigg(\sum_{i \in [r]} \mu_i F\Big(\sum_{j \in [s] \setminus \{a\}} \lambda_{i\,j} u_j + \lambda_{i\,a} v'_a\Big)\bigg)\nonumber\\
= & \sum_{i \in I} \mu_i F\Big(\sum_{j \in [s] \setminus \{a\}} \lambda_{i\,j} u_j + \lambda_{i\,a} v_a\Big) - \mu_i F\Big(\sum_{j \in [s] \setminus \{a\}} \lambda_{i\,j} u_j + \lambda_{i\,a} v'_a\Big).\label{2ndLCcondition}\end{align}

Let $M(u_{[s] \setminus \{a, b\}}, v_a, v'_a)$ be the number of $u_b$ such that~\eqref{2ndLCcondition} holds for $(s+1)$-tuple $(u_{[s] \setminus \{a\}}, v_a, v'_a)$. By Cauchy-Schwarz inequality we get
\[\sum_{u_{[s] \setminus \{a, b\}}, v_a, v'_a} M(u_{[s] \setminus \{a, b\}}, v_a, v'_a)^2 \geq |G|^{-s} \Big(\sum_{u_{[s] \setminus \{a, b\}}, v_a, v'_a} M(u_{[s] \setminus \{a, b\}}, v_a, v'_a)\Big)^2 \geq \alpha^4 |G|^{s+2}.\]

Hence, we get at least $|G|^{s + 2}$ of $(s+2)$-tuples $(u_{[s] \setminus \{a, b\}}, v_a, v'_a, w_b, w'_b)$ such that 
\begin{align*}0 = &\bigg(\sum_{i \in I} \mu_i F\Big(\sum_{j \in [s] \setminus \{a, b\}} \lambda_{i\,j} u_j + \lambda_{i\,a} v_a + \lambda_{i\,b} w_b\Big) - \mu_i F\Big(\sum_{j \in [s] \setminus \{a, b\}} \lambda_{i\,j} u_j + \lambda_{i\,a} v'_a + \lambda_{i\,b} w_b\Big)\bigg)\\
&\hspace{2cm}-\bigg(\sum_{i \in I} \mu_i F\Big(\sum_{j \in [s] \setminus \{a, b\}} \lambda_{i\,j} u_j + \lambda_{i\,a} v_a + \lambda_{i\,b} w'_b\Big) - \mu_i F\Big(\sum_{j \in [s] \setminus \{a, b\}} \lambda_{i\,j} u_j + \lambda_{i\,a} v'_a + \lambda_{i\,b} w'_b\Big)\bigg)\\
= & \mu_{i_0} \bigg(F\Big(\sum_{j \in [s] \setminus \{a, b\}} \lambda_{i_0\,j} u_j + \lambda_{i_0\,a} v_a + \lambda_{i_0\,b} w_b\Big) - F\Big(\sum_{j \in [s] \setminus \{a, b\}} \lambda_{i_0\,j} u_j + \lambda_{i_0\,a} v'_a + \lambda_{i_0\,b} w_b\Big) \\
&\hspace{2cm} -F\Big(\sum_{j \in [s] \setminus \{a, b\}} \lambda_{i_0\,j} u_j + \lambda_{i_0\,a} v_a + \lambda_{i_0\,b} w'_b\Big) + F\Big(\sum_{j \in [s] \setminus \{a, b\}} \lambda_{i_0\,j} u_j + \lambda_{i_0\,a} v'_a + \lambda_{i_0\,b} w'_b\Big)\bigg).\end{align*}

Thus, averaging over $u_{[s] \setminus \{a,b\}}$ we see that $F$ respects at least $\alpha^4 |G|^3$ additive quadruples.\end{proof}

Finally, we make use of Green's $\mathsf{U}^2$-arithmetic regularity lemma.

\begin{lemma}[Theorem 2.1~\cite{GreenReg}] \label{u2arithmreg}Given a set $A \subseteq G$ and a parameter $\varepsilon > 0$, there exists a decomposition $G = K \oplus T$, where $\dim T \leq W(O(\varepsilon^{-3}))$, where $W(t)$ is a tower of twos of height $t$, such that for all but at most $\varepsilon |T|$ of $t \in T$, we have
\begin{equation}\label{uniformonsubspace}\max_{k \notin K^\perp} \Big|\exx_{x \in K} \id_A(x + t) \omega^{-k \cdot x}\Big| \leq \varepsilon.\end{equation}
\end{lemma}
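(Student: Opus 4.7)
The plan is to prove this by a standard energy-increment argument in Fourier space, iteratively refining the subspace $K$. First I would fix notation: for any candidate subspace $K \leq G$ with complement $T$ (so $G = K \oplus T$), set $\alpha_t = \exx_{x \in K}\id_A(x+t)$ and let $\hat{f}_t(k) = \exx_{x \in K}\id_A(x+t)\omega^{-k\cdot x}$ denote the Fourier coefficients of the restriction of $\id_A$ to the coset $t+K$, indexed by $k \in G^*$ modulo $K^\perp$. Since $\id_A$ is $\{0,1\}$-valued, Parseval on each coset gives $\alpha_t = \alpha_t^2 + \sum_{k \notin K^\perp}|\hat{f}_t(k)|^2$. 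The natural energy is $\mathcal{E}(K) = \exx_{t\in T}\alpha_t^2$, which lies in $[0,1]$.

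Next I would iterate. Starting from $K_0 = G$, at each step I check whether $K_i$ satisfies the conclusion of the lemma. If not, there is a set $\mathcal{B}_i \subseteq T_i$ with $|\mathcal{B}_i| > \varepsilon|T_i|$ of bad cosets, each equipped with a witness character $k_t \notin K_i^\perp$ such that $|\hat{f}_t(k_t)| > \varepsilon$. I would set $S_i = \{k_t : t \in \mathcal{B}_i\}$ (viewed modulo $K_i^\perp$) and let $K_{i+1} = K_i \cap \bigcap_{k \in S_i}\ker k$, so that $K_{i+1}^\perp = K_i^\perp + \langle S_i\rangle$ contains every witness. Decomposing Parseval along the refinement $K_{i+1} \leq K_i$ gives
\[\mathcal{E}(K_{i+1}) - \mathcal{E}(K_i) = \exx_{t \in T_i}\sum_{k \in K_{i+1}^\perp \setminus K_i^\perp}|\hat{f}_t(k)|^2 \geq \frac{|\mathcal{B}_i|}{|T_i|}\cdot \varepsilon^2 > \varepsilon^3,\]
so since $\mathcal{E} \leq 1$, the process terminates in fewer than $\varepsilon^{-3}$ steps.

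The hard part — and the source of the tower-type bound — is controlling the growth of the codimension. At step $i$ the witness set $S_i$ may in principle contain up to $|T_i| = p^{\dim T_i}$ characters, so $\dim T_{i+1}$ can be exponential in $\dim T_i$. Iterating this cascading behaviour over $O(\varepsilon^{-3})$ rounds produces precisely the tower-of-twos bound $\dim T \leq W(O(\varepsilon^{-3}))$ in the statement. I do not see a way to avoid this blow-up without weakening the ``pointwise'' conclusion (uniformity on \emph{most} individual cosets) to a merely $L^2$-averaged bound, which would not suffice for the intended applications.
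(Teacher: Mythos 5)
Your argument is correct: it is the standard $L^2$ energy-increment proof of Green's arithmetic regularity lemma, which is exactly how the cited source (Green, Theorem 2.1 of~\cite{GreenReg}) establishes the result; the present paper does not reprove it but simply invokes that theorem. The only point to keep straight is that in your increment identity the sum over $k \in K_{i+1}^\perp \setminus K_i^\perp$ should be understood modulo $K_i^\perp$ (i.e.\ over the nontrivial characters of $K_i/K_{i+1}$), as your earlier indexing convention indicates, and then the bound $\mathcal{E}(K_{i+1}) - \mathcal{E}(K_i) > \varepsilon^3$ and the tower-type codimension bound follow as you say.
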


If condition~\eqref{uniformonsubspace} holds, we say that $t$ is \emph{$\varepsilon$-regular}.\\

We are now ready to prove Proposition~\ref{approximatePolynomialsAndApproximateVars}.

\begin{proof}[Proof of Proposition~\ref{approximatePolynomialsAndApproximateVars}] By assumptions, we have a set $\mathcal{G}$ of quadruples $(x,a,b,c) \in G^4$ of size at least $c_0 |G|^4$ such that $\Delta_a \Delta_b\Delta_c F(x) = 0$ and all 8 arguments of $F$ belong to $A$.\\
\indent In the first step of the proof, we need to pass to a sufficiently regular subset of $A$. Let $\eta > 0$ be a parameter to be specified later. Apply Lemma~\ref{u2arithmreg} to find a decomposition $G = K \oplus T$, with $\dim T \leq W(O(\eta^{-3}))$, such that $t$ is $\eta$-regular for all but at most $\eta |T|$ of elements $t \in T$. Let $T_{\text{reg}}$ be the set of $\eta$-regular elements $t \in T$. Then, using decomposition $G = K \oplus T$, we have 
\begin{align*}c_0 \leq &\exx_{x,a,b,c \in G} \id_A(x)\id_A(x + a) \dots \id_A(x + a + b +c) \id(\Delta_{a,b,c} F(x) = 0) \\
&\hspace{1cm}= \exx_{x',a',b',c' \in K} \exx_{x'', a'', b'', c'' \in T} \id_A(x' + x'') \id_A(x' +x''+ a' + a'') \dots \\
&\hspace{4cm}\id_A(x' + x'' + a' + a'' + b' + b'' + c' + c'') \id(\Delta_{a' + a'',b' + b'',c' + c''} F(x' + x'') = 0).\end{align*}
On the other hand, the fact that $|T \setminus T_{\text{reg}}| \leq \eta |T|$ implies
\[\exx_{\ssk{x',a',b',c' \in K\\x'', a'', b'', c'' \in T}} \id_{T \setminus T_{\text{reg}}}(x'') \id_A(x' + x'') \id_A(x' +x''+ a' + a'') \dots \id_A(x' + x'' + a' + a'' + b' + b'' + c' + c'') \leq \eta,\]
as well as similar inequalities with any element among $x'' + a'', x'' + b'', \dots, x'' + a'' + b'' + c''$ instead of $x''$ as the argument of $\id_{T \setminus T_{\text{reg}}}$. It follows that, if we choose $\eta \leq c_0 /16$, 
\begin{align*}c_0/2 \leq &\exx_{\ssk{x',a',b',c' \in K\\x'', a'', b'', c'' \in T}} \id_{T_{\text{reg}}}(x'') \dots \id_{T_{\text{reg}}}(x'' + a'' + b'' + c'') \id_A(x' + x'') \id_A(x' +x''+ a' + a'') \dots \\
&\hspace{4cm}\id_A(x' + x'' + a' + a'' + b' + b'' + c' + c'') \id(\Delta_{a' + a'',b' + b'',c' + c''} F(x' + x'') = 0).\end{align*}

Thus, by averaging, we may find $x'', a'', b'', c'' \in T$ such that all 8 elements $x'',$ $x'' + a'',$ $x'' + b'', \dots,$ $x'' + a'' + b'' + c'' + T$ are $\eta$-regular and
\begin{align*}c_0/2 \leq &\exx_{x',a',b',c' \in K} \id_A(x' + x'') \id_A(x' +x''+ a' + a'') \dots \\
&\hspace{4cm}\id_A(x' + x'' + a' + a'' + b' + b'' + c' + c'') \id(\Delta_{a' + a'',b' + b'',c' + c''} F(x' + x'') = 0).\end{align*}

A Cauchy-Schwarz inequlity based argument like that in Lemma~\ref{dependencetoadditivequads} allows us to pass to a subset of a single coset $A \cap (x'' + K)$ on which $F$ respects at least $\frac{c_0^8}{2^8}|K|^4$ additive cubes, with $x'' \in T_{\text{reg}}$. Thus, writing $c_1 = \frac{c_0^8}{2^8}$ and misusing the notation and writing $A$ instead of  $(A - x'') \cap  K$, and $G$ instead of $K$, we may assume that $|\widehat{\id_A}(r)| \leq \eta$ for all $r \in G \setminus \{0\}$. Let $c_2$ be the density of $A$, hence $c_2 \geq c_1$. We also have that $F$ respects at most $\varepsilon' |G|^3$ additive quadruples in $A$, where $\varepsilon' = \varepsilon p^{3 \dim T}$. In particular, the number of additive quadruples in $A$ is then
\[\sum_{x,a,b \in G} \id_A(x) \id_A(x + a) \id_A(x + b) \id_A(x + a + b) = |G|^3 \sum_{r \in G} |\widehat{\id_A}(r)|^4 \leq (c_2^4 + \eta^2) |G|^3.\]

\indent Let $U$ be a random subspace of $H$ of codimension $d$. Let $V = \{x \in A \colon F(x) \in U\}$. Let $X$ be the number of additive quadruples in $V$, and $Y$ the number of quadruples $(x,a,b,c) \in \mathcal{G}$ such that 8 points $F(x), F(x + a), \dots, F(x + a + b + c)$ belong to $U$.\\
\indent We use the second moment method to prove the theorem. We use Lemma~\ref{probrcestimate} frequently below, without additional comments. Firstly, note that
\[\exx |V| = \sum_{x \in A} \mathbb{P}(F(x) \in U) \geq (p^{-d} - 4p^{1 + d -n}) |A|,\]
and
\begin{align*}\exx |V|^2 = \sum_{x, y \in A} \mathbb{P}(F(x), F(y) \in U) \leq &p^{-2d} \sum_{x, y \in A} \id(\on{rank}\{F(x) , F(y)\} = 2) + \sum_{x, y \in A} \id(\on{rank}\{F(x), F(y)\} \leq 1).\end{align*}

Note that the number of $x \in A$ such that $F(x) = 0$ is at most $\sqrt[4]{\varepsilon'} |G|$, as otherwise we get more than $\varepsilon' |G|^3$ additive quadruples in $A$ that are respected by $F$. Furthermore, for each $\lambda, \mu \not =0 $ we get at most $\sqrt[4]{\varepsilon'} |G|^2$ pairs $(x,y) \in A^2$ such that $ \lambda F(x) +  \mu F(y) = 0$, by Lemma~\ref{dependencetoadditivequads} (after a change of variables $y = x + a$), again as $F$ respects few additive quadruples in $A$. Thus
\[\exx |V|^2 \leq p^{-2d} |A|^2 + (p^2 + 2) \sqrt[4]{\varepsilon'} |G|^2.\]
By Chebyshev's inequality we have
\begin{equation}\label{veqn2ndmoment}\mathbb{P}\Big(\Big||V| - p^{-d}|A|\Big| \leq \sqrt{\eta}|G|\Big) \geq 1 - \Big(2c_2^2 \eta^{-1} p^{2} \sqrt[4]{\varepsilon'} + 4\eta^{-1}p^{1 + d -n}\Big).\end{equation}
Thus, this probability is at least $0.999$ if $\varepsilon'$ is sufficiently small in terms of other parameters.

\hspace{\baselineskip}

Next, consider random variable $X$. We have
\begin{align*}\exx X = &\sum_{x, a,b \in G} \id_A(x) \id_A(x + a) \id_A(x + b) \id_A(x + a + b) \mathbb{P}(F(x), F(x + a), F(x + b), F(x + a + b) \in U)\\
\geq &\Big(p^{-4d} - p^{4 + d-n}\Big) \sum_{x, a,b \in G} \id_A(x) \id_A(x + a) \id_A(x + b) \id_A(x + a + b)\\
\geq &\Big(p^{-4d} - p^{4 + d-n}\Big) c_2^4 |G|^3.\end{align*}

Next, we have
\begin{align*}\exx X^2 = &\sum_{\ssk{x, a,b \in G\\x', a',b' \in G}} \id_A(x) \id_A(x + a) \id_A(x + b) \id_A(x + a + b) \mathbb{P}(F(x), F(x + a), F(x + b), F(x + a + b) \in U)\\
&\hspace{1cm}\id_A(x') \id_A(x' + a') \id_A(x' + b') \id_A(x' + a' + b') \mathbb{P}(F(x'), F(x' + a'), F(x' + b'), F(x' + a' + b') \in U)\\
\leq &p^{-8d} \Big(\sum_{x, a,b \in G} \id_A(x) \id_A(x + a) \id_A(x + b) \id_A(x + a + b)\Big)^2 \\
&\hspace{1cm}+ \sum_{\ssk{x, a,b \in G\\x', a',b' \in G}} \id_A(x) \dots \id_A(x' + a' + b') \id\Big(\on{rank}\{F(x), \dots, F(x' + a' + b')\} \leq 7\Big)\\
\leq &p^{-8d}(c_2^4 + \eta^2)^2 |G|^6 + p^{8} \sqrt[4]{\varepsilon'} |G|^6,\end{align*}
again using Lemma~\ref{dependencetoadditivequads} in the last line. By Chebyshev's inequality we have
\begin{equation}\label{xeqn2ndmoment}\mathbb{P}\Big(\Big|X - p^{-4d}c_2^4 |G|^3\Big| \leq \sqrt{\eta} |G|^3\Big) \geq 1 - \Big(3\eta + p^8 \sqrt[4]{\varepsilon'} + p^{4 +d -n}\Big),\end{equation}
which is at least $0.999$, again provided $\varepsilon'$ is sufficiently small in terms of other parameters.

\hspace{\baselineskip}

Finally, we consider random variable $Y$. Recall that if $(x,a,b,c) \in \mathcal{G}$, then $\Delta_{a,b,c} F(x) = 0$ (and all 8 points that are arguments of $F$ belong to $A$). Then 8 values $F(x), \dots, F(x + a + b + c)$ belong to $U$ if and only if any 7 of them belong to $U$. Hence
\begin{align*}\exx Y = \sum_{(x,a,b,c) \in \mathcal{G}} \mathbb{P}(F(x + a), F(x + b), \dots, F(x + a + b + c) \in U) \geq \Big(p^{-7d} - 4 p^{7 + d - n}\Big) |\mathcal{G}|.\end{align*} 

On the other hand,
\begin{align*}\exx Y^2 = &\sum_{(x,a,b,c), (x',a',b',c') \in \mathcal{G}} \mathbb{P}(F(x + a), \dots, F(x + a + b + c), F(x' + a'), \dots, F(x' + a' + b' + c') \in U)\\ 
\leq & p^{-14d} |\mathcal{G}|^2 + \sum_{x,a,b,c,x',a',b',c' \in G} \id\Big(\on{rank}\{F(x + a), \dots, F(x' + a' + b' + c')\} \leq 13\Big).\end{align*}

Finally, for each non-trivial linear combination
\[\lambda_1 F(x+a) + \lambda_2 F(x + b) + \dots + \lambda_{14} F(x' + a' + b' + c')\]
we claim that there are at most $\sqrt[4]{\varepsilon'}|G|^8$ choices of $(x,x', \dots, c, c') \in G^8$ such that it vanishes. Make a change of variables $x \mapsto x - a -b-c$ and $x' \mapsto x' - a' - b' -c'$. Then the claim follows from Lemma~\ref{dependencetoadditivequads} if any of $\lambda_1, \lambda_2, \lambda_3, \lambda_8, \lambda_9, \lambda_{10}$ is non-zero, using adequate two variables among $a,b,c,a',b',c'$. For the remaining 8 coefficients, we use $x$ or $x'$ and possibly one more variable. Thus,
 \[\exx Y^2 \leq p^{-14d} |\mathcal{G}|^2 + p^{14} \sqrt[4]{\varepsilon'} |G|^8.\]

Using Chebyshev's inequality one more time, we obtain

\begin{equation}\label{yeqn2ndmoment}\mathbb{P}\Big(\Big|Y - p^{-7d} |\mathcal{G}|\Big| \leq \sqrt{\eta} |G|^4\Big) \geq 1 - \Big(p^{14} \sqrt[4]{\varepsilon'} + 4p^{7 + d - n}\Big),\end{equation}
which is at least 0.999, provided $\varepsilon'$ is sufficiently small.\\ 
\indent To obtain desired bounds, we set $\eta = 10^{-24}\xi^8c_0^8 p^{-2d}$ and $\varepsilon = W(K \eta^{-3})$ for a sufficiently large absolute constant $K$. To sum everything up, combining~\eqref{veqn2ndmoment},~\eqref{xeqn2ndmoment} and~\eqref{yeqn2ndmoment} we see that with probability at least 0.99, we have
\[\Big||V| - p^{-d}|A|\Big| \leq \sqrt{\eta}|G|,\,\,\Big|X - p^{-4d}c_2^4 |G|^3\Big| \leq \sqrt{\eta} |G|^3,\text{ and }\Big|Y - p^{-7d} |\mathcal{G}|\Big| \leq \sqrt{\eta} |G|^4.\]
For such a choice of $V$, we see that $V$ has density $\delta \in [2^{-9}c_0^8 p^{-d}, 2p^{-d}]$, $\|\id_V - \delta\|_{\mathsf{U}^2} \leq 100\eta^{1/8} \leq \xi$ and the number of additive cubes in $V$ is at least $p^{-7d} |\mathcal{G}| - \sqrt{\eta}|G|^4 \geq p^{-7d}c_0^82^{-9}|G|^4$. On the other hand, since $\|\id_V - \delta\|_{\mathsf{U}^2} \leq 100\eta^{1/8}$, Lemma~\ref{u2controlBasic} implies that the number of 7-tuples of the form $(x + a, x+ b, \dots, x+ a+ b+c)$ in $V$ is at most $2\delta^7|G|^4$. Hence, $V$ is a $(c', \delta, \xi)$-approximate quadratic variety for some $c' \in [c_0^82^{-9}, 2]$, as claimed.\end{proof}

\thebibliography{99}

\bibitem{AKZ} K. Adiprasito, D. Kazhdan and T. Ziegler, \emph{On the Schmidt and analytic ranks for trilinear forms}, arXiv preprint (2021), \verb+arXiv:2102.03659+.

\bibitem{nilspaceDefn} O. Antol\'in Camarena and B. Szegedy, \emph{Nilspaces, nilmanifolds and their morphisms}, arXiv preprint (2010), \verb+http://arxiv.org/abs/1009.3825+.

\bibitem{bsgthm} A. Balog and E. Szemer\'edi, \emph{A statistical theorem of set addition}, Combinatorica \textbf{14} (1994), 263--268.

\bibitem{BTZ} V. Bergelson, T. Tao and T. Ziegler, \emph{An inverse theorem for the uniformity seminorms associated with the action of $\mathbb{F}_p^\infty$}, Geom. Funct. Anal. \textbf{19} (2010), 1539--1596.

\bibitem{BhowLov} A. Bhowmick and S. Lovett, \emph{Bias vs structure of polynomials in large fields, and applications in effective algebraic geometry and coding theory}, IEEE Trans. Inform. Theory \textbf{69} (2022), 963--977.

\bibitem{CohenMoshkovitz} A. Cohen and G. Moshkovitz, \emph{Structure vs.\ randomness for bilinear maps}, Discrete Anal., 2022:12, 21 pp.

\bibitem{Freiman} G.A. Freiman, \textbf{Foundations of a structural theory of set addition}, American Mathematical Society, Providence, R. I., 1973. Translated from Russian, Translations of Mathematical Monographs, Vol 37.

\bibitem{Gowers4AP} W.T. Gowers, \emph{A new proof of Szemer\'edi's theorem for progressions of length four}, Geom. Funct. Anal. \textbf{8} (1998), no. 3, 529--551.

\bibitem{GowerskAP} W.T. Gowers, \emph{A new proof of Szemer\'edi's theorem}, Geom. Funct. Anal. \textbf{11} (2001), 465--588.

\bibitem{U4paper} W.T. Gowers and L. Mili\'cevi\'c, \emph{A quantitative inverse theorem for the $U^4$ norm over finite fields}, arXiv preprint (2017), \verb+arXiv:1712.00241+.

\bibitem{FMulti} W.T. Gowers and L. Mili\'cevi\'c, \emph{An inverse theorem for Freiman multi-homomorphisms}, arXiv preprint (2020), \verb+arXiv:2002.11667+.

\bibitem{TrueCompl} W.T. Gowers and J. Wolf, \emph{The true complexity of a system of linear equations}, Proc. Lond. Math. Soc. (3), \textbf{100} (2010), no. 1, 155--176.

\bibitem{GreenReg} B. Green, \emph{A Szemer\'edi-type regularity lemma in abelian groups, with applications}, Geom. Funct. Anal. \textbf{15} (2005), 340--376.

\bibitem{GreenTaoU3} B.J. Green and T. Tao, \emph{An inverse theorem for the Gowers $\mathsf{U}^3$-norm}, Proc. Edin. Math. Soc. (2) \textbf{51} (2008), 73--153.

\bibitem{GreenTaoPolys} B. Green and T. Tao, \emph{The distribution of polynomials over finite fields, with applications to the Gowers norms}, Contrib. Discrete Math. \textbf{4} (2009), no. 2, 1--36.

\bibitem{GreenTaoReg} B. Green and T. Tao, \emph{An arithmetic regularity lemma, an associated counting lemma, and applications}, in \emph{An Irregular Mind: Szemer\'edi is 70} (pp. 261--334). Berlin, Heidelberg: Springer Berlin Heidelberg.

\bibitem{GTZ} B.J. Green, T. Tao and T. Ziegler, \emph{An inverse theorem for the Gowers $\mathsf{U}^{s+1}[N]$-norm}, Ann. of Math. (2) \textbf{176} (2012), no. 2, 1231--1372.

\bibitem{ApproxCohomology} D. Kazhdan and T. Ziegler, \emph{Approximate cohomology}, Selecta Math. \textbf{24} (2018), 499--509.

\bibitem{KimLiTidor} D. Kim, A. Li and J. Tidor, \emph{Cubic Goldreich-Levin}, in \emph{Proceedings of the 2023 Annual ACM-SIAM Symposium on Discrete Algorithms (SODA)} (pp. 4846-4892). Society for Industrial and Applied Mathematics.

\bibitem{MannersApprox} F. Manners, \emph{Quantitative bounds in the inverse theorem for the Gowers $\mathsf{U}^{s+1}$-norms over cyclic groups}, arXiv preprint (2018), \verb+arXiv:1811.00718+.

\bibitem{LukaU56} L. Mili\'cevi\'c, \emph{Quantitative inverse theorem for Gowers uniformity norms $\mathsf{U}^5$ and $\mathsf{U}^6$ in $\mathbb{F}_2^n$}, Canad. J. Math., \emph{to appear}.

\bibitem{RuzsaFp} I.Z. Ruzsa, \emph{An analog of Freiman's theorem in groups}, Structure theory of set addition, Ast\'erisque \textbf{258} (1999), 323--326.

\bibitem{Sanders} T. Sanders, \emph{On the Bogolyubov-Ruzsa lemma}, Anal. PDE \textbf{5} (2012), no. 3, 627--655.

\bibitem{SchSisRob} T. Schoen and O. Sisask, \emph{Roth's theorem for four variables and additive structures in sums of sparse sets}, Forum Math. Sigma \textbf{4} (2016), e5, 28pp.

\bibitem{TaoZiegler} T. Tao and T. Ziegler, \emph{The inverse conjecture for the Gowers norm over finite fields in low characteristic}, Ann. Comb. \textbf{16} (2012), 121--188.

\end{document}